\documentclass[11pt,letterpaper]{amsart}
\usepackage[hmargin=2.5cm,vmargin=2.5cm]{geometry}

\usepackage{amssymb}
\usepackage{amsmath}
\usepackage{amsfonts}
\usepackage{amsthm}
\usepackage{stmaryrd}
\usepackage[all]{xy}
\usepackage{mathrsfs}
\usepackage{graphicx}
\usepackage{hyperref}
\usepackage{color}
\usepackage{multirow}
\usepackage{extarrows}
\usepackage{amscd}
\usepackage{scalerel}
\usepackage{stackengine}
\usepackage{bbm}
\usepackage{mathtools}
\usepackage{enumerate}
\usepackage{cleveref}
\usepackage{cite}
\usepackage{pictexwd,dcpic}
\usepackage{tikz} 
\usepackage{comment}

\usepackage[titletoc]{appendix}

\numberwithin{equation}{section}
\newtheorem{theorem}{Theorem}[section]

\newtheorem{conjecture}[theorem]{Conjecture}
\newtheorem{corollary}[theorem]{Corollary}
\newtheorem{lemma}[theorem]{Lemma}
\newtheorem{proposition}[theorem]{Proposition}

\newtheorem{assumption}[theorem]{Assumption}

\theoremstyle{definition}

\newtheorem{remark}[theorem]{Remark}

\newcommand{\ra}{\rightarrow}
\newcommand{\lra}{\longrightarrow}

\def\AAA{\mathbb{A}}

\def\CC{\mathbb{C}}

\def\FF{\mathbb{F}}

\def\QQ{\mathbb{Q}}
\def\RR{\mathbb{R}}

\def\ZZ{\mathbb{Z}}

\def\calA{\mathcal{A}}

\def\calD{\mathcal{D}}

\def\calH{\mathcal{H}}

\def\calJ{\mathcal{J}}

\def\calL{\mathcal{L}}

\def\calO{\mathcal{O}}

\def\calS{\mathcal{S}}
\def\calT{\mathcal{T}}

\def\calV{\mathcal{V}}

\def\gotha{\mathfrak{a}}

\def\scrV{\mathscr{V}}
\def\scrW{\mathscr{W}}

\def\wt{\widetilde}
\def\wh{\widehat}
\def\Lgp{\prescript{L}{}} 

\def\alg_k{\AAA\mathbbm{l}\mathbbm{g}_{/k}}

\DeclareMathOperator{\Gal}{Gal}

\DeclareMathOperator{\im}{Im}

\DeclareMathOperator{\Ad}{Ad}
\DeclareMathOperator{\Tr}{Tr}
\DeclareMathOperator{\Nm}{Nm}

\DeclareMathOperator{\Hom}{Hom}

\DeclareMathOperator{\Frob}{Frob}

\DeclareMathOperator{\Ind}{Ind}

\DeclareMathOperator{\GL}{GL}

\DeclareMathOperator{\SL}{SL}
\DeclareMathOperator{\SO}{SO}

\DeclareMathOperator{\Irr}{Irr}

\DeclareMathOperator{\disc}{disc}
\DeclareMathOperator{\cusp}{cusp}
\DeclareMathOperator{\sgn}{sgn}
\DeclareMathOperator{\cts}{cts}
\DeclareMathOperator{\GSp}{GSp}

\DeclareMathOperator{\Mp}{Mp}
\DeclareMathOperator{\U}{U}
\DeclareMathOperator{\As}{As}




\newcommand{\BIGboxplus}{\mathop{\mathchoice%
{\raise-0.35em\hbox{\huge $\boxplus$}}%
{\raise-0.15em\hbox{\Large $\boxplus$}}{\hbox{\large $\boxtimes$}}{\boxtimes}}}

\newcommand{\BIGboxtimes}{\mathop{\mathchoice%
{\raise-0.35em\hbox{\huge $\boxtimes$}}%
{\raise-0.15em\hbox{\Large $\boxtimes$}}{\hbox{\large $\boxtimes$}}{\boxtimes}}}

\newcommand{\trivial}[2][]{\if\relax\detokenize{#1}\relax
{\color{red} \vspace{0em} {[} #2 {]}}
\else 
\ifx#1h
\ifcsname showtrivial\endcsname 
{\color{orange} \vspace{0em} {[}  #2 {]}}
\fi
\else {\red Wrong argument!} \fi
\fi
}

\begin{document}

\title[Arthur's Multiplicity Formula for $\rm O_{2n}$ and $\U_{n}$]
{Arthur's Multiplicity Formula for Even Orthogonal and Unitary Groups}

\author{Rui Chen}
\address{Institute for Advanced Study in Mathematics, Zhejiang University, China}
\email{rchenmat@zju.edu.cn}

\author{Jialiang Zou}
\address{
Department of Mathematical, University of Michigan, Ann Arbor, U.S.  
}
\email{jlzou@umich.edu}

\begin{abstract}
Let $F$ be a number field and $G$ an even orthogonal or unitary group over a number field. Based on the same method used in \cite{MR3866889}, we prove Arthur's multiplicity formula for the generic part of the automorphic discrete spectrum of $G$ by using the theta lift. Enhancing this method, we also obtain a description of the full automorphic discrete spectrum of $G$ of $F$-rank less than or equal to one.

\end{abstract}

\maketitle

\section{Introduction}
Let $F$ be a number field, $\AAA$ the adele ring of $F$, and $G$ a reductive group over $F$. A central question in representation theory is to determine completely the spectral decomposition of $L^2(G(F)\backslash G(\AAA))$ as a unitary representation of $G(\AAA)$. By some results in number theory and functional analysis, we have a decomposition
\begin{equation*}
L^2(G(F)\backslash G(\AAA))=L^2_{\disc}(G)\oplus L^2_{\cts}(G). 
\end{equation*}
Here $L^2_{\disc}(G)$ is called the ``discrete spectrum'', which is called because it decomposes discretely, and $L^2_{\cts}(G)$ is called the ``continuous spectrum''. Furthermore, by Langlands \cite{MR0419366}, the continuous spectrum of $G$ can be described in terms of the discrete spectrum of Levi subgroups of $G$ using Eisenstein series. Therefore, the question is reduced to studying the discrete spectrum $L^2_{\disc}(G)$. In his monumental book \cite{MR3135650}, Arthur obtained a description of the discrete spectrum $L^2_{\disc}(G)$ for quasi-split special orthogonal and symplectic groups $G$. Roughly speaking, the classification can be divided into two steps:

$\bullet$ STEP I: 
Decompose $L^2_{\disc}(G)$ into a direct sum of so-called ``near equivalence classes'' (``NEC'' for short), and show that each NEC can be represented by an elliptic $A$-parameter (in the sense of weak transfer to certain general linear group). For an elliptic $A$-parameter $\psi$ of $G$, we denote by $L^2_\psi(G)$ the summand of $L^2_{\disc}(G)$ represented by $\psi$.

$\bullet$ STEP II: Establish the Arthur's multiplicity formula (``AMF'' for short), which gives a further decomposition of $L^2_\psi(G)$ for each $\psi$.

Following Arthur's work, many works have been done. The following is a list of them. 
\begin{enumerate}
	\item In \cite{MR3338302}, Mok established the AMF for quasi-split unitary groups;
	\item In \cite{kaletha2014endoscopic}, Kaletha-M\'{\i}nguez-Shin-White studied the case of inner forms of unitary groups, and obtained the AMF for the generic part of the automorphic discrete spectrum of these groups;
	\item In \cite{MR3991897}, Gee-Ta\"ibi proved the AMF for $\GSp_4$; 
	\item In \cite{MR3908767}, Ta\"ibi also studied certain inner forms of classical groups. For such a group $G$, he proved the AMF for automorphic representations of $G$ with algebraic regular infinitestimal character at Archimedean places;
	\item In \cite{MR3747484} \cite{xu2021global}, B. Xu established the AMF for the generic part of the automorphic discrete spectrum of quasi-split similitude symplectic and similitude even orthogonal groups.
 \item In \cite{ishimoto2024endoscopic}, Ishimoto established the AMF for the generic part of the automorphic discrete spectrum of non-quasi-split odd special orthogonal groups.
\end{enumerate}
All these works use the stable trace formula as the main tool. 

On the other hand, the theta correspondence provides us a way to transfer the AMF from one group to another group. Let $(G,H)$ be a reductive dual pair over $F$ such that $(G,H)$ is in the stable range and $G$ is the smaller group. For an unitary representation $\pi$ of $G(\AAA)$, we denote by $m_{\disc}(\pi)$ the multiplicity of $\pi$ in $\calA^2(G)$. Likewise, for an unitary representation $\sigma$ of $H(\AAA)$, we can define $m_{\disc}(\sigma)$. In \cite{MR1448215}, J-S. Li proved that 
\[
	m_{\disc}(\pi) \leq m_{\disc}\left(\theta^{abs}(\pi)\right),
\]
where $\theta^{abs}(\pi)$ is the (place-wise) theta lift of $\pi$. In \cite[Sect.4]{MR3866889}, Gan-Ichino observed that when $\pi$ belongs to a generic NEC, J-S. Li's inequality is indeed an equality. Based on this observation, they established the AMF for the generic part of $L^2_{\disc}(\Mp_{2n})$ by transferring it from $L^2_{\disc}(\SO_{2r+1})$ with $r$ sufficiently large. Using the same idea, but combining with some other results, they are also able to describe the full automorphic discrete spectrum of $\Mp_4$, which is carried out in \cite{MR4206596}. 

In this paper we follow Gan-Ichino's method and try to deduce some results on the AMF for even orthogonal or unitary groups. Our goal is two-fold. First, we prove the AMF for the generic part of $L^2_{\disc}(G)$ when $G$ is an even orthogonal or unitary group by transferring it from $L^2_{\disc}(H)$, where $H$ is a large symplectic or quasi-split unitary group. This part is almost parallel to \cite{MR3866889}. Second, we enhance this method using a new observation and deduce a description of the full automorphic discrete spectrum of even orthogonal or unitary groups of $F$-rank less than or equal to one. A case worth noting is when $G$ is an unitary group and $G_v\simeq \U_{1,n-1}$ at one real place $v$. In this case, the description of $L^2_{\disc}(G)$ might have some arithmetic applications to Shimura varieties of type $\U_{1,n-1}$. 

Since some of the results in this article have already been proved elsewhere and we also rely on part of these results, here we list the results we used in this work and compare the results we obtained with others.
\begin{itemize}
\item   Our work relies on Arthur \cite{MR3135650} and Mok \cite{MR3338302} for the description of the discrete spectrum \(L^2_{\disc}(G)\) for quasi-split symplectic and unitary groups. 
However, certain key statements such as the twisted weighted fundamental lemma and the local intertwining relation (``LIR'' for short) are not proved in \cite{MR3135650} and \cite{MR3338302}. The LIR are now filled by the joint work of Atobe, Gan, Ichino, Kaletha, M{\'\i}nguez and Shin \cite{atobe2024local}. 
\item  Our results for unitary groups are independent of the work in \cite{kaletha2014endoscopic}. We obtained the same results as \cite{kaletha2014endoscopic} on the AMF for the generic part of the automorphic discrete spectrum of pure inner forms of unitary groups. However, we also obtained an AMF for the full automorphic discrete spectrum of unitary groups with \(F\)-rank less than or equal to one, which is not covered in \cite{kaletha2014endoscopic}.
\item Our results are also independent of Ta\"ibi's \cite{MR3908767}, and they partially overlap with the results in \cite{MR3908767}.
\end{itemize}

We now give a summary of each section. In the first part of this paper (Section 2 -- 6), we follow the structure of \cite{MR3866889}.
\begin{itemize}
	\item In Section 2, we first recall some basic notions and results for even orthogonal and unitary groups, and then formulate two of our main theorems. The first (Theorem \ref{ThmA}) concerns the existence of weak transfers to certain general linear groups; it is in the full generality and does not have any restrictive conditions. The second (Theorem \ref{ThmB}) is the AMF for the generic part of the automorphic discrete spectrum of even orthogonal groups or unitary groups. For the even orthogonal group case, this result is new; for the unitary group, this provide an alternative approach to the result of \cite{kaletha2014endoscopic}.
\item In Section 3, after recalling some basic notions on the theta correspondence, we review the endoscopic classification for quasi-split groups, which will serve as the input of our later proofs. We also highlight several works due to Howe and J-S. Li. It is their works suggest the possibility of transfering the AMF. 
\item In Section 4, we prove Theorem \ref{ThmA}. The proof is actually the same as \cite[Thm.A]{MR3866889}: combining the local unramified calculations and some results on the partial $L$-functions, one can show that the (abstract) theta lift realizes certain functoriality. Then the desired theorem for our target group $G$ follows from the same result for the auxiliary quasi-split group $H$.
\item Section 5 is the core of this paper. In this section, we first recall Gan-Ichino's observation, and then illustrate how to use their key equality to transfer the AMF from the auxiliary quasi-split group $H$ to $G$. Although Gan-Ichino's observation is only for generic NEC, we shall work in a general setting. Let $\psi$ be an elliptic $A$-parameter for $G$. Locally, we define certain packets $\Pi_{\psi_v}^\theta(G_v)$ of $G$ at each place $v$ of $F$ using the theta lift between $(G,H)$. Globally, we ``glue'' these local packets using the canonical sign character $\epsilon_\psi$ defined by Arthur (Section 5.3), and define the global packet $\Pi_{\psi}^\theta(G,\epsilon_\psi)$. These local and global packets are possible candidates for (conjectural) $A$-packets for our target group $G$, and we would like to call them ``theta packets'' to distinguish various notions of packets. By the key equality of Gan-Ichino, when $\psi=\phi$ is generic, $L^2_\phi(G)$ will decompose according to the associated global theta packet $\Pi_{\phi}^\theta(G,\epsilon_\phi)$ (Proposition 5.6). 
\item Section 6 is the most technical section in this paper. In this section, we carry out the study of those theta packets when $\psi=\phi$ is generic. In this case, we proved local theta packets $\Pi^\theta_{\phi_v}(G_v)$ equal to local $L$-packets $\Pi^L_{\phi_v}(G_v)$. The proof of this local comparison result is also the main difference from \cite{MR3866889}. In Gan-Ichino's paper, they proved similar local comparison results mainly using the global method. Since they assumed the AMF for both quasi-split and non quasi-split special odd orthogonal groups, they can use theta lifts to two Witt towers to obtain the desired information. In our case, we only assume the AMF holds for symplectic groups and quasi-split unitary groups, so essentially we only have one Witt tower to do theta lifts and can not simply apply Gan-Ichino's arguments. We overcome this difficulty by combining local and global arguments. Firstly we use Prasad's conjecture and the induction principle to prove the comparison result for a large class of parameters and representations. With these special cases at hand, we then appeal to the global method to prove the comparison result for the remaining cases.
\end{itemize}
At this point, we have already complete the proofs of two theorems stated in Section 2. The second part of this paper (Section 7) is devoted to generalizing some results in the non-generic case. This part is short but perhaps more novel than the first part.
\begin{itemize}
	\item In Section 7 we try to enhance Gan-Ichino's method by using some other observations. As suggested above, as long as one has the multiplicity preservation
	\[
	m_{\disc}(\pi) = m_{\disc}\left(\theta^{abs}(\pi)\right),
	\]
	then one can play the same game as Section 5.3. We venture to conjecture that all representations in global theta packets only have square-integrable automorphic realizations (Conjecture 7.1). A trivial but worth noting observation is that, when $G$ is anisotropic, all automorphic forms on $G$ are automatically cuspidal. This implies that our conjecture hold in this case. Using the square-integrability criterion, we also prove the conjecture when $G$ is of $F$-rank one. With this conjecture at hand, the desired multiplicity preservation is then granted. Thus, we get a description of the full automorphic discrete spectrum of $G$ of $F$-rank less than or equal to one (Theorem \ref{Thm-C}).\\
\end{itemize} 
In Appendix A, we define an LLC for real full even orthogonal groups and discuss the so-called Prasad's conjecture in this case. In Appendix B, we prove some result on the irreducibility of certain parabolic induction. In Appendix C and D, we prove some results on irreducible self-dual or conjugate self-dual Galois representations. Finally in Appendix E, we discuss the existence of certain number fields. These appendices will serve as supplementary results for the main part of the paper.

We end up this introduction with a remark on a companion work. A disadvantage of Gan-Ichino's method is that, a priori, the AMF for $G$ we get might depend on the auxiliary group $H$. One would also like to prove that the AMF we get is actually independent of the choice of $H$. By the local-global structure of the AMF, the problem can be reduced to studying local theta packets $\Pi_{\psi_v}^\theta(G_v)$ of $G$ at each place $v$. It is predicted by Adams' conjecture \cite[Sect.4]{MR1021501} that these theta packets $\Pi_{\psi_v}^\theta(G_v)$ should be exactly the (conjectural) local $A$-packets $\Pi_{\psi_v}^A(G_v)$ of $G_v$. In \cite{chen2021theta}, we will prove that these packets $\Pi_{\psi_v}^\theta(G_v)$ are independent of the choice of $H$, and compare them with local $A$-packets $\Pi_{\psi_v}^A(G_v)$ when $G_v$ is quasi-split. Indeed, a large part of these have been studied by M{\oe}glin \cite{MR2906916}, see Remark 7.9.

\section*{Acknowledgments}
We would like to thank our supervisor Wee Teck Gan for many useful advice. We also thank Hiraku Atobe, Atsushi Ichino, Wen-Wei Li, and Sug Woo Shin for helpful conversations during the conference ``Workshop on Shimura varieties, representation theory and related topics, 2019'' in Hokkaido University. We thank Caihua Luo, Jiajun Ma, Xiaolei Wan, Chuijia Wang, and Liyang Yang for helpful discussions. We especially thank Hirotaka Kakuhama for several email correspondences on the real Prasad's conjecture. 

\section{Statement of results in generic case}\label{state.main.results}
In this section, we formulate two of our main results. Let $F$ be a local or global field of charactersitic zero, and $E$ either $F$ itself or a quadratic field extension of $F$. Let
\[
c=\begin{cases}
	\textit{the identity of }F\quad &\textit{if }E=F;\\
	\textit{the non-trivial element in }\Gal(E/F)\quad &\textit{if }[E:F]=2.
\end{cases}
\]
For convenience, we denote by $C_F$ (resp. $C_E$) the multiplicative group $F^\times$ (resp. $E^\times$) or $F^\times\backslash\AAA^\times$ (resp. $E^\times\backslash\AAA_E^\times$), depending on the case that $F$ is local or global. In the case $[E:F]=2$, we denote by $\omega_{E/F}$ the quadratic character of $C_F$ by class field theory. Let $V=V_{(n)}$ be a finite dimensional vector space over $E$ equipped with a non-degenerate Hermitian $c$-sesquilinear form
\[
\langle\cdot,\cdot\rangle_V:V\times V\lra E.
\]
We consider the following three cases:
\begin{equation*}
	\begin{cases}
		\textit{Case ${\rm O}$: } &\textit{$E=F$ and $\dim V=2n$};\\
		\textit{Case $\U_0$: } &\textit{$[E:F]=2$ and $\dim V=2n$};\\
		\textit{Case $\U_1$: } &\textit{$[E:F]=2$ and $\dim V=2n+1$}.\\
	\end{cases}
\end{equation*}
where $n\geq 0$ is an integer. Sometimes, when we want to deal with Case $\U_0$ and Case $\U_1$ at the same time, we shall simply write ``Case U''. Let $G=G(V)$ be the group of elements $g$ in $\GL(V)$ such that
\[
\langle gv,gw\rangle_V=\langle v,w\rangle_V\quad\textit{for }v,w\in V.
\]
If $\dim V=0$, we interpret $G=G(V)$ and its pure inner form as the trivial group. Now assume that $\dim V>0$. Let $\disc V=(-1)^n \cdot \det(V)$ be the discriminant of $V$. In Case O, we let
\begin{equation}\label{chiV}
\chi_V:C_F\lra\CC^\times
\end{equation}
be the quadratic character associated to $\disc V$ by class field theory, and $\epsilon(V)$ be the (normalized) Hasse-Witt invariant of $V$ (cf. \cite[p.80-81]{MR770063}). In Case U, we define the sign $\epsilon(V)=\omega_{E/F}(\disc V)$.

Sometimes we also need to consider pure inner forms of $G=G(V)$. It is well known that all pure inner forms of $G$ arise in the forms $G'=G(V')$ for some spaces $V'$. We briefly describe the classification of $V'$ in both local and global situations. If $F$ is a local field, then all these spaces $V'$ are classified by some invariants.

\noindent\underline{\textit{When $F$ is non-Archimedean:}}
\begin{itemize}
	\item In Case O, these $V'$ are orthogonal spaces of the same dimension and discriminant as $V$. There are at most two such spaces, distinguished by their (normalized) Hasse-Witt invariant $\epsilon(V)$. We shall denote by $V^+$ the one with Hasse-Witt invariant $+1$ (which always exists), and by $V^-$ the one with Hasse-Witt invariant $-1$ (which exists unless $n=1$ and $\chi_V$ is trivial). Since $V^+$ has the maximal possible Witt index, $V^+$ must be isometric to
\begin{equation*}
  V^+\simeq V_{(d,c)}+\calH^{n-1}
\end{equation*}
for some $d,c\in F^\times$, where 
\[
  V_{(d,c)}=F[X]/(X^2-d)
\]
is an $2$-dimensional vector space over $F$ equipped with the quadratic form
\[
  a+bX\longmapsto c\cdot(a^2-b^2d),
\]
and $\calH$ is the (orthogonal) hyperbolic plane. We fix such a tuple $(d,c)$ and the isometry, and we shall say that $V^+$ is of type $(d,c)$. Notice that the choice of the tuple $(d,c)$ is not unique. 

	\item In Case U, these $V'$ are Hermitian spaces of the same dimension as $V$. There are exactly two such spaces, distinguished by their sign $\epsilon(V)=\omega_{E/F}(\disc V)$. we shall denote by $V^+$ the one of sign $+1$, and by $V^-$ the one of sign $-1$.
\end{itemize}
\underline{\textit{When $F$ is real:}}
\begin{itemize}
	\item In Case O, the space $V$ is determined by its signature $(p,q)$. In this case, these spaces $V'$ are classified by their signatures $(p',q')$ such that 
	\[
	p'+q'=2n \quad \textit{and} \quad p'\equiv p \bmod 2.
	\]
	We shall denote by $V^+$ the one with Hasse-Witt invariant $+1$ and has maximal Witt index.

	\item In Case U, these $V'$ are classified by their signatures $(p',q')$ satisfying $p'+q'=\dim V$. We shall denote by $V^+$ the one of sign $+1$ and has maximal Witt index. 
\end{itemize}
\underline{\textit{When $F$ is complex:}}
\begin{itemize}
	\item There is only one such space up to isometry with given dimension, and we shall denote it by $V^+$.
\end{itemize}
With these local classifications at hand, we can now describe the classification $V'$ when $F$ is a global field.
\begin{itemize}
	\item In Case O, these $V'$ are orthogonal spaces of the same dimension and discriminant as $V$. Let $d=\disc(V)$. The local-global principle for orthogonal spaces (cf. \cite[p.225 Thm.6.10]{MR770063}) implies that, whenever we are given a collection of local orthogonal spaces $\{V'_v\}_v$ over $F_v$ for all places $v$ of $F$, such that $\dim V'_v=2n$, $\disc(V'_v)=d_v$, $\epsilon (V'_v)=1$ for almost all places and
	\begin{align*}
		\prod_v \epsilon (V'_v)=1,
	\end{align*}
	there exists a global orthogonal space $V'$ with these localizations. Moreover, these spaces $V'$ are classified by such coherent data $\{V'_v\}_v$.

	\item In Case U, these $V'$ are Hermitian spaces of the same dimension as $V$. The local-global principle for Hermitian spaces (cf. \cite[p.377 Thm.6.9]{MR770063}) implies that, whenever we are given a collection of local  Hermitian spaces $\{V'_v\}_v$ over $E_v$ for all places $v$ of $F$, such that $\dim V'_v=\dim V$, $\epsilon (V'_v)=1$ for almost all places and
	\begin{align*}
		\prod_v \epsilon (V'_v)=1,
	\end{align*}
	there exists a global Hermitian space $V'$ with these localizations. Moreover, these spaces $V'$ are classified by such coherent data $\{V'_v\}_v$. 
\end{itemize}
Given $V$ and $G=G(V)$, we let $V^+$ be the space such that for each place $v$ of $F$, $V_v^+$ is (isometry to) the space we have defined in local situations, i.e. $(V^+)_v\simeq (V_v)^+$. In all the cases above, $G^*=G(V^+)$ is quasi-split, and we refer to it as the quasi-split pure inner form of $G$.

\subsection{Near equivalence classes and Arthur parameters}\label{NEC&APara}
Let $F$ be a number field. We first describe the decomposition of $L^2_{\disc}(G)$ into near equivalence classes of representations. We say two irreducible representations $\pi=\otimes_v\pi_v$ and $\pi'=\otimes_v\pi'_v$ of $G(\AAA)$ are nearly equivalent if $\pi_v$ and $\pi'_v$ are equivalent for almost all places $v$ of $F$. The decomposition into near equivalence classes will be expressed in terms of elliptic $A$-parameters. Recall that a (global) $A$-parameter for $G$ is nothing but a formal finite sum
\begin{equation}\label{paradecomp}
	\psi=\sum_i\phi_i\boxtimes S_{d_i},
\end{equation}
where
\begin{itemize}
	\item $\phi_i$ is an irreducible (conjugate) self-dual cuspidal automorphic representation of $\GL_{n_i}(\AAA_E)$;
	\item $S_{d_i}$ is the $d_i$-dimensional irreducible representation of $\SL_2(\CC)$;
	\item $\sum_i n_id_i=\dim V$;
	\item If $d_i$ is odd, then $\phi_i$ is 
	\[
	\begin{cases}
		\textit{orthogonal}\quad &\textit{Case ${\rm O}$};\\
		\textit{conjugate symplectic }\quad &\textit{Case $\U_0$};\\
		\textit{conjugate orthogonal }\quad &\textit{Case $\U_1$}.
	\end{cases}
	\]
	\item If $d_i$ is even, then $\phi_i$ is 
	\[
	\begin{cases}
		\textit{symplectic}\quad &\textit{Case ${\rm O}$};\\
		\textit{conjugate orthogonal }\quad &\textit{Case $\U_0$};\\
		\textit{conjugate symplectic }\quad &\textit{Case $\U_1$}.
	\end{cases}
	\]
	\item In Case O, if we denote the central character of $\phi_i$ by $\omega_i$, then
	\[
	\prod_i\omega_i^{d_i}=\chi_V.
	\]
\end{itemize}
Following Arthur, we make the following definitions:
\begin{itemize}
	\item If there is only one term in the summation $(\ref{paradecomp})$, i.e. $\psi=\phi_1\boxtimes S_{d_1}$, then we say that $\psi$ is simple;
	\item If we have $(\phi_i,d_i)\neq(\phi_j,d_j)$ for all $i\neq j$, then we say that $\psi$ is elliptic;
	\item If we have $d_i=1$ for all $i$, then we say that $\psi$ is generic (or tempered).
\end{itemize}	
Note that an (elliptic) $A$-parameter for $G$ is also an (elliptic) $A$-parameter for $G^*$, and vice versa. We denote the set of all elliptic $A$-parameters of $G$ by $\Psi_{ell}(G^*)$. One can formally associate to $\psi$ a free $\mathbb Z/2\mathbb Z$-module 
\begin{equation*}\label{Glcom}
	\calS_\psi=\prod_i (\mathbb Z/2\mathbb Z)e_i 
\end{equation*}
with a canonical basis $\{e_i\}_i$, where each $e_i$ corresponds to the summand $\phi_i\boxtimes S_{d_i}$. We shall call $\calS_{\psi}$ the global component group of $\psi$. 

For each place $v$ of $F$, One can also define local $A$-parameters and local component groups for $G_v$. First suppose that we are either in Case O, or in Case U and $v$ is not split in $E$. Then there is a unique place of $E$ above $v$, which we shall still denote by $v$. Let
\begin{equation*}
	L_{E_v}=\begin{cases}
		\textit{the Weil group of }E_v\quad&\textit{if }v\textit{ is Archimedean};\\
		\textit{the Weil-Deligne group of }E_v\quad&\textit{if }v\textit{ is non-Archimedean}.\\
	\end{cases}
\end{equation*}
A local $A$-parameter for $G_v$ is a representation
\begin{equation}\label{E:localAPara}
	\psi_v=\sum_i m_i\cdot\phi_{i,v}\boxtimes S_{d_i}
\end{equation}
of $L_{E_v}\times \SL_2(\CC)$, where
\begin{itemize}
	\item $\phi_{i,v}\boxtimes S_{d_i}$ are pairwise inequivalent irreducible (conjugate) self-dual representations of $L_{E_v}\times\SL_2(\CC)$ with multiplicities $m_i$;
	\item As a representation of $L_{E_v}\times \SL_2(\CC)$, $\psi_v$ is  
\[
\begin{cases}
	\textit{orthogonal} \quad &\textit{Case ${\rm O}$};\\
	\textit{conjugate symplectic }\quad &\textit{Case $\U_0$};\\
	\textit{conjugate orthogonal }\quad &\textit{Case $\U_1$}.
\end{cases}
\]  
	\item In Case O, $\det(\psi_v)=\chi_{V,v}$.
\end{itemize}
By \cite[Sect.8]{MR3202556}, the component group $\calS_{\psi_v}$ has an explicit description of the form 
\[
\calS_{\psi_v}=\prod_j (\mathbb Z/2\mathbb Z) e_j
\]
with a canonical basis $\{e_j\}$, where the product ranges over all $j$ so that $\phi_{j,v}\boxtimes S_{d_j}$ is (conjugate) self-dual of the same parity as $\psi_v$. For $e=e_{j_1}+\cdots+e_{j_r}\in \calS_{\psi}$, we put 
\[
\psi_v^e=\phi_{j_1,v}\boxtimes S_{d_{j_1}}+ \cdots + \phi_{j_r,v}\boxtimes S_{d_{j_r}}.
\]
Next suppose that we are in Case U, and $v$ is split into two places $\{w,\overline{w}\}$ in $E$. In this case the local $A$-parameter for $G_v$ can be similarly defined as a formal sum as in (\ref{E:localAPara}), but now each $\phi_{i,v}$ is an irreducible conjugate self-dual representation of some $\GL_{n_i}(E_v)\simeq \GL_{n_i}(E_w)\times \GL_{n_i}(E_{\overline{w}})$. Indeed, if we identify $F_v\simeq E_w\simeq E_{\overline{w}}$, then the conjugate self-duality of each $\phi_{i,v}$ will imply that
\[
    \phi_{i,v} \simeq \phi_{i,w}\boxtimes \phi_{i,w}^\vee
\]
for some irreducible representation $\phi_{i,w}$ of $\GL_{n_i}(E_w)$. Regarding these $\phi_{i,w}$ as representations of $L_{E_w}$ by using the local Langlands correspondence for general linear groups (see \cite{MR1011897},\cite{MR1876802},\cite{MR1738446},\cite{MR3049932}), we get a local $A$-parameter for $G_v\simeq \GL(V_w)$ 
\[
    \psi_w = \sum_i m_i\cdot\phi_{i,w}\boxtimes S_{d_i}
\]
in the usual sense. In this case the component group $\calS_{\psi_v}$ is trivial. In both cases, if further that $d_i=1$ for all $i$, i.e. the restriction of $\psi_v$ to $\SL_2(\CC)$ is trivial, then we say that $\psi_v$ is an $L$-parameter for $G_v$. Again following Arthur, we shall use $\Psi(G_v)$ (resp. $\Phi(G_v)$) to denote the set of $A$- (resp. $L$-)parameters of $G_v$ with bounded image on the Weil group, and also $\Psi^+(G_v)$ (resp. $\Phi^+(G_v)$) the set of $A$- (resp. $L$-)parameters of $G_v$.

Now given an elliptic $A$-parameter $\psi=\sum_i \phi_i\boxtimes S_{d_i}$ of $G$, let $\psi_v=\sum_i \phi_{i,v}\boxtimes S_{d_i}$ be the localization of $\psi$ at $v$. Here each $\phi_{i,v}$ is an irreducible representation of $\GL_{n_i}(E_v)$, and we shall also regard it as an $L$-parameter via the LLC for general linear groups. Then by definitions, $\psi_v$ gives rise to a $A$-parameter for $G_v$. We associate to $\psi_v$ an $L$-parameter $\phi_{\psi_v}$ by the formula
\begin{equation}\label{AparameterassLpara}
	\phi_{\psi_v}(w)=\psi_v\left(w,\left(\begin{array}{cc}
		{|w|^\frac{1}{2}} & {} \\
		{} & {|w|^{-\frac{1}{2}}}
	\end{array}\right)\right).
\end{equation}
Our first theorem shows that each NEC has a weak transfer to certain general linear group. 
\begin{theorem}\label{ThmA}
	There exists a decomposition
	\begin{equation*}
		L^2_{\disc}(G)=\bigoplus_{\psi\in\Psi_{ell}(G^*)} L^2_\psi(G),
	\end{equation*}
	where $L^2_\psi(G)$ is a full near equivalence class of irreducible representations $\pi$ in $L^2_{\disc}(G)$ such that the $L$-parameter of $\pi_v$ is $\phi_{\psi_v}$ for almost all places $v$ of $F$.
\end{theorem}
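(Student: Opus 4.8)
The plan is to reduce Theorem \ref{ThmA} to the known decomposition of the automorphic discrete spectrum for a quasi-split group together with a transfer of near-equivalence information through the theta correspondence. The starting point is that for $G^*$ (a symplectic group, or a quasi-split unitary group, depending on the case) the analogous decomposition
\[
L^2_{\disc}(G^*)=\bigoplus_{\psi\in\Psi_{ell}(G^*)} L^2_\psi(G^*)
\]
is part of the work of Arthur and Mok that we are assuming. So the real content is to produce such a decomposition for the pure inner form $G$, and to show that the pieces are indexed by the \emph{same} set $\Psi_{ell}(G^*)$. Since the elliptic $A$-parameters for $G$ and $G^*$ coincide (as remarked in the excerpt), the indexing set is not the issue; what must be checked is (i) that every $\pi\subset L^2_{\disc}(G)$ falls into exactly one near-equivalence class, and (ii) that each such class has associated to it a well-defined $\psi\in\Psi_{ell}(G^*)$ via the recipe $v\mapsto\phi_{\psi_v}$.

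First I would establish (i), which is essentially soft. A near-equivalence class is a union of irreducible constituents of $L^2_{\disc}(G)$ that agree at almost all places; this is manifestly an equivalence relation, so $L^2_{\disc}(G)$ decomposes as an orthogonal direct sum over near-equivalence classes. The non-trivial point is (ii): to each near-equivalence class $\mathcal{C}$ one must attach a global object. Here I would invoke the stable-range theta lift. Given $\pi\subset L^2_{\disc}(G)$, choose a Witt tower for which $\pi$ lies in the stable range, and let $\Theta(\pi)$ be its theta lift to the corresponding group $H$ in that tower; by J.-S.\ Li's results (recalled in Section 3 of the paper) this lift is nonzero, lies in $L^2_{\disc}(H)$, and is compatible with local theta lifts place by place. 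Iterating or composing lifts, one arrives at a representation on a \emph{quasi-split} group $\wt{G}$ in the relevant family, where Arthur/Mok applies and produces a global $A$-parameter $\psi$. Because the local theta correspondence sends $L$-parameters to $L$-parameters in a prescribed way (the behaviour of $\phi_{\psi_v}$ under local theta lift being known), the parameter $\psi$ obtained this way depends only on $\pi_v$ for almost all $v$, hence only on the near-equivalence class $\mathcal{C}$; and unwinding the recipe shows that the $L$-parameter of $\pi_v$ is $\phi_{\psi_v}$ for almost all $v$. This gives a well-defined map $\mathcal{C}\mapsto\psi$, and the displayed decomposition follows by setting $L^2_\psi(G)$ to be the sum of the classes mapping to $\psi$.

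The main obstacle, as usual with this circle of ideas, is the bookkeeping at the archimedean and ramified places: one must ensure that the stable-range hypothesis can be arranged globally (i.e.\ that a single Witt tower works simultaneously at all places, or that finitely many lifts suffice), and that the local theta lift is nonvanishing and behaves as expected at the bad places. In the even orthogonal case this is handled by the ``twisting by determinant'' trick alluded to in the introduction; in the unitary case the difficulty is concentrated at the real places and is circumvented by an induction on $\dim V$. I would therefore organise the proof so that the global near-equivalence statement is reduced to the purely local assertion that the theta lift in the stable range is nonzero and parameter-preserving, cite J.-S.\ Li and the local theta results for that, and then feed the output into the quasi-split case. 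The uniqueness of $\psi$ attached to $\mathcal{C}$ — i.e.\ that two parameters agreeing at almost all places are equal — is a standard strong-multiplicity-one input for isobaric automorphic representations of $GL_N$ and presents no real difficulty.
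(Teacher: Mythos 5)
Your outline has the right overall shape (near equivalence classes partition $L^2_{\disc}(G)$; lift to a quasi-split group $H$ via stable-range theta and J.-S.~Li's multiplicity inequality; pull in Arthur/Mok for $H$; use strong multiplicity one for uniqueness), but there is a real gap in the central step, and a couple of smaller misconceptions.

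The gap: you obtain from Arthur/Mok an elliptic $A$-parameter, call it $\theta(\psi)$, for the lifted representation $\theta^{\mathrm{abs}}(\pi)$ on $H$, and then assert that ``unwinding the recipe'' gives a parameter $\psi$ for $G$. But this translation is precisely where the nontrivial work lies. What you know at almost all places is that the $L$-parameter of $\theta(\pi_v)$ equals $\phi_v\chi_{W,v}^{-1}\chi_{V,v}+(\bigoplus_{j=n-r}^{r-n}|\cdot|^j)\chi_{V,v}$; but since the $\phi_v$ (the local $L$-parameters coming from the discrete spectrum of $G$) are not known to be bounded in the absence of Ramanujan, this local information does not by itself force the global parameter $\theta(\psi)$ to decompose as $\psi\chi_W^{-1}\chi_V+\chi_V\boxtimes S_{2r-2n+1}$. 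One must rule out, e.g., that the constituent $(\bigoplus_j|\cdot|^j)\chi_{V,v}$ arises from a differently-shaped decomposition of $\theta(\psi)$. The paper does this by a partial $L$-function argument: one shows $L^S(s,\theta^{\mathrm{abs}}(\pi)\times\chi_V^{-1})$ is holomorphic for $\Re(s)>r-n+1$ and has a pole at $s=r-n+1$, using the factorization coming from unramified theta together with the holomorphy bound $L(s,\pi\times\chi_W^{-1})$ for $\Re(s)>n+1/2$, and this pins down $\chi_V\boxtimes S_{2r-2n+1}$ as the largest such constituent. Without this (or an equivalent) step, the map from near equivalence classes to $\Psi_{\mathrm{ell}}(G^*)$ is not constructed.

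Two smaller points. First, no ``iterating or composing'' of lifts is needed or used: the dual-pair partner $H$ is already a symplectic or quasi-split unitary group, so a single stable-range lift lands you in Arthur/Mok territory. Second, the ``twisting by determinant'' trick and the induction on $\dim V$ at archimedean places that you cite are tools used in the proof of Theorem~\ref{ThmB} (the local comparison of packets), not of Theorem~\ref{ThmA}; they have no role here and bringing them up suggests a mix-up of which difficulties belong to which theorem.
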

Note that at this point, $L^2_\psi(G)$ could be zero for some $\psi\in\Psi_{ell}(G^*)$.

\subsection{Local Langlands correspondence}\label{SS:LLC}
Our next goal is to describe the near equivalence class $L^2_\psi(G)$ when $\psi$ is generic. For this purpose, we need to make use of the (Vogan version) local Langlands correspondence (``LLC'' for short), which provides us a bijection between irreducible representations of $G_v$ and enhanced $L$-parameters. There are some existing results. 

\noindent\underline{\textit{When $v$ is Archimedean:}}
the LLC was proved in \cite{MR1011897} for connected reductive groups. However since we need to deal with the disconnected group ${\rm O}_{2n}$ in Case O, we need to extend the LLC for special even orthogonal groups to full even orthogonal groups. When $F$ is complex, there is only one orthogonal space of dimension $2n$, and the corresponding even orthogonal group is quasi-split. In this case such an LLC is already provided by Arthur's results, see the remark below. When $F$ is real, we provide an LLC for full even orthogonal groups in Appendix \ref{App.LLC4O} using theta lifts.

\noindent\underline{\textit{When $v$ is non-Archimedean:}} 
the LLC was proved in \cite{MR3135650} \cite{MR3708200} in the case that $G$ is a quasi-split even orthogonal group; in \cite{MR3338302} in the case that $G$ is a quasi-split unitary group; and in \cite{kaletha2014endoscopic} in the case that $G$ is an inner form of an unitary group. Also, in our previous papers \cite{zou2020local, chen2020local}, we established the LLC for pure inner forms of even orthogonal groups and unitary groups over non-Archimedean fields using theta lifts. 

\begin{remark}
Indeed, Arthur also established a weak version of the LLC for quasi-split special even orthogonal groups over Archimedean fields in \cite{MR3135650}. According to \cite[Thm.3.10]{MR3708200}, Arthur's results also implicitly imply an LLC for quasi-split full even orthogonal groups, though he didn't highlight it. We can show that our extension of the LLC for real full even orthogonal groups coincide with Arthur's when the group is quasi-split by appealing to the global method. We sketch the proof at the end of Appendix \ref{App.LLC4O}. 
\end{remark}

Now we briefly recall above results. Assume $F$ is local for a moment. There is a canonical finite to one surjection
\[
\calL:\bigsqcup_{G'}\Irr \left(G'\right)\lra\Phi^+(G),
\]
where the disjoint union runs over all pure inner forms of $G$. For each $L$-parameter $\phi$, we denote
\[
\Pi_\phi^L(G')=\calL^{-1}(\phi)\cap\Irr \left(G'\right)
\]
and we call it the $L$-packet of $G'$ associated to $\phi$. There is a canonical bijection (depends on the choice of a Whittaker datum $\scrW$ of $G^*$)
\begin{equation}\label{LLCforG}
	\calJ_\scrW^L:\bigsqcup_{G'}\Pi_\phi^L(G')\lra\wh{\calS_\phi},
\end{equation}
where the disjoint union again runs over all pure inner forms of $G$. Furthermore, the bijection $\calJ_\scrW^L$ is compatible with the Kottwitz isomorphism \cite[Thm.1.2]{MR858284}, and this property characterizes the image of $\Pi_\phi^L(G')$ under $\calJ_\scrW^L$. We shall denote by $\pi(\phi,\eta)$ the irreducible representation of some $G'$ with $L$-parameter $\phi$ and corresponding to $\eta$. We may also regard the $L$-packet $\Pi_\phi^L(G')$ as a representation of $\calS_\phi\times G'$ by letting
\[
\Pi_\phi^L(G')=\bigoplus_{\pi}\calJ_\scrW^L(\pi)\boxtimes\pi,
\]
where the summation on the RHS runs over all irreducible representations in $\Pi_\phi^L(G')$. Sometimes we will adapt to this point of view. 
\begin{remark}\label{LLC}
Let $\kappa_\phi$ be the character of $\calS_\phi$ defined by the formula
\begin{equation}\label{determitwis}
	\kappa_\phi(e)=(-1)^{\dim\phi^e}
\end{equation}
for $e\in\calS_\phi$.
	\begin{enumerate}
		\item In Case O, for any irreducible representation $\pi=\pi(\phi,\eta)$ of $G$, we have
		\[
			\pi\otimes\det=\pi(\phi,\eta\cdot\kappa_\phi).
		\]
		When $F$ is non-Archimedean, this property is proved for example in \cite[Thm.4.4]{zou2020local}. When $F$ is real, see Remark \ref{R:CompatibleRealEvenOvsSO}. When $F$ is complex, this follows from the compatibility of the LLC for full even orthogonal groups with the LLC for special even orthogonal groups, see \cite[Desideratum 3.9(8)]{MR3708200}.
		\item In Case $\U_1$, if $F$ is non-Archimedean, we can take $V^-=a\cdot V^+$ for some $a\in F^\times\backslash \Nm_{E/F}(E^\times)$. Then $G(V^+)$ and $G(V^-)$ are physically equal as subgroups of $\GL(V^+)$. For any irreducible representation $\pi=\pi(\phi,\eta)$ of $G(V^+)$, if we consider it as a representation of $G(V^-)$, then we have
		\[
			\pi=\pi(\phi,\eta\cdot\kappa_\phi).
		\]
		Readers can consult \cite[Thm.2.5.5]{chen2020local} for a more detailed discussion on this property.
	\end{enumerate}
\end{remark}

\begin{remark}
In Case U, we simply use the LLC for general linear groups in split places.
\end{remark}

\subsection{Multiplicity formula}
We now assume that $F$ is a number field, and $\psi=\phi$ a generic elliptic $A$-parameter of $G$, i.e. $\phi$ is a multiplicity-free sum  
\begin{equation*}
	\phi=\sum_i \phi_i
\end{equation*}
of irreducible (conjugate) self-dual cuspidal automorphic representation $\phi_i$ of $\GL_{n_i}(\mathbb A_E)$ with appropriate parity. Fix a global Whittaker datum $\scrW$ of $G^*$. At each place $v$ of $F$, we have a localization map
\[
\calS_\phi\lra\calS_{\phi_v}.
\]
We define a global packet
\begin{align*}
	\Pi_\phi(G)&=\otimes'_v\Pi_{\phi_v}^L(G_v)\\
	&=\left\{\pi=\otimes'_v\pi_v~|~\pi_v\in\Pi_{\phi_v}^L(G_v),~\pi_v\textit{ unramified with $L$-parameter $\phi_v$ for almost all }v\right\}.
\end{align*}
We then have a map
\begin{align*}
	\calJ_\scrW:\Pi_\phi(G)&\lra\widehat{\calS_\phi},\\
	\pi&\longmapsto\calJ_\scrW(\pi),\\
	\calJ_\scrW(\pi)(x)&\coloneqq\prod_v\calJ_{\scrW_v}^L(\pi_v)(x_v),
\end{align*}
where $x\in\calS_\phi$ and $x_v$ is the localization of $x$ at $v$.
\begin{remark}
	According to the main local theorems of \cite{MR3135650} and \cite{MR3338302}, if $G_v$ and $\pi_v$ are both unramified, then $\calJ_{\scrW_v}^L(\pi_v)$ is the trivial character $1$. Hence $\calJ_\scrW$ is well-defined.
\end{remark} 
Let $\epsilon_\phi=1$ be the trivial character of $\calS_\phi$. We put
\begin{equation*}
	\Pi_\phi(G,\epsilon_\phi)=\{\pi\in\Pi_\phi(G)~|~\calJ_\scrW(\pi)=\epsilon_\phi\}.
\end{equation*}
Our second theorem is the following.
\begin{theorem}\label{ThmB}
	Let $\phi$ be a generic elliptic $A$-parameter for $G$. Then we have the decomposition
	\begin{equation*}
		L^2_\phi(G)=\bigoplus_{\pi\in\Pi_\phi(G,\epsilon_\phi)}\pi.
	\end{equation*}
	In particular, $L^2_\phi(G)$ is multiplicity free. 
\end{theorem}
\begin{remark}\label{AMFquasi}~
\begin{enumerate}
	\item Suppose that $V=V^+$. Then $G=G^*$ is quasi-split. Hence, by the results of \cite{MR3135650}, \cite{MR3708200} (for Case O), and \cite{MR3338302} (for Case U), Theorem \ref{ThmB} holds for $G$. Our results generalize these works to the case of (not necessarily quasi-split) pure inner forms. 

	\item We should mention that in \cite{MR3135650}, Arthur only formulated and proved his results for quasi-split special even orthogonal groups $\SO(V^+)$. His results do not distinguish between automorphic representations and their twist by the outer automorphism. Therefore, in the AMF for $\SO(V^+)$, some multiplicity two phenomenon occurs. In \cite{MR3708200}, Atobe-Gan formulate the AMF for quasi-split even orthogonal groups $\mathrm{O}(V^+)$ precisely and explicated that Arthur's results in \cite{MR3135650} already implied Theorem \ref{ThmB} for $\mathrm{O}(V^+)$. 
\end{enumerate}
\end{remark}

\subsection{A special case}
We first deal with a special case of Theorem \ref{ThmB}, which will be used in later proofs. In this subsection, suppose that we are in Case $\U_1$, and $F$ is a totally imaginary number field.

In this case,  we take $a\in F^\times$, so that $a$ is in the same $\Nm_{E/F}(E^\times)$-orbit as $\disc(V)$. Then by \cite[Cor.6.6]{MR770063}, we have $V^+ \simeq a\cdot V$. This implies that $G\simeq G^*$ as abstract groups. Therefore
\[
L_{\psi}^2(G)= L_{\psi}^2(G^*)
\] 
for any elliptic $A$-parameter $\psi$ of $G$. However, to establish Theorem \ref{ThmB} for $G$, we need to consider $G$ not only as an abstract group, but also as a pure inner form of $G^*$, i.e. the Hermitian form $V$ should also be taken into consideration. When $\psi=\phi$ is generic, we need to distinguish $\Pi_{\phi_v}^L(G_v)$ and $\Pi_{\phi_v}^L(G_v^*)$ at some places. To be more precise, let $v$ be a place of $F$, there are two cases:
\begin{itemize}
	\item If $a_v\in\Nm_{E_v/F_v}(E_v^\times)$, then $V_v\simeq V_v^+$, and hence $\Pi_{\phi_v}^L(G_v)=\Pi_{\phi_v}^L(G_v^*)$ as representations of $\calS_{\phi_v}\times G_v$. Note that all complex places satisfy this condition.
	\item If $a_v\notin\Nm_{E_v/F_v}(E_v^\times)$, then $V_v\not\simeq V_v^+$. According to Remark \ref{LLC} (2), as representations of $\calS_{\phi_v}\times G_v$, we have
	\[
	\Pi_{\phi_v}^L(G_v)=\Pi_{\phi_v}^L(G_v^*)\otimes\kappa_{\phi_v}.
	\] 
	Note that by the local-global principle for Hermitian forms, the number of places $v$ satisfying this condition is even.
\end{itemize}
Therefore, it is not hard to check that:
\[
\Pi_\phi(G,\epsilon_\phi)=\Pi_\phi(G^*,\epsilon_\phi)
\]
as sets of representations of $G(\AAA)$. We deduce that:
\begin{proposition}\label{AMF-U_1}
	Suppose we are in Case $\U_1$, and $F$ is a totally imaginary number field. Then Theorem \ref{ThmB} holds.
\end{proposition}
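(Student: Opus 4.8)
The plan is to reduce Proposition \ref{AMF-U_1} to the already-known quasi-split case (Remark \ref{AMFquasi}, via \cite{MR3338302}) by exploiting the fact that in Case $U_1$ over a totally imaginary field the group $G$ and its quasi-split pure inner form $G^*$ are, as abstract algebraic groups over $F$, literally the same: they differ only in the choice of Hermitian form, which rescales by $a=\disc(V)$. As abstract groups we therefore have $L^2_\phi(G)=L^2_\phi(G^*)$ for every generic elliptic $A$-parameter $\phi$, and the content is purely a bookkeeping matter about how the local $L$-packet parametrizations, hence the maps $\calJ_{\scrW_v}^L$ and the global map $\calJ_\scrW$, transform when one passes from the form on $V$ to the rescaled form on $V^+$.

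First I would verify the two local dichotomies already laid out in the excerpt: if $a_v\in\Nm_{E_v/F_v}(E_v^\times)$ then $V_v\simeq V_v^+$ so the packets and their labelings coincide on the nose; if $a_v\notin\Nm_{E_v/F_v}(E_v^\times)$ then $V_v\not\simeq V_v^+$ but the LLC for unitary groups (\cite[Theorem 2.5.5]{chen2020local}) tells us that passing between the two pure inner forms twists the component-group character by $\eta_{\phi_v}^-\colon a\mapsto(-1)^{\dim\phi_v^a}$. Here I should check carefully that $\eta_{\phi_v}^-$ is indeed the correct "relevance/normalization" character — i.e. that it matches the recipe in \cite{chen2020local} for how the Whittaker-normalized parametrization on the quasi-split form relates to the parametrization on the non-quasi-split form — and that it depends only on $\phi_v$ and not on further choices. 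Next I would globalize: the product $\prod_v\eta_{\phi_v}^-$ over all places is a character of $\calS_\phi$, and by the local-global principle for Hermitian spaces the set of places $v$ with $a_v\notin\Nm_{E_v/F_v}(E_v^\times)$ has even cardinality, so this product is trivial. (One must also confirm that the localization maps $\calS_\phi\to\calS_{\phi_v}$ are compatible enough that $\prod_v\eta_{\phi_v}^-$ literally evaluates to $\prod_v(-1)^{\dim\phi_{i,v}^{a_i}}$ on a generator $a_i$, and that $\dim\phi_{i,v}=n_i$ is place-independent, so the parity argument goes through term by term.)

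Combining these, for any $\pi=\otimes_v'\pi_v\in\Pi_\phi(G)$ with matching partner $\pi^*=\otimes_v'\pi_v^*\in\Pi_\phi(G^*)$ under the above local identifications, one gets $\calJ_\scrW(\pi)=\calJ_{\scrW}(\pi^*)\cdot\prod_v\eta_{\phi_v}^-=\calJ_\scrW(\pi^*)$, whence $\pi$ contributes to $\Pi_\phi(G,\epsilon_\phi)$ if and only if $\pi^*$ contributes to $\Pi_\phi(G^*,\epsilon_\phi)$. This gives the claimed equality $\Pi_\phi(G,\epsilon_\phi)=\Pi_\phi(G^*,\epsilon_\phi)$ as sets of representations of $G(\AAA)=G^*(\AAA)$. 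Since Theorem \ref{ThmB} is known for $G^*$ by \cite{MR3338302} (Remark \ref{AMFquasi}), and since $L^2_\phi(G)=L^2_\phi(G^*)$ as subspaces of $L^2$ of the common adelic quotient, we conclude $L^2_\phi(G)=\bigoplus_{\pi\in\Pi_\phi(G^*,\epsilon_\phi)}\pi=\bigoplus_{\pi\in\Pi_\phi(G,\epsilon_\phi)}\pi$, which is Theorem \ref{ThmB} for $G$.

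The main obstacle, I expect, is not any single hard estimate but rather the careful tracking of normalizations: one is juggling (i) the Whittaker datum $\scrW$ of $G^*$ used to rigidify the global parametrization, (ii) the choice of the element $a=\disc(V)$ and the identification $V^+=(V,a\langle\cdot,\cdot\rangle_V)$, (iii) the place-by-place comparison of $L$-packets on $G_v$ vs $G_v^*$ with the twist $\eta_{\phi_v}^-$, and (iv) the compatibility of all localization maps on component groups. The risk is an off-by-one-character error — e.g. confusing $\eta_{\phi_v}^-$ with its inverse, or mislabeling which inner form is "$+$" — that would be invisible globally precisely because of the even-cardinality cancellation, yet must be pinned down to make the argument rigorous. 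A secondary subtlety is ensuring that "$G$ and $G^*$ are physically equal as subgroups of $GL(V_v)$ at every place" genuinely yields $L^2_\phi(G)=L^2_\phi(G^*)$ at the level of automorphic spectra (and not merely of abstract groups), which uses that the two global forms are inner twists via a cocycle that is locally trivial everywhere after rescaling.
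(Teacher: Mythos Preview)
Your proposal is correct and follows essentially the same approach as the paper: rescale the Hermitian form by $a=\disc(V)$ to identify $G$ and $G^*$ as the same subgroup of $GL(V)$, invoke Mok's result for $G^*$, and then track the local packet labelings via the character $\eta_{\phi_v}^-$, with the even-cardinality cancellation coming from the local--global principle for Hermitian spaces. The only difference is presentational: the paper states the two local dichotomies and the parity cancellation tersely, whereas you spell out the verification steps and flag the normalization bookkeeping as the main point of care.
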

This proposition will be used in the later proof of Theorem \ref{ThmB}.

\section{Preliminaries}
In this section, we recall some preliminaries we will need in the proof of our main theorems.

\subsection{Theta lifts}\label{ThetaLifts}
Fix a trace zero element $\delta\in E^\times$. Let $W=W_{(r)}$ be an 
\begin{equation}\label{W-split}
	\begin{cases}
		2r\textit{-dimensional}\quad &\textit{Case ${\rm O}$};\\
		(2r+1)\textit{-dimensional}\quad &\textit{Case $\U_0$};\\
		(2r+2)\textit{-dimensional}\quad &\textit{Case $\U_1$}\\
	\end{cases}
\end{equation}
vector space over $E$ equipped with a non-degenerate skew-Hermitian $c$-sesquilinear form
\[
\langle\cdot,\cdot\rangle_W:W\times W\lra E,
\]
such that $W$ is split (in Case $\U_0$ we require that the anisotropic kernel of $W$ is the $1$-dimensional skew-Hermitian space represented by $\delta$). Let $H=H(W)$ be the group of elements $h$ in $\GL(W)$ which preserve the form
\[
\langle hv,hw\rangle_W=\langle v,w\rangle_W\quad\textit{for }v,w\in W. 
\]
Note that $H$ is quasi-split. The pair $(G,H)$ is then an example of a reductive dual-pair. We fix a pair of characters $(\chi_V,\chi_W)$ of $C_E$ as follows:
\[
\chi_V=\begin{cases}
	\textit{the quadratic character associated to }V\quad &\textit{Case ${\rm O}$};\\
	\textit{a character of }C_E\textit{ such that }\chi_V|_{C_F}=\omega_{E/F}^{\dim V}\quad &\textit{Case ${\rm U}$}.
\end{cases}
\]
\[
\chi_W=\begin{cases}
	\textit{the trivial character of }F^\times\quad &\textit{Case ${\rm O}$};\\
	\textit{a character of }C_E\textit{ such that }\chi_W|_{C_F}=\omega_{E/F}^{\dim W}\quad &\textit{Case ${\rm U}$}.
\end{cases}
\]

Assume $F$ is local for a moment. With respect to a non-trivial additive character $\psi_F$ of $F$ and the auxiliary data $(\chi_V,\chi_W)$, one can define the Weil representation $\omega$ of $G\times H$. For any irreducible representation $\pi$ of $G$, the maximal $\pi$-isotypic quotient of $\omega$ is of the form 
\begin{equation*}
	\pi\boxtimes\Theta(\pi)
\end{equation*}
for some smooth representation $\Theta(\pi)$ of $H$ of finite length. Then by the Howe duality \cite{MR1159105}, \cite{MR3502978}, \cite{MR3454380}, the maximal semi-simple quotient $\theta(\pi)$ of $\Theta(\pi)$ is either zero or irreducible. Similarly, for any irreducible representation $\sigma$ of $H$, we can define $\Theta(\sigma)$ and $\theta(\sigma)$.

Suppose next that $F$ is a number field. Fix a non-trivial additive character $\psi_F$ of $F\backslash\AAA$, and also characters $(\chi_V,\chi_W)$. We define an abstract irreducible representation of $\pi$ of $G(\AAA)$ as a tensor product of irreducible representations $\pi_v$ of $G_v$, which is at almost all places umramified. We write 
$\pi=\otimes_v\pi_v$. At each place $v$ of $F$, we can form the local theta lift $\theta(\pi_v)$ with respect to $(\psi_{F,v},\chi_{V,v},\chi_{W,v})$. Assume that they are all non-vanishing. Then $\theta(\pi_v)$ is irreducible for all $v$ and is unramified for almost all $v$. Hence, we may define an abstract irreducible representation
\begin{equation*}
	\theta^{abs}(\pi)=\bigotimes_v\theta(\pi_v)
\end{equation*}
of $H(\AAA)$. We call $\theta^{abs}(\pi)$ the abstract theta lift of $\pi$ to $H(\AAA)$. On the other hand, if $\pi$ is an irreducible cuspidal automorphic representation of $G(\AAA)$, then we can define its global theta lift $\Theta^{aut}(\pi)$ as the subspace of $\calA(H)$ spanned by all automorphic forms of the form
\begin{equation*}
	\theta(f,\varphi)(h)=\int_{G(F)\backslash G(\AAA)}\theta(f)(g,h)\overline{\varphi(g)}dg
\end{equation*}
for $f\in\omega$ and $\varphi\in\pi$. Here $\omega$ is the Weil representation of $G(\AAA)\times H(\AAA)$ and $\theta(f)$ is the theta function associated to $f$. According to \cite{MR1289491}, if $\Theta^{aut}(\pi)$ is non-zero and contained in $\calA^2(H)$, then $\Theta^{aut}(\pi)$ is irreducible and
\begin{equation*}
	\Theta^{aut}(\pi)\simeq\theta^{abs}(\pi).
\end{equation*}

\subsection{Unitary representations of low rank}
The notion of rank for unitary representations was first introduced by Howe \cite{MR777342} for symplectic groups and was extended to other classical groups by J-S. Li \cite{MR1008803}. Following \cite{MR1008803}, we say that an irreducible unitary representation of $H=H(W_{(r)})$ is of low rank if its rank is less than $r$. Such representations can be described using theta lifts as follows.

Assume $\dim V<r$, so that the reductive dual pair $(G,H)$ is in the stable range (see \cite[Def.5.1]{MR1001840}). If $F$ is local, then for any irreducible representation $\pi$ of $G$, its theta lift $\theta(\pi)$ to $H$ is non-vanishing. Moreover, if $\pi$ is unitary, then by \cite{MR1001840}, so is $\theta(\pi)$. In \cite{MR1008803}, J-S. Li showed that this theta lift provides a bijection
\begin{equation*}
	\begin{array}{c}
		\displaystyle{\bigsqcup_{V'}}\Irr_{unit}G(V')\times\Big\{\textit{Characters of }H\Big\}\\
		\Bigg\updownarrow\\
		\Big\{\textit{Irreducible unitary representations of }H\textit{ of rank }\dim V\Big\}.
	\end{array}
\end{equation*}
where the disjoint union runs over all isometry classes of vector space $V'$ over $E$ of the same dimension as $V$, and equipped with a non-degenerate Hermitian $c$-sesquilinear form. The map sends a pair $(\pi,\chi)$ in the first set to a representation $\theta(\pi)\otimes\chi$ of $H$. Note that in Case O, $\chi$ is always trivial since $H$ is simple.

This result has a global analog. Let $F$ be a number field and $\sigma=\otimes_v\sigma_v$ an irreducible unitary representation of $H(\AAA)$ that occurs as a subrepresentation of $\calA(H)$. Then, by \cite[Lem.3.2]{MR1001840}, the following are equivalent:
\begin{itemize}
	\item $\sigma$ is of rank $\dim V$;
	\item $\sigma_v$ is of rank $\dim V$ for all $v$;
	\item $\sigma_v$ is of rank $\dim V$ for some $v$.
\end{itemize}
Suppose that $\sigma$ satisfies the above equivalent conditions. Then, \cite[Prop.5.7]{MR1448215} asserts that there exists some $G=G(V')$ with $\dim V'=\dim V$, together with an abstract representation $\pi=\otimes_v\pi_v$ of $G(\AAA)$, and an automorphic character $\chi$ of $H(\AAA)$, such that
\begin{equation*}
	\sigma\simeq\theta^{abs}(\pi)\otimes\chi.
\end{equation*}

\subsection{Some inequalities}
Finally, we recall a result of J-S. Li, which allows us to lift square-integrable automorphic representations of $G(\AAA)$ to $H(\AAA)$. For any irreducible representation $\pi$ of $G(\AAA)$, we define its multiplicities $m(\pi)$ and $m_{\disc}(\pi)$ by
\begin{align*}
	m(\pi)&=\dim\Hom_{G(\AAA)}\big(\pi,\calA(G)\big);\\
	m_{\disc}(\pi)&=\dim\Hom_{G(\AAA)}\big(\pi,\calA^2(G)\big).
\end{align*}
Obviously, $m_{\disc}(\pi)\leq m(\pi)$. Similarly, if $\sigma$ is an irreducible representation of $H(\AAA)$, we have its multiplicities $m(\sigma)$ and $m_{\disc}(\sigma)$. By \cite[Theorem A]{MR1448215}, we have:
\begin{theorem}\label{J-S inequality}
	Assume that $\dim V<r$. Let $\pi$ be an irreducible unitary representation of $G(\AAA)$ and $\theta^{abs}(\pi)$ its abstract theta lift to $H(\AAA)$. Then we have
	\begin{equation*}
		m_{\disc}(\pi)\leq m_{\disc}(\theta^{abs}(\pi))\leq m(\theta^{abs}(\pi))\leq m(\pi).
	\end{equation*}
\end{theorem}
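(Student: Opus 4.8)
The plan is the following. The middle inequality $m_{\disc}(\theta^{abs}(\pi))\le m(\theta^{abs}(\pi))$ is immediate from $\calA^2(H)\subseteq\calA(H)$, so the two outer inequalities carry all the content. I will use freely the stable-range facts recalled above: since $\dim V<r$, all local theta lifts $\theta(\pi_v)$ are non-zero (so $\theta^{abs}(\pi)$ is defined) and unitary, and, by \cite{MR1289491}, if the global theta lift $\Theta^{\aut}(\pi)$ of an automorphic realization of $\pi$ is non-zero and square-integrable then it is irreducible and isomorphic to $\theta^{abs}(\pi)$.

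For $m_{\disc}(\pi)\le m_{\disc}(\theta^{abs}(\pi))$, I would lift automorphic realizations of $\pi$ by the theta integral. Fix an embedding $\iota\colon\pi\hookrightarrow\calA^2(G)$; for $f$ in the adelic Weil representation $\omega$ of $G(\AAA)\times H(\AAA)$ and $\varphi\in\pi$ consider
\[
h\longmapsto\int_{G(F)\backslash G(\AAA)}\theta(f)(g,h)\,\overline{\iota(\varphi)(g)}\,dg .
\]
The first and, I expect, hardest step is the analytic assertion that in the stable range this integral converges and defines a \emph{square-integrable} automorphic form on $H(\AAA)$, even though $\iota(\varphi)$ need not be cuspidal: this reduces to a bound on the theta kernel $\theta(f)(\cdot,h)$ along $G(F)\backslash G(\AAA)$ paired against $L^2$-functions, and it is precisely here that the hypothesis $\dim V<r$ enters essentially (Li's estimate). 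Granting it, one obtains a $G(\AAA)\times H(\AAA)$-intertwining map whose image $\Theta^{\aut}(\iota(\pi))\subseteq\calA^2(H)$, if non-zero, is by \S\ref{ThetaLifts} isomorphic to $\theta^{abs}(\pi)$; thus $\iota$ determines an element of $\Hom_{H(\AAA)}(\theta^{abs}(\pi),\calA^2(H))$, depending (conjugate-)linearly on $\iota$. The inequality then follows once this assignment is injective, i.e.\ once $\Theta^{\aut}(\iota(\pi))\neq0$ for every non-zero $\iota$; for this non-vanishing I would compute the degenerate Fourier coefficient of $\Theta^{\aut}(\iota(\varphi))$ along the unipotent subgroup of $H$ attached to the rank-$\dim V$ nilpotent orbit, which in the stable range unfolds to a non-zero period of $\varphi$ over $G(F)\backslash G(\AAA)$ (again Li's non-vanishing theorem).

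For the reverse inequality $m(\theta^{abs}(\pi))\le m(\pi)$, the plan is to descend from $H$ to $G$ by J.-S. Li's theory of automorphic representations of low rank (\cite{MR1008803}, \cite{MR1448215}). Write $\sigma=\theta^{abs}(\pi)$, which by construction has rank $\dim V<r$. Given an arbitrary automorphic realization $\jmath\colon\sigma\hookrightarrow\calA(H)$, apply the degenerate Fourier coefficient $\calF$ along the unipotent subgroup $N\subseteq H$ adapted to the rank-$\dim V$ orbit. By Li's structure theorem for rank-$k$ representations, $\calF$ intertwines the action of the pertinent Levi $G(V)\times E^1$ with $\pi\boxtimes\chi$ (here with $\chi$ the trivial character), so $\calF\circ\jmath$ is a $G(\AAA)$-equivariant map from $\sigma$ into the $\pi$-isotypic part of $\calA(G)$; it is non-zero because $\sigma$ has rank \emph{exactly} $\dim V$, so $\calF$ cannot vanish identically on $\jmath(\sigma)$. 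As $\jmath\mapsto\calF\circ\jmath$ is linear with trivial kernel, $m(\theta^{abs}(\pi))=m(\sigma)\le m(\pi)$.

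The reason the four quantities sandwich is that the ascent map (global theta lift $G\to H$) and the descent map (rank-$\dim V$ Fourier coefficient $H\to G$) are inverse to one another up to the reciprocity of the stable-range theta correspondence, whose local manifestation is that $\Theta(\theta(\pi_v))$ has $\pi_v$ as its unique irreducible quotient. The principal obstacle is the analytic input in the second paragraph — the convergence and square-integrability of the stable-range theta integral paired against a merely square-integrable (possibly residual) automorphic form, together with the non-vanishing of the resulting global lift; both are genuinely global statements resting on $\dim V<r$, and the rest of the argument is formal once they are in hand.
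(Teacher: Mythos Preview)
The paper does not supply its own proof of this theorem: it is simply quoted from J.-S.~Li's work with the sentence ``By \cite{MR1448215}, we have\ldots'', so there is no in-paper argument to compare against. Your sketch is a faithful outline of Li's original strategy --- ascent by the stable-range theta integral (with the genuine analytic content being the convergence and square-integrability of the theta kernel paired against a merely $L^2$, possibly residual, automorphic form when $\dim V<r$, together with non-vanishing via the rank-$\dim V$ Fourier coefficient), and descent by that same Fourier coefficient using Li's low-rank theory from \cite{MR1008803}, \cite{MR1448215}. You have correctly isolated the analytic step as the principal obstacle; once it is granted, the formal bookkeeping you describe is exactly how the inequalities are assembled.
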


\subsection{Review of results for quasi-split groups}
We review some results in \cite{MR3135650} and \cite{MR3338302}.

First, let $F$ be a local field. Recall that by our definition, $H=H(W_{(r)})$ is either a symplectic group or a quasi-split unitary group. Similarly to Section \ref{NEC&APara}, one can also define the local $A$-parameter and the component group for $H$. Following Arthur, we use $\Psi(H)$ to denote the set of local $A$-parameters for $H$ with bounded images on the Weil group. Let $\psi\in\Psi(H)$ be a $A$-parameter for $H$. If we write 
\[
	\psi=\sum_i m_i\cdot\phi_i\boxtimes S_{d_i}
\]
with pairwise inequivalent irreducible subrepresentations $\phi_i\boxtimes S_{d_i}$ of $\psi$, then by \cite[Sect.8]{MR3202556}, we have 
\begin{align*}
	\calS_\psi= \prod_i (\ZZ/2\ZZ) e_j \quad \textit{and} \quad \overline{\calS_\psi}=\calS_\psi/\langle z_\psi\rangle,
\end{align*}
where the product ranges over all $j$ such that $\phi_{j}\boxtimes S_{d_j}$ is (conjugate) self-dual of the same parity as $\psi$, and $z_\psi=\sum_j m_j\cdot e_j$. We shall call $\calS_\psi$ or $\overline{\calS_\psi}$ the component group associated to $\psi$. To such a $\psi$, Arthur \cite{MR3135650} and Mok \cite{MR3338302} assigned a finite multi-set $\Pi_\psi(H)$ of irreducible unitary representations of $H$, together with a canonical map (after fixing a Whittaker datum $\scrW'$ of $H$)
\begin{equation*}
	\calJ_{\scrW'}:\Pi_\psi(H)\lra\widehat{\overline{\calS_\psi}}. 
\end{equation*} 
They proved that this multi-set $\Pi_\psi(H)$ and the assignment $\sigma\mapsto\calJ_{\scrW'}(\sigma)$ satisfy the following two properties:
\begin{enumerate}
	\item If both $H$ and $\sigma$ are unramified, then $\calJ_{\scrW'}(\sigma)=1$.
	\item Let $\phi_\psi$ be the $L$-parameter associated to $\psi$ as in equation  (\ref{AparameterassLpara}). Then the $A$-packet $\Pi_\psi(H)$ contains the $L$-packet $\Pi_{\phi_\psi}^L(H)$ as a subset. We have a commutative diagram
	\begin{equation*}
		\begin{CD}
			\Pi_{\phi_\psi}^L(H) @>\calJ_{\scrW'}^L>> \widehat{\overline{\calS_{\phi_\psi}}}\\
			@VVV @VVV\\
			\Pi_\psi(H) @>\calJ_{\scrW'}>> \widehat{\overline{\calS_{\psi}}}
		\end{CD},
	\end{equation*}
	where the left vertical arrow is the natural inclusion and the right vertical arrow is induced by the surjection $\calS_\psi\ra\calS_{\phi_\psi}$. Moreover, if $\psi=\phi$ is a tempered $L$-parameter, then $\Pi_\psi(H)$ is multiplicity free and coincide with the $L$-packet $\Pi_{\phi}^L(H)$.
\end{enumerate}
Besides these two properties, the packet $\Pi_\psi(H)$ also satisfies the so-called ``endoscopic character identity'', but we will not use this fact in this paper. 

Let $\Psi^+(H)$ to be the set of local $A$-parameters, whose elements do not necessarily have bounded images on the Weil group. Due to the potential failure of the generalized Ramanujan conjecture for general linear groups, for the purpose of the global classification, Arthur and Mok also defined the local $A$-packet $\Pi_\psi(H)$ and the canonical map $\calJ_{\scrW'}$ for $\psi\in \Psi^+(H)$ by using parabolic inductions. These local $A$-packets $\Pi_\psi(H)$ and maps $\calJ_{\scrW'}$ also satisfy the two properties listed above. In Section \ref{S:LP}, we will discuss this issue in more details.

Now we turn to the global classification, so $F$ is now a number field. Let $\psi$ be an elliptic $A$-parameter for $H$ and write 
\begin{equation*}
	\psi=\sum_i \phi_i\boxtimes S_{d_i}
\end{equation*}
as in (\ref{paradecomp}). Let 
\begin{equation*}
	\calS_{\psi}=\prod_i (\mathbb Z/2\mathbb Z)e_i \quad \textit{and} \quad \overline{\calS_\psi}=\calS_{\psi}/\langle z_\psi\rangle
\end{equation*}
be the global component groups of $\phi$, where $z_\psi=\sum_i e_i$. For each place $v$ of $F$, the localization $\psi_v$ of $\psi$ at $v$ gives rise to a local $A$-parameter for $H_v$. We also have a localization map 
\begin{align*}
	\calS_\psi\lra \calS_{\psi_v} \quad \left(\textit{or}\quad\overline{\calS_\psi}\lra \overline{\calS_{\psi_v}}\right).
\end{align*}
Fix a global Whittaker datum $\scrW'$ of $H$. Given an elliptic $A$-parameter $\psi$, we define the global packet $\Pi_\psi(H)$ associated to $\psi$ as the restricted tensor product of the local $A$-packets $\Pi_{\psi_v}(H_v)$:
\begin{align*}
	\Pi_\psi(H)&=\otimes'_v\Pi_{\psi_v}(H_v)\\
	&=\{\sigma=\otimes'_v\sigma_v~|~\sigma_v\in\Pi_{\psi_v}(H_v),~\sigma_v\textit{ unramified with $L$-parameter $\phi_{\psi_v}$ for almost all }v\}.
\end{align*}
We then have a map
\begin{align*}
	\calJ_{\scrW'}:\Pi_\psi(H)&\lra\wh{\overline{\calS_\psi}},\\
	\sigma&\longmapsto\calJ_{\scrW'}(\sigma),\\
	\calJ_{\scrW'}(\sigma)(x)&\coloneqq\prod_v\calJ_{\scrW'_v}(\sigma_v)(x_v),
\end{align*}
where $x\in\calS_\psi$ and $x_v$ is the localization of $x$ at $v$. 
\begin{remark}
	According to the main local theorems of \cite{MR3135650} and \cite{MR3338302}, if $H_v$ and $\sigma_v$ are both unramified, then $\calJ_{\scrW'_v}(\sigma_v)$ is the trivial character $1$. Hence $\calJ_{\scrW'}$ is well-defined.
\end{remark}
Let $\epsilon_\psi\in\wh{\calS_\psi}$ be the canonical sign character defined by Arthur \cite[p.47]{MR3135650} and Mok \cite[p.29]{MR3338302}. Put
\begin{equation*}
	\Pi_\psi(H,\epsilon_\psi)=\{\sigma\in\Pi_\psi(H)~|~\calJ_{\scrW'}(\sigma)=\epsilon_\psi\}.
\end{equation*}
Then the main global theorems in \cite{MR3135650} and \cite{MR3338302} assert that:
\begin{theorem}\label{H-ThmA}
	There exists a decomposition
	\begin{equation*}
		L^2_{\disc}(H)=\bigoplus_{\psi\in\Psi_{ell}(H)} L^2_\psi(H),
	\end{equation*}
	where $L^2_\psi(H)$ is a full near equivalence class of irreducible representations $\sigma$ in $L^2_{\disc}(H)$ such that the $L$-parameter of $\sigma_v$ is $\phi_{\psi_v}$ for almost all places $v$ of $F$.
\end{theorem}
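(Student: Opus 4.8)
The plan is to obtain this statement as a direct consequence of the global classification theorems of Arthur \cite{MR3135650} and Mok \cite{MR3338302}, which apply because $H=H(W_{(r)})$ is by construction quasi-split: a symplectic group in Cases $O$ and $U_1$, and a quasi-split unitary group in Case $U_0$. Thus in practice the ``proof'' of this theorem is the citation of those works. Still, it is worth sketching the strategy underlying them, since it is the template that the later sections extend from $H$ to its non-quasi-split relatives $G$. The engine is the comparison of Arthur's invariant trace formula for $H$ with the twisted trace formula for a general linear group $\GL_N\rtimes\langle\theta\rangle$ --- where $N$ is the dimension of the standard representation of $\widehat H$, and $\theta$ is the outer automorphism $g\mapsto {}^tg^{-1}$ in the symplectic case and its composition with the Galois conjugation $c$ in the unitary case --- after both have been stabilized.

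First I would record the stabilized spectral side. Arthur's stabilization expresses the discrete part $I_{\mathrm{disc}}(H)$ of the trace formula as a sum, over the elliptic endoscopic data of $H$, of stable distributions; applying this to $\GL_N\rtimes\langle\theta\rangle$ and to all of its twisted elliptic endoscopic groups simultaneously, and feeding in the M\oe glin--Waldspurger description of the discrete spectrum of $\GL_N$ --- every discrete automorphic representation being of Speh type $\mathrm{Speh}(\tau,d)=\tau|\cdot|^{(d-1)/2}\boxplus\cdots\boxplus\tau|\cdot|^{-(d-1)/2}$ for a unitary cuspidal $\tau$ --- one parametrizes the Hecke-eigenvalue data that occur in $L^2_{\disc}(H)$ by the formal isobaric sums $\boxplus_i\phi_i\boxtimes S_{d_i}$ that are $\theta$-discrete, i.e. that satisfy exactly the self-duality, distinctness, sign and parity conditions appearing in the definition of $\Psi_{ell}(H)$. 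This is the content of the ``seed'' theorems of \cite{MR3135650} and their unitary analogues in \cite{MR3338302}, and it already attaches to every near equivalence class in $L^2_{\disc}(H)$ a candidate parameter $\psi$ whose local $L$-parameter is $\phi_{\psi_v}$ at almost all $v$.

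Next I would bootstrap by induction on $N$. Using the classification for smaller groups, together with the splitting of the trace-formula identity into components indexed by the formal sums above, one isolates for each $\psi\in\Psi_{ell}(H)$ a summand $L^2_\psi(H)\subseteq L^2_{\disc}(H)$; strong multiplicity one for $\GL_N$, transported through the twisted endoscopic correspondence, forces distinct near equivalence classes to be mutually orthogonal and shows that the Hecke-eigenvalue family of any irreducible $\sigma\subseteq L^2_{\disc}(H)$ determines $\psi$ uniquely. This yields simultaneously the orthogonal decomposition $L^2_{\disc}(H)=\bigoplus_\psi L^2_\psi(H)$ and the assertion that $L^2_\psi(H)$ is the \emph{full} set of $\sigma$ with local parameter $\phi_{\psi_v}$ for almost all $v$.

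The main obstacle is located entirely in these inputs rather than in the bookkeeping. The stabilization of the ordinary and twisted trace formulas (Arthur's work, M\oe glin--Waldspurger's stabilization of the twisted formula, and in the unitary case the base-change results on which Mok's memoir rests) is the deep and lengthy part, as is the simultaneous construction of the local $A$-packets with their endoscopic character identities, without which the spectral expansion cannot be read term by term. For the purposes of the present paper all of this is taken as known, and the genuine work begins only in the later sections, where one must pass from the quasi-split $H$ to a general $G$ --- for which no trace formula is available --- by means of the stable-range theta lift.
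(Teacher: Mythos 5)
Your proposal is correct and matches the paper exactly: the paper does not prove this theorem but simply records it as the main global theorem of Arthur \cite{MR3135650} (Cases $O$ and $U_1$, where $H$ is symplectic) and Mok \cite{MR3338302} (Case $U_0$, where $H$ is quasi-split unitary). Your additional sketch of the trace-formula strategy underlying those works is accurate but plays no role in the paper's treatment, which treats the statement purely as a known input.
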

\begin{theorem}\label{H-ThmB}
	Let $\psi$ be an elliptic $A$-parameter for $H$. Then we have the decomposition
	\begin{equation*}
		L^2_\psi(H)=\bigoplus_{\sigma\in\Pi_\psi(H,\epsilon_\psi)}\sigma.
	\end{equation*}
\end{theorem}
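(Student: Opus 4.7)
The statement here is the main global theorem of Arthur \cite{MR3135650} (for $H$ symplectic) and of Mok \cite{MR3338302} (for $H$ a quasi-split unitary group), so the plan is essentially to invoke their work. Their proof proceeds by a long simultaneous induction on $\dim H$ and the ``size'' of the parameter $\psi$, run in parallel with the construction of the local $A$-packets $\Pi_{\psi_v}(H_v)$ and the verification of the endoscopic character identities (which we do not invoke directly in this paper, but which the whole framework crucially rests on).

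The central tool is the comparison between the stabilized discrete part of the Arthur--Selberg trace formula of $H$ and the twisted trace formula of $\GL_N(\AAA_E)$, twisted by the outer automorphism $\theta: g\mapsto J\,{}^t g^{-1} J^{-1}$ (composed with the Galois involution in the unitary case). First I would marshal the analytic inputs: the invariant trace formula and its stabilization for $H$, the twisted invariant trace formula for $\GL_N$ and its stabilization (Moeglin--Waldspurger), and the ordinary and weighted fundamental lemmas (Ng\^o, Chaudouard--Laumon). Then one sets up the stable distribution $S^H_{\disc,\psi}$ and the twisted distribution $\widetilde{I}^{\GL_N}_{\disc,\psi}$, and proves the spectral identity obtained by grouping terms according to elliptic endoscopic data
\[
\widetilde{I}^{\GL_N}_{\disc,\psi}\;=\;\sum_{H'}\iota(\GL_N,H')\,\wh{S}^{H'}_{\disc,\psi},
\]
where the sum runs over elliptic twisted endoscopic groups $H'$ of $\GL_N$ attached to $\psi$. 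Using the classification of conjugate self-dual cuspidal automorphic representations of $\GL_N$ on the left and the local packet structure on the right, one reads off the multiplicity of each $\sigma\in\Pi_\psi(H)$ as
\[
m(\sigma)\;=\;\frac{1}{|\overline{\calS_\psi}|}\sum_{x\in\overline{\calS_\psi}}\epsilon_\psi(x)\,\calJ_{\scrW'}(\sigma)(x),
\]
which collapses to $1$ if $\calJ_{\scrW'}(\sigma)=\epsilon_\psi$ and to $0$ otherwise, yielding the asserted decomposition. The sign character $\epsilon_\psi$ is extracted from an explicit computation of symplectic root numbers appearing in the coefficients of the right-hand side.

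The main obstacle, and the reason the proof fills two monographs, is the circularity between local and global statements: the local $A$-packets enter the spectral side of the stable trace formula, but they are themselves constructed and characterized only globally, via applications of the trace formula to carefully chosen test functions. Arthur and Mok resolve this by a delicate interweaving of local and global induction steps, so that each strengthens the other. A secondary obstacle is the passage from generic to non-generic $\psi$, which requires the Aubert--Zelevinsky involution and a good control of standard modules so that the non-tempered packets defined by parabolic induction satisfy the required character identities.
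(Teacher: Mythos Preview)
Your proposal is correct and matches the paper's treatment: the paper does not give its own proof of this statement but simply records it as the main global theorem of Arthur \cite{MR3135650} (symplectic case) and Mok \cite{MR3338302} (quasi-split unitary case), to be used as an input for the rest of the argument. Your sketch of the trace-formula comparison underlying those monographs is accurate and goes beyond what the paper itself says, which is merely the citation.
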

Our Theorem \ref{ThmA} is an analog of Theorem \ref{H-ThmA} for the group $G$, and our Theorem \ref{ThmB} is an analog of Theorem \ref{H-ThmB} for generic elliptic $A$-parameters of $G$. 

\subsection{Remarks on Whittaker data}\label{Whittaker.Data}
In the rest of this paper, we will prove Theorem \ref{ThmA} and Theorem \ref{ThmB} by using the theta lift between $G$ and $H$. Since the local and global classifications of both $G$ and $H$ depend on the choices of Whittaker data, and the theta lift also depends on the choice of an additive character, we need to specify the data we are using and their relation. Here we are following \cite[Conj.4.4 \& 4.6]{MR3788848} and \cite[Sect.4.4 \& 4.6]{MR3573972}, as we will use their results in later proofs (see Theorem \ref{Prasad.Conj}). We now briefly describe the way we choose these data.\\

When $F$ is a local field, firstly we fix a non-trivial additive character $\psi_F$ of $F$. Besides, we also need some auxiliary data in different cases:
\begin{itemize}
	\item in Case O, we fix an isometry
\[
  V^+\simeq V_{(d,c)}+\calH^{n-1}
\]
for some $d,c\in F^\times $, as described at the beginning of Section \ref{state.main.results};
	\item in Case U, we fix a trace zero element $\delta\in E^\times$.
\end{itemize}	
We define a Whittaker datum $\scrW=\scrW_{\psi_F}$ of $G^*=G(V^+)$ as follows.\\

\noindent\underline{\textit{Case ${\rm O}$:}}
In this case, recall that we have fixed an isometry
\[
  V^+\simeq V_{(d,c)}+\calH^{n-1},
\]
where $\calH$ is the (orthogonal) hyperbolic plane. We denote by $e,e'$ the images of $1,X\in F[X]$ in $V_{(d,c)}$ respectively. For $1\leq k\leq n-1$, we write the $k$-th hyperbolic plane $\calH=Fv_k+Fv_k^*$ with 
\[
  \langle v_k,v_k\rangle_V=\langle v_k^*,v_k^*\rangle_V=0\quad\textit{and}\quad \langle v_k,v_k^*\rangle_V=1,
\]
and we set
\[
  X_k=Fv_1+\cdots+Fv_k\quad\textit{and}\quad X_k^*=Fv_1^*+\cdots+Fv_k^*.
\]
Let $B=TU$ be the $F$-rational Borel subgroup of $G^*$ stabilizing the complete flag
\[
  X_1\subset\cdots\subset X_{n-1},
\]
where $T$ is the $F$-rational torus stabilizing the lines $Fv_i$ for $1\leq k\leq n-1$. We define a generic character $\mu_c$ of $U$ by
\begin{equation}\label{whittaker So}
  \mu_c(u)=\psi_F(\langle uv_2,v_1^*\rangle_V+\cdots+\langle uv_{n-1},v_{n-2}^*\rangle_V+\langle ue,v_{n-1}^*\rangle_V).
\end{equation}
Let $\scrW=(V^+,B,T,\mu_c)$. Here the notion of Whittaker datum is slightly different from the usual one: the datum is not only associated to the group $G^*$, but rather the orthogonal space $V^+$ is part of the datum. Moreover, this datum $\scrW$ is indeed independent of the choice of the additive character $\psi$, but only depend on the choice of $c\in F^\times$. Readers may consult \cite[Sect.2.2]{MR3708200} for a discussion of this.\\

\noindent\underline{\textit{Case $\U_0$:}}
In this case, the Witt index of $V^+$ is $n$. We choose a basis $\{v_i,v_i^*~|~i=1,\cdots,n\}$ of $V^+$ such that
\[
	\langle v_i,v_j\rangle_V=\langle v_i^*,v_j^*\rangle_V=0\quad\textit{and}\quad \langle v_i,v_j^*\rangle_V=\delta_{i,j}
\]
for $1\leq i,j\leq n$. We set
\[
	X_k=Ev_1+\cdots+Ev_k\quad\textit{and}\quad X_k^*=Ev_1^*+\cdots+Ev_k^*
\]
for $1\leq i,j\leq n$. We denote by $B=TU$ the $F$-rational Borel subgroup of $G^*$ stabilizing the complete flag
\[
	X_1\subset\cdots\subset X_{n},
\]
where $T$ is the $F$-rational torus stabilizing the lines $Ev_i$ for $1\leq k\leq n$. We define a generic character $\mu$ of $U$ by
\[
	\mu(u)=\psi_F\Big(\frac{1}{2}\Tr_{E/F}\big(\delta\cdot(\langle uv_2,v_1^*\rangle_V+\cdots+\langle uv_{n},v_{n-1}^*\rangle_V+\langle uv_{n}^*,v_{n}^*\rangle_V)\big)\Big).
\]
Let $\scrW=(U,\mu)$.\\

\noindent\underline{\textit{Case $\U_1$:}}
In this case, there is an unique Whittaker datum $\scrW$ of $G^*$.\\

Then we define a Whittaker datum $\scrW'=\scrW'_{\psi_F}$ of $H=H(W)$ as follows.\\

\noindent\underline{\textit{Case ${\rm O}$:}}
In this case $W$ is the $2r$-dimensional symplectic space. We choose a basis $\{w_i,w_i^*~|~i=1,\cdots,r\}$ of $W$ such that
\[
	\langle w_i,w_j\rangle_W=\langle w_i^*,w_j^*\rangle_W=0\quad\textit{and}\quad \langle w_i,w_j^*\rangle_W=\delta_{i,j}
\]
for $1\leq i,j\leq r$. We set
\[
	Y_k=Fw_1+\cdots+Fw_k\quad\textit{and}\quad Y_k^*=Fw_1^*+\cdots+Fw_k^*
\]
for $1\leq i,j\leq r$. We denote by $B'=T'U'$ the $F$-rational Borel subgroup of $H$ stabilizing the complete flag
\[
	Y_1\subset\cdots\subset Y_{r},
\]
where $T'$ is the $F$-rational torus stabilizing the lines $Fw_i$ for $1\leq k\leq r$. We define a generic character $\mu'$ of $U'$ by
\begin{equation}\label{Whitttaker sp}
	\mu'(u)=\psi_F\Big(\langle uw_2,w_1^*\rangle_W+\cdots+\langle uw_{r},w_{r-1}^*\rangle_W+c\langle uw_{r}^*,w_{r}^*\rangle_W\Big).
\end{equation}
Let $\scrW'=(U',\mu')$.\\

\noindent\underline{\textit{Case $\U_0$:}} 
In this case $W$ is a split $(2r+1)$-dimensional skew-Hermitian space. There is a is a unique Whittaker datum $\scrW'$ of $H$. \\

\noindent\underline{\textit{Case $\U_1$:}} 
In this case $W$ is a split $2r$-dimensional skew-Hermitian space. We choose a basis $\{w_i,w_i^*~|~i=1,\cdots,r\}$ of $W$ such that
\[
	\langle w_i,w_j\rangle_W=\langle w_i^*,w_j^*\rangle_W=0\quad\textit{and}\quad \langle w_i,w_j^*\rangle_W=\delta_{i,j}
\]
for $1\leq i,j\leq r$. We set
\[
	Y_k=Fw_1+\cdots+Fw_k\quad\textit{and}\quad Y_k^*=Fw_1^*+\cdots+Fw_k^*
\]
for $1\leq i,j\leq r$. We denote by $B'=T'U'$ the $F$-rational Borel subgroup of $H$ stabilizing the complete flag
\[
	Y_1\subset\cdots\subset Y_{r},
\]
where $T'$ is the $F$-rational torus stabilizing the lines $Fw_i$ for $1\leq k\leq r$. We define a generic character $\mu'$ of $U'$ by
\[
	\mu'(u)=\psi_F\Big(\frac{1}{2}\Tr_{E/F}\big(\langle uw_2,w_1^*\rangle_W+\cdots+\langle uw_{r},w_{r-1}^*\rangle_W+\langle uw_{r}^*,w_{r}^*\rangle_W\big)\Big).
\]
Let $\scrW'=(U',\mu')$.\\

When $F$ is a number field, after fixing an additive character $\psi_F$ of $\AAA/F$ (and also some auxiliary data as in the local case), we can define (global) Whittaker data $\scrW=\scrW_{\psi_F}$ and $\scrW'=\scrW'_{\psi_F}$ of $G^*$ and $H$ using the same formulas described above. By our definitions, these Whittaker data satisfy the local-global property, in the sense that
\[
	\scrW_v=\scrW_{\psi_{F,v}} \quad \textit{and} \quad \scrW'_v=\scrW'_{\psi_{F,v}} 
\]
for all places $v$ of $F$.

\section{Weak transfer to the general linear group}
From now on we assume that $\dim V<r$, hence the reductive dual pair $(G,H)$ is in the stable range. In this section, we prove Theorem \ref{ThmA}, which shows that each NEC in $L^2_{\disc}(G)$ has a weak transfer to the general linear group $\GL(\scrV)$, where $\scrV$ is the standard representation of the dual group $\widehat{G}$ of $G$.

\subsection{Attaching Arthur parameters}
Let $F$ be a number field. Let $C$ be a near equivalence class in $L_{\disc}^2(G)$. Then $C$ gives rise to a collection of $L$-parameters
\[
\phi_v:L_{F_v}\lra\Lgp{G}
\]
for almost all $v$, such that for any irreducible summand $\pi$ of $C$, the $L$-parameter of $\pi_v$ is $\phi_v$ for almost all $v$. We have:
\begin{proposition}\label{Weaklift}
	There exists a unique elliptic $A$-parameter $\psi$ for $G$ such that $\phi_{\psi_v}=\phi_v$ for almost all $v$.
\end{proposition}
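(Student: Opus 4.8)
The plan is to produce $\psi$ by lifting to the discrete spectrum of $H$, where Arthur--Mok's classification is available, and then descending. Fix an irreducible summand $\pi=\otimes_v\pi_v$ of $C$; it is a unitary representation of $G(\AAA)$. Because $\dim V<r$, the pair $(G,H)$ is in the stable range, so each local theta lift $\theta(\pi_v)$ is non-vanishing, irreducible, unitary, and unramified for almost all $v$; hence the abstract theta lift $\theta^{\mathrm{abs}}(\pi)=\otimes_v\theta(\pi_v)$ is a well-defined irreducible unitary representation of $H(\AAA)$. Since $\pi$ lies in $L^2_{\disc}(G)$ we have $m_{\disc}(\pi)\geq 1$, so Theorem~\ref{J-S inequality} gives $m_{\disc}(\theta^{\mathrm{abs}}(\pi))\geq 1$, i.e.\ $\theta^{\mathrm{abs}}(\pi)$ occurs in $L^2_{\disc}(H)$.

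By Theorem~\ref{H-ThmA} there is a unique elliptic $A$-parameter $\psi^H\in\Psi_{ell}(H)$ with $\theta^{\mathrm{abs}}(\pi)\subseteq L^2_{\psi^H}(H)$; in particular the $L$-parameter of $\theta(\pi_v)$ is $\phi_{\psi^H_v}$ for almost all $v$. At almost all places $v$ the data $G_v$, $H_v$, $\pi_v$, $\psi_{F,v}$, $\chi_{V,v}$, $\chi_{W,v}$ are all unramified, and there the unramified theta correspondence in the stable range is explicit: the Satake parameter of $\theta(\pi_v)$ is that of $\pi_v$ together with a fixed ``ladder'' of unramified twists, namely the Satake parameter of $\phi_{\psi_{0,v}}$, where $\psi_0=\chi\boxtimes S_{\dim W-\dim V}$ for an explicit conjugate self-dual Hecke character $\chi$ of $\AAA_E^\times$ determined by the case ($O$, $U_0$ or $U_1$) and by the normalizations of $\chi_V$, $\chi_W$, $W$ fixed in \S\ref{ThetaLifts}. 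Thus $\phi_{\psi^H_v}=\phi_v\oplus\phi_{\psi_{0,v}}$ for almost all $v$. Since the cuspidal constituents of an elliptic $A$-parameter have local exponents uniformly bounded below $1/2$ while the ladder of $\psi_0$ has large amplitude, comparing the two sides at almost all $v$ and using strong multiplicity one for general linear groups forces $\psi^H=\psi\boxplus\psi_0$ for a formal sum $\psi$ with $\phi_{\psi_v}=\phi_v$ for almost all $v$.

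It remains to check that $\psi$ is an elliptic $A$-parameter for $G$. Deleting the summand $\psi_0$ from the multiplicity-free sum $\psi^H$ keeps it multiplicity-free, so the distinctness condition holds, and it decreases the total dimension by $\dim W-\dim V$, so $\sum_i n_id_i=\dim V$. The parity required of each remaining summand $\phi_i\boxtimes S_{d_i}$, read off now for $G$ rather than for $H$, is the correct one because the self-duality type demanded switches between $G$ and $H$ by precisely the sign carried by $\psi_0$. In Case~$O$ the central character identity $\prod_i\omega_i^{d_i}=\chi_V$ holds because the determinant of the $L$-parameter $\phi_v$ equals $\chi_{V,v}$ for almost all $v$ (as $\pi_v$ is a representation of the even orthogonal group $G(V_v)$) and a Hecke character is pinned down by almost all of its local components. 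Hence $\psi\in\Psi_{ell}(G^*)$ and $\phi_{\psi_v}=\phi_v$ for almost all $v$. Uniqueness is independent of the construction: if $\psi,\psi'\in\Psi_{ell}(G^*)$ both satisfy $\phi_{\psi_v}=\phi_{\psi'_v}$ for almost all $v$, then matching the (unramified) local parameters at almost all places and invoking strong multiplicity one for the underlying general linear groups yields $\psi=\psi'$; in particular the $\psi$ produced above is independent of the chosen summand $\pi$ of $C$.

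The main obstacle is the middle paragraph: pinning down the unramified theta lift at almost all places and extracting from it the exact ``extra'' parameter $\psi_0=\chi\boxtimes S_{\dim W-\dim V}$, and then verifying that deleting $\psi_0$ converts an elliptic $A$-parameter for $H$ into one for $G$ --- the parity bookkeeping and, in Case~$O$, the discriminant constraint. This is where all the case distinctions and the specific normalizations of \S\ref{ThetaLifts} are used; the remaining steps are formal.
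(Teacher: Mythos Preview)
Your overall strategy---stable-range theta lift of an irreducible summand $\pi$ of $C$ to $H(\AAA)$, invoke Theorem~\ref{H-ThmA} to attach an elliptic $A$-parameter $\psi^H$ for $H$, then strip off the extra $SL_2$-piece to obtain $\psi$---is exactly the paper's approach. The verification that the leftover $\psi$ is elliptic for $G$ and the uniqueness via strong multiplicity one are also fine.

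The real gap is the step you yourself flag as the obstacle. You assert that ``comparing the two sides at almost all $v$ and using strong multiplicity one'' forces $\psi^H=\psi\boxplus\psi_0$. But on the right-hand side of your comparison the piece $\phi_v$ is purely local data: there is no global isobaric object attached to $\{\phi_v\}_v$ yet (producing one is precisely the content of the proposition), so strong multiplicity one cannot be applied directly. Your Jacquet--Shalika exponent argument does show that $\psi^H$ must contain some summand $\chi\boxtimes S_{d}$ with $d\ge 2r-2n+1$ and $\chi$ a Hecke character, and a dimension count shows this summand is unique; but to pin down $d=2r-2n+1$ exactly and to identify $\chi=\chi_V$ one must rule out the possibility that $\phi_v$ itself carries Satake exponents of size $\ge r-n$, and nothing in your argument bounds the exponents of $\phi_v$. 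The paper closes this by an $L$-function computation: writing $L^S(s,\theta^{\mathrm{abs}}(\pi)\times\chi_V^{-1})$ in two ways and invoking Yamana's result that the completed standard $L$-function $L(s,\pi\times\chi_W^{-1})$ is holomorphic for $\Re(s)>n+\tfrac12$, one locates the rightmost pole at $s=r-n+1$, which forces $\chi_V\boxtimes S_{2r-2n+1}\subset\psi^H$ on the nose. Some analytic input of this kind (either Yamana's holomorphy or an explicit bound on the unramified unitary dual of $G_v$) is unavoidable here; your sketch omits it. A minor point: your formula $\psi_0=\chi\boxtimes S_{\dim W-\dim V}$ is off by one in Case~$O$, where $\dim\psi^H=2r+1$ and the correct size is $2r-2n+1$.
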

\begin{proof}
	The strategy of the proof is the same as that of \cite[Prop.3.1]{MR3866889}. For the convenience of the reader, we sketch the proof here. Let $\pi$ be any irreducible summand of $C$, consider the theta lift between $(G,H)$. Since $m_{\disc}(\pi)\geq 1$, we deduce from J-S. Li's inequality Theorem \ref{J-S inequality} that
\[
	m_{\disc}\left(\theta^{abs}(\pi)\right)\geq 1.
\] 
Therefore, Theorem \ref{H-ThmA} attaches an elliptic $A$-parameter $\theta(\psi)$ to $\theta^{abs}(\pi)$. 

Next we show that $\theta(\psi)$ contains $\chi_V\boxtimes S_{2r-2n+1}$ as a direct summand. 
Consider the partial $L$-function $L^S\left(s,\theta^{abs}(\pi)\times\chi_V^{-1}\right)$ associated to $\theta^{abs}(\pi)$ and $\chi_V^{-1}$. Here $S$ is a sufficiently large finite set of places of $F$ such that for any place $v$ outside $S$, the $L$-parameter of $\pi_v$ is $\phi_v$, and $G_v, H_v,\psi_{F,v},\chi_{V,v},\chi_{W,v},\pi_v$ are all unramified. If we write $\theta(\psi)=\sum_i\phi_i\boxtimes S_{d_i}$ as in (\ref{paradecomp}), then
	\begin{equation}\label{ThmAeq1}
		L^S\left(s,\theta^{abs}(\pi)\times\chi_V^{-1}\right)=\prod_i\prod_{j=1}^{d_i}L^S\left(s+\frac{d_i+1}{2}-j,\phi_i\chi_V^{-1}\right).
	\end{equation}
	It follows from \cite[Lemma 4.4, Theorem 5.3]{MR618323} that $L^S\left(s,\phi_i\chi_V^{-1}\right)$ is holomorphic for $\Re(s)>1$ for all $i$, and it has a pole at $s=1$ if and only if $\phi_i=\chi_V$ is an automorphic character of $\GL_1(\AAA_E)$. On the other hand, by the local theta correspondence for unramified representations \cite[Thm.6.1]{MR658543}, for all $v\notin S$, the $L$-parameter of $\theta(\pi_v)$ is
	\begin{equation}\label{ThmAeq2}
		\phi_v\chi_{W,v}^{-1}\chi_{V,v}+\left(\bigoplus_{j=n-r}^{r-n}|\cdot|^j\right)\chi_{V,v}.
	\end{equation}
	Hence 
	\begin{equation}\label{ThmAeq3}
		L^S\left(s,\theta^{abs}(\pi)\times\chi_V^{-1}\right)=L^S\left(s,\pi\times\chi_W^{-1}\right)\prod_{j=n-r}^{r-n}\zeta^S(s+j),
	\end{equation}
	where $L^S\left(s,\pi\times\chi_W^{-1}\right)$ is the partial $L$-function associated to $\pi$ and $\chi_W^{-1}$. By \cite[Thm.9.1]{MR3211043}, we know that the partial $L$-function $L^S\left(s,\pi\times\chi_W^{-1}\right)$ is holomorphic when
	\[
		\Re(s)>\begin{cases}
			n \quad &\textit{Case ${\rm O}$};\\
			n+\frac{1}{2} \quad &\textit{Case $\U_0$};\\
			n+1 \quad &\textit{Case $\U_1$}.
		\end{cases}
	\]
	It follows from (\ref{ThmAeq3}) that $L^S\left(s,\theta^{abs}(\pi)\times\chi_V^{-1}\right)$ is holomorphic for $\Re(s)>r-n+1$ but has a simple pole at $s=r-n+1$. 
 This and (\ref{ThmAeq1}) imply that $\theta(\psi)$ contains $\chi_V\boxtimes S_t$ as a direct summand for some $t$. Let $t$ be the largest integer with this property. Then $L^S\left(s,\theta^{abs}(\pi)\times\chi_V^{-1}\right)$ has a largest pole at $s=\frac{t+1}{2}$. So we have $t=2r-2n+1$. 
	Thus we may write
	\[
	\theta(\psi)=\psi\chi_W^{-1}\chi_V+\chi_V\boxtimes S_{2r-2n+1}
	\]
	for some elliptic $A$-parameter $\psi\in\Psi_{ell}(G^*)$. This and (\ref{ThmAeq2}) imply that $\phi_{\psi_v}=\phi_v$ for almost all $v$. The uniqueness of $\psi$ simply follows from the strong multiplicity one theorem for general linear groups \cite{MR623137}.
\end{proof}

Now we denote by $L^2_{\psi}(G)$ the near-equivalence class $C$ with the associated $A$-parameter $\psi$. Then we have a decomposition 
\[
L^2(G)=\bigoplus_{\psi\in\Psi_{ell}(G^*)} L^2_{\psi}(G).
\]
This completes the proof of Theorem \ref{ThmA}.

\section{A key equality: Generic case}\label{Generic.case}
Let $F$ be a number field, and $G$ an even orthogonal or unitary group over $F$ as in the setting of Section \ref{state.main.results}. In this section, we study the structure of $L_{\phi}^2(G)$ for a generic elliptic $A$-parameter $\phi$. 
\subsection{Gan-Ichino's observation} 
For any irreducible representation $\pi$ of $G(\AAA)$, we define the multiplicity $m_{\cusp}(\pi)$ by
\[
m_{\cusp}(\pi)=\dim\Hom_{G(\AAA)}(\pi,\calA_{\cusp}(G)).
\]
Obviously, we have
\[
m_{\cusp}(\pi)\leq m_{\disc}(\pi)\leq m(\pi).
\]
The following observation due to Gan-Ichino is the core of this work.
\begin{proposition}\label{cusp.realization}
	Let $\phi$ be a generic elliptic $A$-parameter for $G$. Let $\pi$ be an irreducible representation of $G(\AAA)$, such that the $L$-parameter of $\pi_v$ is $\phi_v$ for almost all $v$. Then we have
	\[
	m_{\cusp}(\pi)=m_{\disc}(\pi)=m(\pi).
	\]
\end{proposition}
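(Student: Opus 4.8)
The plan is to prove the reverse of the evident chain $m_{\cusp}(\pi)\le m_{\disc}(\pi)\le m(\pi)$, namely $m(\pi)\le m_{\cusp}(\pi)$; all three numbers then coincide. This inequality is vacuous when $m(\pi)=0$, so I may assume $\pi$ occurs in $\calA(G)$, and it then suffices to show that \emph{every} $G(\AAA)$-equivariant embedding $\pi\hookrightarrow\calA(G)$ has image contained in $\calA_{\cusp}(G)$: this gives $\Hom_{G(\AAA)}(\pi,\calA(G))=\Hom_{G(\AAA)}(\pi,\calA_{\cusp}(G))$, hence the claim.

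Fix such an embedding and suppose, for contradiction, that $\pi$ is not cuspidal. By Langlands' theory of Eisenstein series, $\pi$ is then a constituent of $\Ind_{P(\AAA)}^{G(\AAA)}\bigl(\rho\otimes e^{\langle\lambda_0,H_P(\cdot)\rangle}\bigr)$ for a \emph{proper} parabolic $P=MN$ of $G$, with Levi $M\cong\GL_{k_1}(\AAA_E)\times\cdots\times\GL_{k_m}(\AAA_E)\times G(V_0)$ (so $m\ge1$ and $\dim V_0<\dim V$), an irreducible unitary cuspidal automorphic representation $\rho=\tau_1\otimes\cdots\otimes\tau_m\otimes\sigma$ of $M(\AAA)$, and a point $\lambda_0=(\lambda_0^{(1)},\dots,\lambda_0^{(m)})\in\CC^m$. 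For almost every place $v$ the Satake parameter of the unramified representation $\pi_v$ is the one induced from this data; transporting it through the standard embedding ${}^{L}G_v\hookrightarrow\GL_N(\CC)$ (with $N=\dim V$), this reads, for almost all $v$,
\[
\bigoplus_i\phi_{i,v}\ \cong\ \mathrm{bc}(\sigma_v)\ \oplus\ \bigoplus_{j=1}^{m}\Bigl(\tau_{j,v}\,|\cdot|^{\lambda_0^{(j)}}\ \oplus\ \tau_{j,v}^{\vee}\,|\cdot|^{-\lambda_0^{(j)}}\Bigr),
\]
where $\mathrm{bc}(\sigma_v)$ is the (unramified) standard base change of $\sigma_v$ to $\GL$ over $E_v$.

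The contradiction comes from comparing partial $L$-functions computed two ways. Since the $\phi_i$ are unitary cuspidal representations of general linear groups over $E$ and are \emph{distinct}, the products $L^S(s,\pi\times\tau_j)=\prod_i L^S(s,\phi_i\times\tau_j)$ and $L^S(s,\pi\times\tau_j^{\vee})=\prod_i L^S(s,\phi_i\times\tau_j^{\vee})$ are, by Jacquet--Shalika, holomorphic and nonvanishing in $\Re(s)\ge1$ apart from at most one simple pole, located at $s=1$. On the other hand, inserting the displayed identity, $L^S(s,\pi\times\tau_j)$ acquires the factor $L^S(s-\lambda_0^{(j)},\tau_j^{\vee}\times\tau_j)$ with a pole at $s=1+\lambda_0^{(j)}$, and $L^S(s,\pi\times\tau_j^{\vee})$ acquires $L^S(s+\lambda_0^{(j)},\tau_j\times\tau_j^{\vee})$ with a pole at $s=1-\lambda_0^{(j)}$, while the factors involving $\sigma$ are holomorphic and nonvanishing in $\Re(s)>1$ (the displayed identity forces $\mathrm{bc}(\sigma)$ to have only cuspidal, not Speh, constituents, so this reduces to the $\GL\times\GL$ facts once the base change of the cuspidal representation $\sigma$ of the smaller classical group $G(V_0)$ is controlled, which holds for the quasi-split form of $G(V_0)$ or, failing that, by induction on $\dim V$). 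Comparing forces $\Re(\lambda_0^{(j)})\le0$ and $\Re(\lambda_0^{(j)})\ge0$; and if some $\lambda_0^{(j)}\ne0$, a pole appears on the line $\Re(s)=1$ away from $s=1$, which is absurd. Hence $\lambda_0=0$. But then, comparing the pole of $L^S(s,\pi\times\tau_j^{\vee})$ at $s=1$ the two ways: via $\prod_i L^S(s,\phi_i\times\tau_j^{\vee})$ its order equals $\#\{i:\phi_i\cong\tau_j\}\le1$, whereas via the displayed identity (now with $\lambda_0=0$) it is at least $1$ — from the factor $L^S(s,\tau_j\times\tau_j^{\vee})$ — and at least $2$ if $\tau_j$ is self-dual, the extra unit coming from $L^S(s,\tau_j^{\vee}\times\tau_j^{\vee})$. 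Thus $\tau_j$ is not self-dual, yet $\tau_j\cong\phi_i$ for some $i$ — contradicting that every $\phi_i$ is (conjugate) self-dual. Therefore $\pi$ is cuspidal, $m(\pi)=m_{\cusp}(\pi)$, and the three multiplicities agree.

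I expect the substance of the proof to be exactly this soft argument: it uses Langlands' spectral decomposition of $\calA(G)$, strong multiplicity one together with the standard nonvanishing and holomorphy facts for Rankin--Selberg $L$-functions of general linear groups (Jacquet--Shalika), and the analogous facts for the standard $L$-function of a cuspidal representation of the classical group $G(V_0)$ of smaller rank; it uses neither the theta correspondence nor any case of the generalized Ramanujan conjecture. The step requiring the most care is the analysis when $\lambda_0\ne0$, and conceptually both hypotheses on $\phi$ enter there: its \emph{ellipticity} disposes of the case $\lambda_0=0$ (continuous induction from a proper Levi, where $\phi$ would otherwise factor through ${}^{L}M$), while its \emph{genericity} — that $\phi$ is a multiplicity-free sum of self-dual cuspidal representations with no $\SL_2$-factors — is exactly what rules out a nonzero $\lambda_0$, i.e. a residual realization of $\pi$. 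The one technical loose end, the analytic control of the base change of the cuspidal representation of $G(V_0)$, is why an induction on $\dim V$ is natural in the overall argument.
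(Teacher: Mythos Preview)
Your approach is in essence the paper's: show that every automorphic realization of $\pi$ is cuspidal by analyzing its cuspidal support and comparing with the weak lift to $\GL_N$ dictated by $\phi$. The paper's execution is more direct, however. Rather than isolating the twist $\lambda_0$ and carrying out a pole analysis of Rankin--Selberg $L$-functions, the paper simply applies the strong multiplicity one theorem of Jacquet--Shalika to the two isobaric expressions for the weak lift of $\pi$: one is $\bigl(\BIGboxplus_j(\tau_j\boxplus(\tau_j^c)^\vee)\bigr)\boxplus\tau_0$ coming from the induced picture (with any twist absorbed into the possibly non-unitary cuspidal $\tau_j$), the other is $\BIGboxplus_i\phi_i$ coming from the hypothesis on $\phi$. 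Strong multiplicity one forces these to agree as multisets of cuspidal constituents, and the contradiction is then one line: each $\phi_i$ is (conjugate) self-dual of multiplicity one, whereas each $\tau_j$ either fails to be (conjugate) self-dual or appears with multiplicity at least two, paired with $(\tau_j^c)^\vee$. This subsumes your $\lambda_0\ne0$ and $\lambda_0=0$ cases simultaneously and sidesteps the nonvanishing checks your pole-counting needs at shifted points.

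One correction: your claim that the argument uses ``neither the theta correspondence'' is not accurate. Controlling the weak lift of the cuspidal representation on the smaller classical group $G(V_0)$---your $\mathrm{bc}(\sigma)$, the paper's $\tau_0$---requires knowing that $\pi_0$ has an elliptic $A$-parameter, and the paper obtains this from Theorem~\ref{ThmA}, whose proof rests on the stable-range theta lift. Your proposed ``induction on $\dim V$'' does not circumvent this: what is needed for $G(V_0)$ is the existence of a weak lift (i.e.\ Theorem~\ref{ThmA}), not the multiplicity statement you are proving, so the inductive hypothesis on the proposition itself does not supply it.
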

\begin{proof}
	The proof is totally the same as that of \cite[Prop.4.1]{MR3866889}. 
\end{proof}

\subsection{The multiplicity preservation}
Consider the theta lift between $(G,H)$ with respect to a datum $(\psi_F,\chi_V,\chi_W)$ as described in Section \ref{ThetaLifts}. Let $\sigma$ be an irreducible representation of $H(\AAA)$. We say $\sigma$ is relevant to $G$, if there is an irreducible unitary representation $\pi$ of $G(\AAA)$ and an automorphic character $\chi$ of $H(\AAA)$, such that
\[
\sigma\simeq\theta^{abs}(\pi)\otimes\chi.
\]
As a consequence of the previous proposition, we deduce:
\begin{corollary}\label{Multi-Preserve}
	Let $\phi$ be a generic elliptic $A$-parameter for $G$. Suppose that
	\[
	L_{\phi}^2(G)=\bigoplus_\pi m_\pi\pi.
	\]
	Let $\theta(\phi)=\phi\chi_W^{-1}\chi_V+\chi_V\boxtimes S_{2r-2n+1}$ be an elliptic $A$-parameter for $H$. Then
	\[
	L_{\theta(\phi)}^2(H)=\left(\bigoplus_\pi m_\pi\theta^{abs}(\pi)\right)\oplus\left(\bigoplus_\sigma m_\sigma\sigma\right),
	\]
	where the second summation on the RHS runs over all $\sigma$ with $A$-parameter $\theta(\phi)$ and not relevant to $G$.
\end{corollary}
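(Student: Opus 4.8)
The plan is to prove the two inclusions separately, using J-S.\ Li's inequality (Theorem~\ref{J-S inequality}) together with Proposition~\ref{cusp.realization} to pin down multiplicities, Howe duality for injectivity, and the unramified theta correspondence to control near equivalence classes. For the \emph{easy inclusion}, let $\pi$ be any irreducible constituent of $L^2_\phi(G)$, so $\pi$ is unitary with $m_{\disc}(\pi)=m_\pi\geq1$ and with $L$-parameter $\phi_v$ at almost all $v$. Since $\dim V<r$ we are in the stable range, so each $\theta(\pi_v)$ is non-zero and $\theta^{abs}(\pi)$ is defined; Theorem~\ref{J-S inequality} gives $m_{\disc}(\theta^{abs}(\pi))\geq m_{\disc}(\pi)\geq1$, so $\theta^{abs}(\pi)$ occurs in $L^2_{\disc}(H)$, and by the unramified theta correspondence (cf.\ equation~(\ref{ThmAeq2})) its local parameters are $\phi_v\chi_{W,v}^{-1}\chi_{V,v}+\chi_{V,v}\big(\bigoplus_{j=n-r}^{r-n}|\cdot|^j\big)=\phi_{\theta(\phi)_v}$ for almost all $v$ (using the hypothesis $\theta(\phi)=\phi\chi_W^{-1}\chi_V+\chi_V\boxtimes S_{2r-2n+1}$), hence $\theta^{abs}(\pi)\in L^2_{\theta(\phi)}(H)$ by Theorem~\ref{H-ThmA}. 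Combining Theorem~\ref{J-S inequality} with the equality $m(\pi)=m_{\disc}(\pi)$ of Proposition~\ref{cusp.realization} gives $m_{\disc}(\pi)\leq m_{\disc}(\theta^{abs}(\pi))\leq m(\theta^{abs}(\pi))\leq m(\pi)=m_{\disc}(\pi)$, so $\theta^{abs}(\pi)$ occurs in $L^2_{\theta(\phi)}(H)$ with multiplicity exactly $m_\pi$; and by Howe duality the abstract theta lift is injective on irreducible representations of $G(\AAA)$ in the stable range (cf.\ \S\ref{ThetaLifts}). Thus $\bigoplus_\pi m_\pi\theta^{abs}(\pi)$ is a subrepresentation of $L^2_{\theta(\phi)}(H)$, all of whose constituents are relevant to $G$.

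For the \emph{hard inclusion}, let $\sigma$ be an irreducible constituent of $L^2_{\theta(\phi)}(H)$ that is relevant to $G$, and write $\sigma\simeq\theta^{abs}(\pi)\otimes\chi$ with $\pi=\otimes_v\pi_v$ irreducible unitary on $G(\AAA)$ and $\chi$ an automorphic character of $E^1(\AAA)$, as provided by \cite{MR1448215} (in Case $O$ this $\chi$ is trivial automatically). The key point is to show $\chi=1$ and that $\pi$ has $L$-parameter $\phi_v$ at almost all $v$. I would argue this by comparing $L$-parameters at the places $v$ where all the data are unramified: there the parameter of $\sigma_v$ is $\phi_{\theta(\phi)_v}=\phi_v\chi_{W,v}^{-1}\chi_{V,v}+\chi_{V,v}\big(\bigoplus_{j=n-r}^{r-n}|\cdot|^j\big)$, while by the unramified theta correspondence and the $\chi_v$-twist it equals $\big(\phi(\pi_v)\chi_{W,v}^{-1}\chi_{V,v}+\chi_{V,v}\bigoplus_{j=n-r}^{r-n}|\cdot|^j\big)\otimes\chi_v$, where $\phi(\pi_v)$ is the $L$-parameter of $\pi_v$. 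Since $\phi$ is generic, the exponents occurring in $\phi_v$ lie in $(-\tfrac12,\tfrac12)$ by the Jacquet--Shalika bound, and $\dim V<r$ forces $r-n\geq1$; hence the characters of absolute value $>\tfrac12$ in $\phi_{\theta(\phi)_v}$ are exactly $\chi_{V,v}|\cdot|^{1},\dots,\chi_{V,v}|\cdot|^{r-n}$, all coming from the Speh block. Counting such characters on the other side forces $\pi_v$ to have no exponent of absolute value $>\tfrac12$, and then forces $\chi_v=1$ and $\phi(\pi_v)=\phi_v$; since this holds for almost all $v$ we conclude $\chi=1$ and that $\pi$ has $L$-parameter $\phi_v$ almost everywhere, so $\sigma\simeq\theta^{abs}(\pi)$. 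It remains to place $\pi$ inside $L^2_\phi(G)$: being unitary with $L$-parameter $\phi_v$ almost everywhere, Theorem~\ref{J-S inequality} and Proposition~\ref{cusp.realization} again give $m_{\disc}(\pi)=m_{\disc}(\theta^{abs}(\pi))=m_{\disc}(\sigma)\geq1$, so $\pi$ occurs in $L^2_{\disc}(G)$; and since there is at most one elliptic $A$-parameter $\psi$ with $\phi_{\psi_v}=\phi_v$ for almost all $v$ (strong multiplicity one, \cite{MR623137}), Theorem~\ref{ThmA} forces the near equivalence class of $\pi$ to be $L^2_\phi(G)$, with multiplicity $m_\pi=m_{\disc}(\pi)=m_{\disc}(\sigma)$. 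Combining the two inclusions, the relevant-to-$G$ part of $L^2_{\theta(\phi)}(H)$ is precisely $\bigoplus_\pi m_\pi\theta^{abs}(\pi)$, and writing the remaining constituents---each of which has $A$-parameter $\theta(\phi)$ and is not relevant to $G$---with their multiplicities as $\bigoplus'_\sigma m_\sigma\sigma$ yields the asserted decomposition.

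Granting Theorems~\ref{J-S inequality} and \ref{H-ThmA} and Proposition~\ref{cusp.realization}, the only step that needs genuine (if elementary) work is the parameter comparison in the second paragraph: one must exploit the rigidity of the long Speh segment $\chi_V\boxtimes S_{2r-2n+1}$ against the Jacquet--Shalika bound for $\phi$ to kill the auxiliary character $\chi$ and to match $\phi(\pi_v)$ with $\phi_v$, keeping in mind that the unitary $\pi_v$ is a priori only constrained to have exponents in $[-(r-n),r-n]$ until the counting argument forces them into $[-\tfrac12,\tfrac12]$. I expect no other serious difficulty.
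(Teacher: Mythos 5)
Your overall structure matches the paper's: first show $\theta^{abs}(\pi)$ lands in $L^2_{\theta(\phi)}(H)$ with multiplicity $m_\pi$ using Theorem~\ref{J-S inequality} and Proposition~\ref{cusp.realization}, then for $\sigma$ relevant to $G$ write $\sigma\simeq\theta^{abs}(\pi)\otimes\chi$ via \cite{MR1448215} and argue at unramified places that $\chi=1$ and that $\pi$ has $L$-parameter $\phi_v$. The first paragraph is fine. The second paragraph, however, replaces the paper's clean appeal to J-S.\ Li's \emph{local uniqueness} with a parameter-counting argument, and this substitution creates a gap: to write the $L$-parameter of $\sigma_v$ as $\bigl(\phi(\pi_v)\chi_{W,v}^{-1}\chi_{V,v}+\chi_{V,v}\bigoplus_j|\cdot|^j\bigr)\otimes\chi_v$ you need the unramified theta-lift formula to apply to $\pi_v$, hence you need $\pi_v$ itself to be unramified for almost all $v$ --- but this is precisely what you have not yet established (you only know $\sigma_v$ is unramified). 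The paper handles this in one stroke: since $\sigma_v$ is unramified with the known parameter, the unramified theta correspondence exhibits $\sigma_v\simeq\theta(\pi'_v)$ where $\pi'_v$ is the unramified representation with $L$-parameter $\phi_v$, and then the \emph{uniqueness} in J-S.\ Li's local bijection $\bigsqcup_V\Irr_{\mathrm{unit}}G(V)\times\{\text{chars of }E^1\}\leftrightarrow\{\text{low-rank reps of }H\}$ immediately forces $(\pi_v,\chi_v)=(\pi'_v,1)$. Once you have invoked that uniqueness (which you implicitly rely on anyway to know $\pi_v$ is unramified), the exponent-counting against the Speh block becomes redundant. So the counting is not wrong as a sanity check, but it neither closes the gap by itself nor shortens the argument; I would replace that paragraph with the paper's direct invocation of the local J-S.\ Li bijection.

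Two smaller remarks. First, your closing step placing $\pi$ inside $L^2_\phi(G)$ is more elaborate than needed: once $m_{\disc}(\pi)\geq1$ and the $L$-parameter of $\pi_v$ is $\phi_v$ almost everywhere, Theorem~\ref{ThmA} already says $\pi$ lies in $L^2_\phi(G)$ --- no separate appeal to strong multiplicity one is required there, since Theorem~\ref{ThmA} is precisely the assertion that the near equivalence classes in $L^2_{\disc}(G)$ are indexed by elliptic $A$-parameters. Second, the parenthetical remark that $\chi$ is automatically trivial in Case $O$ is correct (the paper's convention makes $E^1$ the trivial group there), and it is a useful simplification worth keeping if you do retain the counting argument for Case $U$.
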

\begin{proof}
	It follows from Theorem \ref{J-S inequality} and the Howe duality that there is an injection 
	\[
	\bigoplus_\pi m_\pi\theta^{abs}(\pi)\hookrightarrow L_{\theta(\phi)}^2(H).
	\]
	It remains to show that for any irreducible summand $\sigma$ of $L_{\theta(\phi)}^2(H)$ relevant to $G$, there is an irreducible summand $\pi$ of $L_{\phi}^2(G)$, such that $\sigma\simeq\theta^{abs}(\pi)$ and $m_{\disc}(\pi)=m_{\disc}(\theta^{abs}(\pi))$.

	So now suppose that $\sigma$ is relevant to $G$. By definition there is an unique irreducible unitary representation $\pi$ of $G(\AAA)$ and an unique automorphic character $\chi$ of $H(\AAA)$ such that
	\[
	\sigma\simeq\theta^{abs}(\pi)\otimes\chi.
	\]
	It follows from J-S. Li's results \cite[Prop.5.7]{MR1448215} that $\pi$ and $\chi$ satisfying this condition are unique. By Theorem \ref{H-ThmA}, we know that the $L$-parameter of $\sigma_v$ is
	\begin{equation}\label{E:S5unramLsig}
	\phi_v\chi_{W,v}^{-1}\chi_{V,v}+\left(\bigoplus_{j=n-r}^{r-n}|\cdot|^j\right)\chi_{V,v}
	\end{equation}
	for almost all $v$. Then it follows from the local theta lift for unramified representations that
	\[
	\sigma_v\simeq\theta(\pi'_v)
	\]
	for almost all $v$, where $\pi'_v$ is the unramified representation of $G_v$ with the $L$-parameter $\phi_v$. Therefore, by the uniqueness of $\pi$ and $\chi$, the $L$-parameter of $\pi_v$ is $\phi_v$ and $\chi_v$ is trivial for almost all $v$. Since $\chi$ is automorphic, it must be trivial, so that $\sigma\simeq\theta^{abs}(\pi)$. Moreover, since the $L$-parameter of $\pi_v$ is $\phi_v$ for almost all $v$, we have
	\[
	m_{\disc}(\pi)=m_{\disc}(\theta^{abs}(\pi))
	\]
	thanks to Theorem \ref{J-S inequality} and Proposition \ref{cusp.realization}. This completes the proof.
\end{proof}
\begin{remark}\label{Multi-Preserve-PIF}
	As explicated at the beginning of Section \ref{state.main.results}, all pure inner forms of $G$ arise in the forms $G'=G(V')$ for some spaces $V'$. Suppose that for $G'$ we have the decomposition
	\[
	L_{\phi}^2(G')=\bigoplus_{\pi'} m_{\pi'}\pi'.
	\]
	Consider the theta lift between $(G',H)$ for all such $G'$ simultaneously. For any irreducible summand $\sigma$ of $L_{\theta(\phi)}^2(H)$, at almost all places $v$ of $F$ the localization $\sigma_v$ is unramified with the $L$-parameter as in (\ref{E:S5unramLsig}). Hence by using a same argument as in the proof of Corollary \ref{Multi-Preserve}, one can show that $\sigma$ is of the form $\theta^{abs}(\pi')$ for a unique summand $\pi'$ of $L_{\phi}^2(G')$, where $G'$ is a pure inner form of $G$. This implies that
	\[
	L_{\theta(\phi)}^2(H)=\bigoplus_{G'}\left(\bigoplus_{\pi'} m_{\pi'}\theta^{abs}(\pi')\right),
	\]
	where the first summation on the RHS runs over all pure inner forms of $G$. 
\end{remark}

\subsection{Transferring the multiplicity formula}\label{transfer.AMF}
In this subsection we define some notions and ``transfer'' the AMF from $H$ to $G$. We shall work in a general setting first, and then specialize to the generic case. 

Let $\psi=\sum_i\phi_i\boxtimes S_{d_i}$ be an elliptic $A$-parameter for $G$, and
\[
\theta(\psi)=\psi\chi_W^{-1}\chi_V+\chi_V\boxtimes S_{2r-2n+1}
\]
be an elliptic $A$-parameter for $H$. Then
\[
\calS_\psi=\prod_i(\ZZ/2\ZZ)e_i\quad\textit{and}\quad \calS_{\theta(\psi)}=\left(\prod_i(\ZZ/2\ZZ)e'_i\right)\times\ \Big(\left(\ZZ/2\ZZ\right)a\Big),
\]
where $e_i$ corresponds to $\phi_i\boxtimes S_{d_i}\subset\psi$, $e'_i$ corresponds to $\phi_i\chi_W^{-1}\chi_V\boxtimes S_{d_i}\subset\theta(\psi)$, and $a$ corresponds to $\chi_V\boxtimes S_{2r-2n+1}\subset\theta(\psi)$. The composition of the following two maps
\begin{equation*}
	\ell:\calS_\psi\lra\calS_{\theta(\psi)}\lra\overline{\calS_{\theta(\psi)}}
\end{equation*}
is an isomorphism, where the first map sends $e_i$ to $e'_i$, and the second map is the natural projection. Recall that Arthur \cite[p.47]{MR3135650} and Mok \cite[p.29]{MR3338302} have defined the so-called canonical sign characters $\epsilon_\psi$ and $\epsilon_{\theta(\psi)}$ of $\calS_\psi$ and $\overline{\calS_{\theta(\psi)}}$. We first give an explicit description of these characters.

\begin{proposition}\label{P:epsilonfactor}
We have
\begin{equation*}\label{epsilonfactor unitary}
		\epsilon_\psi(e_i)=\prod_{j\neq i}\epsilon\left(\frac{1}{2},\phi_i\times\phi_j^\vee \right)^{\min\{d_i,d_j\}}.
\end{equation*}
Likewise, a similar formula holds for $\epsilon_{\theta(\psi)}$.
\end{proposition} 

\begin{proof}
For Case O, this is proved in \cite[Prop.-Def.8.3.7]{MR3929692}. So we will only prove this in Case U. To make the proof more streamlined, we shall make use of hypothetical global Langlands groups $\calL_F$ and $\calL_E$. However, we should mention that these two groups are not necessary, one can introduce the substitutes $\calL_\psi$ and $\calL_{\psi/E}$ as in \cite[p.19(2.4.3)]{MR3338302}, and our proof still works after some slight modifications.

By the global Langlands conjecture, there is an one to one correspondence between irreducible cuspidal representations of $\GL_k(E)$ and irreducible $k$-dimensional representations of $\calL_E$. Therefore each $\phi_i$ corresponds to an irreducible representation of $\calL_E$, which we shall still denote by $\phi_i$. For each $i$, let $\psi_i=\phi_i\boxtimes S_{d_i}$ be an irreducible representation of $\calL_E\times \SL_2(\CC)$. Then the global $A$-parameter $\psi$ can be regarded as the direct sum of these $\psi_i$, and the conjugate self-duality of $\psi$ allows us to extend it to an $L$-homomorphism
\[
	\widetilde{\psi}: \calL_F\times\SL_2(\CC) \lra {}^LG.
\] 
Using this $L$-homomorphism, the global component group $\calS_\psi$ can be identified with the centralizer of $\im(\widetilde{\psi})$ in $\widehat{G}$. To define the character $\epsilon_\psi$, we need to consider the adjoint representation of ${}^LG$ on the Lie algebra $\widehat{\mathfrak g}$ of the Langlands dual group $\widehat G$. Let  
\[
	\tau_\psi: \calS_\psi\times \calL_F\times \SL_2(\mathbb C) \longrightarrow \GL(\widehat {\mathfrak g}) 
\]
be the representation given by the formula 
\[
	\tau_\psi\left(x, g, h\right)= \Ad\left(x\cdot \widetilde{\psi}(g, h)\right)
\]
for $x\in \calS_\psi$, $g\in\calL_F$ and $h\in\SL_2(\CC)$. If this representation $\tau_\psi$ is decomposed as 
\[
	\tau_\psi= \bigoplus_\alpha \lambda_\alpha \boxtimes \mu_\alpha \boxtimes \nu_\alpha 
\]
for some irreducible representations $\lambda_\alpha$, $\mu_\alpha$, $\nu_\alpha$ of $\calS_\psi$, $\calL_F$ and $\SL_2(\mathbb C)$ respectively, then the character $\epsilon_\psi$ is define by 
\begin{equation*}
	\epsilon_\psi(e_i)=\prod_{\alpha}' \det \left(\lambda_\alpha\left(e_i\right)\right),
\end{equation*}
where $\prod'_\alpha$ denotes the product over the indices $\alpha$ such that $\mu_\alpha$ is symplectic and $\epsilon\left(\frac{1}{2}, \mu_\alpha\right)=-1$. By some elementary computations, we know that the representation $\tau_\psi$ can be decomposed as  
\[
	\tau_\psi \simeq \left(\sum_{i} \varepsilon_i^2\As^{\pm}(\psi_i)\right) \oplus \left(\sum_{i< j} \varepsilon_i\varepsilon_j\cdot\Ind_{\calL_{E}\times \SL_2(\mathbb C)}^{\calL_F\times \SL_2(\mathbb C)} \left(\psi_i\otimes \psi_j^\vee\right)\right), 
\]
where $\varepsilon_i$ is the character of $\calS_\psi$ defined by 
\[
	\varepsilon_i:\calS_\psi\lra \{\pm1\}, \quad e_j\longmapsto \begin{cases}
		+1 \quad &\textit{if } j=i;\\
		-1 \quad &\textit{if } j\neq i.
	\end{cases}
\]
Also note that 
\[
	\Ind_{\calL_{E}\times \SL_2(\mathbb C)}^{\calL_F\times \SL_2(\mathbb C)}\left(\psi_i\otimes\psi_j^\vee\right) = \sum_{k=1}^{\min\{d_i,d_j\}} \Ind_{\calL_E}^{\calL_F}\left(\phi_i\otimes\phi_j^\vee\right) \boxtimes S_{d_i+d_j-2k+1}.
\]
Therefore by the definition we have
\[
	\epsilon_\psi\left(e_i\right) = \prod_{j\neq i}\epsilon\left(\frac{1}{2},\Ind_{\calL_E}^{\calL_F}\left(\phi_i\otimes\phi_j^\vee\right)\right)^{\min\{d_i,d_j\}} = \prod_{j\neq i}\epsilon\left(\frac{1}{2},\phi_i\times\phi_j^\vee\right)^{\min\{d_i,d_j\}}
\]
Here in the last equality we use the inductivity of epsilon factors. 
\end{proof}

\begin{remark}
When $\psi=\phi$ is generic, for any indices $i$ and $j$, both $\phi_i$ and $\phi_j$ are (conjugate) self-dual of the same parity. By \cite[Thm.1.5.3]{MR3135650} (for Case O) and \cite[Thm.2.5.4]{MR3338302} (for Case U), we have $\epsilon\left(\frac{1}{2},\phi_i\times\phi_j^\vee\right)=1$. This implies that
\begin{equation}\label{epsilon-generic}
	\epsilon_\phi=1. 
\end{equation}
\end{remark}

Next we compare the character $\epsilon_\psi$ and the pull back of $\epsilon_{\theta(\psi)}$ along the map $\ell$ using this description.  

\begin{lemma}\label{Compare-epsilon}
	We have
	\[
	\epsilon_\psi=\ell^*\left(\epsilon_{\theta(\psi)}\right).
	\]
\end{lemma}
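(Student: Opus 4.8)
The plan is to compare the two sign characters term by term against their definitions in Arthur \cite[(1.5.6)]{MR3135650} and Mok \cite[(2.5.5)]{MR3338302}. Recall that for an elliptic $A$-parameter $\psi=\sum_i\phi_i\boxtimes S_{d_i}$, the canonical sign character is given on the generator $a_i$ by
\[
	\epsilon_\psi(a_i)=\prod_{j}\epsilon\big(\tfrac12,\phi_i\times\phi_j\big)^{\min(d_i,d_j)},
\]
where the product runs over those $j$ for which $\phi_i\times\phi_j$ is symplectic (equivalently, the relevant root number is $-1$-eligible) and $\min(d_i,d_j)$ is odd; the precise shape of the exponent is what we must track. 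First I would write out $\epsilon_\psi(a_i)$ as such a product over the constituents $\phi_j\boxtimes S_{d_j}$ of $\psi$, and separately write out $\epsilon_{\theta(\psi)}(\ell(a_i))=\epsilon_{\theta(\psi)}(a'_i)$ as a product over the constituents of $\theta(\psi)=\sum_i(\phi_i\chi_W^{-1}\chi_V)\boxtimes S_{d_i}+\chi_V\boxtimes S_{2r-2n+1}$.

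The comparison then splits into two parts. The first part is the contribution to $\epsilon_{\theta(\psi)}(a'_i)$ coming from the other ``old'' constituents $a'_j$, $j$ ranging over the index set of $\psi$: here the twist by $\chi_W^{-1}\chi_V$ cancels in the pair $\phi_i\chi_W^{-1}\chi_V\times(\phi_j\chi_W^{-1}\chi_V)^\vee$ (in Case $O$ literally, in Case $U$ after accounting for the conjugate-duality), so each factor $\epsilon(\tfrac12,\phi_i\chi_W^{-1}\chi_V\times\cdots)$ equals the corresponding factor $\epsilon(\tfrac12,\phi_i\times\phi_j\cdots)$ in $\epsilon_\psi(a_i)$, and the symplectic/orthogonal type of the tensor product is unchanged because the parities $d_i,d_j$ are the same on both sides. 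Thus the ``old'' part of $\epsilon_{\theta(\psi)}(a'_i)$ reproduces $\epsilon_\psi(a_i)$ exactly. The second part is the extra factor coming from pairing $a'_i$ with the ``new'' constituent $b$ corresponding to $\chi_V\boxtimes S_{2r-2n+1}$. I would show this extra factor is trivial: either the product $\phi_i\chi_W^{-1}\chi_V\times\chi_V^\vee\cong\phi_i\chi_W^{-1}$ is of the wrong (orthogonal/conjugate-orthogonal) type to contribute, or, if it is of the contributing type, then $\min(d_i,2r-2n+1)$ — note $2r-2n+1$ is odd — has the opposite parity forcing the exponent to be even, or else the root number $\epsilon(\tfrac12,\phi_i\chi_W^{-1})$ itself is $+1$ by the self-duality of $\phi_i\chi_W^{-1}$ (recall $\chi_W|_{F^\times}=\omega_{E/F}^{\dim W}$ and the parities are arranged so the twisted parameter stays (conjugate) self-dual). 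Finally I would note that $b$ itself does not contribute a component on which $\ell(a_i)$ could be evaluated since $\ell$ lands in $\overline{\calS_{\theta(\psi)}}$ where $z_{\theta(\psi)}=\sum_i a'_i + b$ is killed, so one must only be careful that passing to the quotient by $\langle z_{\theta(\psi)}\rangle$ is compatible — but $\epsilon_{\theta(\psi)}$ is by construction a character of $\overline{\calS_{\theta(\psi)}}$, i.e. $\epsilon_{\theta(\psi)}(z_{\theta(\psi)})=1$, so this is automatic.

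The main obstacle I anticipate is bookkeeping of the (conjugate) self-duality types and of the normalization of root numbers across the three cases $O$, $U_0$, $U_1$ simultaneously, together with keeping straight which constituents actually enter Arthur's product (the condition is on $\phi_i\times\phi_j$ being symplectic, which depends on the interaction of the types of $\phi_i$, $\phi_j$ with the parities $d_i$, $d_j$). Concretely, one must verify that the twist by $\chi_W^{-1}\chi_V$ does not alter the symplectic-versus-orthogonal dichotomy for the pairs $\phi_i\times\phi_j^\vee$, which follows from $\chi_W^{-1}\chi_V$ being (conjugate) orthogonal — i.e. $(\chi_W^{-1}\chi_V)|_{F^\times}$ is trivial in Case $O$ and $\chi_W^{-1}\chi_V$ is conjugate-orthogonal in Case $U$ by the defining conditions $\chi_V|_{F^\times}=\omega_{E/F}^{\dim V}$, $\chi_W|_{F^\times}=\omega_{E/F}^{\dim W}$, and the congruence $\dim W\equiv\dim V\pmod 2$ built into the stable-range setup. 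Once these type computations are pinned down, the identity $\epsilon_\psi=\ell^*(\epsilon_{\theta(\psi)})$ drops out by matching the two product formulas factor by factor. I would present the argument uniformly, isolating the case-dependent parity check into a short lemma or a remark, rather than repeating the computation three times.
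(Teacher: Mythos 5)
Your proposal is correct and follows essentially the same route as the paper: write both sign characters via Arthur's explicit product formula, observe that the ``old'' factors $\epsilon(\tfrac12,\phi_i\chi_W^{-1}\chi_V\times\phi_j\chi_W^{-1}\chi_V)=\epsilon(\tfrac12,\phi_i\times\phi_j)$ reproduce $\epsilon_\psi(a_i)$, and show the extra factor from the new constituent $\chi_V\boxtimes S_{2r-2n+1}$ is trivial. The paper carries this out for Case~$O$ only (asserting Case~$U$ is similar), computing the extra factor as $\epsilon(\tfrac12,\phi_i)^{d_i}$ and killing it by: $\phi_i$ orthogonal $\Rightarrow\epsilon(\tfrac12,\phi_i)=1$ by \cite[Theorem 1.5.3]{MR3135650}; $\phi_i$ symplectic $\Rightarrow d_i$ even $\Rightarrow$ exponent kills it. One caution on your write-up: the claim that $\epsilon(\tfrac12,\phi_i\chi_W^{-1})=+1$ holds ``by self-duality'' is imprecise — self-duality alone does not force a trivial root number (symplectic self-dual cuspidals can have $\epsilon=-1$); what you actually need is that $\phi_i\chi_W^{-1}$ is \emph{orthogonal} (resp.\ conjugate-orthogonal), which by Arthur's Theorem 1.5.3 (resp.\ Mok's analogue) forces the symmetric root number to be $+1$. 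With that citation inserted, your two alternatives (orthogonal type $\Rightarrow$ root number trivial; symplectic type $\Rightarrow$ exponent $d_i$ even) exactly match the paper's case split, and your final remark about $\epsilon_{\theta(\psi)}$ descending to $\overline{\calS_{\theta(\psi)}}$ is a correct (if unstated in the paper) sanity check.
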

\begin{proof}
By Proposition \ref{P:epsilonfactor}, we have
\begin{equation}\label{epsilonfactorComputation}
\begin{split}
		\epsilon_{\theta(\psi)}(e'_i)=&\left(\prod_{j\neq i}\epsilon\left(\frac{1}{2},\phi_i\chi_V\chi_W^{-1}\times\phi_j^\vee \chi_V^{-1}\chi_W\right)^{\min\{d_i,d_j\}}\right)\times\epsilon\left(\frac{1}{2},\phi_i\chi_V\chi_W^{-1}\times\chi_V^{-1}\right)^{d_i}\\
		=&\left(\prod_{j\neq i}\epsilon\left(\frac{1}{2},\phi_i\times\phi_j\right)^{\min\{d_i,d_j\}}\right)\times\epsilon\left(\frac{1}{2},\phi_i\chi_W^{-1}\right)^{d_i}\\
  =&\epsilon_\psi\left(e_i\right)\cdot \epsilon\left(\frac{1}{2},\phi_i\chi_W^{-1}\right)^{d_i}.\\
\end{split}
	\end{equation}
If $\phi_i$ has the same parity as $\psi$, then it also has the same parity as $\chi_W$. By \cite[Thm.1.5.3]{MR3135650} (for Case O) and \cite[Thm.2.5.4]{MR3338302} (for Case U), we have $\epsilon\left(\frac{1}{2},\phi_i\chi_W^{-1}\right)=1$. On the other hand, if $\phi_i$ has different parity with $\psi$, then $d_i$ must be even.  
Therefore we always have $\epsilon\left(\frac{1}{2},\phi_i\chi_W^{-1}\right)^{d_i}=1$. The desired conclusion then follows from equation (\ref{epsilonfactorComputation}).

\end{proof}

Locally, for each place $v$ of $F$, we also have the natural map of component groups
\begin{equation*}
	\ell_v: \calS_{\psi_v}\lra\calS_{\theta(\psi_v)}\lra\overline{\calS_{\theta(\psi_v)}}.
\end{equation*}
Again $\ell_v$ is an isomorphism. We regard the local $A$-packet $\Pi_{\theta(\psi_v)}(H_v)$ as a representation of $\overline{\calS_{\theta(\psi_v)}}\times H_v$ by setting
\[
\Pi_{\theta(\psi_v)}(H_v)=\bigoplus_\sigma\calJ_{\scrW'}(\sigma)\boxtimes\sigma,
\]
where the summation on the RHS runs over all irreducible unitary representations of $H_v$ in $\Pi_{\theta(\psi_v)}(H_v)$. Let
\[
\Pi_{\psi_v}^{\theta}(G_v)=\bigoplus_\sigma\ell_v^*\left(\calJ_{\scrW'}(\sigma)\right)\boxtimes\theta(\sigma).
\]
Note that $\theta(\sigma)$ could be zero. We shall discard those $\sigma$ being mapped to zero under the theta lift. We regard $\Pi_{\phi_v}^{\theta}(G_v)$ as a (multi) set equipped with a map
\begin{align*}
	\calJ_v^\theta:\Pi_{\psi_v}^{\theta}(G_v)&\lra\calS_{\psi_v},\\
	\theta(\sigma)&\longmapsto\ell_v^*\left(\calJ_{\scrW'}(\sigma)\right).
\end{align*}
Define a global packet
\begin{align*}
	\Pi_\psi^\theta(G)&=\otimes'_v\Pi_{\psi_v}^\theta(G_v)\\
	&=\{\pi=\otimes'_v\pi_v~|~\pi_v\in\Pi_{\psi_v}^\theta(G_v),~\pi_v\textit{ unramified with the L-parameter $\phi_{\psi_v}$ for almost all }v\}.
\end{align*}
We then have a map
\begin{align*}
	\calJ^\theta:\Pi_\psi^\theta(G)&\lra\widehat{\calS_\psi},\\
	\pi&\longmapsto \calJ^\theta(\pi),\\
	\calJ^\theta(\pi)(x)&\coloneqq\prod_v\calJ_v^\theta(\pi_v)(x_v),
\end{align*}
where $x\in\calS_\psi$ and $x_v$ is the localization of $x$ at $v$. It is easy to check that
\[
	\calJ^\theta(\pi)=\ell^*\left(\calJ_{\scrW'}(\theta^{abs}(\pi))\right).
\]
We put
\[
\Pi_\psi^\theta(G,\epsilon_\psi)=\{\pi\in\Pi_\psi^\theta(G)~|~\calJ^\theta(\pi)=\epsilon_\psi\}.
\]
Next we specialize to the generic case. As a direct consequence of Corollary \ref{Multi-Preserve} and Lemma \ref{Compare-epsilon}, we have:
\begin{proposition}\label{transfermulti}
	Let $\psi=\phi$ be a generic elliptic $A$-parameter for $G$. Then there is a decomposition
	\[
	L_\phi^2(G)=\bigoplus_{\pi\in\Pi_\phi^\theta(G,\epsilon_\phi)}\pi.
	\]
\end{proposition}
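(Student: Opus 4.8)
The plan is to combine the multiplicity-preservation result (Corollary \ref{Multi-Preserve}) with the global classification for $H$ (Theorem \ref{H-ThmB}) and the comparison of sign characters (Lemma \ref{Compare-epsilon}), translating the description of $L^2_{\theta(\phi)}(H)$ across the theta correspondence back to $G$. First I would invoke Corollary \ref{Multi-Preserve}, writing $L^2_\phi(G)=\bigoplus_\pi m_\pi\pi$ and using that the summands $\sigma$ of $L^2_{\theta(\phi)}(H)$ which are relevant to $G$ are exactly the abstract theta lifts $\theta^{\mathrm{abs}}(\pi)$ of the summands of $L^2_\phi(G)$, each appearing with the same multiplicity $m_\pi$. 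On the other side, Theorem \ref{H-ThmB} gives $L^2_{\theta(\phi)}(H)=\bigoplus_{\sigma\in\Pi_{\theta(\phi)}(H,\epsilon_{\theta(\phi)})}\sigma$, so the relevant summands are precisely those $\sigma\in\Pi_{\theta(\phi)}(H,\epsilon_{\theta(\phi)})$ of the form $\theta^{\mathrm{abs}}(\pi)$.

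Next I would match the two parametrizations at each place. By construction of $\Pi_{\phi_v}^\theta(G_v)$ via $\sigma\mapsto\theta(\sigma)$ and $\calJ_v^\theta(\theta(\sigma))=\ell_v^*(\calJ_{\scrW'}(\sigma))$, an irreducible $\pi_v$ lies in $\Pi_{\phi_v}^\theta(G_v)$ with a prescribed character value iff its local theta lift $\theta(\pi_v)\in\Pi_{\theta(\phi_v)}(H_v)$ with the $\ell_v$-corresponding character (here one should note the local theta lift from the stable-range dual pair is injective on the relevant packet, and $\theta(\theta(\pi_v))\cong\pi_v$, so $\pi\mapsto\theta^{\mathrm{abs}}(\pi)$ is a bijection from $\Pi_\phi^\theta(G)$ onto the set of $\sigma\in\Pi_{\theta(\phi)}(H)$ relevant to $G$; the unramified-place normalization from the remark after \eqref{localAcontainlocalL} and the local unramified theta correspondence \cite{MR658543} guarantee compatibility of the restricted tensor products). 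Under this bijection, $\calJ^\theta(\pi)=\ell^*(\calJ_{\scrW'}(\theta^{\mathrm{abs}}(\pi)))$ by definition, so $\pi\in\Pi_\phi^\theta(G,\epsilon_\phi)$ iff $\calJ_{\scrW'}(\theta^{\mathrm{abs}}(\pi))=(\ell^{-1})^*(\epsilon_\phi)=(\ell^{-1})^*(\ell^*(\epsilon_{\theta(\phi)}))=\epsilon_{\theta(\phi)}$ — the middle equality being exactly Lemma \ref{Compare-epsilon} — i.e.\ iff $\theta^{\mathrm{abs}}(\pi)\in\Pi_{\theta(\phi)}(H,\epsilon_{\theta(\phi)})$.

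Combining: the relevant-to-$G$ part of $L^2_{\theta(\phi)}(H)$ equals $\bigoplus_{\pi\in\Pi_\phi^\theta(G,\epsilon_\phi)}\theta^{\mathrm{abs}}(\pi)$ (each with multiplicity one, by Theorem \ref{H-ThmB}), while Corollary \ref{Multi-Preserve} identifies this same part with $\bigoplus_\pi m_\pi\theta^{\mathrm{abs}}(\pi)$. Since distinct summands of $L^2_\phi(G)$ have distinct (hence linearly independent as automorphic representations, via J-S.~Li's injectivity) abstract theta lifts, comparing coefficients forces $m_\pi=1$ for $\pi\in\Pi_\phi^\theta(G,\epsilon_\phi)$ and $m_\pi=0$ otherwise, which is the asserted decomposition $L^2_\phi(G)=\bigoplus_{\pi\in\Pi_\phi^\theta(G,\epsilon_\phi)}\pi$.

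The main obstacle I anticipate is the bookkeeping needed to guarantee that the abstract (restricted tensor product) theta lift $\pi\mapsto\theta^{\mathrm{abs}}(\pi)$ sets up a genuine bijection between $\Pi_\phi^\theta(G)$ and the relevant-to-$G$ members of $\Pi_{\theta(\phi)}(H)$ that is simultaneously compatible with the local character maps $\calJ_v^\theta$ versus $\calJ_{\scrW'_v}$ at \emph{every} place and with the global product characters; in particular one must rule out that the automorphic character $\chi$ of $E^1(\AAA)$ appearing in J-S.~Li's bijection is nontrivial — this was handled inside the proof of Corollary \ref{Multi-Preserve} (it is forced trivial by being automorphic and locally trivial almost everywhere), so here it is a matter of citing it correctly rather than reproving it. Everything else is formal manipulation of the packets and the sign character identity already established.
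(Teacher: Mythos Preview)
Your proposal is correct and follows the same route as the paper, which states the proposition simply as ``a direct consequence of Corollary \ref{Multi-Preserve} and Lemma \ref{Compare-epsilon}''; you have just unpacked the formal bookkeeping (the bijection $\pi\leftrightarrow\theta^{abs}(\pi)$ between $\Pi_\phi^\theta(G,\epsilon_\phi)$ and the $G$-relevant part of $\Pi_{\theta(\phi)}(H,\epsilon_{\theta(\phi)})$, and the use of Theorem \ref{H-ThmB}) that the paper leaves implicit. One small caution: at this stage $\Pi_{\theta(\phi_v)}(H_v)$ and hence $\Pi_\phi^\theta(G,\epsilon_\phi)$ are a priori \emph{multisets}, so the correct conclusion is $m_\pi=$ (multiplicity of $\pi$ in the multiset $\Pi_\phi^\theta(G,\epsilon_\phi)$), not literally $m_\pi=1$; the multiplicity-freeness is only established later in Theorem \ref{Local-Compare}(i), so you should phrase the comparison as a matching of multiset-multiplicities rather than asserting multiplicity one here.
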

Hence, to complete the proof of Theorem \ref{ThmB}, it remains to describe $\Pi_{\phi_v}^{\theta}(G_v)$ in terms of the LLC for $G_v$. This will be established in the next section.

\section{Local comparison}\label{localcom}
In this section we let $F$ be a local field of characteristic $0$. We will compare the packets $\Pi_{\phi}^{\theta}(G)$ and $\Pi_{\phi}^{L}(G)$ for all almost tempered $L$-parameters $\phi$ of $G$. 

\subsection{Main local theorem}\label{mainlical}
We first recall several terminologies. Let $\phi$ be a local $L$-parameter for $G$. As explicated in \cite[Sect.8]{MR3202556}, $\phi$ can be regarded as a (conjugate) self-dual representation of $L_E$ of certain parity. If we write $\phi$ as 
\begin{equation*}
	\phi= \sum_i m_i \phi_i
\end{equation*}
for some positive integers $m_i$ and some pairwise distinct irreducible representations $\phi_i$ of $L_E$, we say that:
\begin{itemize}
	\item $\phi$ is of good parity if $\phi_i$ is (conjugate) self-dual of the same parity as $\phi$ for all $i$;
	\item $\phi$ is tempered if $\phi_i$ is tempered (i.e. has bounded image on the Weil group $W_E$) for all $i$;
	\item $\phi$ is almost tempered if $\phi_i=\phi'_i|\cdot|^{s_i}$ for some tempered representation $\phi'_i$ of $L_E$ and some $s_i\in \mathbb R$ with $|s_i|<\frac{1}{2}$ for all $i$. 
\end{itemize}
Note that if $\phi$ is of good parity, then it is tempered. 
\begin{remark}\label{ramajujan}
	In fact, the Ramanujan conjecture predicts that all localizations of global generic elliptic $A$-parameters are tempered. However, we still need to deal with a larger class of $L$-parameters but not only those are tempered since the Ramanujan conjecture has not been proved in general. Thank to the work of Jacquet-Shalika \cite[Cor.2.5]{MR618323}, for our purpose, it is sufficient to do the comparison for almost tempered $L$-parameters.
\end{remark}

Let $\phi$ be an almost tempered $L$-parameter for $G=G(V)$. Assume that $\dim V<r$ and let
\begin{equation}\label{thetaphi}
	\theta(\phi)=\phi\chi_W^{-1}\chi_V+\chi_V\boxtimes S_{2r-2n+1}.
\end{equation}
Then $\theta(\phi)$ is a local $A$-parameter for $H=H\left(W_{(r)}\right)$. Recall that there is an isomorphism between component groups
\[
\ell:\calS_\phi\lra\calS_{\theta(\phi)}\lra\overline{\calS_{\theta(\phi)}}.
\]
In the light of Proposition \ref{transfermulti} and Remark \ref{ramajujan}, to complete the proof of Theorem \ref{ThmB}, all we need is the following two theorems.
\begin{theorem}\label{T:APack-MultiFree}
	The local $A$-packet $\Pi_{\theta(\phi)}(H)$ is multiplicity free. Hence we can regard $\Pi_{\theta(\phi)}(H)$ as a subset of $\Irr(H)$.
\end{theorem}

\begin{theorem}\label{Local-Compare}
There is a commutative diagram
\begin{equation}\label{Diag-LocalCompare}
	\begin{CD}
	\Pi_{\theta(\phi)}(H) @>\calJ_{\scrW'}>> \widehat{\overline{\calS_{\theta(\phi)}}}\\
	@VV\theta V @VV\ell^*V\\
	\bigsqcup\Pi_\phi^L(G') @>\calJ_\scrW^L>> \widehat{\calS_\phi}
	\end{CD},
\end{equation}
where the disjoint union runs over all pure inner forms of $G$, and the arrow $\theta$ is a bijection given by the theta lift. More precisely, for $\sigma\in \Pi_{\theta(\phi)}(H)$, we have:
\begin{enumerate}
	\item There exists an unique pure inner form $G'$ of $G$, such that the theta lift $\pi=\theta(\sigma)$ of $\sigma$ to $G'$ is non-vanishing.
	\item $\pi$ lies in the $L$-packet $\Pi_{\phi}^{L}(G')$, and 
	\[
		\calJ_\scrW^L(\pi)= \ell^*\left(\calJ_{\scrW'}(\sigma)\right).
	\]
\end{enumerate} 
Hence it follows that $\Pi_{\phi}^{\theta}(G)=\Pi_{\phi}^{L}(G)$ as sets and $\calJ_\scrW^L= \calJ^{\theta}$.
\end{theorem}
\begin{remark}\label{mogelin}~
	\begin{enumerate}
		\item Theorem \ref{T:APack-MultiFree} is largely due to M{\oe}glin \cite{MR2767522} when $F$ is non-Archimedean, to M{\oe}glin-Renard \cite{MR3666056} when $F=\mathbb C$ and to Arancibia-M{\oe}glin-Renard \cite{MR3919547}, M{\oe}glin-Renard \cite{MR3969879} \cite{MR3947270} \cite{MR4092900} when $F=\mathbb R$ and $\phi$ is of good parity. Here we will give a proof of Theorem \ref{T:APack-MultiFree} by using theta lifts.  
		\item When we are in Case U, we also need to deal with split places where $E\simeq F\times F$. In this case, $G(V)$ and $H(W)$ are isomorphic to general linear groups, and $G(V)$ has no pure inner form other than itself. We also know that the local $L$-packet $\Pi^L_{\phi}(G)$ and the local $A$-packet $\Pi_{\theta(\phi)}(H)$ are singletons. Therefore, in this case we need to prove that
		\[
		\theta\left(\pi_\phi\right)=\sigma_{\theta(\phi)}, 
		\]
		where $\pi_\phi$ and $\sigma_{\theta(\phi)}$ are the irreducible representations of $G$ and $H$ associated to $\phi$ and $\theta(\phi)$ respectively. This assertion is an analogy of Theorem \ref{Local-Compare} for general linear groups. When $F$ is non-archimedean, it has been proved by M{\'{\i}}nguez \cite{MR2504432}. When $F$ is Archimedean, it has been proved by Adams-Barbasch \cite{MR1346217} for $F=\mathbb C$ and by Adams \cite{MR1021501} for $F=\mathbb R$.
	\end{enumerate}
\end{remark}
We are going to prove these theorems in the next few subsections. 

\subsection{Some local packets}\label{S:LP}
We review some knowledge of certain local $L$-packets and $A$-packets in this subsection. Let $\phi$ be an almost tempered $L$-parameter. We may write it as
\begin{equation}\label{almosttempered}
	\phi=\varphi+\phi_0+\left(\varphi^c\right)^\vee,
\end{equation}
where:
\begin{itemize}
	\item $\phi_0$ is an $L$-parameter of good parity for $G_0^*=G(V_0^+)$; here $V_0^+$ is a $c$-Hermitian space in the Witt tower containing $V^+$;
	\item $\varphi$ is a summation 
		\[
			\varphi=\phi_1|\cdot|^{s_1}+ \cdots + \phi_r|\cdot|^{s_r}
		\] 
		for some $k_i$-dimensional irreducible tempered representation $\phi_i$ of $L_E$ and non-negative real number $s_i < 1/2$, such that for each $i\in\{1,\cdots,r\}$, either $\phi_i$ is not (conjugate) self-dual of the same parity as $\phi$, or $s_i$ is positive. 
\end{itemize}
Without loss of generality, we can rearrange the index set $\{1,\cdots,r\}$ such that
\[
	\frac{1}{2}> s_1 \geq \cdots \geq s_r \geq 0.
\]
We put $k=k_1+\cdots+k_r$. There is a natural isomorphism $\calS_{\phi_0}\simeq\calS_{\phi}$. By the inductive property of the LLC for $G$, we know that $\Pi^L_{\phi}(G)=\varnothing$ unless the $F$-rank of $G$ is greater or equal to $k$, in which case there exists a $F$-parabolic subgroup $P$ of $G$ with Levi component $\GL_{k}(E)\times G_0$, where $G_0=G(V_0)$ and $V_0$ is a $c$-Hermitian space in the Witt tower containing $V$. Let $\tau$ be the irreducible representation of $\GL_{k}(E)$ associated to $\varphi$. Then by \cite[Thm.4.4(10)]{zou2020local}, \cite[Thm.2.5.1(8)]{chen2020local} 
we know that for any $\eta\in \wh{\calS_\phi}$, $\pi=\pi(\phi,\eta)$ is the unique irreducible quotient of 
\[
	\Ind_{P}^{G}\left(\tau\boxtimes\pi_0\right),
\]
where $\pi_0=\pi\left(\phi_0,\eta\right)$, and we regard $\eta$ as a character of $\calS_{\phi_0}$ through the natural isomorphism. Indeed we have more than this: 

\begin{lemma}\label{Irr-StdMod}
	The induced representation $\Ind_P^{G}\left(\tau\boxtimes \pi_0\right)$ is irreducible for all $\pi_0 \in \Pi_{\phi_0}\left(G_0\right)$. Hence we have 
	\[
	\Pi^L_{\phi}(G)=\Big\{\Ind_P^G(\tau\boxtimes\pi_0)~\big|~\pi_0\in\Pi_{\phi_0}(G_0)\Big\}.
	\]
\end{lemma}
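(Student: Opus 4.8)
The plan is to prove irreducibility of the standard-module-type induction $\Ind_P^G(\tau \boxtimes \pi_0)$ by reducing to the corresponding statement on the quasi-split pure inner form $G^*$, where the representation theory is governed by Arthur's and Mok's results and where irreducibility of such induced representations for almost tempered (in particular, for representations whose exponents lie in the open interval $(-1/2,1/2)$) parameters is essentially known. First I would recall the structure (\ref{almosttempered}) of $\phi$: since $\varphi = \phi_1|\cdot|^{s_1} \oplus \cdots \oplus \phi_r|\cdot|^{s_r}$ with each $\phi_i$ tempered and irreducible, $0 \le s_r \le \cdots \le s_1 < 1/2$, and none of the $\phi_i$ (conjugate) self-dual of the parity of $\phi$, the representation $\tau$ of $\GL_k(E)$ is the Langlands quotient of the corresponding standard module, and one has control of its Zelevinsky/Langlands data. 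The key point is that for such $\varphi$ the induced representation has no ``extra'' reducibility coming from self-duality: the intertwining operators relevant to $\Ind_P^G(\tau \boxtimes \pi_0)$ are isomorphisms.

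The main steps, in order, would be: \textbf{(1)} First handle the case where $G = G^*$ is quasi-split. Here one can invoke the known description of $A$-packets via parabolic induction: by Arthur \cite{MR3135650} and Mok \cite{MR3338302} (and the supplementary work of Moeglin and others on the structure of these packets), for an almost tempered parameter $\phi$ written as in (\ref{almosttempered}), every member of $\Pi_\phi(G^*)$ is obtained as $\Ind_P^{G^*}(\tau \boxtimes \pi_0)$ with $\pi_0 \in \Pi_{\phi_0}(G_0^*)$, and this induction is irreducible because the exponents $s_i$ satisfy $|s_i| < 1/2$ — the standard intertwining operator $M(w,\cdot)$ for the longest relevant Weyl element has no pole and no zero in this range (this is where the ``almost tempered'' hypothesis, rather than merely ``bounded'', is used). \textbf{(2)} Then transfer to the general pure inner form $G = G(V)$. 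Here I would use the local Langlands correspondence for pure inner forms established in \cite{zou2020local}, \cite{chen2020local} (and \cite{kaletha2014endoscopic} in Case $U$), together with the compatibility of parabolic induction with the LLC: the Levi $\GL_k(E) \times G_0$ of $P$ in $G$ is an inner twist of the corresponding Levi in $G^*$, and $\pi(\phi_0,\eta_0)$ for $G_0$ corresponds to $\pi(\phi_0,\eta_0)$ for $G_0^*$ under the packet bijection. Irreducibility of $\Ind_P^G(\tau \boxtimes \pi_0)$ then follows from irreducibility of the corresponding induction on $G^*$ because reducibility is detected by the same $R$-group / intertwining operator data, which is insensitive to the inner twisting (the relevant Plancherel measures and Harish-Chandra $\mu$-functions agree, being computed from the same $L$-functions of $\phi_i$, $\phi_i \times \phi_j$, and $\phi_i$ with $G_0$). \textbf{(3)} Finally, once each $\Ind_P^G(\tau \boxtimes \pi_0)$ is irreducible, the identification $\Pi_\phi(G) = \{\Ind_P^G(\tau \boxtimes \pi_0) \mid \pi_0 \in \Pi_{\phi_0}(G_0)\}$ follows from the induction property of the LLC for $G$ recalled just before the lemma, together with the natural isomorphism $\calS_{\phi_0} \simeq \calS_\phi$, since $\pi(\phi,\eta)$ is by definition the unique irreducible subquotient of $\Ind_P^G(\tau \boxtimes \pi(\phi_0,\eta_0))$.

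The main obstacle I anticipate is Step (1)/(2) in the Archimedean case, and more specifically at the real place: the structure theory of $A$-packets and the irreducibility of these induced modules for $F = \mathbb{R}$ is more delicate (this is the same ``issue only at the real place'' alluded to in the introduction), and the cited references (Moeglin–Renard \cite{MR3666056}, \cite{MR4092900}, Arancibia–Moeglin–Renard \cite{MR3919547}, etc.) cover $A$-packets for good parameters but the almost tempered case needs the extra twist-by-$|\cdot|^{s_i}$ analysis. I would handle this by noting that since $\phi$ is assumed to be a localization of a \emph{global generic} elliptic $A$-parameter, $\phi_0$ is in fact a tempered $L$-parameter, so $\Pi_{\phi_0}(G_0)$ is the tempered $L$-packet, and then the required irreducibility is the standard fact that parabolic induction of a tempered representation twisted by characters with exponents in $(-1/2,1/2)$ stays irreducible when the inducing data is not self-dual of the relevant parity — which for real groups follows from the Langlands classification together with a Knapp–Stein/Speh–Vogan type argument, or can be deduced from the corresponding statement for the general linear factor combined with the fact that $G_0$-reducibility would force a self-dual constituent. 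A secondary, more bookkeeping-type obstacle is making sure the normalizations of the intertwining operators and Whittaker data match between $G$ and $G^*$; but since reducibility is a normalization-independent property, this does not affect the argument and only matters for the later comparison of the maps $\calJ$.
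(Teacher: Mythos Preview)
Your route differs from the paper's, and your Step~(2) contains a genuine gap. The paper does \emph{not} argue by transfer from $G^*$ to $G$. Instead it proceeds in two stages, both carried out directly on $G$ itself. First, when $\phi$ is actually tempered (all $s_i = 0$), irreducibility of $\Ind_P^G(\tau \boxtimes \pi_0)$ is deduced by a counting argument: by definition the $L$-packet $\Pi_\phi(G)$ consists of all irreducible summands of these inductions as $\pi_0$ ranges over $\Pi_{\phi_0}(G_0)$, and since the LLC bijection forces $|\Pi_\phi(G)| = |\widehat{\calS_\phi}| = |\widehat{\calS_{\phi_0}}| = |\Pi_{\phi_0}(G_0)|$, each induction must be irreducible. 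Second, for general almost tempered $\phi$, induction in stages reduces the question to the case where all $s_i > 0$ and $\pi_0$ is tempered; this is then handled by direct citation --- Speh--Vogan \cite{MR590291} for $F$ Archimedean, and \cite[Proposition~6.7]{MR3708200} (Case~$O$) and \cite[Proposition~9.1 and Appendix~B]{MR3573972} (Case~$U$) for $F$ non-Archimedean. These cited results apply to the pure inner form $G$ directly, once the LLC for $G$ is in hand (which it is, by \cite{zou2020local}, \cite{chen2020local}); no transfer from $G^*$ is needed.

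The problem with your Step~(2) is that $R$-group and $\mu$-function analysis governs reducibility of \emph{tempered} parabolic induction, but here the inducing data $\tau \boxtimes \pi_0$ is non-tempered (the exponents $s_i$ are nonzero). In that regime reducibility is a Langlands-quotient/standard-module question, not a Knapp--Stein $R$-group question, and the assertion that ``reducibility is detected by the same intertwining operator data, which is insensitive to inner twisting'' is not justified as stated. You could perhaps push through an argument via normalized intertwining operators and their $L$-function normalizing factors, but this is substantially more work than you indicate, and in any case is unnecessary: the references above already prove the result for arbitrary $G$. You do correctly identify Speh--Vogan as the relevant tool for $F = \RR$ in your obstacles paragraph, and the observation there that $\phi_0$ is a genuine tempered $L$-parameter (so $\pi_0$ is tempered) is exactly the reduction the paper uses --- but this should be the main argument, not a fallback.
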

\begin{proof}
	When $\phi$ is tempered, the assertion in this lemma simply follows from the local intertwining relation for $G$. 
So it remains to show the lemma when $\phi$ is almost tempered but non-tempered. Let $j\in\{1,\cdots,r\}$ be the largest integer such that $s_j>0$. By induction in stages, we may rewrite $\Ind_P^{G}\left(\tau\boxtimes \pi_0\right)$ as a standard module. More precisely, we have
	\[
		\Ind_P^{G}\left(\tau\boxtimes \pi_0\right)= \Ind_{P'}^G \left(\tau'\boxtimes \pi'_0\right),
	\]
	where:
	\begin{itemize}
		\item $P'$ is a parabolic subgroup of $G$ with Levi component $\GL_{k'}(E)\times G(V'_0)$, where $V'_0$ is a $c$-Hermitian space in the Witt tower containing $V$, and $k'=k_1+\cdots+k_j$;
		\item $\tau'$ is the irreducible representation of $\GL_{k'}(E)$ associated to the $L$-parameter 
		\[
		\phi_1|\cdot|^{s_1}+ \cdots + \phi_j |\cdot|^{s_j};
		\]
		\item $\pi'_0$ is some irreducible tempered representation of $G(V'_0)$ with the $L$-parameter
		\[
			\phi'_0=\left(\phi_{j+1}+\cdots +\phi_r\right) + \phi_0 + (\left(\phi_{j+1}+\cdots +\phi_r\right)^c)^\vee.
		\] 
	\end{itemize}
	Hence it would be sufficient to show the irreducibility of $\Ind_{P'}^G \left(\tau'\boxtimes \pi'_0\right)$. If $F$ is Archimedean,then this follows from a result of Speh-Vogan \cite[Thm.1.1]{MR590291}; see also \cite[Sect.8]{MR632407}. If $F$ is non-Archimedean, this has been proved in \cite[Prop.6.7]{MR3708200} for Case O and \cite[Prop.9.1 \& App.B]{MR3573972} for Case U. 
\end{proof}

Next we consider certain local $A$-packets for $H$. Let $W_0=W_{(r-k)}$ and $H_0=H(W_0)$. Put
\begin{equation*}
	\theta\left(\phi_0\right)=\phi_0\chi_W^{-1}\chi_V+\chi_V\boxtimes S_{2r-2n+1}. 
\end{equation*}
Then $\theta\left(\phi_0\right)$ is a local $A$-packet of good parity for $H_0$. According to the decomposition (\ref{almosttempered}) of $\phi$, we have a similar decomposition of $\theta(\phi)$  
\begin{equation*}
	\theta(\phi)=\varphi\chi_W^{-1}\chi_V+\theta(\phi_0)+\left(\left(\varphi\chi_W^{-1}\chi_V\right)^c\right)^\vee,
\end{equation*}
as well as the natural isomorphism $\calS_{\theta(\phi_0)}\simeq\calS_{\theta(\phi)}$. Let $Q$ be the standard parabolic subgroup of $H$ with Levi component $\GL_k(E)\times H_0$. Then by the definition and the inductive property of local $A$-packets (see \cite[p.45(1.5.1)]{MR3135650} and \cite[p.28]{MR3338302}), we have: 
\begin{equation*}
	\Pi_{\theta(\phi)}(H)=
	\Big\{\textit{Irreducible constituents of}~\Ind_Q^H(\tau\chi_W^{-1}\chi_V\boxtimes\sigma_0)~\big|~\sigma_0\in\Pi_{\theta(\phi_0)}(H_0)\Big\}.
\end{equation*}
The following irreducibility result for these induced representations will be used in later proofs. 
\begin{lemma}\label{Irr-nonStdMod}
	The induced representation $\Ind_Q^H(\tau\chi_W^{-1}\chi_V\boxtimes\sigma_0)$ is irreducible for all irreducible unitary representation $\sigma_0$ in the $A$-packet $\Pi_{\theta(\phi_0)}(H_0)$. 
	Hence as a (multi) set, we have
	\[
	\Pi_{\theta(\phi)}(H)=\Big\{\Ind_Q^H(\tau\chi_W^{-1}\chi_V\boxtimes\sigma_0)~\big|~\sigma_0\in\Pi_{\theta(\phi_0)}(H_0)\Big\}.
	\]
\end{lemma}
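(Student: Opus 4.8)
The plan is to establish irreducibility of $\Ind_Q^H(\tau\chi_W^{-1}\chi_V\boxtimes\sigma_0)$ by mirroring the strategy of Lemma~\ref{Irr-StdMod}, but now working on the symplectic/quasi-split-unitary side $H$, where the relevant representation-theoretic input is cleaner because $H$ is quasi-split. First I would reduce, by induction in stages on the number of segments in $\varphi = \phi_1|\cdot|^{s_1}\oplus\cdots\oplus\phi_r|\cdot|^{s_r}$, to the case where $\tau\chi_W^{-1}\chi_V$ corresponds to a single twisted tempered representation $\phi_j|\cdot|^{s_j}\chi_W^{-1}\chi_V$ with $0<s_j<\tfrac12$, and $\sigma_0$ ranges over the $A$-packet $\Pi_{\theta(\phi_0)}(H_0)$. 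The key point is that, unlike the $L$-parameter case, $\sigma_0$ is a member of a genuinely non-tempered $A$-packet; however, since $\theta(\phi_0)$ has bounded image on the Weil group, each such $\sigma_0$ is unitary, and one knows (from Moeglin's work cited in the first Remark, or from the explicit construction of $A$-packets via parabolic induction and Langlands quotients) that $\sigma_0$ is itself a subquotient of a standard module induced from a tempered representation of a smaller group via segments $\rho_i\boxtimes S_{e_i}$ with half-integral exponents strictly less than those coming from $S_{2r-2n+1}$.

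The main step is then a standard-module / Langlands-quotient argument. I would write $\sigma_0$ as the Langlands quotient (or, more precisely, identify its cuspidal support) and observe that $\Ind_Q^H(\tau\chi_W^{-1}\chi_V\boxtimes\sigma_0)$ has the same cuspidal support as a standard module whose exponents are totally ordered: the exponent $s_j$ attached to $\tau$ lies in $(0,\tfrac12)$, whereas the exponents appearing inside $\sigma_0$ coming from $\theta(\phi_0)$ are of the form $\tfrac{2r-2n+1-1}{2}, \tfrac{2r-2n+1-3}{2},\ldots$ (for the $S_{2r-2n+1}$-block, recalling $\dim V < r$ so $2r-2n+1 \geq 3$) together with $0$ (for the good-parity tempered part of $\phi_0$). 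Since $\dim V<r$ forces $2r-2n+1 > 1$, none of the $\sigma_0$-exponents lies in the open interval $(0,\tfrac12)$ except possibly $0$ itself, and $0$ causes no linkage obstruction because $\phi_j|\cdot|^{s_j}$ is not (conjugate) self-dual with the parity of $\phi$, hence $\tau\chi_W^{-1}\chi_V$ does not link to the tempered block of $\sigma_0$. Concretely I would invoke the irreducibility criterion for such induced representations: in the non-Archimedean case the results of Moeglin on $A$-packets together with the derivative/Jacquet-module combinatorics as in \cite[Proposition 6.7]{MR3708200} and \cite[Proposition 9.1 \& Appendix B]{MR3573972}, applied to the Levi $\GL_k(E)\times H_0$; in the Archimedean case the Speh--Vogan irreducibility theorem \cite{MR590291} (see also \cite[\S 8]{MR632407}), which applies verbatim once $\sigma_0$ is exhibited as unitarily induced from a tempered representation with half-integral exponents.

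Granting irreducibility, the ``in particular'' clause is immediate: by the inductive description~\eqref{localAnontemepred} of $\Pi_{\theta(\phi)}(H)$ as the set of irreducible constituents of $\Ind_Q^H(\tau\chi_W^{-1}\chi_V\boxtimes\sigma_0)$ as $\sigma_0$ runs over $\Pi_{\theta(\phi_0)}(H_0)$, irreducibility collapses ``irreducible constituents of'' to the induced representation itself, giving
\[
	\Pi_{\theta(\phi)}(H)=\Big\{\Ind_Q^H(\tau\chi_W^{-1}\chi_V\boxtimes\sigma_0)~|~\sigma_0\in\Pi_{\theta(\phi_0)}(H_0)\Big\}
\]
as a multiset, with multiplicities inherited from $\Pi_{\theta(\phi_0)}(H_0)$.

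The hardest part will be handling the Archimedean case, specifically $F=\RR$, where the $A$-packets $\Pi_{\theta(\phi_0)}(H_0)$ are only known to be well-behaved under the restrictions recorded in the first Remark (Arancibia--Moeglin--Renard, Moeglin--Renard) and where one must be careful that the members $\sigma_0$ really do arise as unitarily parabolically induced from tempered representations with the claimed exponent pattern, so that Speh--Vogan applies; I expect this to require an explicit appeal to the construction of real $A$-packets rather than a black-box citation. A secondary subtlety is making sure the twist by $\chi_W^{-1}\chi_V$ does not introduce a spurious self-duality (in Case $U$) that would create a linkage; this is exactly the computation that $\phi_j\chi_W^{-1}\chi_V$ versus $\chi_V$ have distinct Weil-group constituents, which follows from $\phi_j$ not being conjugate self-dual with the parity of $\phi$.
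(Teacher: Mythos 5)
Your non-Archimedean strategy matches the paper's: the paper simply cites Moeglin (\cite[\S 3.2]{MR2767522}, \cite[\S 5.1]{MR2822218}), and you do the same, with the cuspidal-support/linkage discussion serving as an (unnecessary but helpful) heuristic for why Moeglin's theorem applies. The gap is in the Archimedean case, and it is not a small one. You propose to conclude by exhibiting $\sigma_0$ as ``unitarily induced from a tempered representation with half-integral exponents'' and then appealing to Speh--Vogan \cite{MR590291}. But $\sigma_0\in\Pi_{\theta(\phi_0)}(H_0)$ is \emph{not} unitarily parabolically induced from tempered: $\theta(\phi_0)$ contains $\chi_V\boxtimes S_{2r-2n+1}$ with $2r-2n+1\geq 3$, so $\sigma_0$ is a Speh-type representation --- the (unique) Langlands \emph{quotient} of a standard module, realized as the image of a normalized intertwining operator, not the induced module itself. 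Speh--Vogan addresses irreducibility of $\Ind_P^G(\delta\otimes\nu)$ for $\delta$ a (limit of) discrete series, and does not directly cover inducing a further almost-tempered twist past a Speh quotient.

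The paper handles $F=\RR$ with a different mechanism, sketched in Appendix~\ref{App.Irr} (following \cite{MR3907735}): write $\sigma_0$ as the image of a normalized intertwining operator $R_{\overline{Q}_{\psi_0}|Q_{\psi_0}}$ on a standard module for $H_0$; use exactness of parabolic induction to realize $\Ind_Q^H(\tau\chi_W^{-1}\chi_V\boxtimes\sigma_0)$ as the image of the induced operator; then rearrange via a commutative diagram of intertwining operators (commuting $\tau$-segments past the $\tau'_j$-segments, which is where the exponent separation $s_j<\tfrac12\leq 1\leq$ Speh exponents enters) so that the resulting operator is the Langlands-quotient map of a genuine standard module for $H$. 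Irreducibility of the image is then automatic. This intertwining-operator bookkeeping is the actual content of the real case and does not reduce to Speh--Vogan; your proposal flags the real case as hard but offers the wrong tool. You should replace the Speh--Vogan step with the exact-functor plus intertwining-operator rearrangement argument, or cite \cite{MR3907735} for it.
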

\begin{proof}
	This assertion has been proved in a more general context by M{\oe}glin in \cite[Sect.3.2]{MR2767522}, and \cite[Sect.5.1]{MR2822218} when $F$ is non-Archimedean; and by M{\oe}glin-Renard in \cite[Sect.6]{MR3666056} when $F=\CC$. To show the case when $F=\RR$, one can appeal to the same argument as that in \cite[Prop.3.4]{MR3907735}; for a sketch of the proof, see Appendix \ref{App.Irr}. 
\end{proof}

\subsection{Reduction to the case of good parity}
We proceed the proof by induction on $\dim V$. 
\begin{lemma}\label{Induction.Hypothesis}
	Theorem \ref{T:APack-MultiFree} and Theorem \ref{Local-Compare} holds for $\dim V=0$, i.e. in Case ${\rm O}$ or Case $\U_0$, and $n=0$.  
\end{lemma}
\begin{proof}
	When $\dim V=0$, we have $\theta(\phi)=\chi_V\boxtimes S_{2r+1}$ (note that if we are in Case O, we have $\chi_V=\mathbbm{1}$ by our conventions), and $\Pi_{\theta(\phi)}(H)=\{\sigma_1\}$, where 
	\[
	\sigma_1= \begin{cases}
		\textit{the trivial representation} \quad & \textit{Case ${\rm O}$},\\
		\chi\circ\det \quad & \textit{Case ${\rm U}$}.
	\end{cases}
	\]
	Here in Case U, $\chi$ is the character of $E^1$ corresponding to $\chi_V$ by the LLC for torus. It is easy to see that $\sigma_1$ is also the theta lift of the trivial representation of $G$. This completes the proof. 
\end{proof}

From now on, we assume that $\dim V>0$. We first reduce the proof of Theorem \ref{T:APack-MultiFree} and Theorem \ref{Local-Compare} to the case of good parity. Given an almost tempered $L$-parameter $\phi$ of $G$, we can write it as 
\[
	\phi=\varphi+\phi_0+\left(\varphi^c\right)^\vee,
\]
where $\phi_0$ is the ``good parity part'', see (\ref{almosttempered}). 

\begin{lemma}\label{Reduction.To.GoodParity}
	Assume that Theorem \ref{T:APack-MultiFree} and Theorem \ref{Local-Compare} hold for $\phi_0$. Then they also hold for $\phi$.
\end{lemma}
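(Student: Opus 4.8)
The plan is to deduce Theorem \ref{Local-Compare} for $\phi$ from the good-parity case $\phi_0$ by matching up the two induction pictures established in Lemma \ref{Irr-StdMod} and Lemma \ref{Irr-nonStdMod}. Recall that we have written $\phi = \varphi + \phi_0 + (\varphi^c)^\vee$, with $\tau$ the irreducible representation of $\GL_k(E)$ attached to $\varphi$. On the $G$-side, Lemma \ref{Irr-StdMod} tells us that every member of $\Pi_\phi^L(G')$ (over all pure inner forms $G'$) is of the form $\Ind_P^{G'}(\tau \boxtimes \pi_0)$ for a unique $\pi_0 \in \Pi_{\phi_0}^L(G'_0)$, and this sets up a bijection $\Pi_{\phi_0}^L(G'_0) \leftrightarrow \Pi_\phi^L(G')$ compatible with the canonical isomorphism $\calS_{\phi_0} \simeq \calS_\phi$; in particular $\calJ_\scrW^L(\Ind_P^{G'}(\tau\boxtimes\pi_0))$ corresponds to $\calJ_\scrW^L(\pi_0)$ under this isomorphism. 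On the $H$-side, Lemma \ref{Irr-nonStdMod} gives the analogous bijection $\Pi_{\theta(\phi_0)}(H_0) \leftrightarrow \Pi_{\theta(\phi)}(H)$, $\sigma_0 \mapsto \Ind_Q^H(\tau\chi_W^{-1}\chi_V \boxtimes \sigma_0)$, compatible with $\calS_{\theta(\phi_0)} \simeq \calS_{\theta(\phi)}$, hence in particular part (i) of Theorem \ref{Local-Compare} for $\phi$ (multiplicity-freeness of $\Pi_{\theta(\phi)}(H)$) follows immediately from multiplicity-freeness for $\theta(\phi_0)$.

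The core step is then to check that the theta lift intertwines these two induction operations, i.e. that
\[
\theta\big(\Ind_Q^H(\tau\chi_W^{-1}\chi_V \boxtimes \sigma_0)\big) \simeq \Ind_P^{G'}\big(\tau \boxtimes \theta(\sigma_0)\big)
\]
whenever $\theta(\sigma_0)$ is the theta lift of $\sigma_0$ to the appropriate $G'_0$ (and the space $V'_0$ determines $V'$ in the same Witt tower). This is a standard compatibility of theta lifts with parabolic induction in the stable range: one argues via the mixed model / Kudla's filtration of the Weil representation restricted to $\GL_k(E) \times H_0 \times G'$, using that the dual pair is in the stable range so the relevant Jacquet modules behave well and no "boundary" terms survive. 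The point is that induction from a Siegel-type parabolic corresponds on the other member of the dual pair to induction from the parallel parabolic, with the $\GL_k$-factor twisted by $\chi_W^{-1}\chi_V$. Combined with the irreducibility statements in Lemmas \ref{Irr-StdMod} and \ref{Irr-nonStdMod} — which guarantee both sides are irreducible, so there is nothing to disentangle — this yields a bijection $\theta \colon \Pi_{\theta(\phi)}(H) \to \bigsqcup_{G'} \Pi_\phi^L(G')$ fitting into diagram (\ref{Diag-LocalCompare}).

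Finally I would verify commutativity of the character labels. The left square of the diagram factors through the good-parity square for $\phi_0$ via the vertical identifications $\calS_\phi \simeq \calS_{\phi_0}$ and $\overline{\calS_{\theta(\phi)}} \simeq \overline{\calS_{\theta(\phi_0)}}$ (induced by the isomorphisms $\ell$ and $\ell_0$, which are compatible because $\varphi\chi_W^{-1}\chi_V$ contributes no new component-group generator — it is not conjugate self-dual of the relevant parity). By the inductive hypothesis the inner square commutes, and the outer square commutes because both $\calJ_\scrW^L$ and $\calJ_{\scrW'}$ are, by construction, transported along parabolic induction compatibly with the component-group identifications (this is exactly property (2) in the review of \cite{MR3135650}, \cite{MR3338302}, together with the analogous behavior of $\calJ_\scrW^L$ under induction on the $G$-side). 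Hence $\calJ_\scrW^L \circ \theta = \ell^* \circ \calJ_{\scrW'}$, and $\Pi_\phi^\theta(G) = \Pi_\phi^L(G)$ with $\calJ_\scrW^L = \calJ^\theta$, completing the reduction.

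The main obstacle I anticipate is the theta–parabolic-induction compatibility: one must be careful about which $G'$ appears (the Witt-tower bookkeeping for $V'_0 \leadsto V'$), about the normalization of the $\GL_k$-twist $\chi_W^{-1}\chi_V$, and about confirming that in the stable range the Kudla filtration has no surviving subquotients other than the expected induced representation. The irreducibility lemmas are doing a lot of work here by collapsing what would otherwise be a delicate subquotient-matching argument into an equality of irreducibles, so once those are in hand the rest is essentially bookkeeping with component groups and the known functoriality of the canonical characters $\calJ_{\scrW'}$, $\calJ_\scrW^L$.
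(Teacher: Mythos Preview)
Your proposal is correct and follows essentially the same route as the paper's proof. The paper also reduces to the good-parity case via the two irreducibility lemmas and the compatibility of theta lifts with parabolic induction; the only cosmetic difference is that the paper cites this compatibility as the ``induction principle'' with explicit references \cite{MR818351}, \cite[Theorem 4.5.5]{MR1658091}, \cite[Theorem 28]{MR2175409}, \cite[Cor 3.21]{MR1346217}, phrasing it as the existence of a nonzero equivariant map $\omega \to \Ind_{P'}^{G'}(\tau\boxtimes\pi_0)\boxtimes\Ind_Q^H(\tau\chi_W^{-1}\chi_V\boxtimes\sigma_0)$ and then invoking irreducibility, rather than asserting the isomorphism directly --- but this is exactly what you unpack in your final paragraph. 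Note that the induction principle here does not actually require the stable-range hypothesis, so your remark about ``no boundary terms survive'' is slightly misplaced, though harmless.
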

\begin{proof}
	Let $\sigma\in\Pi_{\theta(\phi)}(H)$. By Lemma \ref{Irr-nonStdMod} we know that
	\[
	\sigma=\Ind_Q^H(\tau\chi_W^{-1}\chi_V\boxtimes\sigma_0)
	\]
	for some $\sigma_0\in\Pi_{\theta(\phi_0)}(H_0)$. Moreover, if we identify $\calS_{\theta(\phi)}$ with $\calS_{\theta(\phi_0)}$ via the natural isomorphism, then $\calJ_{\scrW'}(\sigma)=\calJ_{\scrW'}(\sigma_0)$. By our hypothesis, there is exactly one pure inner form $G_0'$ of $G_0^*$, such that:
	\begin{itemize}
		\item the theta lift of $\sigma_0$ to $G_0'$, denoted by $\pi_0$, is non-vanishing;
		\item $\calJ_{\scrW}^L(\pi_0)=\ell^*(\calJ_{\scrW'}(\sigma_0))$.
	\end{itemize}
	Then it follows from the induction principle of local theta correspondence \cite{MR818351}, \cite[Cor.3.21]{MR1346217}, \cite[Thm.4.5.5]{MR1658091}, \cite[Sect.5.2]{MR2175409} that there exists a non-zero equivariant map
	\[
	\omega\lra\Ind_{P'}^{G'}(\tau\boxtimes\pi_0)\boxtimes\Ind_Q^H(\tau\chi_W^{-1}\chi_V\boxtimes\sigma_0),
	\]
	where $G'$ is a pure inner form of $G$, and $P'$ is the standard parabolic subgroup of $G'$ with Levi component $\GL_k(E)\times G_0'$. Since $\Ind_{P'}^{G'}(\tau\boxtimes\pi_0)$ is irreducible by Lemma \ref{Irr-StdMod}, this implies that the theta lift of $\sigma$ to $G'$ is 
	\[
	\pi=\Ind_{P'}^{G'}(\tau\boxtimes\pi_0).
	\]
	Moreover, by the LLC for $G'$, if we identify $\calS_\phi$ with $\calS_{\phi_0}$ via the natural isomorphism, then $\calJ_{\scrW}^L(\pi)=\calJ_{\scrW}^L(\pi_0)$. Hence the theta lift between $(G',H)$ gives us the desired commutative diagram (\ref{Diag-LocalCompare}). Moreover, it follows from the combination of our hypothesis, Lemma \ref{Irr-StdMod}, and Lemma \ref{Irr-nonStdMod} that the left vertical arrow $\theta$ is a bijection. In particular, $\Pi_{\theta(\phi)}(H)$ is multiplicity-free. This completes the proof.
\end{proof}

\subsection{A subdiagram}\label{S:Asubdiagram}
So now, by the results in the previous subsection, it remains to prove Theorem \ref{T:APack-MultiFree} and Theorem \ref{Local-Compare} for $L$-parameters of good parity. Let $\phi$ be an $L$-parameter of good parity for $G$. Let $\theta(\phi)$ be the $A$-parameter of $H$ defined in (\ref{thetaphi}), and $\phi_{\theta(\phi)}$ the $L$-parameter associated to $\theta(\phi)$. Since $\phi$ is of good parity, it is also tempered. Hence we know that the $L$-packet $\Pi_{\phi_{\theta(\phi)}}^L(H)$ is a subset of the local $A$-packet $\Pi_{\theta(\phi)}(H)$, and there is a commutative diagram 
\begin{equation*}
	\begin{CD}
		\Pi_{\phi_{\theta(\phi)}}^L(H) @>\calJ_{\scrW'}^L>> \widehat{\overline{\calS_{\phi_{\theta(\phi)}}}}\\
		@VVV @VVV\\
		\Pi_{\theta(\phi)}(H) @>\calJ_{\scrW'}>> \widehat{\overline{\calS_{\theta(\phi)}}} 
	\end{CD}.
\end{equation*}
By abuse of notation, we shall still denote the following composition of natural maps
\[
\calS_\phi\xrightarrow{~\ell~}\overline{\calS_{\theta(\phi)}}\lra\overline{\calS_{\phi_{\theta(\phi)}}}
\]
by the symbol $\ell$. 
\begin{remark}
	In our later proofs, we shall also use the fact that the $A$-packet $\Pi_{\theta(\phi)}(H)$ contains the $L$-packet $\Pi_{\phi_{\theta(\phi)}}^L(H)$ when $\phi$ is almost tempered but may not be tempered. We are not very sure whether this fact has been verified in \cite{MR3135650} \cite{MR3338302} or not, but this fact can be easily shown based on the tempered case, by using Lemma \ref{Irr-nonStdMod}. 
\end{remark}
In this subsection, as the first step toward the good parity case, we prove the following proposition, which will establish a ``subdiagram'' of Theorem \ref{Local-Compare}. 
\begin{proposition}\label{Sub-Diagram.Local.Compare}
	There is a commutative diagram 
	\[
	\begin{CD}
		\Pi_{\phi_{\theta(\phi)}}^L(H) @>\calJ_{\scrW'}^L>> \widehat{\overline{\calS_{\phi_{\theta(\phi)}}}}\\
		@VV\theta V @VV\ell^*V\\
		\bigsqcup\Pi_\phi^L(G') @>\calJ_\scrW^L>> \widehat{\calS_\phi}
	\end{CD},
	\]
	where the disjoint union runs over all pure inner forms of $G$, and the arrow $\theta$ is an injection given by the theta lift. More precisely, for $\sigma\in \Pi_{\phi_{\theta(\phi)}}^L(H)$, we have:
\begin{enumerate}
	\item There exists an unique pure inner form $G'$ of $G$, such that the theta lift $\pi=\theta(\sigma)$ of $\sigma$ to $G'$ is non-vanishing.
	\item $\pi$ lies in the $L$-packet $\Pi_{\phi}^{L}(G')$, and 
	\[
		\calJ_\scrW^L(\pi)= \ell^*\left(\calJ_{\scrW'}^L(\sigma)\right).
	\]
\end{enumerate}
\end{proposition}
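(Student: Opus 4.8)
\emph{Proof plan.} The plan is to compute the stable--range theta lift of each member of $\Pi^L_{\phi_{\theta(\phi)}}(H)$ explicitly, reducing it to an almost equal rank theta lift, for which the desired compatibility with packets is already built into the construction of the local Langlands correspondence for the pure inner forms of $G$ --- \cite{zou2020local} in Case $O$, \cite{chen2020local} in Case $U$ for non-Archimedean $F$, and \cite{MR1011897} for Archimedean $F$.

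First I would unravel the parameter. Since $\phi$ is of good parity, hence tempered, the $L$-parameter attached to $\theta(\phi)$ via (\ref{AparameterassLpara}) is
\[
\phi_{\theta(\phi)}=\phi\chi_W^{-1}\chi_V\ \oplus\ \chi_V\bigl(|\cdot|^{\,r-n}\oplus|\cdot|^{\,r-n-1}\oplus\cdots\oplus|\cdot|^{\,n-r}\bigr).
\]
Its Langlands datum consists of the essentially square-integrable representation $\tau'$ of $\GL_{r-n}(E)$ with $L$-parameter $\chi_V(|\cdot|\oplus\cdots\oplus|\cdot|^{\,r-n})$, together with the tempered $L$-parameter $\phi_0\coloneqq\phi\chi_W^{-1}\chi_V\oplus\chi_V$ of the group $H_0\coloneqq H(W_{(n)})$; using the parities of $\chi_V$ and $\chi_W$ one checks that $\phi_0$ is of good parity for $H_0$ and that $(G(V),H_0)$ is an almost equal rank dual pair. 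By the inductive property of the LLC for the quasi-split $H$ (\cite{MR3135650}, \cite{MR3338302}), the natural map $\calS_{\phi_0}\lra\calS_{\phi_{\theta(\phi)}}$ is an isomorphism, every member of $\Pi^L_{\phi_{\theta(\phi)}}(H)$ is the unique irreducible quotient $J(\tau'\boxtimes\sigma_0)$ of $\Ind_Q^H(\tau'\boxtimes\sigma_0)$ for a unique $\sigma_0\in\Pi^L_{\phi_0}(H_0)$, and $\calJ^L_{\scrW'}(J(\tau'\boxtimes\sigma_0))=\calJ^L_{\scrW'_0}(\sigma_0)$ under this identification, where $\scrW'_0$ is the Whittaker datum on $H_0$ cut out from $\scrW'$ as in \S\ref{Whittaker.Data}. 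A direct bookkeeping with the recipe of \cite[\S 8]{MR3202556} then identifies $\ell^*$ with the inclusion $\wh{\overline{\calS_{\phi_0}}}\hookrightarrow\wh{\calS_\phi}$.

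Next I would compute the theta lift. Write $W_{(r)}=W_{(n)}\perp W''$ with $W''$ split of dimension $2(r-n)$; since $\dim V<r$ one has $\dim W''>\dim V$, so that the trivial representation of $G(V)$ lifts non-trivially into the Witt tower of $W''$. Feeding this into the see-saw attached to $W_{(r)}=W_{(n)}\perp W''$, together with the induction principle for theta lifts (\cite{MR818351}, \cite[Theorem 4.5.5]{MR1658091}, \cite[Theorem 28]{MR2175409}, \cite[Cor 3.21]{MR1346217}) and the irreducibility statements of Lemmas \ref{Irr-StdMod} and \ref{Irr-nonStdMod}, I would show: for $\sigma_0\in\Pi^L_{\phi_0}(H_0)$ the almost equal rank lift $\pi_0\coloneqq\theta_{W_{(n)}}(\sigma_0)$ is non-vanishing on exactly one pure inner form $G'=G(V')$ of $G$ (this first-occurrence statement, and the identification of $V'$, being supplied by the conservation relation), and the stable-range lift $\theta_{W_{(r)}}(J(\tau'\boxtimes\sigma_0))$ is then non-vanishing precisely on $G'$ and coincides with $\pi_0$: the $\GL_{r-n}$-decoration $\tau'$ ``evaporates'', there being no room for it inside $G(V')$. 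By the construction of the LLC for the pure inner forms of $G$ via the almost equal rank theta lift, $\pi_0\in\Pi^L_\phi(G')$ and $\calJ^L_{\scrW}(\pi_0)$ is the transport of $\calJ^L_{\scrW'_0}(\sigma_0)$; chasing through the identifications of component groups, this equals $\ell^*\bigl(\calJ^L_{\scrW'}(J(\tau'\boxtimes\sigma_0))\bigr)$. This yields the commutative square, and $\theta$ is injective since $\sigma_0\mapsto J(\tau'\boxtimes\sigma_0)$ is a bijection while $\sigma_0\mapsto\theta_{W_{(n)}}(\sigma_0)$ is injective by Howe duality.

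I expect the main obstacle to be the second step: proving the identity $\theta_{W_{(r)}}(J(\tau'\boxtimes\sigma_0))=\theta_{W_{(n)}}(\sigma_0)$ with the accompanying non-vanishing bookkeeping, and --- threading all three steps together --- matching the local character identities produced by the two external inputs (Arthur--Mok on the $H$-side, \cite{zou2020local}/\cite{chen2020local} on the $G$-side), which is precisely what forces the coherent choice of Whittaker data $\scrW,\scrW',\scrW'_0$ and additive character $\psi_F$ fixed in \S\ref{Whittaker.Data}. The Archimedean places need a separate input, along the lines of Appendix \ref{App.Irr} (and, in Case $U$, an auxiliary induction on $\dim V$ as announced in the introduction); in Case $U$ the split places are handled by a direct computation with the type II theta correspondence \cite{MR2504432}.
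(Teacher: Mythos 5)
Your proposal is correct and follows essentially the same strategy as the paper's proof: exhibit $\sigma$ as the unique irreducible (Langlands) quotient of the standard module $\Ind_Q^H(\chi_V|\det|^{r-n}\boxtimes\cdots\boxtimes\chi_V|\det|\boxtimes\sigma_0)$ with $\sigma_0\in\Pi_{\vartheta(\phi)}^L(H_0)$, apply Prasad's conjecture (Theorem~\ref{Prasad.Conj}) to identify the almost--equal--rank lift $\pi=\vartheta(\sigma_0)$ together with its character on $\calS_\phi$, and then match $\pi$ with the stable--range lift of $\sigma$ via the persistency/Witt--tower results. The only cosmetic difference is that the paper runs the Witt--tower step ``upward'' from $\pi$ to $\sigma$, which is what \cite[Proposition 3.2]{MR3502978} and \cite[Theorem A]{MR3305312} (plus persistency) directly supply, whereas you propose to compute the descent $\theta_{W_{(r)}}(\sigma)$ directly using a see--saw attached to $W_{(r)}=W_{(n)}\perp W''$; the see--saw framing is not actually needed and is the less natural way to organize the non--vanishing bookkeeping, since $\sigma$ is a representation of $H(W_{(r)})$ rather than of $H(W_{(n)})\times H(W'')$, and the Kudla induction principle together with the Langlands--quotient description of higher lifts already suffice.
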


The ingredient of the proof of this proposition is the so-called Prasad's conjecture. It describes the (almost) equal rank theta lift in terms of the LLC. We shall briefly recall it.

Let $W_0=W_{(n)}$ as defined in (\ref{W-split}), and $H_0=H\left(W_{(n)}\right)$. We shall use symbols $G'$ and $H_0'$ to denote pure inner forms of $G$ and $H_0$. Consider the theta lift between $(G',H_0')$, which is in the almost equal rank, with respect to the datum $(\psi_F,\chi_V,\chi_W)$. To distinguish notations, we use the symbol $\vartheta$ to denote the theta lift between $(G',H_0')$. As we will need to use the LLC for the group $H_0'$, it is necessary for us to specify which Whittaker datum of $H_0$ we are using. Following the description in Section \ref{Whittaker.Data}, we can pick up a Whittaker datum of $H_0$ (depending on the choice of the additive character $\psi_F$), which we shall also denote by $\scrW'$ by abuse of notation. Let
\begin{equation*}
	\vartheta(\phi)=\phi\chi_W^{-1}\chi_V+\chi_V
\end{equation*}
be a tempered $L$-parameter for $H_0$. Then the Prasad's conjecture asserts that:
\begin{theorem}\label{Prasad.Conj}~
	\begin{enumerate}
		\item For any pure inner form $H'_0$ of $H_0$, and any $\sigma_0\in\Pi_{\vartheta(\phi)}^L(H'_0)$, there exists an unique pure inner form $G'$ of $G$, such that the theta lift $\pi$ of $\sigma_0$ to $G'$ is non-zero. Moreover, $\pi$ is in the $L$-packet $\Pi_\phi^L(G')$, and
		\begin{equation}\label{E:PrasadConjLabelling}
		\calJ_{\scrW}^L(\pi)=\calJ_{\scrW'}^L(\sigma_0)~\Big|_{\calS_\phi}.
		\end{equation}
		Here we regard $\calS_\phi$ as a subgroup of $\calS_{\vartheta(\phi)}$ via the natural embedding $\calS_\phi\hookrightarrow\calS_{\vartheta(\phi)}$. In particular, if we are in Case ${\rm U}$, then the map
		\[
		\vartheta:\bigsqcup_{H'_0}\Pi_{\vartheta(\phi)}^L(H'_0)\lra\bigsqcup_{G'}\Pi_\phi^L(G')
		\]
		given by the theta lift $\vartheta$ is surjective.
		\item Suppose we are in Case ${\rm O}$. Suppose further that $\chi_W\not\subset\phi$. Then for any pure inner form $G'$ of $G$, and any $\pi\in\Pi_\phi^L(G')$, both
		\[
		\begin{cases}
			\textit{the theta lift }\vartheta(\pi)\textit{ of $\pi$ to } H_0,\\
			\textit{the theta lift }\vartheta(\pi\otimes\det)\textit{ of $\pi\otimes\det$ to } H_0
		\end{cases}
		\]
		are non-zero.
		\item Suppose we are in Case ${\rm U}$, and $F$ is non-Archimedean. Suppose further that $\chi_W\not\subset\phi$. Let $W'_0$ be the unique skew-Hermitian space over $E$ of the same dimension and the opposite sign as $W_0$, and $H'_0=H(W'_0)$. Then for any pure inner form $G'$ of $G$, and any $\pi\in\Pi_\phi^L(G')$, both
		\[
		\begin{cases}
			\textit{the theta lift }\vartheta(\pi)\textit{ of $\pi$ to } H_0,\\
			\textit{the theta lift }\vartheta(\pi)\textit{ of $\pi$ to } H'_0
		\end{cases}
		\]
		are non-zero.
	\end{enumerate}
\end{theorem}
\begin{proof}
	This theorem was proved by many people in various cases. When $F$ is non-Archimedean, this is proved by Atobe-Gan \cite[Thm.4.5]{MR3708200} in Case O, and by Gan-Ichino \cite[Thm.4.4]{MR3573972} in Case U; when $F$ is real, this is essentially proved by Paul \cite[Thm.3.4]{MR1664375} \cite[Thm.15 \& Thm.18]{MR2175409}; when $F$ is complex, this is proved by Adams-Barbasch \cite[Thm.2.8 \& Thm.2.9]{MR1346217}. 
\end{proof}

\begin{remark}~
\begin{enumerate}
\item In Case U, the proof of Gan-Ichino \cite[Thm.4.4]{MR3573972} uses the AMF as an input. But one can apply Atobe's method \cite[Sect.7.1]{MR3788848} to this case to avoid using any global method. Indeed, except for the equality (\ref{E:PrasadConjLabelling}), all other statements in Theorem \ref{Prasad.Conj} follows from \cite[Thm.C.5]{MR3166215}. As for the equality (\ref{E:PrasadConjLabelling}), one can consider the diagram
\begin{equation*}
\begin{CD}
\omega\otimes\Ind_Q^{\widetilde{H}'_0}\left(\tau^c\chi_W^c\boxtimes\sigma_0^\vee\right) @>{\calT}>> \Ind_P^{\widetilde{G}'}\left(\tau\chi_V\boxtimes\pi\right)\\
@V{1\otimes R(\wt{w}',\tau^c\chi_W^c\boxtimes\sigma_0^\vee)}VV @VV{R(\wt{w},\tau\chi_V\boxtimes\pi)}V\\
\omega\otimes\Ind_Q^{\widetilde{H}'_0}\left(\tau^c\chi_W^c\boxtimes\sigma_0^\vee\right) @>{\calT}>> \Ind_P^{\widetilde{G}'}\left(\tau\chi_V\boxtimes\pi\right)
\end{CD}
\end{equation*}
constructed in \cite[Sect.8.2]{MR3573972}, where $\tau$ is an irreducible discrete series representation of some general linear group; $\widetilde{G}'$, ${\widetilde{H}'_0}$ are some larger unitary groups; $P$, $Q$ are maximal parabolic subgroups of $\widetilde{G}'$, ${\widetilde{H}'_0}$; and finally $R(\wt{w},\tau\chi_V\boxtimes\pi)$, $R(\wt{w}',\tau^c\chi_W^c\boxtimes\sigma_0^\vee)$ are normalized intertwining operators. This diagram commutes up to an explicitly computable constant. Let $\tau$ runs over all discrete series of general linear groups, by the local intertwining relation one can recover $\calJ_{\scrW}^L(\pi)$ from normalized intertwining operators $R(\wt{w},\tau\chi_V\boxtimes\pi)$, whereas the latter can be computed via the diagram above. Readers may also consult our previous paper \cite[Cor.6.3.2]{chen2020local} for a similar argument (but in a slightly different setting).

\item Here is a caveat: Paul's results \cite[Thm.3.4]{MR1664375} \cite[Thm.15 \& Thm.18]{MR2175409} are written in
terms of the Harish-Chandra parameters rather than the $L$-parameters. Therefore we need a ``dictionary'', which allows us to translate the language of Harish-Chandra parameters into the language of $L$-parameters, so that we can reformulate Paul's results into the form we need. For Case U, such a dictionary has been provided by \cite[App.A]{MR4054051} and \cite[Sect.3.2]{MR4568696}. For Case O, the situation is more complicated. The LLC for real even orthogonal groups has not been established in the literature so far, due to their disconnectedness. 
In Appendix \ref{App.LLC4O} we will first prove a weaker version of Theorem \ref{Prasad.Conj} (see Theorem \ref{weakver.Prasad.Conj}) using Paul's results, and then define an LLC for real full even orthogonal groups by using theta lifts. By our construction, Theorem \ref{weakver.Prasad.Conj} is upgraded to Theorem \ref{Prasad.Conj} automatically.

\item Indeed, this theorem also holds for all generic $L$-parameters. In particular, it holds for all almost tempered $L$-parameters.
\end{enumerate}
\end{remark}

Now we have three component groups $\calS_{\phi}$, $\calS_{\vartheta(\phi)}$, and $\calS_{\phi_{\theta(\phi)}}$. The natural embeddings between them give us a commutative diagram
\begin{equation*}
	\begindc{\commdiag}[75]
	\obj(8,4)[b]{$\overline{\calS_{\phi_{\theta(\phi)}}}$}
	\obj(0,0)[x]{$\overline{\calS_{\vartheta(\phi)}}$}
	\obj(-8,4)[c]{$\calS_{\phi}$}
	\mor{x}{b}{}[\atleft,\solidarrow]
	\mor{c}{x}{}[\atleft,\solidarrow]
	\mor{c}{b}{$\ell$}[\atleft,\solidarrow]
	\enddc.
\end{equation*}
Notice that 
\[
\phi_{\theta(\phi)}=\left(|\cdot|^{n-r}+\cdots+|\cdot|^{-1}\right)\chi_V+\vartheta(\phi)+\left(|\cdot|^{1}+\cdots+|\cdot|^{r-n}\right)\chi_V.
\]
Hence the natural map $\calS_{\vartheta(\phi)}\lra\calS_{\phi_{\theta(\phi)}}$ is indeed an isomorphism. The component group $\overline{\calS_{\vartheta(\phi)}}$ will serve as a springboard and make our proofs much easier.

\begin{proof}[Proof of Proposition \ref{Sub-Diagram.Local.Compare}]
	Let $\sigma$ be an irreducible representation in the $L$-packet $\Pi_{\phi_{\theta(\phi)}}^L(H)$, corresponding to the character $\eta_\sigma\in\wh{\overline{\calS_{\phi_{\theta(\phi)}}}}$. Let
	\[
	\eta_{\sigma_0}=\eta_\sigma\Big|_{\overline{\calS_{\vartheta(\phi)}}},
	\]
	and $\sigma_0\in\Pi_{\vartheta(\phi)}^L(H_0)$ the irreducible tempered representation corresponding to $\eta_{\sigma_0}$. Then by the LLC for $H$, there is a parabolic subgroup of $H$, say $Q$, with Levi component
	\begin{equation*}
		L\simeq \GL_{1}(E)\times\cdots\times \GL_{1}(E)\times H_0,
	\end{equation*}
	so that $\sigma$ is the unique irreducible quotient of the standard module 
	\begin{equation*}
		\Ind_Q^{H}\left(\chi_V|\det|^{r-n}\boxtimes\cdots\boxtimes\chi_V|\det|^{1}\boxtimes\sigma_0\right).
	\end{equation*}
	According to the Prasad's conjecture Theorem \ref{Prasad.Conj}, there exists an unique pure inner form $G'$ of $G$, such that the theta lift $\pi$ of $\sigma_0$ to $G'$ is non-zero. Moreover, $\pi$ is in the $L$-packet $\Pi_\phi^L(G')$, corresponding to the character
	\[
	\eta_\pi=\eta_{\sigma_0}\Big|_{\calS_\phi}.
	\]
	We claim that the theta lift of $\pi$ to the group $H$ is just $\sigma$. The assertion of Proposition \ref{Sub-Diagram.Local.Compare} then follows from our claim.

	Now we prove our claim. Indeed, when $F$ is non-Archimedean, this claim follows from \cite[Prop.3.2]{MR3502978} directly. When $F$ is Archimedean, by the persistence principle (see \cite[Prop.4.1]{kudla1996notes}) 
 we know that there is a non-zero map 
	\[
	\Theta(\pi) \longrightarrow \Ind_{Q'}^H\left(\chi_V|\det|^{\frac{n-1-r}{2}}\boxtimes \sigma_0\right),
	\]
	where $\Theta(\pi)$ is the big theta lift of $\pi$ to $H$, and $Q'$ is the standard parabolic subgroup of $H$ with Levi component $\GL_{r-n}(E)\times H_0$. Since the dual-pair $(G',H)$ is in the stable range, it follows from \cite[Thm.A]{MR3305312} that $\Theta(\pi)=\theta(\pi)$ is irreducible. Therefore the map above is injective. Apply both the MVW functor and the contragredient functor to the above map, we deduce that 
	$\theta(\pi)$ is a quotient of $ \Ind_{Q'}^H\left(\chi_V|\det|^{\frac{r-n+1}{2}}\boxtimes \sigma_0\right)$, which is also the unique quotient of the the standard module 
	\begin{equation*}
		\Ind_Q^{H}\left(\chi_V|\det|^{r-n}\boxtimes\cdots\boxtimes\chi_V|\det|^{1}\boxtimes\sigma_0\right).
	\end{equation*}
	This completes the proof of our claim. 
\end{proof}

\subsection{Globalization}
To complete the proof of Theorem \ref{Local-Compare}, we need to appeal to some global methods. Let $\phi$ be an $L$-parameter of good parity for $G=G(V)$. Write 
\begin{equation}\label{E:phiSUM}
\phi=\sum_i \phi_i 
\end{equation}
for some (not necessarily distinct) $n_i$-dimensional irreducible (conjugate) self-dual representations of $L_E$. We shall globalize $\phi$ to a generic elliptic $A$-paramater $\dot \phi$, by globalizing each $\phi_i$ separately. 

We fisrt consider the case when $\phi_i$ is a (conjugate) self-dual character. 
\begin{lemma}\label{Globalization character}
Let $(\dot{F},\dot{E})$ be a pair of number fields, and $u$ be a place of $\dot{F}$, such that $(\dot{F}_u,\dot{E}_u)\simeq (F,E)$. In particular $\dot{E}=\dot{F}$ in Case ${\rm O}$ and $\dot E/\dot F$ is a quadratic extension in Case ${\rm U}$. Let $T$ be a finite set of places of $\dot{F}$ that contain $u$. In Case ${\rm U}$, we also require that $\dot E$ is not split at all $v\in T$. Fix $\kappa=\pm 1$. For each place $v\in T$, let $\chi_v$ be a character of $\GL_1(\dot E_v)$, which is
	\[
	\begin{cases}
		\textit{quadratic}\quad&\textit{in Case ${\rm O}$};\\
		\textit{conjugate self-dual of parity $\kappa$}\quad&\textit{in Case ${\rm U}$}.
	\end{cases}
	\]
	Then there exists an automorphic character $\dot\chi$ of $\GL_1(\AAA_{\dot E})$, which is also (conjugate) self-dual of the same parity as each $\chi_v$, such that $\dot\chi_v=\chi_v$ for all $v\in T$. 
\end{lemma}
\begin{proof}
	For Case O, this is a special case of the Grunwald-Wang theorem (cf. \cite[Chap.X, Thm.5]{MR2467155}). For Case U, this is proved in \cite[Lem.8.8]{MR4372665}.
\end{proof}

Next, we consider the case where $\dim \phi_i=m \geq 2$. 
\begin{lemma}\label{Golobalization higner dimension}
	Let $(\dot{F},\dot{E})$ be a pair of number fields, and $u$ be a place of $\dot{F}$, such that $(\dot{F}_u,\dot{E}_u)\simeq (F,E)$. In particular $\dot{E}=\dot{F}$ in Case ${\rm O}$ and $\dot E/\dot F$ is a quadratic extension in Case ${\rm U}$. Let $T$ be a finite set of places of $\dot{F}$ containing $u$. In Case ${\rm U}$, we also require that $\dot E$ is not split at all $v\in T$. Let $m\geq 2$ be a positive integer and fix $\kappa=\pm 1$. For each place $v\in T$, let $\phi_v$ be an irreducible $m$-dimensional
	\[
	\begin{cases}
		\textit{othogonal representation of $L_{\dot E_v}$}\quad&\textit{in Case ${\rm O}$};\\
		\textit{conjugate self-dual representation of $L_{\dot E_v}$ of parity $\kappa$}\quad&\textit{in Case ${\rm U}$}.
	\end{cases}
	\]
	Then there exists an irreducible cuspidal automorphic representation $\dot\phi$ of $\GL_m(\AAA_{\dot E})$, which is also (conjugate) self-dual of the same parity as each $\phi_v$, such that $\dot\phi_v=\phi_v$ for all $v\in T$. Here, we regard $\phi_v$ as an irreducible representation of $\GL_m(E_v)$ by the LLC for $\GL_m$. 
\end{lemma}
\begin{proof}
A similar statement is proved in \cite[Lem.6.7]{ishimoto2024endoscopic} for Case O and \cite[Lem.4.3.1]{kaletha2014endoscopic} for Case U. Since our setting is slightly different from theirs, we provide a proof here.
We first consider the case when $m$ is even in Case O or $\kappa=(-1)^{m-1}$ in Case U. Let $\dot V^+$ be the unique $m$-dimensional $c$-Hermitian space satisfying the following properties:
\begin{enumerate}
	\item In Case O, for all $v\in S$, the quadratic character associated to $\dot V^+_v$ is the same as $\det\phi_v$;
	\item For all places $v$ of $\dot F$, the Hasse-Witt invariant (resp. sign) of $\dot V^+_v$ is $+1$, i.e. $\epsilon(\dot V^+_v)=1$.
\end{enumerate}
The existence of such a space is guaranteed by the local-global principle for orthogonal and Hermitian spaces. Let $\dot G=G(\dot V^+)^0$ be the identity component of the isometry group of $\dot V^+$, i.e. $\dot G=\SO(\dot V^+)$ in Case O, and $\dot G=\U(\dot V^+)$ in Case U. Since the space $\dot V^+$ is maximally split, the group $\dot G$ is quasi-split. For each $v\in T$, we pick up an irreducible square-integrable representation $\pi_v \in \Pi^L_{\phi_v}(\dot G_v)$. We also pick up a finite place $w$ of $\dot F$ outside $T$, such that
\[
	\begin{cases}
	\dot V^+ \textit{ is unramified (see \cite[Sect.2.3]{MR3708200})} \quad & \textit{in Case O};\\
	\dot E \textit{ is split at $w$} \quad & \textit{in Case U}. 
	\end{cases}
\]
In the notion of \cite[Thm.5.13]{MR3004076}, we take $S=T\cup \{w\}$ and choose two finite places $v_1,v_2$ of $\dot F$ not contained in $S$. Then by \cite[Thm.5.13]{MR3004076}, there exists an irreducible cuspidal automorphic representation $\dot\pi$ of $\dot G$, such that: 
\begin{enumerate}
	\item For all $v\in T$, $\dot\pi_v=\pi_v$;
	\item In Case O, $\dot\pi_w$ is tempered unramified;
	\item In Case U, $\dot\pi_w$ is a supercuspidal representation of $\dot G_w\simeq \GL_m(\dot F_w)$.
\end{enumerate}
Based on the results of Arthur \cite{MR3135650} and Mok \cite{MR3338302}, the cuspidal representation $\dot\pi$ has an elliptic $A$-parameter $\dot\phi$. The conditions we put on the place $w$ guarantee that $\dot\phi$ is generic. Moreover, we know that
\[
	\dot \phi_v=\phi_v
\]
for all $v\in S$, since the localization $\dot\pi_v=\pi_v$ lies in both $\Pi_{\dot \phi_v}^L(\dot G_v)$ and $\Pi_{\phi_v}^L(\dot G_v)$, and different local $L$-packets are disjoint with each other. From the irreducibility of $\phi_v$, we can further conclude that $\dot \phi$ is simple. Therefore, $\dot\phi$ is an irreducible cuspidal automorphic representation of $\GL_m(\AAA_{\dot E})$ that satisfies all our requirements. 

Next we consider the case when $m$ is odd in Case O or $\kappa=(-1)^{m}$ in Case U. In Case U, we choose a conjugate symplectic character $\chi_v$ at each $v\in T$. Then we can globalize $\phi_v$ by globalizing both $\phi_v\chi_v$ (which is of parity $\kappa=(-1)^{m-1}$ now) and $\chi_v$ (using \Cref{Globalization character}). Similarly in Case O, we can twist $\phi_v$ by a quadratic character and assume  $\det \phi_v=1$ for all $v\in T$. Then a similar argument for the previous case works here by replacing $\dot V^+$ by $\dot W^+$, where $\dot W^+$ is the unique $(m-1)$-dimensional symplectic space. 
This completes the proof of this lemma.
\end{proof}

As an application of the previous two lemmas, we are now able to globalize an $L$-parameter $\phi$ of good parity. 
\begin{corollary}\label{Globalization-2}
	Let $\phi$ be an $L$-parameter of good parity for $G=G(V)$. Then there exists a tuple of data $(\dot{F},\dot{E},\dot{V},\dot{\phi},u_1,u_2,w)$, where
	\begin{itemize}
		\item $\dot{F}$ is a number field, and $\dot{E}$ is either $\dot F$ itself or a quadratic extension of $\dot{F}$, according to our cases; if we are in Case ${\rm U}$ and $F$ is non-Archimedean, we may choose $\dot{F}$ to be totally imaginary; 
		\item $\dot{V}$ is a vector space over $\dot{E}$, equipped with a non-degenerate Hermitian $c$-sesquilinear form;
		\item $\dot{\phi}$ is a generic elliptic $A$-parameter for $\dot{G}=G(\dot{V})$;
		\item $u_1,u_2,w$ are places of $\dot F$, and $w$ is finite; 
	\end{itemize}
	such that the following conditions hold:
	\begin{enumerate}
		\item $(\dot{F}_{u_1},\dot{E}_{u_1},\dot{V}_{u_1},\dot{\phi}_{u_1})\simeq(\dot{F}_{u_2},\dot{E}_{u_2},\dot{V}_{u_2},\dot{\phi}_{u_2})\simeq(F,E,V,\phi)$;
		\item If we are in the Case ${\rm U}$, then $\dot E_w/\dot F_w$ is a quadratic field extension;
		\item $\dot\phi_w$ is a discrete $L$-parameter for $\dot G_w=G(\dot V_w)$; furthermore, we may choose $\dot\phi_w$ so that it does not contain a given character $\chi_w$ of $L_{\dot{E}_{w}}$;
		\item The localization maps $\calS_{\dot{\phi}}\lra\calS_{\dot{\phi}_{u_1}}$ and $\calS_{\dot{\phi}}\lra\calS_{\dot{\phi}_{u_2}}$ agree, and they are surjections;
		\item At the place $w$, the localization map $\calS_{\dot{\phi}}\lra\calS_{\dot{\phi}_{w}}$ is an isomorphism.
	\end{enumerate}
\end{corollary}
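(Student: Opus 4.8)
The plan is to run a by-now standard globalization argument built on Lemma~\ref{Golobalization-Origin}, the only twist being that we must produce \emph{two} auxiliary places $u_1,u_2$ with the \emph{same} completion, so that the two localization maps appearing in condition (4) become literally the same map. First I would fix a pair of number fields $(\dot F,\dot E)$ together with places $u_1,u_2,w$ of $\dot F$ such that $(\dot F_{u_1},\dot E_{u_1})\simeq(\dot F_{u_2},\dot E_{u_2})\simeq(F,E)$; such that $w$ is finite with $\dot E_w/\dot F_w$ a (quadratic) field extension in Case $U$ and, in Case $O$, with $\dot F_w$ a $2$-adic field of sufficiently large residue field, so that irreducible orthogonal (resp.\ conjugate self-dual) local parameters of $L_{\dot F_w}$ (resp.\ $L_{\dot E_w}$) of the needed dimensions and parities exist in abundance, as in the Remarks following \cite[Corollary 6.2.4]{MR3135650}; such that $\dot F$ has enough real places distinct from $u_1,u_2,w$ in Case $O$; and such that $\dot F$ is totally imaginary when we are in Case $U$ and $F$ is non-Archimedean. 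Producing two places with a prescribed completion is elementary (take a number field realizing the given local field as a completion at some place $v_0$, then pass to a quadratic extension in which $v_0$ splits), and the remaining constraints on $\dot F,\dot E$ are met by weak approximation on the coefficients of defining equations.

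Next I would globalize the parameter. Write $\phi=\sum_i m_i\phi_i$ as in \eqref{localLparameter}, with the $\phi_i$ pairwise distinct irreducible and (conjugate) self-dual of the parity of $\phi$, and list the constituents with multiplicity as $\phi^{(1)},\dots,\phi^{(t)}$, $t=\sum_i m_i$. At $w$ I would choose, for each $l$, an irreducible (conjugate) self-dual local parameter $\phi^{(l)}_w$ of $L_{\dot E_w}$ (resp.\ $L_{\dot F_w}$ in Case $O$) of dimension $\dim\phi^{(l)}$ and of the correct parity, arranged so that the $\phi^{(l)}_w$ are pairwise distinct and none equals the given character $\chi_w$; this is possible by the abundance hypotheses on $w$. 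Applying Lemma~\ref{Golobalization-Origin}, extended in the routine way to the finite set of prescribed places $\{u_1,u_2,w\}$ (when $F$ is Archimedean one uses the evident variant of the lemma allowing two distinguished Archimedean places, or globalizes equivariantly for an automorphism of $\dot F$ swapping $u_1$ and $u_2$), to each $\phi^{(l)}$ yields an irreducible cuspidal automorphic representation $\dot\phi^{(l)}$ of $\GL_{\dim\phi^{(l)}}(\AAA_{\dot E})$, (conjugate) self-dual of the correct parity, with $\dot\phi^{(l)}_{u_1}=\dot\phi^{(l)}_{u_2}=\phi^{(l)}$ and $\dot\phi^{(l)}_w=\phi^{(l)}_w$. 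Set $\dot\phi=\sum_l\dot\phi^{(l)}$.

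Now I would produce the global space and check the five conclusions. The $\dot\phi^{(l)}$ are pairwise distinct because their localizations at $w$ are; together with $\sum_l\dim\phi^{(l)}=\dim V$ and the parity (and, in Case $O$, central-character) conditions, which hold because they hold at $u_1$, this shows that $\dot\phi$ is a generic elliptic $A$-parameter for the classical group attached to any coherent global space $\dot V$. By the local--global principle for quadratic/Hermitian spaces I would choose $\dot V$ with $\dot V_{u_1}\simeq\dot V_{u_2}\simeq V$, with $\dot V_w$ a space carrying the discrete parameter $\dot\phi_w$, and with the remaining local spaces chosen to make the whole collection coherent; in Case $O$ one must also match the discriminant class of $\dot V$ with the product of the central characters of the $\dot\phi^{(l)}$. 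Conditions (1) and (2) then hold by construction. For (3): $\dot\phi_w=\sum_l\phi^{(l)}_w$ is a multiplicity-free sum of distinct irreducible (conjugate) self-dual parameters of the right parity, hence a discrete $L$-parameter for $\dot G_w$, and it avoids $\chi_w$ by choice. For (4): since $\dot\phi_{u_1}=\dot\phi_{u_2}=\phi$ we may identify $\calS_{\dot\phi_{u_1}}=\calS_{\dot\phi_{u_2}}=\calS_\phi$, under which the two localization maps are equal, and the generator of $\calS_\phi$ attached to a given $\phi_i$ is the image of $a_l$ for any $l$ with $\phi^{(l)}=\phi_i$, so the map is surjective. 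For (5): the $\dot\phi^{(l)}_w$ being distinct irreducibles, each spans its own $\ZZ/2\ZZ$ factor of $\calS_{\dot\phi_w}$ and $a_l$ maps to the corresponding generator, so the localization map at $w$ is an isomorphism.

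I expect the genuinely delicate points to be two. First, getting Lemma~\ref{Golobalization-Origin} to control two places with the same (possibly Archimedean) completion simultaneously: this is only a mild extension of the stated lemma, but it is the place where the two-place symmetry that condition (4) demands is actually used, and it deserves to be spelled out. Second, the coherence check in the local--global principle for $\dot V$: in Case $O$ this couples the choice of $\dot\phi$ (through its product of central characters) with the discriminant of $\dot V$, and the Hasse-invariant/signature constraints at the real places must be absorbed using the extra real places of $\dot F$ that we arranged to have. Everything else is bookkeeping with the component groups.
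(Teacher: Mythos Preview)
Your approach is essentially the paper's, and for $F$ non-Archimedean it is complete: you globalize each constituent with prescribed local behavior at $\{u_1,u_2,w\}$ (all finite places, so Lemma~\ref{Golobalization-Origin} applies directly with $S=\{u_2,w\}$), choose the $\phi^{(l)}_w$ pairwise distinct and avoiding $\chi_w$, and read off (1)--(5). One detail you gloss over in Case~$U$: the paper takes $\dot F_w$ a $p$-adic field with $p\neq 2$, large residue field, and $\dot E_w/\dot F_w$ \emph{ramified}, then appeals to Appendix~\ref{conj.self.dual.gal.rep} for the supply of irreducible conjugate self-dual parameters of arbitrary dimension and prescribed parity; your reference to the Remarks after \cite[Corollary 6.2.4]{MR3135650} covers only Case~$O$.

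The genuine gap is the Archimedean case. Saying one uses ``the evident variant of the lemma allowing two distinguished Archimedean places'' or globalizes ``equivariantly for an automorphism swapping $u_1$ and $u_2$'' is exactly the point that needs an argument, and neither variant is a routine extension of Lemma~\ref{Golobalization-Origin}. The paper's solution is different and concrete: it first globalizes over $\dot F$ with only \emph{one} distinguished Archimedean place $u_1$ (and $S=\{w\}$), then passes to a quadratic extension $\dot F'/\dot F$ in which $u_1$ splits into $v_1,v_2$ and $w$ splits into $w_1,w_2$, and replaces each $\dot\phi_i$ by its base change $\dot\phi_i'$ to $\dot F'$. The base-changed representation automatically has the same localization at $v_1$ and $v_2$, which is what (1) and (4) demand. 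The key observation making this work is that when $F=\RR$ every irreducible $\phi_i$ has $\dim\phi_i\le 2$, so one only needs base change for $\GL_1$ and $\GL_2$, which is available by \cite{MR574808}. You should replace your hand-wave in the Archimedean case by this base change step.
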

\begin{proof}
Firstly we choose a pair of number fields $(\dot E,\dot{F})$, together with three places $u_1,u_2,w$ of $\dot F$, satisfying the conditions below: 
\begin{enumerate}
	\item $(\dot E_{u_1},\dot F_{u_1})\simeq(\dot E_{u_2},\dot F_{u_2})\simeq(E,F)$; 
	\item If we are in Case O, then $\dot F_w$ is a finite extension of $\QQ_2$ with a sufficiently big residue field; 
	\item If we are in Case U, then $\dot F_w$ is a finite extension of $\QQ_p$ for some $p\neq2$ with a sufficiently big residue field, and $\dot E_w$ is a ramified quadratic field extension of $\dot F_w$; if further that $F$ is non-Archimedean, then $\dot{F}$ is totally imaginary. 
\end{enumerate} 
The existence of such pair of number fields will be proved in Appendix \ref{Existence.NumberField}.
	We write $\phi=\sum_i\phi_i$ as in (\ref{E:phiSUM}). By the results in Appendix \ref{self.dual.Gal.rep} and Appendix \ref{conj.self.dual.gal.rep}, there are sufficiently many irreducible (conjugate) self-dual representations of $W_{\dot E_w}$. Hence we can pick up irreducible (conjugate) self-dual representations $\phi_{w,i}$ of $L_{\dot E_w}$ of the same dimension and parity as $\phi_i$ for each $i$, such that $\phi_{w,i}\not\simeq\phi_{w,i'}$ whenever $i\neq i'$. Moreover, we can further require that all $\phi_{w,i}$ are different from a given character $\chi_w$ of $L_{\dot{E}_{w}}$. 
	
	Let $u=u_1$ and $T=\{u_1,u_2,w\}$. By Lemma \ref{Globalization character} and Lemma \ref{Golobalization higner dimension}, we can globalize each $\phi_i$ to an irreducible cuspidal representation $\dot\phi_i$ of $\GL_{n_i}(\AAA_{\dot E})$, which is (conjugate) self-dual of appropriate parity, such that 
	\begin{enumerate}
		\item $(\dot\phi_i)_{u_1}=(\dot\phi_i)_{u_2}=\phi_{i}$;
		\item $(\dot\phi_i)_{w}=\phi_{w,i}$. 
	\end{enumerate}
	Let
	\[
	\dot\phi=\sum_i\dot\phi_i.
	\]
	It follows from the local-global principle for orthogonal and Hermitian spaces that there exists 
 a $\dot V$ over $\dot E$, equipped with a non-degenerate Hermitian $c$-sesquilinear form, such that
	\begin{enumerate}
		\item $\dot V_{u_1}\simeq\dot V_{u_2}\simeq V$;
		\item $\dot\phi$ is a generic elliptic $A$-parameter for $\dot G=G(\dot V)$.
	\end{enumerate}
	Then the tuple of data $(\dot{F},\dot{E},\dot{V},\dot{\phi},u_1,u_2,w)$ satisfies all our requirements.
\end{proof}

\subsection{Multiplicity-freeness}\label{SS:Multi-free}
Let $\phi$ be an $L$-parameter of good parity for $G$ and $\theta(\phi)$ be the local $A$-parameter for $H$ as in (\ref{thetaphi}). Recall that Theorem \ref{T:APack-MultiFree} asserts that the local $A$-packet $\Pi_{\theta(\phi)}(H)$ is multiplicity-free, and it has been proved by M{\oe}glin and many others. In this subsection, we shall give an independent and self-contained proof by using global methods and theta lifts.

Let $(\dot{F},\dot{E},\dot{V},\dot{\phi},u_1,u_2,w)$ be as given in Corollary \ref{Globalization-2}. In Case U, we fix a trace zero element $\delta\in \dot E^\times$. Let $\dot W$ be the
\begin{equation*}
	\begin{cases}
		2r\textit{-dimensional}\quad &\textit{Case ${\rm O}$};\\
		(2r+1)\textit{-dimensional}\quad &\textit{Case $\U_0$};\\
		(2r+2)\textit{-dimensional}\quad &\textit{Case $\U_1$}\\
	\end{cases}
\end{equation*}
$c$-skew-Hermitian space over $\dot E$ as in (\ref{W-split}), and $\dot H=H(\dot W)$. We also let $\dot W_0$ be the
\begin{equation*}
	\begin{cases}
		\dim V\textit{-dimensional}\quad &\textit{Case ${\rm O}$};\\
		(\dim V+1)\textit{-dimensional}\quad &\textit{Case ${\rm U}$ and $F$ is non-Archimedean};\\
		(\dim V-1)\textit{-dimensional}\quad &\textit{Case ${\rm U}$ and $F$ is real}
	\end{cases}
\end{equation*}
$c$-skew-Hermitian space over $\dot E$ in the Witt tower containing $\dot W$. Let $W_0=\dot{W}_{0,u_1}$. We put $\dot H_0=H(\dot W_0)$ and $H_0=H(W_0)$. We will use symbols $\dot G'$, $\dot H'_0$, and $H'_0$ to denote pure inner forms of $\dot G$, $\dot H_0$, and $H_0$ respectively.

Let $\psi_{\dot F}$ be a nontrivial additive character of $\mathbb A/\dot F$, such that $\psi_{\dot F,u}$ is in the $F^{\times 2}$-orbit of $\psi_F$ for $u\in\left\{u_1,u_2\right\}$. We shall use the additive charater $\psi_{\dot F}$ to fix Whittaker data $\scrW$ and $\scrW'$ of $\dot G$ and $\dot H$ (and also $\dot H_{0}$), as we have described in Section \ref{Whittaker.Data}. We also globalize characters $(\chi_V,\chi_W)$ of $E^\times$ to a pair of characters of $\dot E^\times\backslash\AAA_{\dot E}^\times$ as follows:
\[
\chi_{\dot V}=\begin{cases}
	\textit{the quadratic character associated to }\dot V\quad &\textit{Case ${\rm O}$};\\
	\textit{a character such that }\chi_{\dot V}|_{\AAA_{\dot F}^\times}=\omega_{\dot E/\dot F}^{\dim V}\quad &\textit{Case ${\rm U}$},
\end{cases}
\]
and
\[
\chi_{\dot W}=\begin{cases}
	\textit{the trivial character of }{\dot F}^\times\backslash\AAA_{\dot F}^\times\quad &\textit{Case ${\rm O}$};\\
	\textit{a character such that }\chi_{\dot W}|_{\AAA_{\dot F}^\times}=\omega_{\dot E/\dot F}^{\dim W}\quad &\textit{Case ${\rm U}$}.
\end{cases}
\]  
We will consider the theta lift between $(\dot G',\dot H)$, which is in the stable range case, with respect to $(\psi_{\dot F},\chi_{\dot V},\chi_{\dot W})$. According to Corollary \ref{Globalization-2}, we may globalize $\phi$ suitably so that $\dot\phi_w$ does not contain the character $\chi_{\dot W,w}$, and we will henceforth assume this. Besides, we will also consider the theta lift between $(\dot G',\dot H'_0)$, which is in the almost equal rank case, with respect to some auxiliary data $(\psi_{\dot F},\chi'_{\dot V},\chi'_{\dot W})$. In Case O, there is no flexibility of choosing such data. However in Case U, we shall choose $(\chi'_{\dot V},\chi'_{\dot W})$ suitably in later proofs depending on our needs. Indeed, the flexibility of choosing $(\chi'_{\dot V},\chi'_{\dot W})$ is a key point in our proof for Case U. To distinguish notations, we use the symbol $\theta$ to denote the theta lift between $(\dot G',\dot H)$, and use the symbol $\vartheta$ to denote the theta lift between $(\dot G',\dot H'_0)$.
We first show the following:
\begin{lemma}\label{Global-MultiOne}
	Let $\dot G'$ be a pure inner form of $\dot G$, and $\dot\pi$ be an irreducible representation of $\dot G'(\AAA)$, such that: 
	\begin{enumerate}
		\item the $L$-parameter of $\dot\pi_v$ is $\dot\phi_v$ for almost all $v$;
		\item $\dot\pi_w$ is in the $L$-packet $\Pi_{\dot\phi_w}^L(\dot G'_w)$. 
	\end{enumerate}
	Then $m_{\disc}(\dot\pi)\leq 1$. Moreover, if $m_{\disc}(\dot\pi)=1$, then we have $\dot\pi_{u_1}\in\Pi_{\phi}^L(\dot G'_{u_1})$. 
\end{lemma}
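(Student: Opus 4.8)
The plan is to move the question to the quasi-split group $\dot H$ by means of the stable-range theta lift $\theta$ between $(\dot G',\dot H)$, and then feed in the Arthur--Mok multiplicity formula for $\dot H$ (Theorem \ref{H-ThmB}) together with the local comparison for $L$-packets already obtained in Proposition \ref{Sub-Diagram.Local.Compare}. First I would assume $m_{\disc}(\dot\pi)\ge 1$; then by J-S. Li's inequality (Theorem \ref{J-S inequality}) the abstract theta lift $\theta^{abs}(\dot\pi)$ is non-zero with $m_{\disc}(\theta^{abs}(\dot\pi))\ge m_{\disc}(\dot\pi)\ge 1$, so it lies in some near equivalence class of $L^2_{\disc}(\dot H)$. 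Using hypothesis (1) and the unramified theta correspondence (formula (\ref{ThmAeq2})), the $L$-parameter of $\theta^{abs}(\dot\pi)_v$ is $\phi_{\theta(\dot\phi)_v}$ for almost all $v$; since $\dot\phi_w$ does not contain $\chi_{\dot W,w}$ (Corollary \ref{Globalization-2}(3)), the formal sum $\theta(\dot\phi)=\dot\phi\chi_{\dot W}^{-1}\chi_{\dot V}+\chi_{\dot V}\boxtimes S_{2r-2n+1}$ is an elliptic $A$-parameter for $\dot H$, and strong multiplicity one identifies the near equivalence class of $\theta^{abs}(\dot\pi)$ as $L^2_{\theta(\dot\phi)}(\dot H)$.

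Next I would bound the multiplicity. By Theorem \ref{H-ThmB}, $m_{\disc}(\theta^{abs}(\dot\pi))$ is at most the multiplicity of $\theta^{abs}(\dot\pi)$ in the global packet $\Pi_{\theta(\dot\phi)}(\dot H)$, with equality exactly when $\calJ_{\scrW'}(\theta^{abs}(\dot\pi))=\epsilon_{\theta(\dot\phi)}$. Since $\dot H$ is quasi-split, the local $A$-packets $\Pi_{\theta(\dot\phi)_v}(\dot H_v)$ are multiplicity-free (M{\oe}glin and M{\oe}glin--Renard; for the real places of $\dot F$ the parameters produced by the globalization of Corollary \ref{Globalization-2} are of the type covered by the available results), hence so is the restricted tensor product $\Pi_{\theta(\dot\phi)}(\dot H)$. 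Therefore $m_{\disc}(\theta^{abs}(\dot\pi))\le 1$, and with Theorem \ref{J-S inequality} we get $m_{\disc}(\dot\pi)\le 1$ unconditionally. If moreover $m_{\disc}(\dot\pi)=1$, then all the inequalities above are equalities, so $\theta^{abs}(\dot\pi)$ is a genuine constituent of $L^2_{\theta(\dot\phi)}(\dot H)$, whence $\theta^{abs}(\dot\pi)\in\Pi_{\theta(\dot\phi)}(\dot H,\epsilon_{\theta(\dot\phi)})$; in particular $\theta(\dot\pi_v)\in\Pi_{\theta(\dot\phi)_v}(\dot H_v)$ for every place $v$.

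It then remains to show $\dot\pi_{u_1}\in\Pi_{\phi}^L(\dot G'_{u_1})$ when $m_{\disc}(\dot\pi)=1$. At the place $w$ the parameter $\dot\phi_w$ is discrete, hence of good parity, and $\dot\pi_w$ lies in its tempered $L$-packet by hypothesis (2); by Proposition \ref{Sub-Diagram.Local.Compare} together with the tower property of theta lifts in the stable range (lifting up and then back down recovers the source), $\theta(\dot\pi_w)=\theta^{abs}(\dot\pi)_w$ lies in the $L$-packet $\Pi_{\phi_{\theta(\dot\phi_w)}}^L(\dot H_w)$, so $\calJ_{\scrW'_w}(\theta(\dot\pi_w))$ factors through $\overline{\calS_{\phi_{\theta(\dot\phi_w)}}}$. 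I would now propagate this to $u_1$: combining the global sign identity $\calJ_{\scrW'}(\theta^{abs}(\dot\pi))=\epsilon_{\theta(\dot\phi)}=1$ (the last equality by Lemma \ref{Compare-epsilon} and (\ref{epsilon-generic})), the fact that at places outside $\{u_1,u_2,w\}$ the local lift $\theta(\dot\pi_v)$ is unramified and hence lies in the relevant $L$-packet, the coincidence of the localization maps $\calS_{\dot\phi}\to\calS_{\dot\phi_{u_1}}$ and $\calS_{\dot\phi}\to\calS_{\dot\phi_{u_2}}$ (Corollary \ref{Globalization-2}(4)), and the isomorphism $\calS_{\dot\phi}\xrightarrow{\sim}\calS_{\dot\phi_w}$ (Corollary \ref{Globalization-2}(5)), one pins down $\calJ_{\scrW'_{u_1}}(\theta(\dot\pi_{u_1}))$ enough to conclude, via the injectivity in Proposition \ref{Sub-Diagram.Local.Compare}, that $\theta(\dot\pi_{u_1})$ lies in $\Pi_{\phi_{\theta(\phi)}}^L(\dot H_{u_1})$. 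Descending once more by Proposition \ref{Sub-Diagram.Local.Compare} (again the tower property), $\dot\pi_{u_1}=\theta(\theta(\dot\pi_{u_1}))\in\Pi_{\phi}^L(\dot G'_{u_1})$.

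I expect the main obstacle to be exactly this last propagation step: transporting ``lies in the $L$-packet'' from the auxiliary discrete place $w$, where Proposition \ref{Sub-Diagram.Local.Compare} applies verbatim, to the place $u_1$. This is where all the special features built into Corollary \ref{Globalization-2} must be used, and where one has to be careful that a member of the local $A$-packet $\Pi_{\theta(\phi)}(\dot H_{u_1})$ lying outside the $L$-packet can nevertheless carry the same character of the component group; the sign bookkeeping alone is therefore not quite enough, and one genuinely needs both the injectivity statement in Proposition \ref{Sub-Diagram.Local.Compare} and the presence of the twin place $u_2$.
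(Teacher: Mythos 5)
Your proposal diverges from the paper at a crucial point, and the divergence creates a genuine gap (indeed, a circularity). The paper does \emph{not} prove Lemma~\ref{Global-MultiOne} by lifting $\dot\pi$ to the big group $\dot H$ in the stable range. Instead, it lifts $\dot\pi$ (or a determinant twist $\dot\pi\otimes\det_T$ in Case~$O$, or a representation constructed inductively in Case~$U$ real) to the \emph{small} quasi-split group $\dot H'_0$ via the \emph{almost equal rank} theta lift $\vartheta$, where the receiving parameter $\vartheta(\dot\phi)=\dot\phi(\chi'_{\dot W})^{-1}\chi'_{\dot V}+\chi'_{\dot V}$ is again \emph{generic}. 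Because $\vartheta(\dot\phi)$ is generic, the local $A$-packets of $\dot H'_0$ are tempered $L$-packets (known from Arthur and Mok on the nose), and the multiplicity-one statement is immediate from Theorem~\ref{H-ThmB}. The delicate analytic content of the paper's proof is to show that $\vartheta^{\aut}(\calV\otimes\det_T)$ is non-zero and cuspidal (this needs the conservation relation, Prasad's conjecture at the place $w$, the Rallis inner product formula, and a functional-equation argument to exclude a pole at $s=1$), precisely because the almost-equal-rank lift, unlike the stable-range lift, does not automatically produce a non-vanishing square-integrable lift.

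Your approach instead lifts $\dot\pi$ up to $\dot H$ by the stable-range $\theta$, getting a genuinely \emph{non-generic} $A$-parameter $\theta(\dot\phi)$, and then tries to deduce multiplicity one by asserting that the local $A$-packets $\Pi_{\theta(\dot\phi)_v}(\dot H_v)$ are multiplicity-free. That is exactly what Theorem~\ref{Local-Compare}(i) asserts and what Proposition~\ref{Multi-Free.N.Inj} is designed to prove, and the paper's proof of Proposition~\ref{Multi-Free.N.Inj} \emph{relies on} Lemma~\ref{Global-MultiOne}. So invoking it here is circular. Nor can it be imported from the literature: as the paper's remark after Theorem~\ref{Local-Compare} says explicitly, the Mœglin--Renard results at $F=\RR$ only cover parameters ``of good parity'', and the globalization in Corollary~\ref{Globalization-2} deliberately chooses $\dot F$ to have real places other than $u_1,u_2$ (in Case~$O$ this is required for the globalization of cusp forms), and at those auxiliary real places the localization $\theta(\dot\phi)_v$ need not be of good parity. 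The paper's entire design choice of using the almost-equal-rank lift here is to sidestep exactly this gap.

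The second conclusion, $\dot\pi_{u_1}\in\Pi_\phi^L(\dot G'_{u_1})$, has the same problem. In the paper this comes for free from the AMF for the generic parameter $\vartheta(\dot\phi)$ on $\dot H'_0$ (which puts $\dot\sigma_{0,u_1}$ in an $L$-packet) together with Prasad's conjecture descending to $\dot G'_{u_1}$, plus (in Case~$O$) the behavior of the LLC under determinant twist. Your sign-bookkeeping argument on the $\dot H$ side cannot recover this: as you yourself observe at the end, a member of the non-tempered $A$-packet $\Pi_{\theta(\phi)}(\dot H_{u_1})$ outside the $L$-packet $\Pi^L_{\phi_{\theta(\phi)}}(\dot H_{u_1})$ can carry the same character of the component group, so knowing $\calJ_{\scrW'_{u_1}}(\theta(\dot\pi_{u_1}))$ does not place $\theta(\dot\pi_{u_1})$ in the $L$-packet. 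You also cite Proposition~\ref{Sub-Diagram.Local.Compare} in the wrong direction: it says that $\theta$ carries the $L$-packet of $\dot H_w$ \emph{into} $L$-packets of pure inner forms of $\dot G_w$, not that the image of $\Pi^L_{\dot\phi_w}(\dot G'_w)$ under $\theta$ lands in the $L$-packet of $\dot H_w$ — that converse is Lemma~\ref{Springboard}/Proposition~\ref{Final.Jigsaw}, proved later and not available at this stage. In short, the proposal does not close the gap; the paper closes it by descending to a generic parameter via the almost-equal-rank lift.
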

\begin{proof}
	If $m_{\disc}(\dot\pi)=0$, then our conclusions hold. So we may assume $m_{\disc}(\dot\pi)\geq 1$ in the rest of the proof. Since $\dot \phi$ is generic, by Proposition \ref{cusp.realization}, we know that
	\[
	m_{\cusp}(\dot\pi)=m_{\disc}(\dot\pi)=m(\dot\pi),  
	\]
	and any realization $\calV$ of $\dot\pi$ in $\calA(\dot G')$ lies in $\calA_{\cusp}(\dot G')$. We will prove this lemma by considering (automorphic) theta lifts between $(\dot G', \dot H_0')$ for pure inner forms $\dot H'_0$ of $\dot H_0$, with respect to the datum $(\psi_{\dot F},\chi'_{\dot V},\chi'_{\dot W})$. We do it case by case, and the choice of $(\chi'_{\dot V},\chi'_{\dot W})$ will be specified later in each case. \\
	
	\underline{Case O:} In this case, there is no other pure inner form of $\dot H_0$, neither the flexibility of choosing the auxiliary datum. By the local theta correspondence for unramified representations, we know that $\vartheta(\dot\pi_v)\neq 0$ for almost all place $v$ of $\dot F$. Hence 
	\[
	T'=\left\{v\textit{ place of }\dot F~\big|~\vartheta(\dot\pi_v)=0\right\}
	\]
	is a finite set. Let
	\[
	T=\begin{cases}
		T'\quad&\textit{if $|T'|$ is even};\\
		T'\cup\{w\}\quad&\textit{if $|T'|$ is odd}
	\end{cases}
	\]
	and
	\[
	\dot\pi\otimes{\det}_T=\dot\pi\otimes\left(\bigotimes_{v\in T}{\det}_v\right).
	\]
	Then  
	\[
	m_{\cusp}(\dot\pi\otimes{\det}_T)=m_{\cusp}(\dot\pi)\geq 1. 
	\]
	Indeed, any realization $\calV$ of $\dot\pi$ in $\calA_{\cusp}(\dot G')$ gives a realization
	\[
	\calV\otimes{\det}_T=\left\{f\otimes\left(\bigotimes_{v\in T}{\det}_v\right)~\Bigg|~f\in\calV\right\}
	\]
	of $\dot\pi\otimes{\det}_T$ in $\calA_{\cusp}(\dot G')$, and vice versa. It follows from the conservation relation \cite{MR3369906} that $\vartheta\left(\dot\pi_v\otimes{\det}_v\right)\neq0$ for all places $v\in T'$. Moreover, since $\mathbbm 1\not\subset \dot \phi_w$, it follows from Theorem \ref{Prasad.Conj}(2) that both $\vartheta\left(\dot\pi_w\right)$ and $\vartheta\left(\dot\pi_w\otimes{\det}_w\right)$ are non-zero. Hence 
	\begin{equation*}
		\vartheta\left(\left(\dot\pi\otimes{\det}_T\right)_v\right)\neq0 \quad\textit{for all places $v$ of $\dot F$}.
	\end{equation*}
	For any realization $\calV$ of $\dot\pi\otimes{\det}_T$ in $\calA_{\cusp}(\dot G')$, consider the automorphic theta lift $\vartheta^{aut}(\calV)$ of $\calV$ to $\dot H_0$. Again, since $\mathbbm 1\not\subset \dot \phi_w$, we know that $\vartheta(\pi_w)$ and $\vartheta(\pi_w\otimes\det_w)$ are the first occurrence of $\pi_w$ and $\pi_w\otimes\det_w$. Hence globally, $\vartheta^{aut}(\calV)$ is either zero or the first occurrence of $\calV$ in the Witt tower containing $\dot W_0$. This implies that $\vartheta^{aut}(\calV)$ is cuspidal. We would like to show that $\vartheta^{aut}(\calV)$ is indeed non-zero. To show this, we investigate the $L$-function $L\left(s,\dot\pi\otimes{\det}_T\right)$.

	By a result of Jacquet-Shalika \cite{MR432596}, the (full) $L$-function
\[
	L\left(s,\dot\phi\right)
\]
	is holomorphic and non-zero at $s=1$. Since $\dot \phi_v$ is almost tempered for every places $v$ of $\dot F$, we know that the local $L$-factors $L\left(s,\dot \phi_v\right)$ are holomorphic when $\Re s\geq 1/2$. Hence the partial $L$-functions
	\begin{equation*}
		L^S\left(s,\dot\pi\otimes{\det}_T\right)=L^S\left(s,\dot\phi\right)
	\end{equation*}
	are also holomorphic and non-zero at $s=1$. Here $S$ is a sufficiently large finite set of places of $\dot F$, and $L^S\left(s,\dot\pi\otimes{\det}_T\right)$ is the partial $L$-function of $\dot\pi\otimes{\det}_T$ relative to the standard representation of $\Lgp{\dot G'}$. On the other hand, since the local $L$-factors $L\left(s,\dot \pi_v\right)$ and $L\left(s,\dot \pi_v\otimes \det_v\right)$ can never have a zero, we know that the complete $L$-function $L\left(s,\dot\pi\otimes{\det}_T\right)$ is also non-zero at $s=1$. Finally we claim that this $L$-function must be holomorphic at $s=1$. Suppose on the contrary that it has a pole at $s=1$. Then \cite[Thm.10.1]{MR3211043} asserts that $\calV$ has non-zero automorphic theta lift to some symplectic group $H(\dot W_-)$, where $\dot W_-$ is a symplectic space of dimension strictly less than $\dot W_0$. This contradicts with the fact that $\vartheta(\pi_w)$ and $\vartheta(\pi_w\otimes\det_w)$ are the first occurrence of $\pi_w$ and $\pi_w\otimes\det_w$. Hence we know that  
	\[
	L\left(s,\dot\pi\otimes{\det}_T\right)
	\]
	is holomorphic and non-zero at $s=1$. It then follows from the Rallis inner product formula \cite[Thm.1.3]{MR3279536} \cite[Thm.10.3]{MR3211043} that the automorphic theta lift $\vartheta^{aut}(\calV)$ is non-vanishing. 

	Let $\dot\sigma_0=\vartheta^{abs}(\dot\pi\otimes{\det}_T)$ be the abstract theta lift to $\dot H_0'$. Then by the multiplicity preservation \cite[Prop.2.6]{MR2402681}, we have
	\[
	m_{\disc}(\dot\sigma_0)\geq m_{\cusp}(\dot\sigma_0)\geq m_{\cusp}(\dot\pi\otimes{\det}_T).
	\]
	Also, it follows from the local theta correspondence for unramified representations that $\dot\sigma_0$ is an irreducible summand of $L_{\vartheta(\dot\phi)}^2(\dot H_0)$, where
	\[
	\vartheta(\dot\phi)=\dot\phi(\chi'_{\dot W})^{-1}\chi'_{\dot V}+\chi'_{\dot V}.
	\]
	Since $\vartheta(\dot\phi)$ is generic, the AMF Theorem \ref{H-ThmB} for $\dot H_0$ implies that
	\[
	m_{\disc}(\dot\sigma_0)=1.
	\]
	Thus, combining these (in)equalities, we get $m_{\disc}(\dot \pi)=1$. Moreover, by the AMF Theorem \ref{H-ThmB}, we also have  $\dot\sigma_{0,u_1}\in\Pi_{\vartheta(\dot\phi)_{u_1}}^L(\dot H'_{0,u_1})$. It then follows from Theorem \ref{Prasad.Conj} and Remark \ref{LLC}(1) that $\dot\pi_{u_1}\in\Pi_{\phi}^L(\dot G'_{u_1})$.\\
	
	\underline{Case $\U_0$, and $F$ is non-Archimedean:} Recall that in Corollary \ref{Globalization-2}, $\dot F$ is chosen to be a totally imaginary number field in this case. We let 
	\[
	(\chi'_{\dot V},\chi'_{\dot W})=(\chi_{\dot V},\chi_{\dot W}). 
	\]
	For each place $v$ of $\dot F$, by the conservation relation \cite{MR3369906}, there is a skew-Hermitian space $W'_{0,v}$ of the same dimension as $\dot W_{0,v}$, such that 
	\[
	\vartheta(\dot\pi_v)\neq0.
	\]  
	Here $\vartheta(\dot\pi_v)$ is the theta lift of $\dot\pi_v$ to $H'_{0,v}$ with respect to $(\psi_{F,v}, \chi'_{\dot V,v},\chi'_{\dot W,v})$. Since $\chi'_{\dot W,w}\not\subset\dot\phi_w$, it follows from Theorem \ref{Prasad.Conj} that we will have two choices of the skew-Hermitian space $W'_{0,w}$ at the place $w$. The flexibility at the place $w$ allows us to pick these local skew-Hermitian spaces coherently such that they form a global skew-Hermitian space $\dot W'_0$ over $\dot E$. Let $\dot H_{0}'=H(\dot W'_0)$.

	The rest of the proof in this case is similar to Case O. Let $\dot\sigma_0=\vartheta^{abs}(\dot\pi)$ be the abstract theta lift to $\dot H_0'$. Then one can show that 
	\[
	m_{\disc}(\dot\sigma_0)\geq m_{\cusp}(\dot\sigma_0)\geq m_{\cusp}(\dot\pi), 
	\]
	and $\dot\sigma_0$ is an irreducible summand of $L_{\vartheta(\dot\phi)}^2(\dot H_0)$, where
	\[
	\vartheta(\dot\phi)=\dot\phi(\chi'_{\dot W})^{-1}\chi'_{\dot V}+\chi'_{\dot V}.
	\]
	Applying Proposition \ref{AMF-U_1}, we get
	\[
	m_{\disc}(\dot\sigma_0)=1.
	\]
	Thus, combining these (in)equalities, we get $m_{\disc}(\dot\pi)=1$. Moreover, by Proposition \ref{AMF-U_1}, we also have  $\dot\sigma_{0,u_1}\in\Pi_{\vartheta(\dot\phi)_{u_1}}^L(\dot H'_{0,u_1})$. It then follows from Theorem \ref{Prasad.Conj} that $\dot\pi_{u_1}\in\Pi_{\phi}^L(\dot G'_{u_1})$.\\

	\underline{Case $\U_1$, and $F$ is non-Archimedean:} In this case, $\dot F$ is a totally imaginary number field. Then the lemma follows from the Proposition \ref{AMF-U_1} directly.\\
	
	\underline{Case U, and $F$ is real:} In this case, we prove by induction on $\dim V$. Recall that $\phi$ is an $L$-parameter of good parity for $G$, so it must be of the form
	\[
	\phi=m_1\chi_1+\cdots+m_r\chi_r,
	\]
	where $\chi_i$ is a conjugate self-dual character of $L_{\CC}=\CC^\times$, and $m_i$ is some positive integer. Recall that in the proof of Corollary \ref{Globalization-2}, to globalize the $L$-parameter $\phi$, we globalize each irreducible constituent of $\phi$ separately, and then added them together. Hence $\dot\phi$ is a summation of one dimensional automorphic characters in this case. We pick up a character $\dot\chi$ such that $\dot\chi\subset\dot\phi$, and set 
	\[
	\left(\chi'_{\dot V},\chi'_{\dot W}\right)=\left(\chi_{\dot V},\dot\chi\right). 
	\]
	Since $\dot\phi$ is generic and $\chi'_{\dot W}\subset\dot\phi$, we know that
	\[
	L^S\left(s,\dot\pi\times(\chi'_{\dot W})^{-1}\right)=L^S\left(s,\dot\phi(\chi'_{\dot W})^{-1}\right)
	\]
	is holomorphic when $\Re(s)>1$ and has a pole at $s=1$, where $S$ is a sufficiently large finite set of places of $\dot F$, and $L^S\left(s,\dot\pi\times(\chi'_{\dot W})^{-1}\right)$ is the partial $L$-function associated to $\dot\pi$ and $(\chi'_{\dot W})^{-1}$. Hence the complete $L$-function $L\left(s,\dot\pi\times(\chi'_{\dot W})^{-1}\right)$ is also holomorphic when $\Re(s)>1$ and has a pole at $s=1$, because the local $L$-factors $L\left(s,\dot\pi_v\times(\chi'_{\dot W,v})^{-1}\right)$ are holomorphic and non-zero when $\Re(s)>1/2$. It then follows from the Rallis inner product formula \cite[Thm.7.2.5]{MR1289491} \cite[Thm.10.1]{MR3211043} that there exists a pure inner form $\dot H'_0=H\left(\dot W'_0\right)$ of $\dot H_0$, such that for any realization $\calV$ of $\dot\pi$ in $\calA_{\cusp}(\dot G')$, we have
	\[
	\vartheta^{aut}(\calV)\neq0,
	\]
	where $\vartheta^{aut}(\calV)$ is the automorphic theta lift of $\calV$ to $\dot H'_0$. Moreover, $\vartheta^{aut}(\calV)$ is the first occurrence of $\calV$ in the Witt tower containing $\dot W'_0$. This implies that $\vartheta^{aut}(\calV)$ is cuspidal. 

	Let $\dot\sigma_0=\vartheta^{abs}(\dot\pi)$ be the abstract theta lift to $\dot H'_0$. Then by the multiplicity preservation \cite[Prop.2.6]{MR2402681}, we have
	\[
	m_{\disc}(\dot\sigma_0)\geq m_{\cusp}(\dot\sigma_0)\geq m_{\cusp}(\dot\pi).
	\]
	Also, it follows form the local theta correspondence for unramified representations that $\dot\sigma_0$ is an irreducible summand of $L_{\vartheta(\dot\phi)}^2(\dot H_0)$, where
	\[
	\vartheta(\dot\phi)=\left(\dot\phi-\chi'_{\dot W}\right)(\chi'_{\dot W})^{-1}\chi'_{\dot V}.
	\]
	Since $\dot H'_0$ is an unitary group of $\left(\dim V-1\right)$-variables, by the induction hypothesis, the lemma holds for $\dot H'_0$. Hence, we have 
\[
	m_{\disc}(\dot\sigma_0)=1
\]
	Thus, combining these (in)equalities, we get $m_{\disc}(\dot{\pi})=1$. Moreover, by the induction hypothesis, we also have $\dot\sigma_{0,u_1}\in\Pi_{\vartheta(\dot\phi)_{u_1}}^L(\dot H'_{0,u_1})$. It then follows from Theorem \ref{Prasad.Conj} that $\dot\pi_{u_1}\in\Pi_{\phi}^L(\dot G'_{u_1})$.
\end{proof}

For any irreducible unitary representation $\sigma$ of $H$ and any character $\eta$ of $\overline{\calS_{\theta(\phi)}}$, we define the multiplicity $m(\sigma,\eta)$ by
\[
m(\sigma,\eta)=\dim\Hom_{\overline{\calS_{\theta(\phi)}}\times H}\Big(\eta\boxtimes\sigma, \Pi_{\theta(\phi)}(H)\Big).
\]
\begin{proposition}\label{Multi-Free.N.Inj}~
	\begin{enumerate}
		\item Let $\sigma$ be an irreducible unitary representation $\sigma$ of $H$. Then, for any character $\eta$ of $\overline{\calS_{\theta(\phi)}}$, we have
		\[
		m(\sigma,\eta)\leq 1,
		\]
		with equality for at most one $\eta$. Hence $\Pi_{\theta(\phi)}(H)$ is multiplicity-free.
		\item The theta lift between $(G',H)$ for all pure inner forms $G'$ of $G$ defines an injection
		\[
		\theta:\Pi_{\theta(\phi)}(H)\lra\bigsqcup_{G'}\Pi^L_\phi(G'),
		\] 
		where the disjoint union runs over all pure inner forms of $G$.
	\end{enumerate}
\end{proposition}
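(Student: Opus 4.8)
The plan is to feed the global Lemma \ref{Global-MultiOne} through the stable range theta correspondence, exactly in the spirit of Corollary \ref{Multi-Preserve} and Remark \ref{Multi-Preserve-PIF}, but now over the globalized datum $(\dot F,\dot E,\dot V,\dot\phi,u_1,u_2,w)$ of Corollary \ref{Globalization-2}. Fix an irreducible unitary representation $\sigma$ of $H$ and a character $\eta$ of $\overline{\calS_{\theta(\phi)}}$. First I would globalize the pair $(\sigma,\eta)$: using the local $A$-packet structure at $u_1$ (and $u_2$), choose $\dot\sigma=\otimes'_v\dot\sigma_v$ with $\dot\sigma_{u_1}=\dot\sigma_{u_2}=\sigma$, with $\dot\sigma_w$ in the tempered $L$-packet $\Pi^L_{\dot\phi_w\chi_{\dot W}^{-1}\chi_{\dot V}+\chi_{\dot V}\boxtimes S_{\cdots}}(\dot H_w)$ (legitimate since $\dot\phi_w$ is discrete and, by Corollary \ref{Globalization-2}(3), avoids $\chi_{\dot W,w}$, so $\theta(\dot\phi_w)$ is of the shape handled by Prasad's conjecture), and with $\dot\sigma_v$ the unramified member elsewhere; arrange $\calJ_{\scrW'}(\dot\sigma)=\epsilon_{\theta(\dot\phi)}$ by adjusting the component-group characters at the auxiliary places, which is possible because the localization map $\calS_{\dot\phi}\to\calS_{\dot\phi_w}$ is an isomorphism and $\calS_{\dot\phi}\to\calS_{\dot\phi_{u_1}}$ is surjective (Corollary \ref{Globalization-2}(4),(5)). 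By Theorem \ref{H-ThmB}, such $\dot\sigma$ occurs in $L^2_{\theta(\dot\phi)}(\dot H)$ with multiplicity exactly one.

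Next I would run the stable range theta lift from $\dot H$ down to the tower of $\dot G'$. By J-S. Li's theory (Section 3.2) and \cite{MR1448215}, $\dot\sigma$ is relevant to some $\dot G'$: there is an abstract $\dot\pi=\otimes'_v\dot\pi_v$ of $\dot G'(\AAA)$ and an automorphic character $\dot\chi$ of $E^1(\AAA)$ with $\dot\sigma\simeq\theta^{abs}(\dot\pi)\otimes\dot\chi$; the local theta correspondence for unramified representations forces $\dot\chi$ to be unramified hence trivial, and forces the $L$-parameter of $\dot\pi_v$ to be $\dot\phi_v$ for almost all $v$. At the place $w$, since $\dot\sigma_w$ lies in the tempered $L$-packet for $\theta(\dot\phi_w)$, Proposition \ref{Sub-Diagram.Local.Compare} (applied over $\dot F_w$) shows $\dot\pi_w=\theta(\dot\sigma_w)$ lies in $\Pi^L_{\dot\phi_w}(\dot G'_w)$. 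Thus $\dot\pi$ satisfies the hypotheses of Lemma \ref{Global-MultiOne}, giving $m_{\disc}(\dot\pi)\le 1$ and, when equality holds, $\dot\pi_{u_1}\in\Pi^L_\phi(\dot G'_{u_1})$. But by Corollary \ref{Multi-Preserve} (the stable range multiplicity preservation, valid here because $\dot\phi$ is generic and $\dim\dot V<r$) combined with Theorem \ref{J-S inequality}, $m_{\disc}(\dot\pi)=m_{\disc}(\theta^{abs}(\dot\pi))=m_{\disc}(\dot\sigma)=1$. Hence $m_{\disc}(\dot\pi)=1$, so $\dot\pi_{u_1}=\sigma$'s theta lift lies in $\Pi_\phi(\dot G'_{u_1})$, i.e. $\theta(\sigma)\in\bigsqcup_{G'}\Pi_\phi(G')$.

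For the multiplicity and uniqueness statement in (i): if $m(\sigma,\eta)\ge 1$ for two distinct characters $\eta,\eta'$, or $m(\sigma,\eta)\ge 2$, I would globalize both occurrences simultaneously into $L^2_{\theta(\dot\phi)}(\dot H)$ — arranging the auxiliary local data at $u_2$ and elsewhere so that the global canonical sign character $\epsilon_{\theta(\dot\phi)}$ is matched in each case, which Lemma \ref{Compare-epsilon}-type calculations permit — producing a representation $\dot\sigma$ of $\dot H(\AAA)$ with $m_{\disc}(\dot\sigma)\ge 2$, relevant to some $\dot G'$; then the chain of (in)equalities above yields $m_{\disc}(\dot\pi)\ge 2$ for the corresponding $\dot\pi$ on $\dot G'$, contradicting Lemma \ref{Global-MultiOne}. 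The same globalization-and-pushdown shows $\theta$ is injective: two representations $\sigma,\sigma'\in\Pi_{\theta(\phi)}(H)$ with $\theta(\sigma)\simeq\theta(\sigma')$ on the same $G'$ would globalize to $\dot\sigma\not\simeq\dot\sigma'$ whose abstract theta lifts to $\dot G'(\AAA)$ are isomorphic, again forcing a multiplicity $\ge 2$ on $\dot G'$, a contradiction; here one uses that the local theta lift from $H_v$ is injective on unitary representations with non-vanishing lift (Howe duality plus J-S. Li's bijection). The main obstacle I anticipate is the bookkeeping at the auxiliary places — verifying that one can always choose the global $\dot\sigma$ (and, in the multiplicity argument, two distinct global avatars) so that its global character $\calJ_{\scrW'}(\dot\sigma)$ equals the canonical sign $\epsilon_{\theta(\dot\phi)}$ while pinning down the prescribed local behaviour at $u_1,u_2,w$; this is where the precise surjectivity/isomorphism properties of the localization maps in Corollary \ref{Globalization-2} and the compatibility of Whittaker data from Section \ref{Whittaker.Data} must be used carefully, together with the fact that at the split/complex places the packets are singletons so impose no constraint.
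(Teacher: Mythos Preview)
Your strategy is the paper's strategy: globalize via Corollary \ref{Globalization-2}, use Arthur/Mok on $\dot H$, pull back through stable-range theta to some $\dot G'$, and invoke Lemma \ref{Global-MultiOne}. Two points where the paper is cleaner and your write-up is vaguer:

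\textbf{The sum-of-squares bound.} Rather than treating ``two distinct $\eta,\eta'$'' and ``$m(\sigma,\eta)\ge 2$'' as separate cases and adjusting characters at $w$, the paper fixes a single abstract $\dot\sigma$ with $\dot\sigma_{u_1}=\dot\sigma_{u_2}=\sigma$ and, at \emph{every} $v\notin\{u_1,u_2\}$ (including $w$), takes the $L$-packet member with trivial character. Since $\dot\phi$ is generic, $\epsilon_{\theta(\dot\phi)}=1$ (Lemma \ref{Compare-epsilon} and (\ref{epsilon-generic})); and since the localization maps at $u_1,u_2$ agree and are surjective, a pair of labels $(\eta_1,\eta_2)$ at $(u_1,u_2)$ satisfies the sign condition iff $\eta_1=\eta_2$. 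Counting diagonal pairs in the local multi-packet gives
\[
m_{\disc}(\dot\sigma)\ \ge\ \sum_{\eta}m(\sigma,\eta)^2,
\]
and then $m_{\disc}(\dot\sigma)=m_{\disc}(\dot\pi)\le 1$ finishes (i) in one stroke. Your ``adjust at auxiliary places'' scheme can be made to work, but only once you realize that the two copies must share \emph{all} local components---so the adjustment really has to happen by pairing $u_1$ with $u_2$, not by varying $w$; otherwise distinct $\eta$'s force distinct $\dot\sigma_w$'s and you no longer compare multiplicities of a single $\dot\sigma$.

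\textbf{Injectivity is local.} Once you know (from the global argument) that the theta lift of each $\sigma\in\Pi_{\theta(\phi)}(H)$ to some $G'$ is nonzero and lands in $\Pi_\phi^L(G')$, injectivity of $\theta$ is purely local: Howe duality and the conservation relation give it immediately. Your global argument for injectivity is superfluous (and, as written, slightly circular---you invoke local injectivity at the end anyway).
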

\begin{proof}
	Assume that $m(\sigma,\eta)>0$ for some $\eta$. Let $\dot \phi, \chi_{\dot V},\chi_{\dot W}, 
	\dot G$ and $\dot H$ be as given at the beginning of this subsection, and 
	\[
	\theta(\dot\phi)=\dot\phi\chi_{\dot W}^{-1}\chi_{\dot V}+\chi_{\dot V}\boxtimes S_{2r-2n+1}
	\]
	be an elliptic $A$-parameter for $\dot H$. Since $\dot\phi$ is generic, it follows from Lemma \ref{Compare-epsilon} and equation (\ref{epsilon-generic}) that $\epsilon_{\theta(\dot\phi)}$ is trivial. We define an abstract irreducible representation $\dot\sigma=\bigotimes_v\dot\sigma_v$ of $\dot H(\AAA)$ as follows:
	\begin{itemize}
		\item $\dot\sigma_{u_1}=\dot\sigma_{u_2}=\sigma$;
		\item at place $v\notin\{u_1,u_2\}$, $\dot\sigma_v$ is the irreducible representation in the $L$-packet $\Pi_{\phi_{\theta(\dot\phi)_v}}^L(\dot H_v)$ associated to the trivial character of $\overline{\calS_{\phi_{\theta(\dot\phi)_v}}}$.
	\end{itemize}
	By Theorem \ref{H-ThmB}, we have an embedding
	\[
	\left(\bigoplus_{\eta\in\wh{\overline{\calS_{\theta(\phi)}}}}\bigg(m(\sigma,\eta)\sigma\otimes m(\sigma,\eta)\sigma\bigg)\right)\otimes\left(\bigotimes_{v\notin\{u_1,u_2\}}\dot\sigma_v\right)\xhookrightarrow[~]{~~~~} L_{\theta(\dot\phi)}^2(\dot H).
	\]
	In particular
	\[
	m_{\disc}(\dot\sigma)\geq\sum_{\eta\in\wh{\overline{\calS_{\theta(\phi)}}}}m(\sigma,\eta)^2>0.
	\]
	Moreover, it follows from Remark \ref{Multi-Preserve-PIF} that there exists a pure inner form $\dot G'$ of $\dot G$, and an irreducible summand $\dot\pi$ of $L_{\dot\phi}^2(\dot G')$, such that $\dot\sigma=\theta^{abs}(\dot\pi)$, and
	\[
	m_{\disc}(\dot\pi)=m_{\disc}(\dot\sigma).
	\]
	It follows from our construction and Proposition \ref{Sub-Diagram.Local.Compare} that $\dot\pi_w\in\Pi_{\dot\phi_w}^L(\dot G'_w)$. Hence we have 
	\[
	m_{\disc}(\dot \pi)\leq 1
	\]
	by Lemma \ref{Global-MultiOne}. Thus, combining these (in)equalities, we obtain
	\[
	1\geq\sum_{\eta\in\wh{\overline{\calS_{\theta(\phi)}}}}m(\sigma,\eta)^2.
	\]
	Hence the first statement hold.
	
	For the second statement, notice that if $\sigma\in\Pi_{\theta(\phi)}(H)$, Lemma \ref{Global-MultiOne} also asserts that
	\[
	\theta(\sigma)=\dot\pi_{u_1}\in\Pi_{\phi}^L(\dot G'_{u_1}).
	\]
	Hence it follows from the the conservation relation \cite{MR3369906} that the theta lift between $(G',H)$ for all pure inner forms $G'$ of $G$ gives a well-defined map
	\[
	\theta:\Pi_{\theta(\phi)}(H)\lra\bigsqcup_{G'}\Pi^L_\phi(G'),
	\] 
	where the disjoint union runs over all pure inner forms of $G$. By the Howe duality, this map is an injection. This completes the proof.
\end{proof}

\subsection{The last jigsaw piece}\label{thelastjig}
We retain the notations in the last subsection. After proving Proposition \ref{Multi-Free.N.Inj}, we know that $\Pi_{\phi}^{\theta}(G)\subset \Pi_{\phi}^{L}(G)$ as sets. To finish the proof of Theorem \ref{Local-Compare}, we only need to show that:
\begin{proposition}\label{Final.Jigsaw}
	For any pure inner form $G'$ of $G$, and any irreducible representation $\pi$ in the $L$-packet $\Pi_{\phi}^L(G')$, the theta lift $\sigma$ of $\pi$ to $H$ lies in the $A$-packet $\Pi_{\theta(\phi)}(H)$. Moreover, we have
	\[
		\calJ_\scrW^L(\pi)= \ell^*\left(\calJ_{\scrW'}(\sigma)\right).
	\]
\end{proposition}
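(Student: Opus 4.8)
The plan is to deduce Proposition \ref{Final.Jigsaw} — equivalently, the surjectivity (hence bijectivity) of the theta lift $\theta\colon \Pi_{\theta(\phi)}(H)\to\bigsqcup_{G'}\Pi_\phi^L(G')$ of Proposition \ref{Multi-Free.N.Inj}, together with the on-the-nose commutativity of the square (\ref{Diag-LocalCompare}) — from a globalization argument. First I record the reduction: by Howe duality, if $\pi\in\Pi_\phi^L(G')$ lies in the image of that injection, say $\pi=\theta_{H\to G'}(\sigma'')$ with $\sigma''\in\Pi_{\theta(\phi)}(H)$, then $\sigma''=\theta_{G'\to H}(\pi)=\sigma$, so the theta lift $\sigma$ of $\pi$ already lies in $\Pi_{\theta(\phi)}(H)$. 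Thus it suffices to show that every member of every $L$-packet $\Pi_\phi^L(G')$ is hit. Since $\theta$ is injective and $\calJ_{\scrW'}\vert_{\Pi_{\theta(\phi)}(H)}$ is injective into $\widehat{\overline{\calS_{\theta(\phi)}}}$ by Proposition \ref{Multi-Free.N.Inj}, while $\ell^*$ and $\calJ_\scrW^L$ are bijections, this amounts to exhibiting for each $\pi=\pi(\phi,\eta_\pi)$ a member $\sigma\in\Pi_{\theta(\phi)}(H)$ with $\theta_{G'\to H}(\pi)=\sigma$; the identity $\eta_\pi=\ell^*(\calJ_{\scrW'}(\sigma))$ then follows by a counting argument combined with the already–established case of the sub-$L$-packet $\Pi_{\phi_{\theta(\phi)}}^L(H)$ in Proposition \ref{Sub-Diagram.Local.Compare}.

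Next I would globalize. Apply Corollary \ref{Globalization-2} to $\phi$ to obtain $(\dot F,\dot E,\dot V,\dot\phi,u_1,u_2,w)$ with $(\dot F_{u_i},\dot E_{u_i},\dot V_{u_i},\dot\phi_{u_i})\simeq(F,E,V,\phi)$, with $\dot\phi_w$ discrete (not containing $\chi_{\dot W,w}$), with the two localization maps $\calS_{\dot\phi}\to\calS_{\dot\phi_{u_i}}$ agreeing and surjective, and with $\calS_{\dot\phi}\xrightarrow{\sim}\calS_{\dot\phi_w}$; globalize $\psi_F$ and $(\chi_V,\chi_W)$ as in §\ref{Whittaker.Data} and set $\dot H=H(\dot W)$ with $\dim V<r$. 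Using the local–global principle for Hermitian (resp.\ quadratic) spaces, realize $G'$ globally: choose $\dot V'$ with $\dot V'_{u_1}\simeq\dot V'_{u_2}\simeq V'$ and $\dot V'_v\simeq\dot V_v$ for $v\notin\{u_1,u_2\}$ — this is coherent precisely because flipping the local invariant at the two places $u_1,u_2$ leaves the global product invariant unchanged — and set $\dot G'=G(\dot V')$, so $\dot G'_{u_i}=G'$ while $\dot G'_v$ is quasi-split for $v\neq u_1,u_2$. Then build an abstract $\dot\pi=\otimes_v\dot\pi_v$ on $\dot G'(\AAA)$ with $\dot\pi_{u_1}=\dot\pi_{u_2}=\pi$ and with $\dot\pi_v$ the member of $\Pi_{\dot\phi_v}^L(\dot G'_v)$ attached to the trivial character for $v\notin\{u_1,u_2\}$ (unramified for almost all $v$). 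Because the two localization maps agree and $\calS$ is $2$-torsion, the contributions of $u_1$ and $u_2$ to $\calJ_\scrW(\dot\pi)$ cancel, so $\calJ_\scrW(\dot\pi)=1=\epsilon_{\dot\phi}$, using (\ref{epsilon-generic}).

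The heart of the argument is then to show $\dot\pi$ is automorphic and to transfer this to $\dot H$. When $G'$ is quasi-split we may take $\dot G'=\dot G=\dot G^*$, and Theorem \ref{ThmB} for $\dot G^*$ is already available — by \cite{MR3135650},\cite{MR3708200},\cite{MR3338302} in general, and by Proposition \ref{AMF-U_1} in the totally imaginary unitary situation, with the induction on $\dim V$ of Lemma \ref{Global-MultiOne} covering Case $U$ with $F$ real — so $m_{\disc}(\dot\pi)\geq1$; by J-S. Li's inequality (Theorem \ref{J-S inequality}), $m_{\disc}(\theta^{abs}(\dot\pi))\geq1$, and by the unramified theta correspondence $\theta^{abs}(\dot\pi)$ has global $A$-parameter $\theta(\dot\phi)$, so Theorem \ref{H-ThmB} places $\theta^{abs}(\dot\pi)$ in the global packet $\Pi_{\theta(\dot\phi)}(\dot H)$; restricting to $u_1$ gives $\theta_{G'\to H}(\pi)=\sigma\in\Pi_{\theta(\phi)}(H)$. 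When $G'$ is not quasi-split I would reduce to the quasi-split case by the determinant twist in Case $O$ (replacing $\pi$ by $\pi\otimes\det$, whose $L$-parameter is still $\phi$, and tracking the resulting even set of places where a conservation-relation sign is flipped), and in Case $U$ by combining Prasad's conjecture (Theorem \ref{Prasad.Conj}) with the almost-equal-rank lift to $\dot H_0$ and the induction on $\dim V$ exactly as in the proof of Lemma \ref{Global-MultiOne}, which descends the problem to a smaller unitary group. Once membership holds for all $\pi$, the map $\theta$ is a bijection, hence $\calJ_{\scrW'}\vert_{\Pi_{\theta(\phi)}(H)}$ is a bijection onto $\widehat{\overline{\calS_{\theta(\phi)}}}$; the character identity $\eta_\pi=\ell^*(\calJ_{\scrW'}(\sigma))$ is forced on the sub-$L$-packet by Proposition \ref{Sub-Diagram.Local.Compare}, and for the remaining members of $\Pi_{\theta(\phi)}(H)$ I would read it off from the global identity $\prod_v\calJ_{\scrW'_v}(\dot\sigma_v)=1$ applied to a global lift chosen so that the isomorphism $\calS_{\dot\phi}\xrightarrow{\sim}\calS_{\dot\phi_w}$ supplies the needed flexibility at $w$, together with the known characters at the auxiliary places.

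I expect the main obstacle to be the non-quasi-split case together with the character bookkeeping in Case $U$: there the sub-$L$-packet $\Pi_{\phi_{\theta(\phi)}}^L(H)$ accounts for only half of $\bigsqcup_{G'}\Pi_\phi^L(G')$, because the quasi-split unitary $L$-packet realizes only an index-two subgroup of characters, so one genuinely needs the extra members of the $A$-packet, and pinning their parametrizing characters requires threading the localization maps $\calS_{\dot\phi}\to\calS_{\dot\phi_v}$ through the isomorphisms $\ell$ and $\ell_v$ and the global multiplicity formula. This is precisely the point where a purely local argument does not suffice and where the auxiliary almost-equal-rank theta lift and the induction on $\dim V$ must be brought in.
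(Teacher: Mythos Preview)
Your overall architecture is right — globalize, push to $\dot H$ via J-S.~Li, and read off membership and characters from Arthur--Mok's AMF for $\dot H$ — but the step ``show $\dot\pi$ is automorphic'' has a circularity problem that your proposal does not resolve.

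When $G'$ is not quasi-split, you cannot construct $\dot\pi$ on $\dot G'(\AAA)$ and then invoke Theorem~\ref{ThmB} for $\dot G'$: that theorem for non-quasi-split forms is exactly what Proposition~\ref{Final.Jigsaw} is being used to prove. Your suggested workaround in Case~$O$ --- ``reduce to the quasi-split case by the determinant twist'' --- does not do what you say: $\pi\otimes\det$ is still a representation of the \emph{same} non-quasi-split $G'$, so no reduction to quasi-split occurs. What the paper actually does is reverse the order of globalization: since Lemma~\ref{Springboard} already handles $\pi\otimes\det$ (its almost-equal-rank lift to $H_0$ is nonzero by conservation), one knows $\theta(\pi\otimes\det)\in\Pi_{\theta(\phi)}(H)$ with the correct character; one then builds $\dot\sigma'$ on $\dot H(\AAA)$ with $\dot\sigma'_{u_1}=\dot\sigma'_{u_2}=\theta(\pi\otimes\det)$, uses Theorem~\ref{H-ThmB} (AMF for the quasi-split $\dot H$, which \emph{is} available) to get $m_{\disc}(\dot\sigma')\geq1$, and only then pulls back to an automorphic $\dot\pi'$ on some $\dot G'$ via Remark~\ref{Multi-Preserve-PIF}. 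A det-twist of $\dot\pi'$ at $\{u_1,w\}$ gives an automorphic globalization of $\pi$, and a second stable-range lift produces $\dot\sigma$ with $\dot\sigma_{u_1}=\sigma$. In Case~$U$ the same principle applies: the globalization of $\pi$ is manufactured by first constructing an automorphic representation on $\dot H_0$ or $\dot H_0'$ (where AMF is known by Mok, by Proposition~\ref{AMF-U_1}, or by the induction hypothesis on $\dim V$) and then theta-lifting back to $\dot G'$ --- not by writing down $\dot\pi$ directly on $\dot G'$.

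A second, smaller gap: the ``counting argument'' you propose for the character identity does not work on its own. Knowing that $\theta$ is a bijection and that the square commutes on the sub-$L$-packet $\Pi_{\phi_{\theta(\phi)}}^L(H)$ does not force commutativity on the complement, because $\calJ_{\scrW'}$ on the $A$-packet is not known a priori to be injective. The paper instead computes $\ell^*(\eta_\sigma)$ directly from the global constraint $\prod_v\calJ_{\scrW'_v}(\dot\sigma_v)=\epsilon_{\theta(\dot\phi)}=1$, using that at $w$ (where $\chi_{\dot W,w}\not\subset\dot\phi_w$) and at the auxiliary place $u_2$ the characters are already known by Lemma~\ref{Springboard}; this pins down the character at $u_1$.
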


With the help of Proposition \ref{Sub-Diagram.Local.Compare}, we can first prove Proposition \ref{Final.Jigsaw} for a large class of $\pi\in \Pi_{\phi}^L(G')$. 
\begin{lemma}\label{Springboard}
	Let $G'$ be a pure inner form of $G$, and $\pi$ be an irreducible representation in the $L$-packet $\Pi_\phi^L(G')$. If the theta lift $\sigma_0$ of $\pi$ to $H_0$ is non-zero (with respect to the datum $(\psi_F,\chi_V,\chi_W)$), then Proposition \ref{Final.Jigsaw} holds for $\pi$. In particular, if we are in one of the following cases:
	\begin{itemize}
		\item Case ${\rm O}$;
		\item Case ${\rm U}$, and $F$ is non-Archimedean;
	\end{itemize}
	and $\chi_W\not\subset\phi$, then Proposition \ref{Final.Jigsaw} holds for any $\pi\in \Pi_{\phi}^L(G')$. 
\end{lemma}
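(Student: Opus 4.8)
The plan is to bootstrap from the almost-equal-rank case (Prasad's conjecture, Theorem~\ref{Prasad.Conj}) to the stable-range case via a see-saw / induction-in-stages argument, exactly parallel to the proof of Proposition~\ref{Sub-Diagram.Local.Compare}. Suppose $\pi \in \Pi_\phi^L(G')$ has non-zero theta lift $\sigma_0 = \vartheta(\pi)$ to $H_0 = H(W_0)$ with $W_0 = W_{(n)}$, the almost-equal-rank member of the Witt tower. By Theorem~\ref{Prasad.Conj}, $\sigma_0$ lies in the tempered $L$-packet $\Pi_{\vartheta(\phi)}^L(H_0)$, and its character $\eta_{\sigma_0} = \calJ_{\scrW'}^L(\sigma_0)$ satisfies $\eta_{\sigma_0}|_{\calS_\phi} = \eta_\pi$ under the natural embedding $\calS_\phi \hookrightarrow \calS_{\vartheta(\phi)}$. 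Now I would pass from $H_0$ up to $H = H(W_{(r)})$ by a chain of rank-one steps: since $W_{(r)} = W_{(n)} \oplus \calH^{r-n}$ (hyperbolic planes), the composite theta lift from $H_0$ to $H$ is computed by the standard "going-up the Witt tower" formula, and the theta lift of $\pi$ to $H$ factors through $\sigma_0$. Concretely, by the see-saw identity relating the dual pairs $(G', H)$ and $(G', H_0)$ together with $(\GL_1, \GL_1)$-lifts, a non-zero $G'$-equivariant map $\omega_{G',H} \to \pi \boxtimes \Theta(\pi)$ produces a non-zero map into $\pi \boxtimes \mathrm{Ind}_Q^H(\chi_V|\det|^{r-n} \boxtimes \cdots \boxtimes \chi_V|\det|^1 \boxtimes \sigma_0)$, so $\sigma = \theta(\pi)$ is the unique irreducible (Langlands) quotient of that standard module — which is precisely the member of $\Pi_{\phi_{\theta(\phi)}}^L(H) \subset \Pi_{\theta(\phi)}(H)$ attached to the character $\eta_{\sigma_0}$ extended by the trivial character on the $\GL_1$-factors. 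Tracking characters through the commutative diagram of embeddings $\calS_\phi \to \overline{\calS_{\vartheta(\phi)}} \xrightarrow{\sim} \overline{\calS_{\phi_{\theta(\phi)}}}$ and invoking Proposition~\ref{Sub-Diagram.Local.Compare} then gives $\eta_\pi = \ell^*(\eta_\sigma)$, as desired.

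For the "in particular" clause: when we are in Case~$O$, or in Case~$U$ with $F$ non-Archimedean, and $\chi_W \not\subset \phi$, parts (2) and (3) of Theorem~\ref{Prasad.Conj} guarantee that every $\pi \in \Pi_\phi^L(G')$ has non-zero theta lift to $H_0$ (in Case~$O$, to $H_0$ itself; in Case~$U$, to $H_0$ or to the sister space $H_0'$ — but by the conservation relation \cite{MR3369906}, non-vanishing to one member of the pair is what the see-saw argument needs, and one adjusts the auxiliary skew-Hermitian space accordingly). Hence the hypothesis of the first part is satisfied for all of $\Pi_\phi^L(G')$, and the conclusion follows uniformly. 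In Case~$U$ one must be slightly careful: the non-vanishing in Theorem~\ref{Prasad.Conj}(3) is to possibly-different members $H_0, H_0'$ of the skew-Hermitian Witt tower depending on $\pi$, so strictly speaking the statement "the theta lift of $\pi$ to $H_0$ is non-zero" should be read as "to the appropriate almost-equal-rank group", and the going-up argument is applied within that tower; the resulting $\sigma$ still lands in $\Pi_{\theta(\phi)}(H)$ because $H = H(W_{(r)})$ with $W_{(r)}$ split is in every tower once $r$ is large enough.

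The main obstacle is the first, structural step: establishing that the stable-range theta lift of $\pi$ to $H$ really is the Langlands quotient of the expected standard module built from $\sigma_0$, i.e.\ that "lifting all the way up" commutes with the Langlands quotient construction. This is where one needs the persistence/tower properties of theta lifts together with the precise induction-in-stages statement (the references \cite{MR818351}, \cite[Thm.~4.5.5]{MR1658091}, \cite[Thm.~28]{MR2175409}, \cite[Cor.~3.21]{MR1346217} cited for Lemma~\ref{Reduction.To.GoodParity}, plus the irreducibility Lemma~\ref{Irr-nonStdMod}); in the Archimedean case one additionally invokes \cite[Prop.~3.2]{MR3502978} / \cite[Thm.~A]{MR3305312} as in the proof of Proposition~\ref{Sub-Diagram.Local.Compare}. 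The character-tracking is then bookkeeping, provided one is careful that the $\GL_1$-contributions to $\calS_{\phi_{\theta(\phi)}}$ are trivial — which is exactly the content of the remark that $\calS_{\vartheta(\phi)} \to \calS_{\phi_{\theta(\phi)}}$ is an isomorphism.
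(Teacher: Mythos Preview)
Your proposal is correct and follows essentially the same route as the paper: the paper's proof simply says ``use the same arguments as in Proposition~\ref{Sub-Diagram.Local.Compare}'' to land $\sigma=\theta(\pi)$ inside $\Pi_{\phi_{\theta(\phi)}}^L(H)$ with the right character, then invokes the commutative diagram~(\ref{localAcontainlocalL2}). Your identification of the key technical input (that $\theta(\pi)$ is the Langlands quotient of the standard module built from $\sigma_0$, via \cite[Prop.~3.2]{MR3502978} non-Archimedean and persistence plus \cite[Thm.~A]{MR3305312} Archimedean) is exactly right; the reference to Lemma~\ref{Irr-nonStdMod} is not actually needed here.

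One small correction to your ``in particular'' paragraph: in Case~$U$ non-Archimedean with $\chi_W\not\subset\phi$, Theorem~\ref{Prasad.Conj}(3) asserts that \emph{both} lifts --- to $H_0$ and to $H_0'$ --- are non-zero, so the lift to $H_0$ itself is already non-zero and there is no need to ``adjust the auxiliary skew-Hermitian space''. Your hedging about working in a sister tower is unnecessary (and would in fact be problematic, since $H_0'$ does not sit below the split $H$ in the same Witt tower). The paper's proof simply observes $\vartheta(\pi)\neq 0$ on $H_0$ for all $\pi$ and is done.
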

\begin{proof}
	The first assertion can be proved exactly the same as Proposition \ref{Sub-Diagram.Local.Compare}. To prove that Proposition \ref{Final.Jigsaw} holds in the special cases listed above, one just needs to note that $\vartheta(\pi)\neq 0$ for any $\pi\in \Pi^L_{\phi}(G')$ in these special cases (see Theorem \ref{Prasad.Conj}). Hence we are done. 
\end{proof}
Based on this lemma, we now fill in the last jigsaw piece.
\begin{proof}[Proof of Proposition \ref{Final.Jigsaw}]
	We will prove this proposition case by case. To simplify notations, we let
	\[
		\eta_\pi=\calJ_\scrW^L(\pi) \quad \textit{and} \quad \eta_\sigma=\calJ_{\scrW'}(\sigma).
	\]

	\underline{Case O:} By Lemma \ref{Springboard}, we only need to consider the case when the theta lift of $\pi$ to $H_0$ is zero. It then follows from the conservation relation \cite{MR3369906} that the theta lift of $\pi\otimes\det$ to $H_0$ is non-zero. In particular, Proposition \ref{Final.Jigsaw} holds for $\pi\otimes\det$. Next we appeal to the global method to compare the theta lifts $\theta(\pi)$ and $\theta(\pi\otimes\det)$ of $\pi$ and $\pi\otimes\det$ to $H$.
	
	As in the proof of Proposition \ref{Multi-Free.N.Inj}, we define an abstract irreducible representation $\dot\sigma'=\bigotimes_v\dot\sigma'_v$ of $\dot H(\AAA)$ by setting:
	\begin{itemize}
		\item $\dot\sigma'_{u_1}=\dot\sigma'_{u_2}=\theta(\pi\otimes\det)$;
		\item at place $v\notin\{u_1,u_2\}$, $\dot\sigma'_v$ is the irreducible representation in the $L$-packet $\Pi_{\phi_{\theta(\dot\phi)_v}}^L(\dot H_v)$ associated to the trivial character of $\overline{\calS_{\phi_{\theta(\dot\phi)_v}}}$.
	\end{itemize}
	By Theorem \ref{H-ThmB}, Lemma \ref{Compare-epsilon} and equation (\ref{epsilon-generic}), $\dot\sigma'$ is a summand of $L^2_{\theta(\dot\phi)}(\dot H)$. Remark \ref{Multi-Preserve-PIF} then implies that there exists a pure inner form $\dot G'$ of $\dot G$, and an irreducible summand $\dot\pi'$ of $L_{\dot\phi}^2(\dot G')$, such that $\dot\sigma'=\theta^{abs}(\dot\pi')$. According to the construction, we must have
	\[
	\dot G'_{u_1}=\dot G'_{u_2}=G',
	\]
	and
	\[
	\dot\pi'_{u_1}=\dot\pi'_{u_2}=\pi\otimes\det.
	\]
	Now we define another abstract irreducible representation $\dot\pi$ of $\dot G'(\AAA)$ by setting $\dot \pi=\dot \pi'\otimes \det_T$ with $T=\{u_1,w\}$. More precisely, 
	\begin{itemize}
		\item at place $v\in\{u_1,w\}$, $\dot\pi_v=\dot\pi'_v\otimes\det_v$;
		\item at place $v\notin\{u_1,w\}$, $\dot\pi_v=\dot\pi'_v$.
	\end{itemize}
	Then $\dot\pi$ is also an irreducible summand of $L_{\dot\phi}^2(\dot G')$, such that
	\[
	\dot\pi_{u_1}=\pi.
	\]
	Let $\dot\sigma=\theta^{abs}(\dot\pi)$. We deduce from Corollary \ref{Multi-Preserve} that $\dot\sigma$ is a summand of $L^2_{\theta(\dot\phi)}(\dot H)$. Hence by the AMF Theorem \ref{H-ThmA}, we have
	\[
	\sigma=\dot\sigma_{u_1}\in\Pi_{\theta(\phi)}(H).
	\]
	This proves the first assertion of Proposition \ref{Final.Jigsaw} for Case O. 

	Next we prove the second assertion of Proposition \ref{Final.Jigsaw}. By Theorem \ref{H-ThmB}, Lemma \ref{Compare-epsilon} and equation (\ref{epsilon-generic}), we have
	\begin{equation}\label{transferAMF-2}
		\ell^*\left(\mathcal J_{\scrW'}(\dot \sigma)\right)=1,
	\end{equation}
	where 
	\begin{equation*}
		\ell^*\left(\mathcal J_{\scrW'}(\dot \sigma)\right)(x)= \prod_v \ell_v^*\left(\mathcal J_{\scrW_v'}(\dot \sigma_v)\right)(x_v)
	\end{equation*}
	for $x\in \overline{\mathcal S_{\dot \phi}}$. The character $\ell_v^*\left(\mathcal J_{\scrW_v'}(\dot \sigma_v)\right)$ can be computed explicitly as follows: 
	\begin{itemize}
		\item at place $v\notin \{u_1,u_2,w\}$, $\ell_v^*\left(\mathcal J_{\scrW_v'}(\dot \sigma_v)\right)$ is the trivial character of $\mathcal S_{\dot\phi_v}$;
		\item at the place $u_2$, we know that Proposition \ref{Final.Jigsaw} holds for $\pi\otimes\det$ by our hypothesis. Thus
		\begin{align*}\label{O.Equal.rk.Eqn}
			\ell_{u_2}^*\left(\mathcal J_{\scrW_{u_2}'}(\dot \sigma_{u_2})\right)=\mathcal J^L_{\scrW_{u_2}}(\pi\otimes \det)= \eta_\pi\cdot \kappa_{\phi}.
		\end{align*}
		Here we have made use of Remark \ref{LLC} in the last equality;
		\item at the place $w$, since $\mathbbm{1}\not\subset\dot\phi_w$, it follows from Lemma \ref{Springboard} that Proposition \ref{Final.Jigsaw}, hence Theorem \ref{Local-Compare} holds for $\dot\phi_w$. Thus
		\[
		\ell_w^*\left(\mathcal J_{\scrW_w'}\left(\dot \sigma_w\right)\right)=\mathcal J^L_{\scrW_w}(\dot \pi'_w\otimes {\det}_{w})=\kappa_{\dot\phi_w}.
		\]
		Here again we have made use of Remark \ref{LLC} in the last equality.
	\end{itemize}
	Hence 
	\begin{equation}\label{cal-1}
		\prod_v \ell_v^*\left(\mathcal J_{\scrW_v'}(\dot \sigma_v)\right)(x)= \eta_{\sigma}(x_{u_1})\cdot\eta_\pi(x_{u_2})\cdot \kappa_{\phi}(x_{u_2})\cdot\kappa_{\dot\phi_w}(x_w). 
	\end{equation}
	for all $x\in \overline{\mathcal S_{\dot \phi}}$. On the other hand, it is easy to check that 
	\begin{equation}\label{cal-2}
		\kappa_{\phi}\left(x_{u_2}\right)\cdot\kappa_{\dot\phi_w}\left(x_w\right)=1
	\end{equation}
	for all $x\in \overline{\mathcal S_{\dot \phi}}$. Combining these equalities (\ref{transferAMF-2}), (\ref{cal-1}) and  (\ref{cal-2}), we get 
	\[
	\eta_\pi(x_{u_2}) = \ell_{u_1}^*\left(\eta_\sigma\right)(x_{u_1}).
	\]
	for all $x\in \overline{\mathcal S_{\dot \phi}}$. Finally, since the localization maps $\mathcal S_{\dot \phi}\lra \mathcal S_{\dot \phi_{u_1}}$ and  $\mathcal S_{\dot \phi}\lra \mathcal S_{\dot \phi_{u_2}}$ agree and they are surjective, we deduce that 
	\[
	\eta_\pi= \ell_{u_1}^*\left(\eta_\sigma\right).
	\]
	This completes the proof in Case O.\\
	
	\underline{Case $\U_0$, and $F$ is non-Archimedean:} In this case, $\dot F$ is a totally imaginary field, $W_0$ is an $(2n+1)$-dimensional skew-Hermitian space over $\dot E$, and $H_0=\U(W_0)$. For a pure inner form $G'$ of $G$ and $\pi$ an irreducible tempered representation in the $L$-packet $\Pi_{\phi}^L(G')$, Lemma \ref{Springboard} asserts that, if the theta lift of $\pi$ to $H_0$ (with respect to the datum $(\psi_F,\chi_V,\chi_W)$) is non-zero, then Proposition \ref{Final.Jigsaw} holds for $\pi$. Next we appeal to the global method to reduce the general situation to this known situation.
	
	We pick up the pair of characters $\left(\chi'_{\dot V},\chi'_{\dot W}\right)$ such that $\chi'_{\dot W,v}\not\subset\dot\phi_v$ for $v\in\{u_1,w\}$. Firstly we use the almost equal rank theta lift to globalize the representation $\pi$. As in the proof of Lemma \ref{Global-MultiOne}, let
	\[
	\vartheta(\dot\phi)=\dot\phi(\chi'_{\dot W})^{-1}\chi'_{\dot V}+\chi'_{\dot V}.
	\]
	We define an irreducible automorphic subrepresentation $\dot\sigma_0=\bigotimes_v\dot\sigma_{0,v}$ of $L^2_{\vartheta(\dot\phi)}(\dot H_0)$ by setting:
	\begin{itemize}
		\item at place $v\notin\{u_1,w\}$, $\dot\sigma_{0,v}$ is the irreducible representation in the $L$-packet $\Pi_{\vartheta(\dot\phi)_v}^L(\dot H_{0,v})$ associated to the trivial character of $\overline{\calS_{\vartheta(\dot\phi)_v}}$;
		\item at the place $u_1$, $\dot\sigma_{0,u_1}=\vartheta(\pi)$ is the theta lift of $\pi$ to $\dot H_{0,u_1}$, which is non-zero by Theorem \ref{Prasad.Conj};
		\item at the place $w$, $\dot\sigma_{0,w}$ is the tempered representation in the $L$-packet $\Pi_{\vartheta(\dot\phi)_w}^L\left(\dot H_{0,w}\right)$ corresponding to the character $\eta_{0,w}$, determined by the formula
		\[
			\prod_v\eta_{0,v} = 1,
		\]
		where $\eta_{0,v}=\calJ_{\scrW'_v}^L(\dot\sigma_{0,v})$, and we regard $\prod_v\eta_{0,v}$ as a character of the global component group $\calS_{\vartheta(\dot\phi)}$ through the localization maps. 
	\end{itemize}
	By Theorem \ref{H-ThmB} and equation (\ref{epsilon-generic}), $\dot\sigma_0$ is a summand of $L^2_{\vartheta(\dot\phi)}(\dot H_0)$. Using the same argument as in Lemma \ref{Global-MultiOne}, we can show that there exists a pure inner form $\dot G'$ of $\dot G$ such that the abstract theta lift $\dot \pi=\vartheta^{abs}(\dot\sigma_0)$ of $\dot\sigma_0$ to $\dot G'$ is nonzero, and  
	\[
	m_{\disc}(\dot\pi)\geq m_{\cusp}(\dot\pi)\geq m_{\cusp}(\dot\sigma_0). 
	\]
	Hence $\dot\pi$ is a summand in $L^2_{\dot \phi}(\dot G')$, and $\dot\pi_v\in\Pi_{\dot\phi_v}^L(\dot G'_v)$ for all places $v$ of $\dot F$. By the conservation relation \cite{MR3369906}, we know that
	\[
		\left(\dot G'_{u_1},\dot\pi_{u_1}\right)\simeq\left(G',\pi\right).
	\] 
	Also, Theorem \ref{Prasad.Conj} implies that 
\begin{equation}\label{U.NA.Equal.rk.Eqn}
	\prod_v\eta_{v} (x_v)= 1
\end{equation}
	for all $x\in\calS_{\dot\phi}$, where $\eta_v=\calJ_{\scrW_v}^L(\dot\pi_v)$. This product is well-defined, since $\eta_v=1$ for all places $v\notin\{u_1,w\}$.
	
	Next we consider the stable range theta lift of $\dot\pi$ to extort some other information. Let $\dot\sigma=\theta^{abs}(\dot\pi)$. We deduce from J-S. Li's inequality (Theorem \ref{J-S inequality}) that $\dot\sigma$ is a summand of $L^2_{\theta(\dot\phi)}(\dot H)$. Hence 
	\[
	\sigma=\dot\sigma_{u_1}\in\Pi_{\theta(\phi)}(H).
	\]
	Also, it follows from Theorem \ref{H-ThmB}, Lemma \ref{Compare-epsilon} and equation (\ref{epsilon-generic}) that
	\begin{equation}\label{transferAMF-3}
		\ell^*\left(\mathcal J_{\scrW'}(\dot \sigma)\right)=1,
	\end{equation}
	where 
	\begin{equation*}
		\ell^*\left(\mathcal J_{\scrW'}(\dot \sigma)\right)(x)= \prod_v \ell_v^*\left(\mathcal J_{\scrW_v'}(\dot \sigma_v)\right)(x_v)
	\end{equation*}
	for $x\in \overline{\mathcal S_{\dot \phi}}$. For all places $v\not\in\{u_1,w\}$, we have 
	\begin{equation*}
		\ell_v^*\left(\mathcal J_{\scrW_v'}(\dot \sigma_v)\right)=\eta_v.
	\end{equation*}
	Indeed, if $\dot\phi_v$ is of good parity, then this equality follows from Theorem \ref{Prasad.Conj} and Lemma \ref{Springboard}; on the other hand, if $\dot\phi_v$ is not of good parity, then this equality follows from our induction hypothesis. At place $w$ of $F$, since $\chi_{\dot W,w}\not\subset\dot\phi_w$, it follows from Lemma \ref{Springboard} that Theorem \ref{Local-Compare} holds for $\dot\phi_w$. Hence
	\begin{equation}\label{cal-5}
		\ell_w^*\left(\mathcal J_{\scrW_w'}(\dot \sigma_w)\right)=\eta_{w}.
	\end{equation}
	Combining these equalities (\ref{U.NA.Equal.rk.Eqn}), (\ref{transferAMF-3}) and (\ref{cal-5}), we have 
	\begin{equation*}
		\eta_{\pi}(x_{u_1})=\ell^*_{u_1}(\eta_\sigma)(x_{u_1}).
	\end{equation*}
	Since the localization map $\mathcal S_{\dot \phi}\lra \mathcal S_{\dot \phi_{u_1}}$ is surjective, 
	the desired conclusion for $\pi$ holds.\\
	
	\underline{Case $\U_1$, and $F$ is non-Archimedean:} The method we used in the previous case can also apply to this case similarly. Indeed, this case is even easier, since Theorem \ref{AMF-U_1} allow as to globalize $\pi\in\Pi_\phi^L(G')$ directly. We omit the details.\\
	
	\underline{Case U, and $F$ is real:} The method we used in the previous two cases can not apply here since the almost equal rank theta lift of $\pi\in\Pi_\phi^L(G')$ to the split Witt tower may vanish. Therefore, we can not globalize it by using the AMF. In this case, we prove by induction on $\dim V$. Then we can use the AMF for some smaller unitary groups to help us do the globalization step. 
	
	When $\dim V=0$, this follows from Lemma \ref{Induction.Hypothesis}. Suppose that for all non-negative integer $m<\dim V$, Proposition \ref{Final.Jigsaw}, hence Theorem \ref{Local-Compare} holds for all real unitary groups of $m$-variables. Now we show that the desired conclusion also holds for all real unitary groups of $\dim V$-variables.
	
	Recall that $\phi$ is an $L$-parameter of good parity for $G$, so it must be of the form
	\[
	\phi=m_1\chi_1+\cdots+m_r\chi_r,
	\]
	where $\chi_i$ is a conjugate self-dual character of $L_{\CC}=\CC^\times$, and $m_i$ is some positive integer. Due to the same reason as in the proof of Lemma \ref{Global-MultiOne}, we may assume that $\dot\phi$ is a summation of automorphic characters. We pick up the pair of characters $\left(\chi'_{\dot V},\chi'_{\dot W}\right)$ such that $\chi'_{\dot W}\subset\dot\phi$. Let
	\[
	\vartheta(\dot\phi)=\left(\dot\phi-\chi'_{\dot W}\right)(\chi'_{\dot W})^{-1}\chi'_{\dot V}.
	\]
	For any $\pi\in \Pi_\phi^L(G')$, we first use the almost equal rank theta lift to globalize it. By Theorem \ref{Prasad.Conj}, there exists a pure inner form $H'_0=H(W'_0)$ of $H_0$, such that the theta lift $\sigma_0\coloneqq\vartheta(\pi)$ of $\pi$ to $H'_0$ (with respect to $\left(\psi_{F,v}, \chi'_{\dot V,v},\chi'_{\dot W,v}\right)$) is non-zero. Let $\dot W'_0$ be the unique $(\dim V-1)$-dimensional $c$-skew-Hermitian space, such that it is split at all places of $\dot F$ except $\{u_1,w\}$, and the localization of $\dot W'_0$ at the place $u_1$ is isometric to $W'_0$. Let $\dot H'_0=H(\dot W'_0)$. Now we define an abstract irreducible representation $\dot\sigma'_0=\bigotimes_v\dot\sigma'_{0,v}$ of $\dot H'_0(\AAA)$ by setting:
	\begin{itemize}
		\item at place $v\notin\{u_1,w\}$, $\dot\sigma'_{0,v}$ is the irreducible representation in the $L$-packet $\Pi_{\vartheta(\dot\phi)_v}^L(\dot H'_{0,v})$ associated to the trivial character of $\calS_{\vartheta(\dot\phi)_v}$.
		\item at the place $u_1$, $\dot\sigma'_{0,u_1}=\sigma_0$;
		\item at the place $w$, $\dot\sigma'_{0,w}$ is the irreducible representation in the $L$-packet $\Pi_{\vartheta(\dot\phi)_w}^L(\dot H'_{0,w})$ corresponding to the character $\eta'_{0,w}\in\wh{\calS_{\vartheta(\dot\phi)_w}}$, determined by the formula
		\[
			\prod_v\eta'_{0,v} = 1,
		\]
		where $\eta'_{0,v}=\calJ_{\scrW'_v}^L(\dot\sigma'_{0,v})$, and we regard $\prod_v\eta'_{0,v}$ as a character of the global component group $\calS_{\vartheta(\dot\phi)}$ through the localization maps. The existence of $\eta'_{0,w}$ is guaranteed by the local-global principle for skew-Hermitian spaces and the LLC for unitary groups. 	
	\end{itemize}
	Since $\dot H'_0$ is a unitary group of $(\dim V-1)$-variables, by our induction hypothesis, Theorem \ref{Local-Compare} holds for all localizations of $\dot H'_0$, hence Theorem \ref{ThmB} holds for $\dot H'_0$. It then follows that $\dot\sigma'_0$ is a summand of $L^2_{\vartheta(\dot\phi)}(\dot H'_0)$. For each place $v$ of $\dot F$, by the conservation relation \cite{MR3369906}, there is a Hermitian space $V'_{v}$ of the same dimension as $V$, such that 
	\[
	\vartheta(\dot\sigma'_{0,v})\neq0,
	\]  
	where $\vartheta(\dot\sigma'_{0,v})$ is the theta lift of $\dot\sigma'_{0,v}$ to $G'_v=G(V'_{v})$ (again with respect to $\left(\psi_{F,v}, \chi'_{\dot V,v},\chi'_{\dot W,v}\right)$). Since $\dot\phi_w$ is discrete, $\vartheta(\dot\phi)_w$ does not contain the character $\chi'_{\dot V,w}$. Thus we will have two choices of the skew-Hermitian space $V'_{w}$ at the place $w$. The flexibility at the place $w$ allows us to pick these local Hermitian spaces coherently such that they form a global Hermitian space $\dot V'$ over $\dot E$. We let $\dot G'=G(\dot V')$. Let $\dot\pi=\vartheta^{abs}(\dot\sigma_0')$ be the abstract theta lift to $\dot G'$. It follows from the same argument as in Lemma \ref{Global-MultiOne} that 
	\[
	m_{\disc}(\dot\pi)\geq m_{\cusp}(\dot\pi)\geq m_{\cusp}(\dot\sigma'_0). 
	\]
	Hence $\dot\pi$ is a summand in $L^2_{\dot \phi}(\dot G')$, and $\dot\pi_v\in\Pi_{\dot\phi_v}^L(\dot G'_v)$ for all places $v$ of $\dot F$. Then by the conservation relation \cite{MR3369906}, we know that
	\[
	\left(\dot G'_{u_1},\dot\pi_{u_1}\right)\simeq(G',\pi).
	\] 
	Also, Theorem \ref{Prasad.Conj} implies that 
\begin{equation}\label{U.A.Equal.rk.Eqn}
	\prod_v\eta_{v} (x_v)= 1
\end{equation}
	for all $x\in\calS_{\vartheta(\dot\phi)}$. Here $\eta_v=\calJ_{\scrW_v}^L(\dot\pi_v)$, and we regard $\calS_{\vartheta(\dot\phi)}$ as a subgroup of $\calS_{\dot\phi}$ via the natural embedding
	\[
	\calS_{\vartheta(\dot\phi)}\longrightarrow\calS_{\dot\phi}.
	\] 
	Since $\eta_v=1$ for all places $v\notin\{u_1,w\}$, this product is well-defined.
	 
	Next we consider the stable range theta lift of $\dot\pi$ to extort some other information. Let $\dot\sigma=\theta^{abs}(\dot\pi)$. We deduce from J-S. Li's inequality (Theorem \ref{J-S inequality}) that $\dot\sigma$ is a summand of $L^2_{\theta(\dot\phi)}(\dot H)$. Hence 
	\[
	\sigma=\dot\sigma_{u_1}\in\Pi_{\theta(\phi)}(H).
	\]
	Similar to previous cases, combining the equality (\ref{U.A.Equal.rk.Eqn}) and the AMF for $\dot H$, we get 
	\begin{equation*}
		\eta_\pi(x_{u_1})=\ell_{u_1}^*\left(\eta_{\sigma}\right)(x_{u_1})
	\end{equation*}
	for all $x\in \calS_{\vartheta(\dot\phi)}$. Since the localization map $\calS_{\vartheta(\dot\phi)}\rightarrow \calS_{\vartheta(\dot\phi)_{u_1}}$ is surjective, we have proved that $\eta_\pi$ and $\ell_{u_1}^*\left(\eta_{\sigma}\right)$ are equal on the image of the natural embedding $\calS_{\vartheta(\dot\phi)_{u_1}}\hookrightarrow\calS_{\phi}$. Certainly this embedding $\calS_{\vartheta(\dot\phi)_{u_1}}\hookrightarrow \calS_{\phi}$ is not necessarily surjective. But there is nothing to worry about, since we may vary the character $\chi'_{\dot W}$. When the character $\chi'_{\dot W}$ runs over all irreducible components of $\dot\phi$, the image of $\calS_{\vartheta(\dot\phi)_{u_1}}$ will exhaust all elements in $\calS_{\phi}$. Hence the desired conclusion holds for real unitary groups.
\end{proof}

So now, we have finished proving Theorem \ref{ThmB}.

\section{Encore: beyond the generic case}
Let $F$ be a number field, $G$ an even orthogoanl or unitary group over $F$ as in the setting of Section \ref{NEC&APara}. Let $\psi$ be an elliptic $A$-parameter for $G$ and
\[
	\theta(\psi)=\psi\chi_W^{-1}\chi_V+\chi_V\boxtimes S_{2r-2n+1}
\]
an elliptic $A$-parameter for $H$. In Section \ref{transfer.AMF}, we have transfered the AMF from $L_{\theta(\psi)}^2(H)$ to $L_\psi^2(G)$ when $\psi=\phi$ is generic. Recall that the key step is to show Proposition \ref{cusp.realization}, which implies that J-S. Li's inequality (Theorem \ref{J-S inequality}) is an equality in the generic case. In this section, we want to go one step further beyond the generic case. We would like to propose the following naive conjecture.

\begin{conjecture}\label{Conjectureequality}
	Let $G$ be an even orthogonal or unitary group, and
\begin{equation}\label{E:sumPhi}
	\psi=\sum_i\phi_i\boxtimes S_{d_i}
\end{equation}
	an elliptic $A$-parameter for $G$, where $\phi_i$ is a cuspidal representation of $\GL_{n_i}(\AAA_E)$. Let $\pi$ be an irreducible representation of $ G(\mathbb A)$ such that the $L$-parameter of $\pi_v$ is $\phi_{\psi_v}$ for almost all $v$. Then we have 
	\[
	m_{\disc}(\pi)=m(\pi).  
	\]
\end{conjecture} 

We have proved this conjecture in Proposition \ref{cusp.realization} when $\psi=\phi$ is generic. Indeed, it is easy to generalize this conjecture to a slightly more general case. 
\begin{assumption}\label{strange.assumption}
	Let $r_V$ be the Witt index of $V$. Suppose that in the expression (\ref{E:sumPhi}), for any $i$ such that $d_i>1$, we have $n_i>r_V$.
\end{assumption}
Under this weird assumption, we can prove Conjecture \ref{Conjectureequality} by using an argument similar to that of Proposition \ref{cusp.realization}. 
\begin{proposition}\label{strangepropo}
	Suppose that $G=G(V)$ and $\psi$ satisfy the Assumption \ref{strange.assumption}. Let $\pi$ be an irreducible representation of $G(\AAA)$, such that the $L$-parameter of $\pi_v$ is $\phi_{\psi_v}$ for almost all $v$. Then we have
	\[
	m_{\cusp}(\pi)=m_{\disc}(\pi)=m(\pi).
	\]
\end{proposition}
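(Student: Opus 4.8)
The plan is to imitate the proof of Proposition \ref{cusp.realization} almost verbatim, replacing the use of genericity by the Witt-index estimate built into Assumption \ref{strange.assumption}. As there, it suffices to prove that every realization $\calV\subset\calA(G)$ of $\pi$ is already contained in $\calA_{\cusp}(G)$; this gives $m_{\cusp}(\pi)=m(\pi)$, hence $m_{\cusp}(\pi)=m_{\disc}(\pi)=m(\pi)$ since $m_{\cusp}(\pi)\leq m_{\disc}(\pi)\leq m(\pi)$. So suppose $\calV\not\subset\calA_{\cusp}(G)$. Choose a proper parabolic $P=MN$ minimal among those along which the constant term of $\calV$ is nonzero; then $\calV_P$ is cuspidal in $\calA_P(G)$, so by \cite{langlands1979notion} the representation $\pi$ embeds into $\Ind_{P(\AAA)}^{G(\AAA)}\rho$ for some irreducible cuspidal automorphic representation $\rho$ of $M(\AAA)$. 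Write $M\simeq\prod_{i=1}^{t}GL_{k_i}\times G_0$ with $G_0=G(V_0)$ and $\rho\simeq\big(\BIGboxtimes_i\tau_i\big)\boxtimes\pi_0$. Since $\pi_0$ is cuspidal on $G_0(\AAA)$, Theorem \ref{ThmA} attaches to it an elliptic $A$-parameter $\psi_0=\sum_l\phi_{0,l}\boxtimes S_{e_l}$, so $\pi_0$ has a weak lift $\tau_0$ to $GL_{\dim V_0}(\AAA_E)$ equal to the isobaric sum $\BIGboxplus_l\BIGboxplus_{m=0}^{e_l-1}\phi_{0,l}|\cdot|^{(e_l-1)/2-m}$. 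Hence $\pi$ admits a weak lift to $GL_n(\AAA_E)$ of the form $\big(\BIGboxplus_i(\tau_i\boxplus(\tau_i^c)^\vee)\big)\boxplus\tau_0$, while the parameter $\psi$ itself gives the weak lift $\BIGboxplus_i\BIGboxplus_{j=0}^{d_i-1}\phi_i|\cdot|^{(d_i-1)/2-j}$; by strong multiplicity one \cite{MR623137} these two isobaric sums agree.

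The only new ingredient is to truncate this identity to cuspidal constituents of bounded $GL$-rank. Since $P$ is proper, $t\geq1$; moreover each $\tau_i$ lives on $GL_{k_i}$ with $k_i\leq r_V$, because the $GL$-factors of $M$ correspond to the successive quotients of a totally isotropic flag whose top term has dimension $\sum_i k_i\leq r_V$. By Assumption \ref{strange.assumption}, every block $\phi_i\boxtimes S_{d_i}$ of $\psi$ with $d_i>1$ has $\dim\phi_i>r_V$, so any cuspidal constituent of the weak lift of $\pi$ of dimension $\leq r_V$ is necessarily one of the $\phi_i$ with $d_i=1$: these are unitary, pairwise distinct, (conjugate) self-dual, and occur with multiplicity one. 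Now compare the sub-sums of dimension-$\leq r_V$ constituents on the two sides of the identity. The left-hand side contains all of $\BIGboxplus_i(\tau_i\boxplus(\tau_i^c)^\vee)$, since each $\tau_i$ has dimension $k_i\leq r_V$. First this forces every $\tau_i$ to be unitary, as all dimension-$\leq r_V$ constituents on the right are. Then, as in Proposition \ref{cusp.realization}: if some $\tau_i$ were (conjugate) self-dual it would occur with multiplicity at least $2$ on the left but at most $1$ on the right; and if $\tau_i$ were not (conjugate) self-dual it would still have to appear among the (conjugate) self-dual $\phi_{i'}$ with $d_{i'}=1$, which is impossible. Either way we reach a contradiction, so $\calV\subset\calA_{\cusp}(G)$, completing the proof.

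The main thing to get right, beyond the bookkeeping above, is the legitimacy of extracting from an identity of isobaric automorphic representations its ``dimension-$\leq r_V$'' subsum, together with the fact that the parity conditions in the definition of an elliptic $A$-parameter indeed make each $\phi_i$ with $d_i=1$ (conjugate) self-dual and of multiplicity one; neither point is serious. The genuinely new content is the observation that Assumption \ref{strange.assumption} forces the low-dimensional part of $\psi$ to look exactly like a generic parameter, which is precisely what makes the argument of Proposition \ref{cusp.realization} carry over without change; in particular this establishes Conjecture \ref{Conjectureequality} for all pairs $(G,\psi)$ satisfying the assumption.
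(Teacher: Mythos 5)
Your proposal is correct and is essentially the paper's own argument: both reduce to showing every realization of $\pi$ is cuspidal via the Langlands cuspidal support, compare the two isobaric weak lifts of $\pi$ via strong multiplicity one, and then invoke Assumption \ref{strange.assumption} to force each $\tau_i$ (which has dimension $k_i\leq\sum_i k_i\leq r_V$) to coincide with some $\phi_{i'}$ with $d_{i'}=1$, whence the same self-duality/multiplicity contradiction as in Proposition \ref{cusp.realization}. The paper phrases the key step as ``$\tau_j$ must be of the form $\phi_{i_j}|\cdot|^{s_j}$, and the dimension bound forces $d_{i_j}=1$, $s_j=0$'' rather than as a truncation of the isobaric identity to dimension-$\leq r_V$ constituents, but this is only a cosmetic difference.
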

\begin{remark}
	A case worth noting is when $r_V=0$, i.e. $G=G(V)$ is anisotropic. In this case, Assumption \ref{strange.assumption} is automatically satisfied. Indeed, when $G$ is anisotropic, we have $\mathcal A_{\cusp}(G)=\mathcal A_{\disc}(G)=\mathcal A(G)$. Therefore, Proposition \ref{strangepropo} holds with no extra work needed. 
\end{remark}
\begin{proof}[Proof of Proposition \ref{strangepropo}]
	In the same spirit of \cite[Prop.4.1]{MR3866889}, it suffices to show that for any realization $\calV\subset\calA(G)$ of $\pi$, we have $\calV\subset\calA_{\cusp}(G)$. Suppose on the contrary that $\calV\not\subset\calA_{\cusp}(G)$ for some such $\calV$. By considering the constant term maps, it follows from \cite{langlands1979notion} that 
	\[
	\pi\subset\Ind_{P(\AAA)}^{G(\AAA)}\rho
	\]
	for some proper parabolic subgroup $P$ of $G$ with Levi component $M$, and some irreducible cuspidal automorphic representation $\rho$ of $M(\AAA)$. Suppose that
	\[
	M\simeq\prod_j\GL_{k_j}\times G_0
	\]
	for some $G_0=G(V_0)$, where $V_0$ is a space in the Witt tower containing $V$. Then $\rho$ is of the form
	\[
	\rho\simeq\left(\BIGboxtimes_j\tau_j\right)\boxtimes\pi_0
	\] 
	for some irreducible cuspidal automorphic representation $\tau_j$ and $\pi_0$ of $\GL_{k_j}$ and $G_0$, respectively. By Theorem \ref{ThmA}, $\pi_0$ has a weak transfer $\tau_0$ to $\GL_{n_0}(\AAA)$. Then $\pi$ has a weak transfer to $\GL_n(\AAA)$ of the form
	\begin{equation}\label{firstexpression}
		\left(\BIGboxplus_j(\tau_j\boxplus(\tau_j^c)^\vee)\right)\boxplus \tau_0.
	\end{equation}
	
	On the other hand, since the $L$-parameter of $\pi_v$ is $\phi_{\psi_v}$ for almost all $v$, it follows that $\pi$ has a weak transfer to $\GL_n(\AAA)$ of the form
	\begin{equation}\label{secondexpression}
		\BIGboxplus_i\left(\phi_i|\cdot|^{\frac{d_i-1}{2}}\boxplus\cdots\boxplus\phi_i|\cdot|^{-\frac{d_i-1}{2}}\right).
	\end{equation}
	By the strong multiplicity one theorem \cite{MR623137}, the two expressions (\ref{firstexpression}) and (\ref{secondexpression}) must agree. Hence $\tau_j$ in the first expression must have the form $\phi_{i_j}|\cdot|^{s_j}$ for some $i_j$ and $s_j\in \frac{1}{2}\mathbb Z$. Note that we have $k_j \leq r_V$. It then follows from Assumption \ref{strange.assumption} that 
	\[
	k_j <n_i
	\]
	for any $i$ such that $d_i>1$. Hence we must have $d_{i_j}=1, s_j=0$. This also implies that 
	\[
	\phi_{i_j}\boxtimes S_{d}
	\]
	is not contained in $\psi$ for any $d>1$. However, $\tau_j=\phi_{i_j}$ occurs with multiplicity at least $2$ in the expression (\ref{firstexpression}), whereas it occurs with multiplicity $1$ in the expression (\ref{secondexpression}). This is a contradiction. Hence we have $\calV\subset\calA_{\cusp}(G)$ as required.
\end{proof}

When the pair $(G,\psi)$ does not satisfy Assumption \ref{strange.assumption}, the realizations $\calV\subset\calA(G)$ of $\pi$ may not lie in $\calA_{\cusp}(G)$. To prove Conjecture \ref{Conjectureequality}, we need some extra inputs. Thanks to the square-integrability criterion \cite[I.4.11 Lem.]{MR1361168}, when $G=G(V)$ is of $F$-rank $1$, we are able to complete the proof.

\begin{proposition}
	Conjecture \ref{Conjectureequality} holds if $G=G(V)$ is of $F$-rank $1$, i.e. $r_V=1$. 
\end{proposition}
\begin{proof}
	Here we only prove Case O. The proof of Case U is similar. 

	Let $\mathcal V\subset \calA(G)$ be an automorphic realization of $\pi$. We need to show that $\mathcal V$ is contained in $\mathcal A^2(G)$. We may assume that $\mathcal V$ is not contained in $\mathcal A_{\cusp}(G)$, otherwise it is already contained in $\calA^2(G)$. Since $G$ is of $F$-rank $1$, the proper standard parabolic subgroup $P=MN$ of $G$ is unique, with the Levi component 
	\[
	M\simeq \GL_{1}\times G_0, 
	\]
	where $G_0$ is an anisotropic group.
	Let $\chi|\cdot|^s\boxtimes \pi_0$ be a cuspidal support of $\pi$ along $P$, where $\chi$ is an unitary automorphic character of $\GL_1$, and $\pi_0$ is a cuspidal automorphic representation of $G_0$. Then it follows from \cite{langlands1979notion} that  
	\begin{equation*}
		\pi\hookrightarrow \Ind_{P}^{G}\left(\chi|\cdot|^{s} \boxtimes \pi_0\right).    
	\end{equation*} 
	Considering the weak transfer of $\pi$ to $\GL_{2n}(\mathbb A)$. On the one hand, this weak transfer is represented by the elliptic $A$-parameter $\psi$; on the other hand, it also has an expression given by the above embedding. Similar to the proof of Proposition \ref{strangepropo}, by comparing these two expressions, we know that there exists some $i$ such that 
	\[
		\chi=\phi_i,\quad  d_i\geq 3\quad \textit{and}\quad s=\pm \frac{d_i-1}{2}. 
	\]
	Moreover, the cuspidal automorphic representation $\pi_0$ is in the NEC represented by the elliptic $A$-parameter 
	\begin{equation}\label{psi0}
		\psi_0=\psi-\chi\boxtimes S_{d_i}+ \chi\boxtimes S_{d_i-2}.	    
	\end{equation}
	If we can show that $s=-(d_i-1)/2$, then the square-integrability criterion \cite[I.4.11 Lem.]{MR1361168} will imply that $\calV\subset\calA^2(G)$, which will complete the proof. So next we shall prove this by contradiction. 

	Suppose on the contrary that $s=(d_i-1)/2$, then at every unramified place $v$, we have 
	\begin{equation}\label{sub3}
		\pi_v\hookrightarrow \Ind_{P_v}^{G_v}\left(\chi_v|\cdot|^{\frac{d_i-1}{2}} \boxtimes \pi_{0,v}\right).     
	\end{equation}
	Since $\chi_v|\cdot|^{\frac{d_i-1}{2}}$ is not self-dual, it follows from \cite[Lem.3.1.3]{MR3268853} that $\pi_v$ is the unique subrepresentation of $\Ind_{P_v}^{G_v}\left(\chi_v|\cdot|^{\frac{d_i-1}{2}} \boxtimes \pi_{0,v}\right)$. 
	Applying both the MVW and contragredient functors, we know that $\pi_v$ is also the unique quotient of $\Ind_{P_v}^{G_v}\left(\chi_v|\cdot|^{-\frac{d_i-1}{2}} \boxtimes \pi_{0,v}\right)$. Let $K_v$ be a special maximal compact subgroup of $G_v$ which has good position relative to $P_v$. Fix a representative $w_v\in K_v$ of the unique non-trivial element in $W_{M_v}=N_{G_v}(M_v)/M_v$. Let 
	\[
	\mathcal M\left(s,\chi_v\boxtimes \pi_{0,v},w_v\right): \Ind_{P_v}^{G_v}\left(\chi_v|\cdot|^{s} \boxtimes \pi_{0,v}\right)\lra\Ind_{P_v}^{G_v}\left(\chi_v|\cdot|^{-s} \boxtimes \pi_{0,v}^{w_v}\right)
	\]
	be the unnormalized intertwining operator given by (the meromorphic continuation of) the integral 
	\[
	\mathcal M\left(s,\chi_v\boxtimes \pi_{0,v},w_v\right) f(g)=\int_{N_v} f(w_v^{-1}ng) dn  
	\]
	for $f\in\Ind_{P_v}^{G_v}\left(\chi_v|\cdot|^{s} \boxtimes \pi_{0,v}\right)$, where $\pi_{0,v}^{w_v}$ is the representation of $G_{0,v}$ on the same space of $\pi_{0,v}$ with the action given by
	\[
	\pi_{0,v}^{w_v}(m)=\pi_{0,v}(w_v^{-1}mw_v).
	\]
	Let $f_{v,s}$ and $f_{v,-s}^{\prime}$ be the unramified vectors in $\Ind_{P_v}^{G_v}\left(\chi_v|\cdot|^{s} \boxtimes \pi_{0,v}\right)$ and $\Ind_{P_v}^{G_v}\left(\chi_v|\cdot|^{-s} \boxtimes \pi_{0,v}^{w_v}\right)$ respectively with the normalization
\[
	f_{v,s}\left(1_{G_v}\right)=f_{v,-s}^\prime\left(1_{G_v}\right)=1.
\]
	Then by the Gindikin-Karpelevich formula \cite[p.141, Thm.6.7]{MR2071722}, we have 
	\begin{equation*}
		\mathcal M\left(s,\chi_v\boxtimes \pi_0,w_v\right) f_{v,s}= \frac{L\left(s,\pi_{0,v}\times \chi_v\right)}{L\left(1+s,\pi_{0,v}\times \chi_v\right)}\cdot f_{v,-s}^\prime.
	\end{equation*}
	If we write 
\[
	\psi_{0,v}=\sum_j \mu_j\boxtimes S_{d_j}
\]
for some unramified characters $\mu_j$ (not necessarily unitary), then
\begin{equation}\label{normalizatin}
	\frac{L\left(s,\pi_{0,v}\times \chi_v\right)}{L\left(1+s,\pi_{0,v}\times \chi_v\right)}
	=\prod_j\frac{L\left(s-\frac{d_j-1}{2},\mu_j\chi_v\right)}{L\left(s+\frac{d_j+1}{2},\mu_j\chi_v\right)}.
\end{equation}
Since $\psi_{0,v}$ is the localization of the global elliptic A-parameter $\psi_0$, we know that for each $j$, $\mu_j$ can be decomposed as
\[
	\mu_j=\mu'_j|\cdot|^s
\]
for some unitary character $\mu'_j$ and real number $|s_j|<1/2$. By (\ref{psi0}), we have $\chi_v\boxtimes S_{d_i-2}\subset\psi_{0,v}$. This implies that the factor (\ref{normalizatin}) has a zero at $s=-(d_i-1)/2$. Hence 
	\begin{equation}\label{zero}
		\mathcal M\left(s,\chi_v\boxtimes \pi_0,w_v\right) f_{v,s}~\Big|_{s=-\frac{d_i-1}{2}}= 0. 
	\end{equation}
	Moreover, since $\pi_v$ is the unique quotient of 
	\[
		\Ind_{P_v}^{G_v}\left(\chi_v|\cdot|^{-\frac{d_i-1}{2}} \boxtimes \pi_{0,v}\right), 
	\]
	we know that $\Ind_{P_v}^{G_v}\left(\chi_v|\cdot|^{-\frac{d_i-1}{2}} \boxtimes \pi_{0,v}\right)$ is generated by the unramified vector $f_{v,s}$. By the equation (\ref{zero}) and the $G_v$-equivariance of $\mathcal M\left(s,\chi_v\boxtimes \pi_0,w_v\right)$, we deduce that $\mathcal M(s,\chi_v\boxtimes \pi_0,w_v)$ is holomorphic and zero at $s=-(d_i-1)/2$, which is impossible (see \cite[Thm.VI.1.1 \& Rmk.]{MR1989693}). This finishes the proof. 
\end{proof}

Then, totally the same as the proof of Corollary \ref{Multi-Preserve}, we deduce:
\begin{corollary}\label{anisotropic}
	Suppose that either: 
	\begin{enumerate}
		\item $(G,\psi)$ satisfies the Assumption \ref{strange.assumption}; 
		\item $G$ is of $F$-rank $1$, and $\psi$ is any elliptic $A$-parameter for $G$. 
	\end{enumerate}
	Suppose that
	\[
	L_{\psi}^2(G)=\bigoplus_\pi m_\pi\pi. 
	\]
	Then
	\[
	L_{\theta(\psi)}^2(H)=\left(\bigoplus_\pi m_\pi\theta^{abs}(\pi)\right)\oplus\left(\bigoplus'_\sigma m_\sigma\sigma\right),
	\]
	where the second summation on the RHS runs over all $\sigma$ with $A$-parameter $\theta(\psi)$ and not relevant to $G$.	
\end{corollary}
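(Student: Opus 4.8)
\noindent The plan is to copy the proof of Corollary \ref{Multi-Preserve} essentially word for word, the only substantive change being that the role of Proposition \ref{cusp.realization} is now played by Proposition \ref{strangepropo} in case (1), and by the Witt-index-one proposition above in case (2). What both of these provide --- and all that is used --- is that
\[
m_{\cusp}(\pi)=m_{\disc}(\pi)=m(\pi)
\]
for \emph{every} irreducible representation $\pi$ of $G(\AAA)$ whose local $L$-parameter is $\phi_{\psi_v}$ for almost all $v$ (not merely those that happen to occur in $L^2_\psi(G)$). Note that we remain in the stable range $\dim V<r$ throughout, so Theorem \ref{J-S inequality} applies.

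First I would combine this equality with J-S.~Li's inequality (Theorem \ref{J-S inequality}): for $\pi$ as above,
\[
m_{\disc}(\pi)\le m_{\disc}(\theta^{abs}(\pi))\le m(\theta^{abs}(\pi))\le m(\pi),
\]
and since the two ends coincide, all four terms agree; in particular $m_{\disc}(\theta^{abs}(\pi))=m_{\disc}(\pi)$. Next, using the local theta correspondence for unramified representations --- concretely, equation (\ref{ThmAeq2}) with $\phi_v$ replaced by $\phi_{\psi_v}$, together with the recipe (\ref{AparameterassLpara}) applied to $\chi_V\boxtimes S_{2r-2n+1}$ --- one sees that the $L$-parameter of $\theta^{abs}(\pi)_v$ equals $\phi_{\theta(\psi)_v}$ for almost all $v$; hence by Theorem \ref{H-ThmA} every such $\theta^{abs}(\pi)$ with $m_{\disc}>0$ lies in $L^2_{\theta(\psi)}(H)$, and distinct $\pi$ give distinct $\theta^{abs}(\pi)$ by Howe duality. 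Writing $L^2_\psi(G)=\bigoplus_\pi m_\pi\pi$, this shows $\bigoplus_\pi m_\pi\theta^{abs}(\pi)$ embeds into $L^2_{\theta(\psi)}(H)$ with the stated multiplicities.

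It then remains to identify the complement. If $\sigma$ is an irreducible summand of $L^2_{\theta(\psi)}(H)$ that is relevant to $G$, then by J-S.~Li's classification of unitary representations of low rank there are a unique irreducible $\pi$ and a unique automorphic character $\chi$ of $E^1(\AAA)$ with $\sigma\simeq\theta^{abs}(\pi)\otimes\chi$; comparing $L$-parameters at the unramified places forces $\chi_v$ unramified for almost all $v$, hence $\chi$ trivial, so $\sigma\simeq\theta^{abs}(\pi)$. Then $1\le m_{\disc}(\sigma)=m_{\disc}(\theta^{abs}(\pi))=m_{\disc}(\pi)$, so $\pi$ occurs in $L^2_\psi(G)$; thus the relevant part of $L^2_{\theta(\psi)}(H)$ is exactly $\bigoplus_\pi m_\pi\theta^{abs}(\pi)$, and the non-relevant summands are collected into the primed sum by definition.

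There is essentially no new obstacle: the entire weight of the proof has been shifted onto Proposition \ref{strangepropo} and the Witt-index-one proposition, whose proofs (in contrast with the generic case) replace outright cuspidality by the M\oe glin--Waldspurger square-integrability criterion. The only mild bookkeeping point is verifying that $\theta^{abs}(\pi)$ carries the $A$-parameter $\theta(\psi)$ for a possibly non-generic $\psi$, which --- as indicated above --- is the same unramified-theta computation used in the proof of Theorem \ref{ThmA} and does not use genericity of $\psi$.
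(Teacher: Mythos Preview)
Your proposal is correct and follows exactly the paper's approach: the paper itself says the proof is ``totally the same as the proof of Corollary \ref{Multi-Preserve}'', and you have spelled this out with the appropriate substitutions. One small overstatement: in case (2) the Witt-index-one proposition only establishes $m_{\disc}(\pi)=m(\pi)$ (this is Conjecture \ref{Conjectureequality}), not the full $m_{\cusp}=m_{\disc}=m$; but since your argument only uses the sandwich $m_{\disc}(\pi)\le m_{\disc}(\theta^{abs}(\pi))\le m(\theta^{abs}(\pi))\le m(\pi)$ together with $m_{\disc}(\pi)=m(\pi)$, this is harmless.
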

Recall that in Section \ref{transfer.AMF}, we have defined local packets $\Pi_{\psi_v}^\theta(G_v)$ for each place $v$ of $F$, as well as the global packet $\Pi_\psi^\theta(G,\epsilon_\psi)$. Combining all of these with Lemma \ref{Compare-epsilon}, we deduce: 
\begin{theorem}\label{Thm-C}
	Suppose that either: 
	\begin{enumerate}
		\item $(G,\psi)$ satisfies the Assumption \ref{strange.assumption}; 
		\item $G$ is of $F$-rank $1$, and $\psi$ is any elliptic $A$-parameter for $G$. 
	\end{enumerate}
	Then there is a decomposition
	\[
	L_\psi^2(G)=\bigoplus_{\pi\in\Pi_\psi^\theta(G,\epsilon_\psi)}\pi.
	\]
\end{theorem}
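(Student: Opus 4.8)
The plan is to obtain Theorem \ref{Thm-C} by running the ``transfer of the multiplicity formula'' argument of Section \ref{transfer.AMF} essentially verbatim, with the generic parameter $\phi$ there replaced by the elliptic $A$-parameter $\psi$, and with Corollary \ref{Multi-Preserve} replaced by Corollary \ref{anisotropic}. Since $\dim V<r$ we are in the stable range, so we may form
\[
	\theta(\psi)=\psi\chi_W^{-1}\chi_V+\chi_V\boxtimes S_{2r-2n+1},
\]
which is again an elliptic $A$-parameter for $H$. Write $L_\psi^2(G)=\bigoplus_\pi m_\pi\pi$. Under either of the hypotheses (1) or (2) of the statement, Corollary \ref{anisotropic} applies and gives
\[
	L_{\theta(\psi)}^2(H)=\left(\bigoplus_\pi m_\pi\theta^{abs}(\pi)\right)\oplus\left(\bigoplus'_\sigma m_\sigma\sigma\right),
\]
the second sum running over constituents with $A$-parameter $\theta(\psi)$ not relevant to $G$; here J--S. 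Li's bijection for stable-range theta lifts makes $\pi\mapsto\theta^{abs}(\pi)$ injective on irreducibles and, combined with Proposition \ref{strangepropo} (resp. the Witt index one proposition), preserves discrete multiplicity. On the other hand, Theorem \ref{H-ThmB} gives $L_{\theta(\psi)}^2(H)=\bigoplus_{\sigma\in\Pi_{\theta(\psi)}(H,\epsilon_{\theta(\psi)})}\sigma$. Comparing the two descriptions, the $\theta^{abs}(\pi)$ with $m_\pi\ne0$ are exactly the relevant elements of the multiset $\Pi_{\theta(\psi)}(H,\epsilon_{\theta(\psi)})$, with matching multiplicities.

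It remains to repackage this in terms of $G$. Exactly as in Section \ref{transfer.AMF}, there is an isomorphism $\ell:\calS_\psi\to\calS_{\theta(\psi)}\to\overline{\calS_{\theta(\psi)}}$ and, for each place $v$, an isomorphism $\ell_v:\calS_{\psi_v}\to\overline{\calS_{\theta(\psi_v)}}$ compatible with the localization maps, all read off from the explicit description of the component groups in terms of the constituents of $\psi$ and $\theta(\psi)$. Transporting characters by $\ell_v^*$, define $\Pi_{\psi_v}^\theta(G_v)$ to be the image of $\Pi_{\theta(\psi_v)}(H_v)$ under the theta lift, equipped with the map sending $\theta(\sigma)$ to the character $\ell_v^*(\calJ_{\scrW'}(\sigma))$ of $\calS_{\psi_v}$; assemble the restricted tensor product $\Pi_\psi^\theta(G)$ together with $\calJ^\theta(\pi)=\ell^*\big(\calJ_{\scrW'}(\theta^{abs}(\pi))\big)$, and put $\Pi_\psi^\theta(G,\epsilon_\psi)=\{\pi\in\Pi_\psi^\theta(G)\ |\ \calJ^\theta(\pi)=\epsilon_\psi\}$. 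For $\sigma=\theta^{abs}(\pi)$ relevant to $G$ one then has
\[
	\sigma\in\Pi_{\theta(\psi)}(H,\epsilon_{\theta(\psi)})\iff \ell^*\big(\calJ_{\scrW'}(\sigma)\big)=\ell^*\big(\epsilon_{\theta(\psi)}\big)=\epsilon_\psi,
\]
where the last equality is Lemma \ref{Compare-epsilon}, which is stated and proved for an arbitrary elliptic $A$-parameter, not merely a generic one. Hence $m_\pi\ne0$ precisely when $\pi\in\Pi_\psi^\theta(G,\epsilon_\psi)$, and in that case $m_\pi=m_{\disc}(\pi)=m_{\disc}(\theta^{abs}(\pi))$ equals the multiplicity of $\theta^{abs}(\pi)$ in $\Pi_{\theta(\psi)}(H,\epsilon_{\theta(\psi)})$, which is by construction the multiplicity of $\pi$ in $\Pi_\psi^\theta(G,\epsilon_\psi)$. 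This yields the asserted decomposition $L_\psi^2(G)=\bigoplus_{\pi\in\Pi_\psi^\theta(G,\epsilon_\psi)}\pi$.

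As for where the difficulty lies: essentially all of the genuinely new content has already been spent, in Proposition \ref{strangepropo} and the Witt index one proposition, where strong multiplicity one together with the M{\oe}glin--Waldspurger square-integrability criterion are used to upgrade J--S. Li's inequality to an equality under hypotheses (1) or (2). Once that is in hand, Corollary \ref{anisotropic} is a formal consequence exactly as Corollary \ref{Multi-Preserve} was, and the present theorem is pure bookkeeping with the component groups. The only points needing a little care are verifying that $\theta(\psi)$ is again elliptic — so that Theorem \ref{H-ThmB} is applicable and $\Pi_{\theta(\psi)}(H)$ and $\epsilon_{\theta(\psi)}$ make sense — and that the identifications $\ell$, $\ell_v$ of component groups commute with localization; both are immediate from the structure of $\calS_\psi$ and $\calS_{\theta(\psi)}$ and require no new ideas.
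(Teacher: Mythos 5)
Your proof is correct and follows the paper's own route exactly: the paper's (terse) argument is to replace $\phi$ by $\psi$ throughout Section \ref{transfer.AMF}, feed in Corollary \ref{anisotropic} in place of Corollary \ref{Multi-Preserve}, and invoke Lemma \ref{Compare-epsilon} (which, as you note, is indeed proved for arbitrary elliptic $A$-parameters). You have merely spelled out the bookkeeping that the paper leaves implicit, and the observations you flag — that $\theta(\psi)$ is elliptic so Theorem \ref{H-ThmB} applies, and that $\ell$ commutes with localization — are exactly the points that make the transfer go through.
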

\begin{remark}
	In particular, when $G=G(V)$ is of $F$-rank less than or equal to $1$, we obtain a description of the whole $L^2_{\disc}(G)$. A case worth noting is when $G$ is an unitary group and $G_v\simeq \U_{1,n-1}$ at one real place $v$. In this case, the description of $L^2_{\disc}(G)$ might have some arithmetic applications to Shimura varieties of type $\U_{1,n-1}$. 
\end{remark}
These results also motivate us to study these local packets $\Pi_{\psi_v}^\theta(G_v)$ at each local place $v$ of $F$. In particular, we want to show that: 
\begin{itemize}
	\item These local packets $\Pi_{\psi_v}^\theta(G_v)$ do not depend on the choice of the auxiliary group $H_v=H\left(W_v\right)$.
	\item If $G_v$ is quasi-split, then
	\[
		\Pi_{\psi_v}^\theta(G_v)=\Pi_{\psi_v}^A(G_v)
	\]
	 as representations of $\calS_\psi\times G_v$, where $\Pi_{\psi_v}^A(G_v)$ is the local $A$-packet defined by Arthur \cite{MR3135650} and Mok \cite{MR3338302}. 	 
\end{itemize}
In \cite{chen2021theta}, we will prove these when $v$ is a non-Archimedean place of $F$. 

\begin{remark}~
\begin{enumerate}
\item In Case O, a large part of these local comparison results have been proved by M{\oe}glin already in a much more general context. In \cite{MR2767522}, she has constructed a packet $\Pi_{\psi_v}^M(G_v)$ explicitly for each $\psi_v$ when $v$ is non-Archimedean. Moreover, she showed that $\Pi_{\psi_v}^M(G_v)$ is multiplicity-free, and 
\[
	\Pi_{\psi_v}^M(G_v)=\Pi_{\psi_v}^A(G_v)
\]
as sets if $G_v$ is quasi-split. Using her explicity construction, she studied the Adams' conjecture in \cite{MR2906916} and obtained many results. It follows from her results that
\[
	\Pi_{\psi_v}^\theta(G_v)=\Pi_{\psi_v}^M(G_v)
\]
as sets. Hence, comparing to her results, the new thing in \cite{chen2021theta} is that we also compare the ``labelings'', i.e. the map $\calJ^\theta$.
\item In Case U, when $v$ is real and the $A$-parameter $\psi_v$ is Adams-Johnson, these local comparison results have been proved in \cite{cossutta2009theta} already. We expect that the method in \cite{cossutta2009theta} can also apply to Case O.
\end{enumerate}
\end{remark}


\appendix
\section{Prasad's conjecture: real even orthogonal-symplectic case} 
\label{App.LLC4O}

In this appendix, we shall use theta lifts to establish an LLC for real full even orthogonal groups, based on Paul's results \cite[Thm.15]{MR2175409}. 

We fix some notation. For $\alpha\in\frac{1}{2}\ZZ$, we denote by $\chi_{2\alpha}$ the character
\[
z\mapsto \left(z/\bar{z}\right)^\alpha
\]
of $L_\CC$, and by $\calD_{2\alpha}$ the $2$-dimensional representation of $L_\RR$ induced from the character $\chi_{2\alpha}$ of $L_\CC$. Note that $\calD_{2\alpha}$ is irreducible unless $\alpha=0$. We also retain the notations in Section \ref{S:Asubdiagram}. So now $V$ is a $2n$-dimensional orthogonal space over $\RR$ with isometry group $G\left(V\right)$, and $W_0$ is a $2n$-dimensional symplectic space over $\RR$ with isometry group $H\left(W_0\right)$. Pure inner forms of $G\left(V\right)$ will be typically denoted by $G\left(V'\right)$ for some orthogonal space $V'$ (see the beginning of Section \ref{state.main.results} for the classification of these $V'$). We also denote the special even orthogonal group associated to $V$ by $G^0\left(V\right)$, which is an index two subgroup in $G(V)$. By some classical Clifford theory, there is a canonical bijection
\[
	\iota:\Irr\left(G\left(V\right)\right)/\sim_{\det} \lra \Irr\left(G^0\left(V\right)\right)/\sim_\varepsilon
\]
given by restriction, where the LHS of the bijection is the set of equivalence classes of irreducible representations of $G\left(V\right)$ up to the determinant twist, and the RHS is the set of equivalence classes of irreducible representations of $G^0\left(V\right)$ up to the action of the outer automorphism. Given $\pi\in\Irr\left(G\left(V\right)\right)$, we shall use $\left[\pi\right]_{\det}$ to denote the equivalence class in $\Irr\left(G\left(V\right)\right)/\sim_{\det}$ containing $\pi$.

Since $G^0\left(V\right)$ and $H\left(W_0\right)$ are connected reductive groups over $\RR$, by the work of Langlands \cite{MR1011897} we have the LLC for these two groups, as we have recalled in Section \ref{SS:LLC}. However, instead of original $L$-parameters for $G^0\left(V\right)$, we prefer to use the so-called weak $L$-parameters which we now describe. By modulo the action of the outer automorphism, we obtain a finite to one surjective map
\begin{equation}\label{weak LLC G0}
	\calL^0: \bigsqcup_{V'}\Irr\left(G^0\left(V'\right)\right)/\sim_\varepsilon \lra \Phi^+\left(G^0\left(V\right)\right)/\sim_\varepsilon,
\end{equation}
from the original LLC for $G^0\left(V\right)$, where the disjoint union on the LHS runs over all $2n$-dimensional orthogonal spaces $V'$ of the same discriminant as $V$, and the RHS of the surjection is the set of equivalence classes of $L$-parameters of $G^0\left(V\right)$ up to $\Lgp{G(V)}={\rm O}_{2n}\left(\CC\right)$ conjugation. 
By composing with the standard representation
\[
	{\rm O}_{2n}\left(\CC\right) \lra \GL_{2n}\left(\CC\right),
\] 
the set $\Phi^+\left(G^0\left(V\right)\right)/\sim_\varepsilon$ can be identified with the set 
\[
	\Phi^+\left(G\left(V\right)\right)\coloneqq \Big\{\phi:L_\RR\lra \GL_{2n}\left(\CC\right)~\big|~\phi \textit{ is semi-simple, orthogonal and } \det\left(\phi\right)=\chi_V\Big\}.
\]
This is the set of weak $L$-parameters we will make use of.

Consider the theta lift between $\left(G\left(V\right) ,H\left(W_0\right)\right)$, with respect to a non-trivial additive character $\psi_\RR$ and splitting characters $\left(\chi_V,\chi_W\right)$ as in Section \ref{ThetaLifts}. Our first goal here is to establish a weaker version of Theorem \ref{Prasad.Conj}, which partially describes this theta lift in terms of (weak) $L$-parameters for $G^0\left(V\right)$ and $H\left(W_0\right)$. 

\begin{theorem}\label{weakver.Prasad.Conj}
Let $\phi$ be a tempered weak $L$-parameter for $G^0\left(V\right)$, and $\pi$ an irreducible tempered representation of $G\left(V\right)$, such that 
\[
	\calL^0\left(\iota\left(\left[\pi\right]_{\det}\right)\right) = \phi.
\]
Suppose that the theta lift $\vartheta\left(\pi\right)$ of $\pi$ to $H\left(W_0\right)$ is non-zero. Then $\vartheta\left(\pi\right)$ is tempered and lies in the $L$-packet associated to
\[
	\vartheta\left(\phi\right) = \phi\chi_W^{-1}\chi_V+\chi_V.
\]
Moreover, if $\chi_W\not\subset\phi$, then both 
\[
		\begin{cases}
			\textit{the theta lift }\vartheta(\pi)\textit{ of $\pi$ to } H\left(W_0\right) ,\\
			\textit{the theta lift }\vartheta(\pi\otimes\det)\textit{ of $\pi\otimes\det$ to } H\left(W_0\right)
		\end{cases}
\]
are non-zero.

\end{theorem}

\begin{proof}
We first consider the case that $\phi$ is a discrete weak $L$-parameter, i.e. $\phi$ is multiplicity-free. We can write it as 
\[
	\phi = \calD_{2\alpha_1} + \calD_{2\alpha_2} + \cdots +\calD_{2\alpha_n}
\]
for some non-negative integers $\alpha_1>\alpha_2>\cdots>\alpha_n\geq 0$. Following Paul \cite[Sect.3.2]{MR2175409}, if the Harish-Chandra parameter of $\pi$ is of the form
\[
	\lambda = \left(a_1,a_2,\cdots;b_1,b_2,\cdots\right),
\]
where $a_i$ and $b_j$ are non-negative integers, $a_1>a_2>\cdots$ and $b_1>b_2>\cdots$, then the infinitesimal character of $\pi$ is precisely the orbit of $\lambda$ under the Weyl group action. Since the LLC preserves the infinitesimal character, we have
\[
	\left\{\alpha_1,\alpha_2,\cdots,\alpha_n\right\} = \left\{a_1,a_2,\cdots,b_1,b_2,\cdots\right\}.
\]
By \cite[Thm.15]{MR2175409}, $\vartheta\left(\pi\right)$ is a limit of discrete series, and the Harish-Chandra parameter of $\vartheta\left(\pi\right)$ is of the form
\[
	\lambda' = \left(a_1,a_2,\cdots;\cdots -b_2,-b_1\right).
\]
Note that by definition any $L$-parameter of $H\left(W_0\right)$ has trivial determinant. Then again by considering the infinitesimal character, one can see immediately that the $L$-parameter of $\pi$ must be $\vartheta\left(\phi\right)$ as predicted.

Next we consider the case that $\phi$ is tempered but not discrete. It follows from the LLC for $G^0\left(V\right)$ that any irreducible representation of $G^0\left(V\right)$ in $\iota\left(\left[\pi\right]_{\det}\right)$ is tempered but not a discrete series, and hence so is $\pi$. It is well known that $\pi$ can be embedded into a parabolic induction of a discrete series representation. Then by the induction principle of local theta correspondence \cite[Sect.5.2]{MR2175409} and the result in discrete series case, we know that the $L$-parameter of $\vartheta\left(\pi\right)$ is $\vartheta\left(\phi\right)$.

Finally we prove the last statement of this theorem. Let $W_{00}$ be the $(2n-2)$-dimensional symplectic space over $\RR$, and $H\left(W_{00}\right)$ be the corresponding symplectic group. If $\vartheta\left(\pi\right)=0$, then by the conservation relation \cite{MR3369906} the theta lift of $\pi\otimes \det$ to $H\left(W_{00}\right)$ is non-zero. By using the same argument as above, we can show that $\chi_W\subset\phi$. Likewise $\vartheta\left(\pi\otimes\det\right)=0$ also yields $\chi_W\subset\phi$. This completes the proof.
\end{proof}

With this theorem at hand, next we extend the LLC for real special even orthogonal groups to full even orthogonal groups using the same idea as in \cite[Sect.5]{zou2020local}. 
As mentioned before, the LLC depends on the choices of a Whittaker datum $\scrW$ of the quasi-split pure inner form of $G\left(V\right)$. Since the construction will also involve the LLC for $H\left(W_0\right)$, we need to choose a Whittaker datum $\scrW'$ of $H\left(W_0\right)$ as well. We shall make these choices according to the additive character $\psi_\RR$ (and some other auxiliary data) as explicated in Section \ref{Whittaker.Data}.

Let us deal with tempered representations first. Recall that by the weak LLC for $G^0\left(V\right)$ we have a finite to one surjective map $\calL^0$ as in (\ref{weak LLC G0}).
We define a map
\[
	\calL:\bigsqcup_{V'}\Irr_{temp}\left(G\left(V'\right)\right) \lra \Phi\left(G\left(V\right)\right)
\] 
by setting $\calL\left(\pi\right) = \calL^0\left(\iota\left(\left[\pi\right]_{\det}\right)\right)$ for $\pi\in\Irr_{temp}\left(G\left(V'\right)\right)$. It then follows from the properties of $\calL^0$ that $\calL$ is a finite to one surjective map. 
For each $L$-parameter $\phi\in\Phi\left(G\left(V\right)\right)$, to give a parametrization of the fibers
\[
	\Pi_\phi\left(G\left(V'\right)\right) \coloneqq \calL^{-1}\left(\phi\right)\cap \Irr\left(G\left(V'\right)\right),
\]
we appeal to the theta lift. Let $\pi\in\Pi_\phi\left(G\left(V'\right)\right)$. Consider the theta lift $\vartheta\left(\pi\right)$ of $\pi$ to $H\left(W_0\right)$. There are two possibilities:

\begin{itemize}
	\item \textit{Case 1}: $\vartheta\left(\pi\right)$ is non-zero. Then by Theorem \ref{weakver.Prasad.Conj}, we know that $\vartheta\left(\pi\right)\in\Pi_{\vartheta\left(\phi\right)}\left(H\left(W_0\right)\right)$, and $\calS_\phi$ can be regarded as a subgroup of $\calS_{\vartheta\left(\phi\right)}$. In this case we set
	\[
		\eta_\pi \coloneqq \eta_{\vartheta(\pi)}\Big|_{\calS_\phi},
	\]
	where $\eta_{\vartheta(\pi)}\in\widehat{\overline{\calS_{\vartheta(\phi)}}}$ is the character associated to $\vartheta\left(\pi\right)$ by the LLC for symplectic groups.

	\item \textit{Case 2}: $\vartheta\left(\pi\right)$ is zero. Then by the conservation relation \cite{MR3369906}, the theta lift $\vartheta\left(\pi\otimes\det\right)$ of $\pi\otimes \det$ to $H\left(W_0\right)$ is non-zero. Since $\left[\pi\right]_{\det}=\left[\pi\otimes\det\right]_{\det}$, it is easy to see from the definition that $\calL\left(\pi\right) = \calL\left(\pi\otimes\det\right)$. In the previous case we have already attached a character $\eta_{\pi\otimes\det}\in\wh{\calS_\phi}$ to $\pi\otimes\det$. In this case we set
	\[
	\eta_\pi \coloneqq \eta_{\pi\otimes\det}\cdot\kappa_\phi,
	\]
	where $\kappa_\phi\in\wh{\calS_\phi}$ is the character defined in equation (\ref{determitwis}).
\end{itemize}
Combining these two cases we obtain a map
\[
	\calJ^L_{\scrW}: \bigsqcup_{V'}\Pi_{\phi}\left(G\left(V'\right)\right) \lra \wh{\calS_\phi}
\]
by setting $\calJ^L_{\scrW}\left(\pi\right) = \eta_\pi$ for $\pi\in\Pi_\phi\left(G\left(V'\right)\right)$. Similar to \cite[Prop.5.10]{zou2020local}, it follows from the Howe duality and the conservation relation \cite{MR3369906} that the map $\calJ^L_{\scrW}$ is indeed a bijection. 

\begin{remark}\label{R:CompatibleRealEvenOvsSO}
The following two properties of the map $\calJ^L_{\scrW}$ are worth noting.
\begin{enumerate}
	\item For any $\pi\in\Pi_\phi\left(G\left(V'\right)\right)$, we have
	\begin{equation*}
		\calJ^L_{\scrW}\left(\pi\otimes\det\right) = \calJ^L_{\scrW}\left(\pi\right) \cdot \kappa_\phi.
	\end{equation*}  
	Indeed, in the case that $\vartheta\left(\pi\right)=0$ or $\vartheta\left(\pi\otimes\det\right)=0$, this equality is a direct consequence of the construction. The proof of our main local result Theorem \ref{Local-Compare} (more precisely the proof of Proposition \ref{Final.Jigsaw}) will only involve this special case. In the case that both $\vartheta\left(\pi\right)$ and $\vartheta\left(\pi\otimes\det\right)$ are non-zero, we can appeal to the strength of Theorem \ref{ThmB} as follows. Similar to Section \ref{SS:Multi-free}, when the $L$-parameter $\phi$ is of good parity, one can suitably globalize $\pi$ 
 to a cuspidal automorphic representation $\dot{\pi}$ with generic $A$-parameter $\dot{\phi}$, such that:
	\begin{itemize}
		\item at a place $v$, the localizations of $\dot{\pi}$ and $\dot{\phi}$ are $\pi$ and $\phi$;

		\item at an auxiliary finite place $w$, the localization map $\calS_{\dot{\phi}}\lra\calS_{\dot{\phi}_w}$ is an isomorphism.
	\end{itemize}
	Let $\dot{\pi}'$ be the cuspidal automorphic representation obtained from $\dot{\pi}$ by replacing $\dot{\pi}_v$ and $\dot{\pi}_w$ by $\dot{\pi}_v\otimes\det$ and $\dot{\pi}_w\otimes\det$. Then applying Theorem \ref{ThmB} to $\dot{\pi}$ and $\dot{\pi}'$, one gets
	\[
		\eta_{\dot\pi_v}\cdot\eta_{\dot\pi_w} = \eta_{\dot\pi_v\otimes\det}\cdot\eta_{\dot\pi_w\otimes\det}.
	\]
	Since the desired equality holds for $\dot\pi_w$ (see Remark \ref{LLC}(1)), it also holds for $\pi=\dot\pi_v$. For general $\phi$ the desired conclusion follows from the compatibility of the LLC with parabolic inductions.

	\item A priori, this map $\calJ^L_{\scrW}$ depends on the choice of the additive character $\psi_\RR$. However, as suggested by the notation it only depends on the choice of the Whittaker datum $\scrW$ of the quasi-split pure inner form of $G\left(V\right)$ but not on $\psi_\RR$. This is a consequence of the scaling property of the Weil representation. Similar to \cite[II.Cor.6.2 \& IV.Prop.1.9]{kudla1996notes}, an easy computation shows that for any $\pi\in\Irr\left(G\left(V\right)\right)$, we have
	\[
		\vartheta_{\psi_{\RR,a}}\left(\pi\right) \simeq \vartheta_{\psi_\RR}\left(\pi\right)^{\delta_a}.
	\]
	Here $a\in\RR^\times$, $\psi_{\RR,a}\coloneqq\psi_\RR\left(a\cdot-\right)$ and $\delta_a$ is an element in $\GL\left(W_0\right)$ such that
	\[
	\left\langle \delta_a\left(v\right),\delta_a\left(v'\right)\right\rangle_{W_0} = a\cdot\left\langle v, v'\right\rangle_{W_0}
	\]
	for any $v, v'\in W_0$. The subscripts ``$_{\psi_{\RR,a}}$'' and ``$_{\psi_\RR}$'' indicate the additive characters used in the definition of theta lifts. Let $\scrW'_a$ be the Whittaker datum of $H\left(W_0\right)$ determined by $\psi_{\RR,a}$ as in Section \ref{Whittaker.Data}. Then it follows from \cite[Thm.4.3]{MR3194648} that
	\[
		\calJ^L_{\scrW'}\left(\vartheta_{\psi_\RR}\left(\pi\right)\right) = \calJ^L_{\scrW'_a}\left(\vartheta_{\psi_{\RR,a}}\left(\pi\right)\right),
	\]
	and in particular these two characters have the same restriction to $\calS_\phi$.
\end{enumerate}	
\end{remark}

After furnishing tempered representations with the maps $\calL$ and $\calJ^L_{\scrW}$, we can extend these maps to all irreducible representations in a standard manner similar to \cite{MR3271238}. Although full even orthogonal groups are disconnected, the Langlands' classification is still valid by \cite[Sect.3.2]{MR2175409}. To be more precise, for any irreducible non-tempered representation $\pi\in \Irr\left(G\left(V\right)\right)$, there is a standard module 
\begin{equation*}
	\Ind_{P}^{G\left(V\right)}\left(\tau_1|\cdot|^{s_1}\boxtimes\cdots\boxtimes\tau_r|\cdot|^{s_r}\boxtimes\pi_{0}\right)
\end{equation*}
of $G\left(V\right)$, where 
\begin{itemize}
	\item $P$ is a parabolic subgroup of $G\left(V\right)$, with a Levi component
	\begin{equation*}
		L\simeq \GL_{d_1}(\RR)\times\cdots\times \GL_{d_m}(\RR)\times G\left(V_0\right);
	\end{equation*}
	here $V_0$ is some orthogonal space in the Witt tower containing $V$;
	\item $\tau_i$ is an irreducible (limit of) discrete series of $\GL_{d_i}(\RR)$, and $s_i$ is a positive real number; 
	\item $\left\{\tau_i|\cdot|^{s_i}\right\}_i$ is ordered such that 
	\[
	s_1\geq \cdots\geq s_r>0;
	\]
	\item $\pi_0$ is an irreducible tempered representation of $G\left(V_0\right)$;
\end{itemize}
such that $\pi$ is the unique irreducible quotient of this standard module. Let $\phi_{\tau_i}$ be the $L$-parameter of $\tau_i$, and $\phi_0=\calL\left(\pi_0\right)$. We set
\[
	\calL\left(\pi\right) = \left(\phi_{\tau_1}|\cdot|^{s_1} + \cdots + \phi_{\tau_r}|\cdot|^{s_r}\right) + \phi_0 + \left(\phi_{\tau_1}|\cdot|^{s_1} + \cdots + \phi_{\tau_r}|\cdot|^{s_r}\right)^\vee.
\]
Then as explicated in \cite[Sect.8]{MR3202556}, there is a natural isomorphism $\calS_\phi\simeq\calS_{\phi_0}$. Let $\eta_0 = \calJ^L_{\scrW}\left(\pi_0\right)$. Under this identification of component groups, we define
\[
	\calJ^L_{\scrW}\left(\pi\right) = \eta_0.
\]
Since these $P$, $\tau_i$, $s_i$ and $\pi_0$ are uniquely determined by $\pi$, the $L$-parameter $\calL\left(\pi\right)$ and the character $\calJ^L_{\scrW}\left(\pi\right) \in\wh{\calS_\phi}$ are well-defined. We conclude the above discussions as follows.

\begin{theorem}\label{T:LLC4RealfullO}
There is a finite to one surjective map
\[
\calL:\bigsqcup_{V'}\Irr\left(G\left(V'\right)\right) \lra \Phi^+\left(G\left(V\right)\right),
\]
where the disjoint union runs over all $2n$-dimensional orthogonal spaces $V'$ of the same discriminant as $V$. For each $L$-parameter $\phi\in\Phi^+\left(G\left(V\right)\right)$, we denote
\[
\Pi_\phi\left(G\left(V'\right)\right) \coloneqq \calL^{-1}\left(\phi\right)\cap \Irr\left(G\left(V'\right)\right),
\]
and we call it the $L$-packet of $G'$ associated to $\phi$. There is a bijection (depends on the choice of the Whittaker datum $\scrW$)
\begin{equation*}
	\calJ_{\scrW}^L:\bigsqcup_{V'}\Pi_{\phi}\left(G\left(V'\right)\right) \lra \wh{\calS_\phi}.
\end{equation*}
\end{theorem}

From our construction, one can see immediately that:

\begin{corollary}
Under the LLC provided by Theorem \ref{T:LLC4RealfullO} for real full even orthogonal groups, Theorem \ref{Prasad.Conj} holds for dual pairs $\left(G\left(V'\right), H\left(W_0\right)\right)$.
\end{corollary}

Finally, recall that Arthur has already established a tempered LLC for quasi-split real full even orthogonal groups, namely a finite to one surjective map
\[
\calL^A: \Irr_{temp}\left(G\left(V^+\right)\right) \lra \Phi\left(G\left(V^+\right)\right),
\]
together with a bijection (depends on the choice of a Whittaker datum $\scrW$)
\[
	\calJ_{\scrW}^A: \Pi_{\phi}\left(G\left(V^+\right)\right) \lra \wh{\overline{\calS_{\phi}}}
\]
on each fiber $\Pi_{\phi}\left(G\left(V^+\right)\right)$ of $\phi\in\Phi\left(G\left(V^+\right)\right)$.
We can justify our construction of the LLC for real full even orthogonal groups by comparing it with Arthur's. Since Arthur's LLC is compatible with the LLC for $G^0\left(V^+\right)$, for any $\pi\in\Irr_{temp}\left(G\left(V^+\right)\right)$ we have
\[
	\calL^A\left(\pi\right) = \calL^0\left(\iota\left(\left[\pi\right]_{\det}\right)\right) = \calL\left(\pi\right).
\]
For any $\phi\in\Phi\left(G\left(V^+\right)\right)$ and $\pi\in\Pi_{\phi}\left(G\left(V^+\right)\right)$, to compare $\calJ_{\scrW}^A\left(\pi\right)$ and $\calJ_{\scrW}^L\left(\pi\right)$, again we appeal to the global method. When the $L$-parameter $\phi$ is of good parity, using the same argument as in Section \ref{SS:Multi-free}, we can suitably globalize $\pi$ to a cuspidal automorphic representation $\dot{\pi}$ of a globally quasi-split even orthogonal group $G(\dot{V}^+)$, with generic $A$-parameter $\dot{\phi}$, such that at a place $v$, the localizations of $\dot{\pi}$ and $\dot{\phi}$ are $\pi$ and $\phi$. Then comparing the original Arthur's multiplicity formula and our version (Theorem \ref{ThmB}) for $L^2_{\dot\phi}\left(G(\dot{V}^+)\right)$, we deduce that
\[
	\calJ_{\scrW}^A\left(\pi\right) = \calJ_{\scrW}^L\left(\pi\right)
\]
For general $\phi$ the desired equality follows from the compatibility of the LLC with parabolic inductions.


\section{An irreducibility result of some induced representations}\label{App.Irr}
In this appendix, we sketch a proof of Lemma \ref{Irr-nonStdMod} in the case when $F=\RR$. We have $E=\RR$ in Case O and $E=\CC$ in Case U. We will prove it in a more general context.

Recall that an irreducible representation of $L_E$ is said to be almost tempered and positive if it is of the form $\phi|\cdot|^s$, where $\phi$ is a representation of $L_E$ with bounded image, and $0< s< 1/2$ is a real number. Let $\psi$ be a local $A$-parameter for $H$. We assume 
\[
\psi=\varphi+\psi_0+\left(\varphi^c\right)^\vee, 
\]
where
\begin{itemize}
	\item $\psi_0$ is a local $A$-parameter for $H_0=H(W_0)$, which is of good parity; here $W_0$ is a $c$-skew-Hermitian space in the Witt tower containing $W$;
	\item $\varphi$ is a $k$-dimensional representation of $L_E$ whose irreducible summands are either almost tempered and positive, or tempered but not (conjugate) self-dual with the same parity as that of $\psi$. 
\end{itemize}
Let $\tau$ be the irreducible representation of $\GL_k(E)$ associated to $\varphi$, and $Q$ the standard parabolic subgroup of $H$ with Levi component $L\simeq \GL_k(E)\times H_0$. We shall prove that:
\begin{theorem}\label{A.Irr-nonStdMod}
	For any irreducible unitary representation $\sigma_0$ in the $A$-packet $\Pi_{\psi_0}(H_0)$, the induced representation $\Ind_Q^H(\tau\boxtimes\sigma_0)$ is irreducible.
\end{theorem}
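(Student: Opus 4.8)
The plan is to prove Theorem~\ref{A.Irr-nonStdMod} by reducing the irreducibility of $\Ind_Q^H(\tau\boxtimes\sigma_0)$ to the irreducibility of various standard modules and parabolically induced tempered representations, using the explicit structure of the real Weil group and the known classification of $A$-packets for $H_0$ over $\RR$. First I would reduce to the case where $\varphi$ is \emph{irreducible}, by induction in stages: writing $\varphi=\varphi_1\oplus\varphi_2$ and noting that $\Ind_Q^H(\tau\boxtimes\sigma_0)$ is built up as an iterated induction $\Ind(\tau_1\boxtimes\Ind(\tau_2\boxtimes\sigma_0))$, so it suffices to treat one irreducible piece at a time, provided each intermediate step preserves the shape of the hypotheses (an $A$-parameter of $H_0$ bounded on the Weil group, twisted by an irreducible summand that is either almost tempered positive or tempered non-conjugate-self-dual). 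This is where one must be slightly careful about the inductive bookkeeping: after peeling off one $GL$-block, the remaining representation $\Ind(\tau_2\boxtimes\sigma_0)$ of the smaller group need not be in an $A$-packet, but it will be an irreducible constituent of one by the inductive hypothesis, and that is enough to iterate.

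Second, with $\varphi=\phi|\cdot|^s$ irreducible ($\phi$ tempered, and either $0<s<1/2$ or $s=0$ with $\phi$ non-conjugate-self-dual of the wrong parity), I would invoke the description of $\sigma_0$ itself: over $\RR$ every member of an $A$-packet $\Pi_{\psi_0}(H_0)$ is a Langlands quotient, or more precisely is cohomologically induced / a Speh-type constituent of a standard module $\Ind_{Q_0}^{H_0}(\delta\boxtimes\tau_0)$ whose inducing data is controlled by $\psi_0$. Substituting this in and using induction in stages again, the question becomes the irreducibility of a single induced representation of the form $\Ind(\phi|\cdot|^s\boxtimes\sigma_0')$ where $\sigma_0'$ is tempered (or a Speh representation attached to $\psi_0$). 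For such representations one can appeal to the archimedean irreducibility criterion: by the result of Speh--Vogan \cite{MR590291} (see also \cite[\S8]{MR632407}) together with the analysis in \cite{MR3907735}, an induced representation $\Ind_Q^H(\chi|\cdot|^s\boxtimes\sigma_0')$ with $|s|<1/2$ and $\sigma_0'$ unitary of the relevant type is irreducible whenever the inducing datum avoids the reducibility points, and the constraint that the summands of $\varphi$ are \emph{not} conjugate-self-dual of the parity of $\phi$ (when $s=0$) is exactly what rules those points out. The key computation is to check that the relevant Plancherel measure / Knapp--Stein $R$-group is trivial, which for these half-integral or non-self-dual twists follows from the location of the poles of the intertwining operator, computed via the Gindikin--Karpelevich formula as in the proof of the last Proposition of Section~7.

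The main obstacle I anticipate is the case $s=0$, i.e.\ $\varphi$ tempered but containing a summand $\mathcal D_{2\alpha}$ or a character that is conjugate-self-dual but of the opposite parity to $\phi$: here one is genuinely at the boundary, and irreducibility is not automatic from general position arguments; it must instead be extracted from the precise structure of the real $A$-packet $\Pi_{\psi_0}(H_0)$ and the known intertwining-operator computations of Mœglin--Renard \cite{MR3666056} in the complex case adapted to $\RR$. I would handle this by first establishing irreducibility when $\psi_0=\phi_0$ is a (tempered) $L$-parameter --- there it follows from the Langlands classification and the triviality of the $R$-group for the relevant non-self-dual or wrong-parity twist --- and then bootstrapping to general $\psi_0$ exactly as in the argument sketched in this appendix for passing from the tempered to the almost-tempered case: one uses the fact that $\Pi_{\psi_0}(H_0)$ is obtained from tempered packets of smaller groups by a further parabolic induction (this is \eqref{localAnontemepred}), applies induction in stages one more time, and reduces to the already-treated tempered situation. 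Throughout, the non-archimedean analogues in \cite{MR2767522,MR2822218} serve as the template, and the only genuinely new input is checking that the Speh--Vogan irreducibility theorem covers all the standard modules that arise, which is a finite check given the restricted shape ($\dim\le 2$ summands in Case $U$, and orthogonal summands in Case $O$) of the parameters in play.
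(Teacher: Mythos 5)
Your proposal is aimed in the right direction and correctly identifies the ingredients — Speh--Vogan irreducibility, the Mœglin--Renard computations, the $R$-group / non-self-dual condition at $s=0$, and the template of Gan--Ichino for odd orthogonal groups — but the ``peel off one $GL$ block at a time'' induction has a genuine gap that the paper's argument is specifically designed to avoid. If you write $\Ind_Q^H(\tau\boxtimes\sigma_0)\simeq\Ind(\tau_1\boxtimes\Ind(\tau_2\boxtimes\cdots\boxtimes\sigma_0))$ and first establish, inductively, that the inner representation $\pi'\coloneqq\Ind(\tau_2\boxtimes\cdots\boxtimes\sigma_0)$ is irreducible, you then want to apply the theorem to the pair $(\tau_1,\pi')$. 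But the hypothesis of the theorem is that the inner representation lies in an $A$-packet $\Pi_{\psi'_0}(H')$ with $\psi'_0$ \emph{bounded on the Weil group}, and $\pi'$ does not satisfy this: if any $\tau_j$ with $j\ge 2$ carries a nonzero exponent $\nu_j>0$, the $A$-parameter of $\pi'$ is genuinely almost-tempered and unbounded. You flag this yourself (``it will be an irreducible constituent of one by the inductive hypothesis, and that is enough to iterate''), but it is not enough: you would be invoking the theorem with a strictly weaker hypothesis on $\psi'_0$ than the one you are proving, and that version is precisely what you do not yet have. There is no ordering of the $\varphi_i$ that avoids this once more than one of the exponents $\nu_i$ is strictly positive.

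The paper sidesteps this by refusing to iterate. It decomposes $\tau$ as $\Ind_{Q_\varphi}^{GL_k}(\tau_1\boxtimes\cdots\boxtimes\tau_r)$ and passes \emph{all} blocks simultaneously; it then writes $\sigma_0$ as the image of a normalized intertwining operator $R_{\overline{Q}_{\psi_0}|Q_{\psi_0}}(\sigma_{0,\psi_0})$ on a standard module for $H_0$ (Langlands classification applied once, not iterated), applies the exact functor $\Ind_{Q_1}^H(\tau_\varphi\boxtimes\;\cdot\;)$ to this operator, and then uses a single ``exchange of induction orders'' lemma — exploiting that the mixed $GL$-inductions $\Ind(\tau_i\boxtimes\tau'_j)\simeq\Ind(\tau'_j\boxtimes\tau_i)$ are irreducible under the stated positivity/non-self-dual hypotheses — to conjugate the resulting operator into one of the form $R_{\overline{Q}_3|Q_3}$ whose source is (after collapsing equal exponents) a genuine standard module for $H$. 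The image is then the Langlands quotient, hence irreducible, with no inductive bootstrapping required. So the conceptual core you identify is correct, but the missing idea is precisely this global re-ordering via normalized intertwining operators, which replaces the problematic step-by-step induction and carries the full burden of the proof.
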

In \cite{MR3907735}, Gan-Ichino proved a similar statement for odd orthogonal groups. Mimicking their proof, we briefly describe the strategies to prove the theorem. Readers may consult \cite[Sect.3I]{MR3907735} for a treatment in full details. The ingredient of the proof is the normalized intertwining operators. Recall that for a real reductive group $G$, a parabolic subgroup $P$ of $G$, and an irreducible representation $\pi$ of the Levi component of $P$, the induced representation $\Ind_P^G\left(\pi\right)$ is called a standard module for $G$ if $\pi$ satisfies certain positivity conditions. For such an induced representation, one can define a (normalized) intertwining operator
\[
R_{\overline{P}|P}(\pi):\Ind_P^G\left(\pi\right)\lra\Ind_{\overline{P}}^G\left(\pi\right),
\]
where $\overline{P}$ is the parabolic subgroup of $G$ opposite to $P$, such that the image of $R_{\overline{P}|P}(\pi)$ is the unique irreducible quotient of $\Ind_P^G\left(\pi\right)$. In the proof of Theorem \ref{A.Irr-nonStdMod}, we shall realize the representation $\Ind_Q^H(\tau\boxtimes\sigma_0)$ as the image of a standard module for $H$.

Firstly, we decompose the representation $\tau$. It follows from our assumptions that we may write $\varphi$ as a summation of some subrepresentations
\[
\varphi=\varphi_1+\cdots+\varphi_r,
\]
satisfying the following conditions:
\begin{itemize}
	\item[-]Suppose we are in Case O, 
	\begin{itemize}
		\item[$\bullet$] Each $\varphi_i$ is of the form $\phi_i|\cdot|^{\nu_i}$, where $\phi_i$ is either $\sgn^{\delta_i}$ for some $\delta_i\in\mathbb{Z}/2\mathbb{Z}$, or $\calD_{2\alpha_i}$ for some $\alpha_i\in \frac{1}{2}\ZZ\backslash \{0\}$; and $\nu_i$ is a complex number;
		\item[$\bullet$] If $\nu_i=0$ for some $i$, then we have $\varphi_i=\calD_{2\alpha_i}$ for some $\alpha_i\in\ZZ+\frac{1}{2}$.
	\end{itemize}
	\item[-]Suppose we are in Case U, 
	\begin{itemize}
		\item[$\bullet$] Each $\varphi_i$ is of the form $\chi_{2\alpha_i}|\cdot|^{\nu_i}$ for some $\alpha_i\in\frac{1}{2}\ZZ$ and $\nu_i\in\CC$;
		\item[$\bullet$] If $\nu_i=0$ for some $i$, then we have $\alpha_i\in\ZZ+\frac{\dim W}{2}$.
	\end{itemize}
\end{itemize}
In both cases, the summation can be ordered such that
		\[
		\frac{1}{2}>\Re(\nu_1)\geq\cdots\geq\Re(\nu_r)\geq 0.
		\]
Let $k_i=\dim\varphi_i$, $\tau_i$ be the irreducible representation of $\GL_{k_i}(E)$ corresponding to $\varphi_i$ by the LLC for general linear groups, and $\tau_\varphi=\tau_1\boxtimes\cdots\boxtimes\tau_r$. It is easy to see that there is a parabolic subgroup $Q_\varphi$ of $\GL_k(E)$, with Levi component
\[
L_\varphi\simeq \GL_{k_1}(E)\times\cdots\times \GL_{k_r}(E),
\]
such that 
\[
\tau\simeq\Ind_{Q_\varphi}^{\GL_k(E)}\left(\tau_\varphi\right).
\]
Let $Q_1$ be the parabolic subgroup of $H$ with Levi component $L_1\simeq L_\varphi\times H_0$, such that $Q_1\subset Q$, and $Q_1\cap L=Q_\varphi\times H_0$. Then by induction in stages, we have
\[
\Ind_Q^H\left(\tau\boxtimes\sigma_0\right)\simeq\Ind_{Q_1}^H\left(\tau_\varphi\boxtimes\sigma_0\right).
\]

Next we deal with the irreducible representation $\sigma_0$. By the Langlands' classification, we know that $\sigma_0$ is the unique irreducible quotient of a standard module (for $H_0$)
\begin{equation*}
	\Ind_{Q_{\psi_0}}^{H_0}\left(\tau'_1\boxtimes\cdots\boxtimes\tau'_m\boxtimes\sigma_{00}\right),
\end{equation*}
where 
\begin{itemize}
	\item $Q_{\psi_0}$ is a parabolic subgroup of $H_{0}$, with Levi component
	\begin{equation*}
		L_{\psi_0}\simeq \GL_{d_1}(E)\times\cdots\times \GL_{d_m}(E)\times H_{00};
	\end{equation*}
	here $H_{00}=H(W_{00})$ for some space $W_{00}$ in the Witt tower containing $W_0$;
	\item $\tau'_i$ is an irreducible essentially (limit of) discrete series of $\GL_{d_i}(E)$, which is of the form
	\[
	\tau'_i=\tau''_i|\cdot|^{s_i}
	\]
	for some irreducible (limit of) discrete series $\tau''_i$ of $\GL_{d_i}(E)$ and $s_i>0$; since $\sigma_0$ lies in the local $A$-packet $\Pi_{\psi_0}(H_0)$, we can further conclude 
 that $s_i\in\frac{1}{2}\ZZ$;
	\item $\{\tau'_i\}_i$ is ordered such that 
	\[
	s_1\geq \cdots\geq s_m>0;
	\]
	\item $\sigma_{00}$ is a tempered 
 representation of $H_{00}$ 
\end{itemize}
Moreover, if we let $\sigma_{0,\psi_0}=\tau'_1\boxtimes\cdots\boxtimes\tau'_m\boxtimes\sigma_{00}$, and
\begin{equation}\label{eq.inner.intertwin}
	R_{\overline{Q}_{\psi_0}|Q_{\psi_0}}(\sigma_{0,\psi_0}):\Ind_{Q_{\psi_0}}^{H_0}\left(\sigma_{0,\psi_0}\right)\lra\Ind_{\overline{Q}_{\psi_0}}^{H_0}\left(\sigma_{0,\psi_0}\right)
\end{equation}
be the (normalized) intertwining operator, then $\sigma_0$ is just the image of this operator $R_{\overline{Q}_{\psi_0}|Q_{\psi_0}}(\sigma_{0,\psi_0})$.

Now we come to the key step. Applying the functor $\Ind_{Q_1}^H\left(\tau_\varphi\boxtimes~\cdot~\right)$ to the intertwining map (\ref{eq.inner.intertwin}), we get
\[
\Ind_{Q_1}^H\left(\mathbbm{1}_{\tau_\varphi}\boxtimes R_{\overline{Q}_{\psi_0}|Q_{\psi_0}}(\sigma_{0,\psi_0})\right):\Ind_{Q_1}^H\left(\tau_\varphi\boxtimes\Ind_{Q_{\psi_0}}^{H_0}\left(\sigma_{0,\psi_0}\right)\right)\lra\Ind_{Q_1}^H\left(\tau_\varphi\boxtimes\Ind_{\overline{Q}_{\psi_0}}^{H_0}\left(\sigma_{0,\psi_0}\right)\right).
\]
Let $Q_2$ be the parabolic subgroup of $H$ with Levi component $L_2\simeq L_\varphi\times L_{\psi_0}$, such that $Q_2\subset Q_1$, and $Q_2\cap L_1=L_\varphi\times Q_{\psi_0}$. Similarly, let $Q'_2$ be the parabolic subgroup of $H$ with the same Levi component as $Q_2$, such that $Q'_2\subset Q_1$, and $Q'_2\cap L_1=L_\varphi\times \overline{Q}_{\psi_0}$. Then, by induction in stages and properties of (normalized) intertwining operators, we can rewrite this intertwining map as 
\[
R_{Q'_2|Q_2}\left(\tau_\varphi\boxtimes\sigma_{0,\psi_0}\right):\Ind_{Q_2}^H\left(\tau_\varphi\boxtimes\sigma_{0,\psi_0}\right)\lra\Ind_{Q'_2}^H\left(\tau_\varphi\boxtimes\sigma_{0,\psi_0}\right).
\]
Since the functor $\Ind_{Q_1}^H\left(\tau_\varphi\boxtimes~\cdot~\right)$ is exact, to show that $\Ind_{Q_1}^H\left(\tau_\varphi\boxtimes\sigma_0\right)$ is irreducible, it is sufficient to show that the image of $R_{Q'_2|Q_2}\left(\tau_\varphi\boxtimes\sigma_{0,\psi_0}\right)$ is irreducible. Let $Q_3$ be the parabolic subgroup of $H$ with the same Levi component $L_3$ as $Q_2$ and $Q'_2$, such that 
\[
	\Re\left(\omega_{\varphi,\psi_0}\right)\in\overline{\gotha}_{Q_3}^{*,+},
\]
where $\omega_{\varphi,\psi_0}$ is the central character of $\tau_\varphi\boxtimes\sigma_{0,\psi_0}$, and $\overline{\gotha}_{Q_3}^{*,+}$ is as defined in \cite[Sect.3H]{MR3907735}. Then by the properties of (normalized) intertwining operators, we have a commutative diagram
		\[
		\begin{CD}
			\Ind_{Q_2}^H\left(\tau_\varphi\boxtimes\sigma_{0,\psi_0}\right) @>R_{Q'_2|Q_2}>> \Ind_{Q'_2}^H\left(\tau_\varphi\boxtimes\sigma_{0,\psi_0}\right)\\
			@VV R_{Q_3|Q_2} V @VV R_{\overline{Q}_3|Q'_2} V\\
			\Ind_{Q_3}^H\left(\tau_\varphi\boxtimes\sigma_{0,\psi_0}\right) @>R_{\overline{Q}_3|Q_3}>> \Ind_{\overline{Q}_3}^H\left(\tau_\varphi\boxtimes\sigma_{0,\psi_0}\right)
		\end{CD}.
		\]
Similar to \cite[Lem.3.10]{MR3907735}, we have the following lemma.

\begin{lemma}\label{exchange}
	The (normalized) intertwining operators $R_{Q_3|Q_2}\left(\tau_\varphi\boxtimes\sigma_{0,\psi_0}\right)$ and $R_{\overline{Q}_3|Q'_2}\left(\tau_\varphi\boxtimes\sigma_{0,\psi_0}\right)$ are isomorphisms.
\end{lemma}

\begin{proof}
Same as the proof of \cite[Lem.3.10]{MR3907735}, the intertwining operator $R_{Q_3|Q_2}\left(\tau_\varphi\boxtimes\sigma_{0,\psi_0}\right)$ can be decomposed as the composition of a sequence of intertwining operators
\[
	R_{Q_3|Q_2} = R_{R_t|R_{t-1}}\circ \cdots \circ R_{R_2|R_1}\circ R_{R_1|R_0},
\]
where $R_0= Q_2, R_1, \cdots, R_t=Q_3$ are parabolic subgroups of $H$. For each $R_{R_k|R_{k-1}}$ on the RHS, there exists some $1\leq i\leq r$ and $1\leq j\leq m$, such that $R_{R_k|R_{k-1}}$ is essentially (a parabolic induction of) the intertwining operator 
\[
\Ind_{P_{i,j}}^{\GL_{k_i+d_j}(E)}\left(\tau_i\boxtimes\tau'_j\right)\lra\Ind_{\overline{P}_{i,j}}^{\GL_{k_i+d_j}(E)}\left(\tau_i\boxtimes\tau'_j\right),
\]
where $P_{i,j}$ is a parabolic subgroup of $\GL_{k_i+d_j}(E)$ with the Levi component $M_{i,j}\simeq \GL_{k_i}(E)\times \GL_{d_j}(E)$. It follows from \cite[Thm.6.19]{MR590291} and the conditions on $\tau_i$, $\tau'_j$ that $\Ind_{P_{i,j}}^{\GL_{k_i+d_j}(E)}\left(\tau_i\boxtimes\tau'_j\right)$ is irreducible. Then one can conclude that each $R_{R_k|R_{k-1}}$ is an isomorphism, hence so is their composition $R_{Q_3|Q_2}\left(\tau_\varphi\boxtimes\sigma_{0,\psi_0}\right)$. Similarly, one can prove that $R_{\overline{Q}_3|Q'_2}\left(\tau_\varphi\boxtimes\sigma_{0,\psi_0}\right)$ is also an isomorphism.
\end{proof}

Therefore, up to an isomorphism, the image of $R_{Q'_2|Q_2}\left(\tau_\varphi\boxtimes\sigma_{0,\psi_0}\right)$ is the same as the image of $R_{\overline{Q}_3|Q_3}\left(\tau_\varphi\boxtimes\sigma_{0,\psi_0}\right)$. Notice that $\Ind_{Q_3}^H\left(\tau_\varphi\boxtimes\sigma_{0,\psi_0}\right)$ is already very close to a standard module for $H$. In general, by ``collapsing the same exponents'', we can find a parabolic subgroup $Q_4$ of $H$ with Levi component $L_4$, such that $Q_3\subset Q_4$, $L_3\subset L_4$, and 
\[
\Re(\omega_{\sigma_\psi})\in\gotha_{Q_4}^{*,+},
\]
where $\omega_{\sigma_\psi}$ is the central character of the irreducible representation $\sigma_\psi\coloneqq\Ind_{Q_3\cap L_4}^{L_4}\left(\tau_\varphi\boxtimes\sigma_{0,\psi_0}\right)$. It follows that $\Ind_{Q_4}^H\left(\sigma_\psi\right)$ is a standard module for $H$, and up to an isomorphism, the image of $R_{\overline{Q}_3|Q_3}\left(\tau_\varphi\boxtimes\sigma_{0,\psi_0}\right)$ is the same with the image of 
\[
R_{\overline{Q}_4|Q_4}(\sigma_\psi):\Ind_{Q_4}^H\left(\sigma_\psi\right)\lra\Ind_{\overline{Q}_4}^H\left(\sigma_\psi\right),
\]
hence is irreducible. This completes the proof.

\section{On irreducible self-dual Galois representations}\label{self.dual.Gal.rep}
In this appendix, we consider irreducible self-dual representations of the Weil group of a local field. The results in this appendix will serve as some supplements to our proof of Corollary \ref{Globalization-2} in Case O.

Let $F$ be a non-Archimedean local field with characteristic $0$. Let $W_F$ be the Weil group of $F$. We use $\Irr^{\star,d}(F)$ (resp. $\Irr^{S,d}(F)$, resp. $\Irr^{O,d}(F)$) to denote the irreducible self-dual (resp. symplectic, resp. orthogonal) representations of $W_F$ of dimension $d$. 
\begin{lemma}
Suppose that $F$ is a finite extension of $\QQ_p$. Let $\pi$ be a uniformizer of $F$, and $k$ be the residue field of $F$. Then 
\begin{equation*}
F^\times\simeq\pi^\ZZ\times k^\times\times\mu_{p^\infty}(F)\times\ZZ_p^d,
\end{equation*}
where $\mu_{p^\infty}(F)$ is the group of roots of unity of $p$-power order in $F$, and $d$ is the degree of $F$ over $\QQ_p$.
\end{lemma}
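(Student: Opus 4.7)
The plan is to decompose $F^\times$ in stages, peeling off the valuation part, then the tame torsion, and finally analyzing the principal units via the $p$-adic logarithm.

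First, I would use the normalized valuation $v : F^\times \to \ZZ$ to write $F^\times \simeq \pi^{\ZZ} \times \calO_F^\times$, splitting via the choice of uniformizer $\pi$. Next, I would apply the Teichm\"uller lift: since the reduction map $\calO_F^\times \to k^\times$ has a canonical multiplicative section (Hensel's lemma applied to $X^{q-1}-1$, where $q = |k|$), we get $\calO_F^\times \simeq k^\times \times U^{(1)}$, where $U^{(1)} = 1 + \pi\calO_F$ is the group of principal units. So far this reduces the lemma to the claim that $U^{(1)} \simeq \mu_{p^\infty}(F) \times \ZZ_p^d$ as abstract (equivalently, topological) abelian groups.

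For the principal units, I would endow $U^{(1)}$ with its natural structure as a topological $\ZZ_p$-module (the $\ZZ_p$-action comes from continuity of $a \mapsto a^n$ for $n \in \ZZ$ and density of $\ZZ$ in $\ZZ_p$). The main tool is the $p$-adic logarithm $\log : U^{(1)} \to F$. For $n$ large enough (specifically $n > e/(p-1)$, where $e$ is the absolute ramification index), $\log$ restricts to an isomorphism of topological groups $U^{(n)} \xrightarrow{\sim} \pi^n \calO_F$, and the target is a free $\ZZ_p$-module of rank $d = [F : \QQ_p]$. Since $U^{(n)}$ has finite index in $U^{(1)}$, the group $U^{(1)}$ is a finitely generated $\ZZ_p$-module. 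By the structure theorem for finitely generated modules over the PID $\ZZ_p$, we get $U^{(1)} \simeq T \times \ZZ_p^r$ for some finite $p$-group $T$ and some $r \geq 0$. Comparing ranks (tensoring with $\QQ_p$) with the isomorphism $U^{(n)} \simeq \ZZ_p^d$ forces $r = d$, and identifying $T$ with $\mu_{p^\infty}(F)$ is immediate since $T$ is precisely the torsion subgroup of $U^{(1)}$, which consists of all $p$-power roots of unity in $F$ (the only torsion in $\calO_F^\times$ outside the Teichm\"uller part).

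The main obstacle, though it is standard, is establishing the $p$-adic logarithm isomorphism at the right level $U^{(n)}$ and ensuring convergence/injectivity bounds; once that is in place, the remainder is formal from the structure theorem. The only subtlety worth flagging is that the splitting $U^{(1)} \simeq \mu_{p^\infty}(F) \times \ZZ_p^d$ is not canonical — one must choose a complement to the torsion subgroup — but this is harmless for the purely abstract isomorphism asserted by the lemma.
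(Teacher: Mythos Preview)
Your proof is correct and is precisely the standard argument for this structure theorem. The paper does not give its own proof; it simply cites a textbook reference (Neukirch, \emph{Algebraic Number Theory}, Chapter II.5), and the argument found there is exactly the one you outline: split off the valuation and the Teichm\"uller part, then use the $p$-adic logarithm to identify a finite-index subgroup of the principal units with $\ZZ_p^d$, and finish with the structure theorem for finitely generated $\ZZ_p$-modules.
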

\begin{proof}
See \cite[Chap.2.5]{MR1697859}.
\end{proof} 
\begin{lemma}
Let $E$ be a finite unramified extension of $F$, and $s=\Frob_F\in W_F$. Then $E$ is a cyclic extension over $F$, and 
\begin{equation*}
W_F/W_E\simeq\langle\overline{s}\rangle,
\end{equation*}
where $\overline{s}$ is the image of $s$ in $\Gal(E/F)$. Moreover, the following diagram
\begin{equation*}
\begin{CD}
W_E @>{Ad(s)}>> W_E\\
@VV{r_E}V @VV{r_E}V\\
E^\times @>{\overline{s}}>> E^\times
\end{CD}
\end{equation*}
commutes, where $r_E$ is the reciprocity law homomorphism of class field theory.
\end{lemma}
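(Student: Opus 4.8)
The plan is to prove the two assertions separately: the group-theoretic identification of $W_F/W_E$, and the compatibility of the reciprocity map with conjugation by $s$.

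The first assertion is classical. An unramified extension $E/F$ of degree $n$ is unique up to isomorphism, and reduction modulo the maximal ideal gives an isomorphism $\Gal(E/F)\xrightarrow{\sim}\Gal(k_E/k)$, the latter being cyclic of order $n$ generated by the residue Frobenius; hence $E/F$ is cyclic with $\Gal(E/F)=\langle\overline{s}\rangle$, where $\overline{s}$ is the image of $s=\mathrm{Frob}_F$. Since $E/F$ is Galois, $W_E$ is normal in $W_F$: with a fixed separable closure $\overline{F}$ one has $W_E=W_F\cap\Gal(\overline{F}/E)$, so $W_E$ is the kernel of the composite $W_F\hookrightarrow\Gal(\overline{F}/F)\twoheadrightarrow\Gal(E/F)$. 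This composite is surjective because already $s\mapsto\overline{s}$ generates $\Gal(E/F)$, and this yields $W_F/W_E\simeq\langle\overline{s}\rangle$.

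For the commutative square, the key input is the functoriality of the local reciprocity map under automorphisms of the base field. Fix the separable closure $\overline{F}$ and the Frobenius lift $s\in W_F$ once and for all; since $E/F$ is Galois, $s$ stabilizes $E$ with $s|_E=\overline{s}$, and conjugation by $s$ preserves $W_E$. The general compatibility statement of local class field theory — the Artin map commutes with isomorphisms of local fields, applied here to $s|_E\colon E\to E$ together with its chosen extension $s$ to $\overline{F}$ — gives
\[
  r_E\big(s\,w\,s^{-1}\big)=\overline{s}\big(r_E(w)\big)\qquad\text{for all }w\in W_E,
\]
which is exactly the commutativity of the diagram. Taking $s$ as above, the two vertical maps and the two horizontal maps are all induced by the single automorphism $s$ of $\overline{F}$, so no coherence issue remains.

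I expect the only real work to be bookkeeping: one must check that the functoriality statement, usually phrased for the Artin reciprocity map $E^\times\to\Gal(E^{\mathrm{ab}}/E)$, transports correctly to the Weil-group normalization $r_E\colon W_E\to W_E^{\mathrm{ab}}\simeq E^\times$ used here, and that "conjugation by $s$" and "the action of $\overline{s}$ on $E^\times$" are genuinely induced by one and the same automorphism of $\overline{F}$. This compatibility can be extracted either from the uniqueness characterization of $r_E$ by its norm/functoriality axioms, or concretely by evaluating both sides on uniformizers and on units via the Lubin--Tate description of $r_E$; neither route presents a genuine difficulty.
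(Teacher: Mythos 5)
The paper does not give a proof of this lemma; it simply cites \cite[Page~4]{MR546607} (Tate's \emph{Number theoretic background}). Your proposal supplies the standard argument that that reference contains, and it is correct. The first half (cyclicity of unramified extensions, $W_E$ normal in $W_F$ with quotient $\Gal(E/F)$ generated by the image of Frobenius) is straightforward and complete. For the square, you correctly identify the key point: functoriality of the local reciprocity map under isomorphisms of local fields, applied to the automorphism $\overline{s}=s|_E$ of $E$ together with its chosen lift $s$ to $\overline{F}$, gives $r_E(s w s^{-1}) = \overline{s}(r_E(w))$, which is the commutativity. Your caveat about reconciling the $W_E\to E^\times$ normalization with the usual $E^\times\to\Gal(E^{\mathrm{ab}}/E)$ direction is a reasonable bookkeeping note, but it is not a genuine obstruction: the functoriality-under-conjugation statement holds on both sides of the isomorphism $W_E^{\mathrm{ab}}\simeq E^\times$ and in either Frobenius normalization, since the isomorphism itself is natural. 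In short, you have reconstructed the proof the paper delegates to Tate, and it is sound.
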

\begin{proof}
See \cite[p.4]{MR546607}.
\end{proof}
\begin{theorem}
For any positive integer $n$, there exists infinitely many symplectic (resp. orthogonal) irreducible representations of $W_F$ of dimension $2n$.
\end{theorem}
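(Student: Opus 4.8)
The plan is to construct the required representations by inducing characters from the unique unramified extension $E/F$ of degree $2n$. Writing $\Gamma=\Gal(E/F)=\langle s\rangle$ with $s=\mathrm{Frob}_F$, the lemma on unramified extensions above tells us, via its commuting square with the reciprocity law, that conjugation by a lift of $s$ acts on $W_E^{\mathrm{ab}}\cong E^\times$ as the Frobenius automorphism; hence $\Gamma$ acts on the character group of $W_E$ through the Frobenius of $E^\times$, and irreducibility and self-duality of an induced representation become explicit conditions on the inducing character. Let $E^+\subset E$ be the degree-$n$ unramified subextension, i.e.\ the fixed field of $s^n$, so $E/E^+$ is quadratic with $\Gal(E/E^+)=\langle s^n\rangle$, and for a character $\chi$ of $E^\times$ put $\rho_\chi=\Ind_{W_E}^{W_F}\chi$, a $2n$-dimensional representation of $W_F$.

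I would then impose two conditions on $\chi$. First, $\Gamma$-regularity ($\chi^{s^i}\neq\chi$ for $0<i<2n$): by Mackey's irreducibility criterion this is precisely what makes $\rho_\chi$ irreducible, necessarily of dimension $2n$. Second, $\chi^{s^n}=\chi^{-1}$: then $\rho_\chi^\vee\cong\Ind_{W_E}^{W_F}\chi^{-1}=\Ind_{W_E}^{W_F}\chi^{s^n}\cong\rho_\chi$, so $\rho_\chi$ is self-dual. This second condition says exactly that $\chi$ is trivial on $\Nm_{E/E^+}(E^\times)$, which (as $E/E^+$ is unramified quadratic) equals $\cO_{E^+}^\times\cdot\varpi^{2\ZZ}$ for a uniformizer $\varpi$; the characters obeying it therefore form (a coset of) the character group of $E^\times/\Nm_{E/E^+}(E^\times)\cong(\ZZ/2\ZZ)\times(\cO_E^\times/\cO_{E^+}^\times)$, an infinite discrete group. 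For the Frobenius--Schur type I would run the usual doubling argument: fixing a Frobenius lift $s\in W_F$ and the basis $v_i=\rho_\chi(s^i)v_0$ $(0\le i<2n)$ of the $\chi^{s^i}$-eigenlines, the (unique up to scalar) $W_F$-invariant bilinear form on $\rho_\chi$ has Gram matrix $\bigl(\begin{smallmatrix}0&I_n\\ cI_n&0\end{smallmatrix}\bigr)$ in this basis, with $c=\chi(s^{2n})$; the relation $\chi^{s^n}=\chi^{-1}$ forces $c^2=1$, and $\rho_\chi$ is orthogonal if $c=1$ and symplectic if $c=-1$. Since $s^{2n}\in W_E$ corresponds under reciprocity to an element of $E^\times$ of odd valuation, its class in $E^\times/\Nm_{E/E^+}(E^\times)$ is nontrivial, so the condition $\chi^{s^n}=\chi^{-1}$ does not pin down $c$, and I may prescribe $c=1$ or $c=-1$ to obtain orthogonal resp.\ symplectic representations --- each time with an infinite remaining supply of admissible $\chi$, since only the $\ZZ/2\ZZ$-component of $\chi$ gets constrained.

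The step I expect to be the main obstacle is checking that infinitely many $\chi$ satisfy regularity as well. Regularity can fail at a given $i$ only when $\chi$ lies in a coset of the subgroup $K_i$ of characters that are in addition trivial on the image of $(1-s^i)E^\times$; a dimension count using that the $\Gamma$-representation $E^\times\otimes\overline{\QQ}$ supports every character of $\Gamma$ (which follows from $\cO_E\cong\cO_F[\Gamma]$ for the unramified extension $E/F$) shows that $K_i$ has infinite index for every $i$ with $0<i<2n$. By B.~H.~Neumann's lemma a group is never the union of finitely many cosets of infinite-index subgroups, and in fact the complement of such a union remains infinite; hence infinitely many $\chi$ meet all the requirements at once. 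Finally, two such characters produce isomorphic $\rho_\chi$ only if they differ by a $\Gamma$-conjugation or by inversion --- a finite ambiguity --- so we obtain infinitely many pairwise non-isomorphic irreducible representations of $W_F$ of dimension $2n$ in each of the two self-dual types, which completes the proof.
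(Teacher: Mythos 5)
Your proof is correct and is essentially the same as the paper's: both induce a character $\chi$ from the Weil group of the unramified extension $E/F$ of degree $2n$, both determine symplectic versus orthogonal type from the restriction of $\chi$ to the index-two subextension $E^+$ (equivalently, from $\chi(s^{2n})$), and both extract the infinite supply of admissible $\chi$ from the $\ZZ_p$-part of $E^\times/\Nm_{E/E^+}(E^\times)$. The only real difference is presentational: the paper fixes an explicit character $X_\zeta$ on $k_E^\times/k_{E_0}^\times$ to guarantee $\Gamma$-regularity and then lets a character of the $\ZZ_p^{dn}$-factor vary, whereas you establish regularity non-constructively by showing each fixed-point subgroup $S_i$ has infinite index and invoking B.~H.~Neumann's lemma; you also spell out the Frobenius--Schur type via the Gram matrix $\bigl(\begin{smallmatrix}0&I_n\\ cI_n&0\end{smallmatrix}\bigr)$ with $c=\chi(s^{2n})$, a computation the paper leaves implicit.
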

\begin{proof}
Let $E_0$ be the unique unramified extension of $F$ of degree $n$, and $E$ be the unique unramified quadratic extension of $E_0$. Then $E/F$ is also unramified. We have
\begin{equation*}
E^\times/\Nm_{E/E_0}(E^\times)\simeq\left(\ZZ/2\ZZ\right)\times\left(k_E^\times/k_{E_0}^\times\right)\times\big(\mu_{p^\infty}(E)/\mu_{p^\infty}(E_0)\big)\times\left(\ZZ_p^{dn}\times T\right),
\end{equation*}
where $T$ is some finite torsion group. Fix a primitive element $x$ in $k_E^\times$, and a primitive $(q^n+1)$-th root of unity $\zeta$. Let
\begin{align*}
X_\zeta:k_E^\times/k_{E_0}^\times&\lra\CC^\times\\
\overline{x}&\longmapsto\zeta.
\end{align*}
Then for any character $\chi$ of $\ZZ_p^{dn}\times T$,
\begin{equation*}
\sgn\boxtimes X_\zeta\boxtimes\mathbbm{1}\boxtimes\chi\quad\quad(\textit{resp. } \mathbbm{1}\boxtimes X_\zeta\boxtimes\mathbbm{1}\boxtimes\chi)
\end{equation*}
gives a character $\wt\chi$ of $E^\times$, which satisfies:
\begin{enumerate}
\item $\wt\chi|_{E_0^\times}=\omega_{E/E_0}$ (resp. $\mathbbm{1}$);
\item for any $1\leq i<2n$, we have $\wt\chi^{\overline{s}^{i}}\neq\wt\chi$.
\end{enumerate}
Therefore $\Ind_{W_E}^{W_F}\wt\chi$ is symplectic (resp. orthogonal) and irreducible of dimension $2n$. This construction gives an injection
\begin{equation*}
\Big(\widehat{\ZZ_p^{dn}}/\sim\Big)\lra\Irr^{S,2n}(F).~~(\textit{resp. }\Irr^{O,2n}(F))
\end{equation*}
Here we regard a character of $\ZZ_p^{dn}$ as a character of $\ZZ_p^{dn}\times T$ which is trivial on the torsion group $T$, and define $\chi_1\sim\chi_2$ if there is some $1\leq i\leq2n$, such that $\wt\chi_1^{\overline{s}^{i}}=\wt\chi_2$. Since $\widehat{\ZZ_p^{dn}}/\sim$ is an infinite set, we are done.
\end{proof}

Now we consider irreducible self-dual representations of $W_F$ with arbitrary dimension.
\begin{proposition}
Suppose $F$ is a finite extension of $\QQ_p$ and $p\neq2$. Then there is no irreducible self-dual representation of $W_F$ with odd dimension greater than $1$.
\end{proposition}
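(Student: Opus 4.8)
The plan is to assume $\rho$ is an irreducible self-dual representation of $W_F$ of odd dimension $n>1$ and to derive a contradiction by analyzing $\rho$ through its restriction to the wild inertia subgroup $P_F\subset W_F$, reducing everything to the tamely ramified case.

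First I would record the standard structural facts. Since $\rho$ is a continuous homomorphism into $\GL_n(\CC)$, its restriction to the profinite group $I_F$, hence to $P_F$, has open kernel, so $\rho|_{P_F}$ factors through a finite $p$-group $Q$. Decompose $\rho|_{P_F}=\bigoplus_{i=1}^{s}\rho_i$ into $P_F$-isotypic components, $\rho_i=m\,\tau_i$ with the $\tau_i$ pairwise non-isomorphic irreducible representations of $Q$. By Clifford theory, irreducibility of $\rho$ forces $W_F$ to permute the homogeneous components transitively, so all $\tau_i$ have a common dimension $p^{a}$ and a common multiplicity $m$; hence
\[
n=s\cdot m\cdot p^{a}.
\]
Self-duality gives $\rho^{\vee}|_{P_F}\cong\rho|_{P_F}$, so $i\mapsto i^{*}$, defined by $\tau_{i^{*}}\cong\tau_i^{\vee}$, is an involution of $\{1,\dots,s\}$.

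The heart of the argument is a dichotomy on this involution. If it is fixed-point free, then $s$ is even, so $n=s\,m\,p^{a}$ is even, contradicting $n$ odd. If it has a fixed point $i_0$, then $\tau_{i_0}$ is a self-dual irreducible representation of the odd-order group $Q$; but for a group of odd order the only self-dual (indeed, the only real-valued) irreducible representation is the trivial one, for instance because the squaring map on $Q$ is a bijection, so the Frobenius--Schur indicator of every non-trivial irreducible representation vanishes. Hence $\tau_{i_0}$ is trivial, and since the $W_F$-orbit of $\tau_{i_0}$ is just $\{\tau_{i_0}\}$ we get $s=1$ and $\rho|_{P_F}$ trivial, i.e. $\rho$ is tamely ramified. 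Then $\rho$ factors through $W_F^{t}=\langle\sigma\rangle\rtimes\langle\phi\rangle$ with $\phi\sigma\phi^{-1}=\sigma^{q}$, where $q$ is the residue cardinality, a power of $p$ hence odd. Restricting the irreducible $\rho$ to the abelian normal subgroup $\langle\sigma\rangle$, it is a sum of characters forming a single $\langle\phi\rangle$-orbit $\{\chi,\chi^{q},\dots,\chi^{q^{n-1}}\}$ of size exactly $n$; self-duality forces $\chi^{-1}=\chi^{q^{j}}$ for some $0\le j<n$, whence $n\mid 2j$, so $j=0$ (the alternative $2j=n$ is excluded because $n$ is odd), i.e. $\chi^{2}=1$. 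But then $\chi^{q}=\chi$ since $q$ is odd, so the orbit is a single point and $n=1$, contradicting $n>1$.

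I expect the main obstacle to be organizational rather than technical: one must carry out the reduction to the tame case cleanly via the isotypic decomposition over $P_F$ together with the transitivity of the $W_F$-action (so that a single trivial $\tau_{i_0}$ forces all of $\rho|_{P_F}$ to be trivial), and be explicit about where the hypothesis $p\neq 2$ is used — precisely in two places, namely to make $Q$ of odd order (ruling out non-trivial self-dual $\tau_i$) and to make $q$ odd (collapsing the Frobenius orbit in the tame case). The identity $n=s\,m\,p^{a}$, the odd-order representation-theory lemma, and the tame orbit computation are each short, and the argument uses nothing beyond Clifford theory and the standard structure of $W_F$.
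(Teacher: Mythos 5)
The paper gives no proof of its own: it simply cites \cite{MR1670568}, Proposition~4. Your argument is correct and self-contained, and the two places where $p\neq 2$ enters are exactly as you isolate them: the Frobenius--Schur argument on the odd-order $p$-group $Q$ forces the reduction to the tame quotient, and oddness of $q$ collapses the Frobenius orbit there. The only step you use without comment is that the restriction of $\rho$ to tame inertia consists of $n$ \emph{distinct} characters, i.e.\ is multiplicity-free. This is the standard fact that every irreducible of the tame Weil group is induced from a character of an open subgroup containing tame inertia; it is worth recording, since Clifford theory alone gives only $m\,(\chi\oplus\chi^q\oplus\cdots\oplus\chi^{q^{k-1}})$ with $mk=n$ and a single Frobenius orbit. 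Even without it your argument survives: $k\mid n$ forces $k$ odd, the same orbit computation gives $\chi^{2}=1$ and then $k=1$ since $q$ is odd; twisting $\rho$ by any extension of $\chi$ to the tame Weil group produces an irreducible representation trivial on tame inertia, hence a character of $\ZZ$, giving $n=1$.
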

\begin{proof}
See \cite[Prop.4]{MR1670568}.
\end{proof}
We now assume that $F$ is a finite extension of $\QQ_2$, with the residue field $k_F$. Let $d$ be the degree of $F$ over $\QQ_2$, and $d_u$ be the degree of $k_F$ over $\FF_2$.
\begin{theorem}
Let $N$ be an arbitrary positive integer. Suppose 
\begin{equation*}
2^d>N.
\end{equation*}
Then, we have
\begin{equation*}
|\Irr^{\star,1}(F)|>N.
\end{equation*}
\end{theorem}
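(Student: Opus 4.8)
The plan is to translate the statement into a count of square classes in $F^\times$ and then read off the answer from the known structure of $F^\times$. By local class field theory the reciprocity homomorphism induces an isomorphism $W_F^{\mathrm{ab}}\simeq F^\times$, so a one-dimensional representation of $W_F$ is the same data as a continuous character of $F^\times$, and such a character is self-dual (equal to its own contragredient) precisely when its square is trivial. Hence $\Irr^{\star,1}(F)$ is in bijection with $\Hom\big(F^\times/(F^\times)^2,\{\pm1\}\big)$; since $F^\times/(F^\times)^2$ is a finite elementary abelian $2$-group, every such homomorphism already lands in $\{\pm1\}$ and Pontryagin duality gives
\[
|\Irr^{\star,1}(F)| = |F^\times/(F^\times)^2|.
\]
(The trivial character is counted on the left; this is harmless for the inequality, and one could exclude it at the cost of subtracting $1$. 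Alternatively one may work directly with the reciprocity map $r_F$ as in Appendix~\ref{self.dual.Gal.rep}.)

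Next I would compute $|F^\times/(F^\times)^2|$ using the structure lemma recalled above, specialized to $p=2$: it gives
\[
F^\times \simeq \pi^{\ZZ}\times k_F^\times\times\mu_{2^\infty}(F)\times\ZZ_2^{d},
\]
where $k_F$ is the residue field of $F$ and $d=[F:\QQ_2]$. Reducing modulo squares factor by factor: $\pi^{\ZZ}$ contributes $\ZZ/2\ZZ$; the group $k_F^\times$ has order $2^{d_u}-1$, which is odd, so it contributes nothing; $\mu_{2^\infty}(F)$ is a nontrivial finite cyclic $2$-group (it contains $-1$, since $\QQ_2\subset F$), so it contributes $\ZZ/2\ZZ$; and $\ZZ_2^{d}$ contributes $(\ZZ/2\ZZ)^{d}$. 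Therefore
\[
F^\times/(F^\times)^2 \simeq (\ZZ/2\ZZ)^{d+2},\qquad |F^\times/(F^\times)^2| = 2^{d+2}.
\]
Combining the two displays yields $|\Irr^{\star,1}(F)| = 2^{d+2} > 2^{d} > N$ by hypothesis, which is exactly the claim.

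There is no serious obstacle here; the statement is essentially the classical local squares theorem for $2$-adic fields repackaged through class field theory. The only points that require a little care are the identification in the first paragraph (making sure one is counting characters of $W_F$, not of the full Galois group, and that self-duality is the order-dividing-$2$ condition) and the observation that $k_F^\times$ contributes trivially because its order is odd — this is precisely the feature that fails in odd residue characteristic (Proposition above, where $|\Irr^{\star,1}|$ stays bounded) and explains why an unbounded supply of self-dual characters exists only when $p=2$.
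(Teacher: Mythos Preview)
Your proof is correct and follows essentially the same approach as the paper: both compute $F^\times/(F^\times)^2$ from the structure lemma and obtain a group of order $2^{d+2}>2^d>N$. Your exposition is more detailed (making explicit the class field theory step and the self-duality $\Leftrightarrow$ order $\le 2$ identification), but the argument is the same.

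One small correction to your final paragraph: the feature that distinguishes $p=2$ from odd $p$ is not the contribution of $k_F^\times$ but that of $\ZZ_p^d$. For odd $p$, the residue field $k_F^\times$ has even order and \emph{does} contribute a factor of $\ZZ/2\ZZ$, while $\ZZ_p^d/2\ZZ_p^d=0$; for $p=2$ it is the reverse. Also, the Proposition you cite concerns odd dimensions $>1$, not dimension $1$. This does not affect the proof itself.
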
 
\begin{proof}
Just notice that
\begin{equation*}
F^\times/(F^\times)^2\simeq \left(\ZZ/2\ZZ\right)\times\Big(\mu_{2^\infty}(F)/\left(\mu_{2^\infty}(F)\right)^2\Big)\times\left(\ZZ/2\ZZ\right)^d.
\end{equation*}
\end{proof}
\begin{theorem}\label{App.B.6}
Fix a positive integer $n$. Let $N$ be an arbitrary positive integer. Suppose 
\begin{equation*}
\frac{2^{d_u}-1}{n}>N.
\end{equation*}
Then, we have
\begin{equation*}
|\Irr^{\star,n}(F)|>N.
\end{equation*}
\end{theorem}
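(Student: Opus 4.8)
The plan is to produce the required representations as induced representations $\Ind_{W_E}^{W_F}\chi$, where $E/F$ is the unramified extension of degree $n$ and $\chi$ runs through a suitable family of characters of $E^\times$, in the same spirit as the earlier theorems of this appendix. Recall that $E/F$ is cyclic, $\Gal(E/F)=\langle\overline s\rangle$ with $\overline s=\mathrm{Frob}_F$, that $\overline s$ acts on $k_E^\times$ by $x\mapsto x^q$ with $q=|k_F|=2^{d_u}$, and that via the reciprocity law this matches the conjugation action on $W_E$. By Mackey theory $\Ind_{W_E}^{W_F}\chi$ is irreducible exactly when $\chi$ is in \emph{general position}, i.e.\ $\chi^{\overline s^{\,j}}\neq\chi$ for $1\le j\le n-1$; two such induced representations are isomorphic precisely when the inducing characters are $\Gal(E/F)$-conjugate; and $\big(\Ind_{W_E}^{W_F}\chi\big)^{\vee}=\Ind_{W_E}^{W_F}\chi^{-1}$, so $\Ind_{W_E}^{W_F}\chi$ is self-dual as soon as $\chi^{-1}$ lies in the $\Gal(E/F)$-orbit of $\chi$. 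Thus everything comes down to exhibiting enough general-position characters for which self-duality is automatic.

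For $n$ even I would work with the intermediate field $E_1\subset E$ with $[E:E_1]=2$, so that $\sigma:=\overline s^{\,n/2}$ generates $\Gal(E/E_1)$. If $\chi$ is trivial on $E_1^\times$ then $\chi(x)\chi^{\sigma}(x)=\chi\big(x\,\sigma(x)\big)=\chi\big(\Nm_{E/E_1}(x)\big)=1$, hence $\chi^{\sigma}=\chi^{-1}$ and $\Ind_{W_E}^{W_F}\chi$ is automatically self-dual. Restricting to tamely ramified such $\chi$, these are precisely the characters of the cyclic group $k_E^\times/k_{E_1}^\times$, whose order is $(q^{n}-1)/(q^{n/2}-1)=q^{n/2}+1$ and on whose character group $\ZZ/(q^{n/2}+1)$ the Frobenius acts by multiplication by $q$. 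One checks that $q$ has order exactly $n$ modulo $q^{n/2}+1$, so the character attached to $c\in\ZZ/(q^{n/2}+1)$ fails to be in general position only when the order of $q$ modulo $(q^{n/2}+1)/\gcd(c,q^{n/2}+1)$ drops below $n$; an elementary estimate bounds the number of such exceptional $c$ by $\sum_{e\mid n,\ e<n}\gcd(q^{e}-1,\,q^{n/2}+1)$, which is negligible next to $q^{n/2}$. Hence at least $2^{d_u}-1$ values of $c$ survive, and since the $\Gal(E/F)$-orbits all have size $n$ this yields more than $\tfrac{2^{d_u}-1}{n}$, hence more than $N$, pairwise non-isomorphic irreducible self-dual representations of $W_F$ of dimension $n$.

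For $n$ odd the tame construction above cannot work, since $\gcd(q^{j}+1,\,q^{n}-1)=1$ for every $j$, so no tamely ramified character of $E^\times$ can satisfy $\chi^{-1}=\chi^{\overline s^{\,j}}$ nontrivially; instead I would take $\chi$ to be a \emph{quadratic} character of $E^\times$, which equals its own inverse and therefore gives a self-dual (necessarily orthogonal) $\Ind_{W_E}^{W_F}\chi$, irreducible whenever $\chi$ is in general position. The supply of quadratic characters is $|E^\times/(E^\times)^{2}|=2^{[E:\QQ_2]+2}$, and $\Gal(E/F)$ acts $\FF_2$-linearly on this group; bounding the dimensions of the subspaces fixed by the nontrivial powers of $\overline s$ shows that all but a vanishing proportion lie in general position, so the number of resulting representations already exceeds $2^{[E:\QQ_2]+2}/n\gg 2^{d_u}-1$ because $[E:\QQ_2]=n\,[F:\QQ_2]\ge n\,d_u$. (When $n=1$ there is nothing to induce: the statement is just $|F^\times/(F^\times)^{2}|=2^{[F:\QQ_2]+2}>2^{d_u}-1$.)

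The main obstacle I anticipate is not the construction but the bookkeeping needed to extract the precise bound: one must check that, after imposing the self-duality constraint (triviality on $E_1^\times$ when $n$ is even, or being quadratic when $n$ is odd), at least $2^{d_u}-1$ characters remain in general position, and that distinct $\Gal(E/F)$-orbits genuinely give non-isomorphic representations, so that the final count is at least $\tfrac{2^{d_u}-1}{n}$. This rests only on elementary facts about multiplicative orders of $q$ modulo divisors of $q^{n/2}\pm1$ (for $n$ even) and on a Frobenius-fixed-subspace count in $E^\times/(E^\times)^2$ (for $n$ odd), but it is the step where the stated inequality is actually used.
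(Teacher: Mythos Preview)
Your approach differs substantially from the paper's, which is both simpler and uniform in $n$ (no even/odd split). The paper also takes $E/F$ unramified of degree $n$, but builds the characters wildly: it uses characters of $E^\times$ factoring through the projection $E^\times\to U_E^1/U_E^2\simeq k_E$ (the \emph{additive} group). Since $p=2$, every character of $k_E$ is automatically quadratic, so self-duality of $\Ind_{W_E}^{W_F}\widetilde\chi$ is free. The decisive trick is the normal basis theorem: fixing $x\in k_E$ with $\{\overline s^{\,i}(x)\}_{0\le i<n}$ a $k_F$-basis of $k_E$ identifies $\widehat{k_E}$ with $n$-tuples $(\chi_0,\dots,\chi_{n-1})\in(\widehat{k_F})^n$, on which $\overline s$ acts by cyclic permutation. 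Then the tuple $(\chi,\mathbbm 1,\dots,\mathbbm 1)$ with $\chi\neq\mathbbm 1$ is \emph{automatically} in general position, so one gets an injection $(\widehat{k_F}\setminus\{\mathbbm 1\})/\!\sim\;\hookrightarrow\Irr^{\star,n}(F)$ and the bound $(2^{d_u}-1)/n$ drops out immediately, with no estimates whatsoever.

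Your odd-$n$ argument is close to this in spirit (you also use quadratic characters), but by working with all of $E^\times/(E^\times)^2$ rather than the normal-basis subfamily you are forced to bound the union of the fixed subspaces of the $\overline s^{\,j}$; you assert this is a ``vanishing proportion'' but do not carry it out, and matching the exact constant $(2^{d_u}-1)/n$ from such a crude union bound is not automatic. Your even-$n$ construction via tame characters of $k_E^\times/k_{E_1}^\times$ is a genuinely different route; there the gap is the sentence ``Hence at least $2^{d_u}-1$ values of $c$ survive,'' which rests on an unproved estimate for $\sum_{e\mid n,\,e<n}\gcd(q^e-1,q^{n/2}+1)$. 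Both gaps are probably closable, but the paper's construction is designed precisely so that no such estimates are needed: general position is guaranteed by the shape $(\chi,\mathbbm 1,\dots,\mathbbm 1)$ of the inducing character.
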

\begin{proof}
Let $E$ be the unique unramified extension of $F$ of degree $n$. Then 
\begin{equation*}
E^\times\simeq\pi^\ZZ\times k_E^\times\times U_E^1,
\end{equation*}
and 
\begin{equation*}
U_E^1/U_E^2\simeq k_E,
\end{equation*}
where
\begin{equation*}
U_E^{i}=1+\pi^{i}\calO_E,
\end{equation*}
and $\calO_E$ is the ring of integers of $E$. Fix $x\in k_E$ such that $\{x,\overline{s}(x),\overline{s}^2(x),\cdots\}$ is a basis of $k_E$ over $k_F$. Then
\begin{equation*}
k_E=k\cdot x+k\cdot\overline{s}(x)+\cdots+k\cdot\overline{s}^{n-1}(x),
\end{equation*}
and the Pontryagin dual of $k_E$ can be identified with the set of $n$-tuples of characters of $k_F$ by
\begin{equation*}
(\chi_1,\cdots,\chi_n)\longmapsto\Big(\lambda\cdot\overline{s}^{i}(x)\mapsto\chi_i(\lambda)~,~\lambda\in k\Big).
\end{equation*}
Under this identification, $\overline{s}$ acts on the Pontryagin dual of $k_E$ by
\begin{equation*}
\overline{s}:(\chi_1,\chi_2,\cdots,\chi_n)\longmapsto(\chi_n,\chi_1,\cdots,\chi_{n-1}).
\end{equation*}
We have injectives maps
\begin{equation*}
\wh {k_F}\backslash\{\mathbbm{1}\}\lra\wh {k_E}\lra \wh{E^\times},
\end{equation*}
where the first map is given by
\begin{equation*}
\chi\mapsto(\chi,\mathbbm{1},\cdots,\mathbbm{1}),
\end{equation*}
and the second map is induced by the natural projection
\begin{equation*}
E^\times\lra U_E^1\lra U_E^1/U_E^2\simeq k_E.
\end{equation*}
We denote the image of $\chi$ in $\wh{E^\times}$ by $\wt\chi$. By our construction, $\wt\chi$ satisfies:
\begin{enumerate}
\item $\wt\chi$ is quadratic;
\item for any $1\leq i<n$, we have $\wt\chi^{\overline{s}^{i}}\neq\wt\chi$.
\end{enumerate}
Therefore $\Ind_{W_E}^{W_F}\wt\chi$ is self-dual and irreducible of dimension $n$. This construction gives an injection
\begin{equation*}
\Big(\widehat{k_F}\backslash\{\mathbbm{1}\}\Big)/\sim~\lra\Irr^{\star,n}(F).
\end{equation*}
Here we define $\chi_1\sim\chi_2$ if there is some $1\leq i\leq n$, such that $\wt\chi_1^{\overline{s}^{i}}=\wt\chi_2$. Notice that there are at most $n$ elements in each equivalence class. Hence the LHS of this injection has at least
\begin{equation*}
\frac{2^{d_u}-1}{n}
\end{equation*} 
elements. By our assumption, we are done.
\end{proof} 

\section{On irreducible conjugate self-dual Galois representations}\label{conj.self.dual.gal.rep}
In this appendix, we consider irreducible conjugate self-dual representations of the Weil group of a local field. The results in this appendix will serve as some supplements to our proof of Corollary \ref{Globalization-2} in Case U.

Let $F$ be a non-Archimedean local field with characteristic $0$, and $E$ be a quadratic field extension of $F$. Let $W_F$ and $W_E$ be the Weil group of $F$ and $E$ respectively. We fix an element $s\in W_F\backslash W_E$. We use $\Irr_F^{S,d}(E)$ (resp. $\Irr_F^{O,d}(E)$) to denote the irreducible conjugate symplectic (resp. conjugate orthogonal) representation of $W_E$ of dimension $d$.

\begin{lemma}
Suppose that:
\begin{enumerate}
\item $F$ is a finite extension of $\QQ_p$ and $p\neq2$;
\item $E/F$ is ramified.
\end{enumerate}
Then for any positive integer $n$, $E_n\coloneqq E\otimes_F F_n$ is a ramified quadratic field extension of $F_n$, where $F_n$ is the unique degree $n$ unramified extension of $F$.
\end{lemma}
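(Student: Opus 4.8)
The plan is to reduce the statement to the elementary fact that unramified base change preserves valuations, combined with the explicit classification of quadratic extensions in residue characteristic $\neq 2$.

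First I would recall that, since $p \neq 2$, the group $F^\times/(F^\times)^2$ has order $4$, represented by $\{1, u, \varpi_0, u\varpi_0\}$, where $u$ is a non-square unit and $\varpi_0$ is a uniformizer of $F$; the two ramified quadratic extensions of $F$ are exactly those of the form $F(\sqrt{\varpi})$ with $v_F(\varpi)$ odd. Hence I may write $E = F(\sqrt{\varpi})$ for some $\varpi \in F^\times$ of odd valuation, and then $E_n = E \otimes_F F_n \cong F_n(\sqrt{\varpi})$ as an $F_n$-algebra. Since $F_n/F$ is unramified, its ramification index is $1$, so the normalized valuation $v_{F_n}$ agrees with $v_F$ on $F^\times$; in particular $v_{F_n}(\varpi)$ is odd. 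Therefore $\varpi$ is not a square in $F_n$, so $F_n(\sqrt{\varpi})$ is a field, i.e. a quadratic extension of $F_n$, and the odd parity of $v_{F_n}(\varpi)$ shows this extension has ramification index $2$, i.e. is totally ramified. This establishes the claim.

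Alternatively, and perhaps more transparently, I would argue via multiplicativity of ramification indices and residue degrees: working inside a fixed algebraic closure of $F$, the degree $[EF_n : F]$ is divisible by $e(E/F) = 2$ and by $f(F_n/F) = n$, so $[EF_n:F] \geq 2n$; since the natural surjection $E \otimes_F F_n \twoheadrightarrow EF_n$ has source of $F$-dimension exactly $2n$, it must be an isomorphism, whence $E_n$ is a field with $e(E_n/F) = 2$ and $f(E_n/F) = n$. Then $e(E_n/F_n) = e(E_n/F)/e(F_n/F) = 2$, so $E_n/F_n$ is ramified of degree two. I do not expect a genuine obstacle here; the only point deserving a line of justification is the linear disjointness over $F$ of the ramified extension $E$ and the unramified extension $F_n$, which the degree count above supplies, and the hypothesis $p \neq 2$ is needed only to describe ramified quadratic extensions as square roots of odd-valuation elements in the first argument.
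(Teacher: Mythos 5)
Your first argument is essentially the proof given in the paper: there, one picks a uniformizer $\pi$ of $E$ with $\pi^2$ a uniformizer of $F$, writes $E\simeq F[x]/(x^2-\pi^2)$, and notes that $\pi^2$ remains a uniformizer of $F_n$ so that $x^2-\pi^2$ stays irreducible over $F_n$; that is precisely your odd-valuation argument with $\varpi=\pi^2$. Your second argument, counting degrees to show that the compositum $EF_n$ has degree $2n$ over $F$ and hence that the natural surjection $E\otimes_F F_n\to EF_n$ is an isomorphism, is a genuinely different route. Its advantage is that it bypasses the explicit description of ramified quadratic extensions as $F(\sqrt{\varpi})$ and so never invokes $p\neq 2$: ramified and unramified extensions are linearly disjoint over $F$ for purely structural reasons (multiplicativity of $e$ and $f$), and from $e(E_n/F)=2$, $f(E_n/F)=n$, $e(F_n/F)=1$ one reads off $e(E_n/F_n)=2$. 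So the hypothesis $p\neq 2$ in the lemma is present only because the surrounding appendix works with tamely ramified $E/F$; as your second argument shows, the conclusion itself holds in residue characteristic $2$ as well.
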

\begin{proof}
By our assumptions, we can choose an uniformizer $\pi$ of $E$, such that $\pi^2$ is a uniformizer of $F$. Then we have
\[
	E\simeq F[x]/\left(x^2-\pi^2\right).
\]
Notice that for each positive integer $n$, $\pi^2$ is also the uniformizer of $F_n$, hence the polynomial $x^2-\pi^2$ is also irreducible in $F_n$. It then follows that $E_n\coloneqq E\otimes_F F_n$ is a ramified quadratic field extension of $F_n$.
\end{proof}

In the rest of this appendix, we assume that the local fields $F$ and $E$ satisfy the conditions in this lemma. We also retain the notations in the proof of this lemma. Let $\Gamma_n$ be the Galois group of $F_n/F$, and $k_n$ the residue field of both $F_n$ and $E_n$. We denote by $d_{u}$ the degree of $k_1$ over $\FF_p$. In spirit of Theorem \ref{App.B.6}, we prove the following:
\begin{theorem}
Fix a positive integer $n$. Let $N$ be an arbitrary positive integer. Suppose
\[
	\frac{p^{d_u}-1}{n}>N.
\]
Then, for $\star\in\{S,O\}$, we have
\[
	|\Irr_F^{\star,n}|>N.
\]
\end{theorem}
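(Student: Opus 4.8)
The plan is to follow the proof of Theorem~\ref{App.B.6} almost verbatim, the role of the degree-$n$ unramified extension there being played here by $E_n\coloneqq E\otimes_F F_n$, which the preceding lemma identifies as a ramified quadratic extension of $F_n$. First I would record the group theory: $E_n/F$ is Galois with $\Gal(E_n/F)\cong\langle c\rangle\times\Gamma_n$ (where $c$ is the conjugation of $E/F$, extended to fix $F_n$), so $W_{E_n}$ is normal in $W_F$ and in $W_E$, with $W_E/W_{E_n}\cong\Gamma_n\cong\ZZ/n\ZZ$ generated by the Frobenius $\mathrm{Fr}$; moreover $E_n/E$ is unramified with residue field $k_n$. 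Retaining the uniformizer $\pi$ of the lemma's proof (so $\pi^2$ is a uniformizer of $F$, hence of $F_n$, and $\pi$ is a uniformizer of $E_n$), we have $c(\pi)=-\pi$ and $c$ acts trivially on $k_n$ since $E_n/F_n$ is totally ramified; therefore, under the isomorphism $U_{E_n}^1/U_{E_n}^2\cong k_n$, $1+\pi x\mapsto\bar x$, the conjugation $c$ acts by $t\mapsto -t$, which is a nontrivial automorphism of $(k_n,+)$ precisely because $p\neq 2$. The Frobenius acts on this $k_n$ as the usual $q$-power map, $q\coloneqq p^{d_u}=|k_1|$.

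Next I would build the representations. Inflating an additive character $\psi$ of $k_n$ along $E_n^\times\twoheadrightarrow U_{E_n}^1/U_{E_n}^2\cong k_n$ gives a character $\widetilde\psi$ of $E_n^\times$; since $c$ negates $k_n$ and kills the uniformizer and Teichm\"uller parts, one checks at once that $\widetilde\psi^c=\widetilde\psi^{-1}$, i.e.\ $\widetilde\psi$ is conjugate self-dual for $E_n/F_n$, and that $\widetilde\psi|_{F_n^\times}=\mathbbm 1$, i.e.\ $\widetilde\psi$ is conjugate orthogonal. By the normal basis theorem for the cyclic extension $k_n/k_1$, choose $x_0\in k_n$ with $\{x_0,\mathrm{Fr}(x_0),\dots,\mathrm{Fr}^{n-1}(x_0)\}$ a $k_1$-basis; this identifies $\widehat{k_n}$ with $n$-tuples of characters of $k_1$ on which $\mathrm{Fr}$ acts by cyclic shift, exactly as in Theorem~\ref{App.B.6}. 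For $\psi$ corresponding to $(\psi_1,\mathbbm 1,\dots,\mathbbm 1)$ with $\psi_1\neq\mathbbm 1$ we then have $\widetilde\psi^{\mathrm{Fr}^i}\neq\widetilde\psi$ for $1\le i<n$, so $\Ind_{W_{E_n}}^{W_E}\widetilde\psi$ is irreducible of dimension $n$, and it is conjugate self-dual for $E/F$ because $W_{E_n}$ is $c$-stable and $(\widetilde\psi^c)^{-1}=\widetilde\psi$. For the other family I would instead use $\mu=\mu_0\cdot\widetilde\psi$, where $\mu_0=\mu_0'\circ\Nm_{E_n/E}$ for a fixed conjugate symplectic character $\mu_0'$ of $E^\times$ (which exists since $p$ is odd): then $\mu_0$ is $\Gamma_n$-invariant, conjugate self-dual, and $\mu_0|_{F_n^\times}=\omega_{E_n/F_n}$, so $\mu$ is conjugate symplectic and $\Ind_{W_{E_n}}^{W_E}\mu$ is again irreducible of dimension $n$.

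Finally I would pin down the signs and count. Since $\mu_0$ is $\Gamma_n$-invariant, in both cases $\psi_1\mapsto\Ind_{W_{E_n}}^{W_E}(\cdots)$ descends to an injection from $(\widehat{k_1}\setminus\{\mathbbm 1\})/{\sim}$, where $\sim$ is generated by the $\mathrm{Fr}$-shift and has classes of size at most $n$; as $|\widehat{k_1}|-1=p^{d_u}-1$, this yields at least $\tfrac{p^{d_u}-1}{n}>N$ pairwise non-isomorphic irreducible conjugate-self-dual representations of $W_E$ of dimension $n$ in each of the two families. The step I expect to be the main obstacle is to show that the two families have opposite conjugate-self-dual sign and to determine which is conjugate orthogonal and which is conjugate symplectic: this comes down to the multiplicativity of the sign of a conjugate-self-dual representation under $\Ind_{W_{E_n}}^{W_E}$, whose discriminant-type correction term should be trivial because $E_n/E$ is unramified -- the analogue of the parity statement used (without detailed proof) in the earlier results of Appendix~B, and the only ingredient not already contained in the proof of Theorem~\ref{App.B.6}.
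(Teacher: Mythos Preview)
Your proposal is correct and follows essentially the same route as the paper: build characters of $E_n^\times$ from additive characters of $k_n\cong U_{E_n}^1/U_{E_n}^2$ via a normal basis for $k_n/k_1$, induce from $W_{E_n}$ to $W_E$, and count orbits under the cyclic shift. The one genuine difference is in how the two parities are produced. The paper works inside the norm quotient
\[
E_n^\times/\Nm_{E_n/F_n}(E_n^\times)\;\simeq\;(\ZZ/2\ZZ)\times\bigl(k_n^\times/(k_n^\times)^2\bigr)\times\bigl(U_{E_n}^1/U_{F_n}^1\bigr)
\]
and toggles the sign by inserting $\mathbbm 1$ versus $\sgn$ on the middle factor $k_n^\times/(k_n^\times)^2$, whereas you realize the conjugate-orthogonal family directly and then twist by $\mu_0'\circ\Nm_{E_n/E}$ to reach the conjugate-symplectic family. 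Both mechanisms are valid; the paper's is slightly more intrinsic (everything lives on the norm quotient), while yours makes the $\Gamma_n$-invariance of the twisting factor manifest, which is convenient for the irreducibility check. As for the step you single out as the main obstacle --- that $\Ind_{W_{E_n}}^{W_E}$ preserves the conjugate-self-dual sign --- the paper simply asserts this (``Therefore $\Ind_{W_{E_n}}^{W_E}\wt\chi$ is conjugate symplectic (resp.\ conjugate orthogonal)\ldots'') without further comment, so your treatment is already at least as careful; the underlying justification is the comparison of $\Ind_{W_{E_n}}^{W_F}\wt\chi$ computed through $W_E$ versus through $W_{F_n}$, using that ordinary induction preserves the symplectic/orthogonal type of a self-dual representation.
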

\begin{proof}
By the structure theorem of local fields, we have
\[
	E_n^\times\simeq\pi^\ZZ\times k_n^\times\times U_{E_n}^1,
\]
and 
\[
	\Nm_{E_n/F_n}\left(E_n^\times\right)\simeq\pi^{2\ZZ}\times \left(k_n^\times\right)^2\times U_{F_n}^1,
\]
where $U_{E_n}^1$ and $U_{F_n}^1$ are as in the proof of Theorem \ref{App.B.6}. These two isomorphisms are indeed $\Gamma_n$-equivariant. They induce another $\Gamma_n$-equivariant isomorphism
\[
	E_n^\times/\Nm_{E_n/F_n}\left(E_n^\times\right)\simeq \left(\ZZ/2\ZZ\right)\times \left(k_n^\times/\left(k_n^\times\right)^2\right)\times \left(U_{E_n}^1/U_{F_n}^1\right).
\]
Notice that as $\Gamma_n$-modules, $U_{F_n}^1\subset U_{E_n}^2$. Hence we obtain a $\Gamma_n$-equivariant surjection
\[
	U_{E_n}^1/U_{F_n}^1\lra U_{E_n}^1/U_{E_n}^2\simeq k_{E_n}.
\]
By playing the same trick as in the proof of Theorem \ref{App.B.6}, for any non-trivial character $\chi$ of $k_E$, we can produce a character $\wt\chi$ of $E_n^\times$ as follows: firstly we fix some $x\in k_{E_n}$ such that $\{\gamma(x)~|~\gamma\in\Gamma_n\}$ is a basis of $k_{E_n}$ over $k_E$, which allow us to identify the Pontryagin dual of $k_{E_n}$ with the set of $n$-tuples of characters of $k_E$ by
\begin{equation*}
(\chi_\gamma)_{\gamma\in\Gamma_n}\longmapsto\Big(\lambda\cdot\gamma(x)\mapsto\chi_\gamma(\lambda)~,~\lambda\in k_E\Big);
\end{equation*}
then, under this identification, $\chi$ can be regarded as a character of $k_{E_n}$ via
\begin{equation*}
\chi\mapsto(\chi,\mathbbm{1},\cdots,\mathbbm{1}),
\end{equation*}
which we shall still denote by $\chi$; finally we pull back the character 
\[
	\mathbbm{1}\boxtimes \sgn \boxtimes \chi\quad\quad(\textit{resp. } \mathbbm{1}\boxtimes \mathbbm{1} \boxtimes \chi)
\]
of $\left(\ZZ/2\ZZ\right)\times \left(k_n^\times/\left(k_n^\times\right)^2\right)\times k_{E_n}$ along the natural projections
\begin{equation*}
E_n^\times\lra E_n^\times/\Nm_{E_n/F_n}\left(E_n^\times\right)\lra \left(\ZZ/2\ZZ\right)\times \left(k_n^\times/\left(k_n^\times\right)^2\right)\times k_{E_n}.
\end{equation*}
We denote the image of $\chi$ in $\wh{E_n^\times}$ by $\wt\chi$. By our construction, $\wt\chi$ satisfies:
\begin{enumerate}
\item $\wt\chi$ is conjugate symplectic (resp. conjugate orthogonal) with respect to $F_n$;
\item for any $1\neq\gamma\in \Gamma_n$, we have $\wt\chi^{\gamma}\neq\wt\chi$.
\end{enumerate}
Therefore $\Ind_{W_{E_n}}^{W_E}\wt\chi$ is conjugate symplectic (resp. conjugate orthogonal) and irreducible of dimension $n$. This construction gives an injection
\begin{equation*}
\Big(\widehat{k_E}\backslash\{\mathbbm{1}\}\Big)/\sim~\lra\Irr_F^{S,n}(E)\quad\quad\bigg(\textit{resp. } \Big(\widehat{k_E}\backslash\{\mathbbm{1}\}\Big)/\sim~\lra\Irr_F^{O,n}(E)\bigg).
\end{equation*}
Here we define $\chi_1\sim\chi_2$ if there is some $1\neq\gamma\in \Gamma_n$, such that $\wt\chi_1^{\gamma}=\wt\chi_2$. Notice that there are at most $n$ elements in each equivalence class. Hence the LHS of this injection has at least
\begin{equation*}
\frac{p^{d_u}-1}{n}
\end{equation*} 
elements. By our assumption, we are done.
\end{proof}

\section{Existence of certain number fields} 
\label{Existence.NumberField}

In this appendix, we prove the existence of certain number fields. The results in this appendix will be used in the proof of Corollary \ref{Globalization-2}. We first start with a well known general result.

\begin{theorem}
Let $\dot{F}$ be a number field, and $v_1,v_2,\cdots,v_r$ be inequivalent places of $\dot{F}$. Let $F_i=\dot{F}_{v_i}$, and $K_i$ a finite extension of $F_i$ of degree $d_i$. We set 
\[
	d = \max\left\{d_i ~\big|~1\leq i \leq r\right\}.
\]
Then there exists a degree $d$ extension $\dot{K}$ of $\dot{F}$, and places $v'_i$ of $\dot{K}$ above $v_i$, such that
\[
	\dot{K}_{v'_i} \simeq K_i
\]
as extensions of $F_i$ for all $i=1,2,\cdots,r$. 
\end{theorem}

\begin{proof}
This is a simple application of Krasner's lemma and the weak approximation theorem. Since we don't know a convenient reference, for sake of the completeness we briefly sketch the proof here.

Since we are considering characteristic $0$ fields, any finite extension is simple. For each $i=1,2,\cdots,r$, let $\alpha_i\in K_i$ be an element such that $K_i=F_i\left(\alpha_i\right)$, and $g'_i$ the minimal polynomial of $\alpha_i$ over $F_i$. Then 
\[
	F_i[x]/\left(g'_i\right) \simeq K_i.
\]
Let $\beta'_{i,1},\beta'_{i,2},\cdots,\beta'_{i,d-d_i}\in F_i$, such that they are distinct, and $g'_i\left(\beta'_{i,j}\right) \neq 0$ for all $1\leq j\leq d-d_i$. We put
\[
	f_i(x) = \left(x-\beta'_{i,1}\right)\left(x-\beta'_{i,2}\right)\cdots\left(x-\beta'_{i,d-d_i}\right)\cdot g'_i(x).
\]
By the weak approximation theorem, we can take a monic polynomial $f\in\dot{F}[x]$ of degree $d$, such that for all $1\leq i\leq r$, coefficients of $f$ are arbitrarily close to coefficients of $f_i$ (with respect to the valuation $v_i$). Then by Krasner's lemma and some classical analysis, we can take $f$ so close such that for all $1\leq i\leq r$, we have:
\begin{itemize}
	\item $f$ can be decomposed as
\[
	f(x) = \left(x-\beta_{i,1}\right)\left(x-\beta_{i,2}\right)\cdots\left(x-\beta_{i,d-d_i}\right)\cdot g_i(x)
\]
for some $\beta_{i,1},\beta_{i,2},\cdots,\beta_{i,d-d_i}\in F_i$ and $g_i\in F_i[x]$;

\item there is an isomorphism
\[
	F_i[x]/\left(g_i\right) \simeq F_i[x]/\left(g'_i\right)
\]
as $F_i$-algebras.
\end{itemize}
Note that there exists some $i_0\in\left\{1,2,\cdots,r\right\}$, such that $d_{i_0}=d$. It follows that $g'_{i_0}$ is an irreducible polynomial of degree $d$ in $F_i[x]$. Consequently, $f$ is also irreducible in $\dot{F}[x]$. Therefore 
\[
	\dot{K} \coloneqq \dot{F}[x]/\left(f\right)
\]
is a field, and one can easily check that $\dot{K}$ satisfies all our requirements. 
\end{proof}

Now let $F$ be a local field, and $E$ either $F$ it self or a quadratic field extension of $F$. Using the theorem above we can prove the existence of the pair of number fields claimed in the proof of Corollary \ref{Globalization-2}.

\begin{corollary}
Given a positive integer $d$ and a prime number $p$. There exists a pair of number fields $(\dot E,\dot{F})$, together with three places $u_1,u_2,w$ of $\dot F$, satisfying the conditions below: 
\begin{enumerate}
	\item $(\dot E_{u_1},\dot F_{u_1})\simeq(\dot E_{u_2},\dot F_{u_2})\simeq(E,F)$; 
	\item $\dot F_w$ is a finite extension of $\QQ_p$, and the degree of the residue field $k_w$ of $\dot F_w$ over $\mathbb{F}_p$ is greater than $d$; 
	\item If $E$ is a quadratic extension of $F$, then $\dot E_w$ is a ramified quadratic field extension of $\dot F_w$; if further that $F$ is non-Archimedean, then $\dot{F}$ is totally imaginary. 
\end{enumerate} 
\end{corollary}

\begin{proof}
We shall construct the desired number fields from $\mathbb{Q}$. Let 
\[
	v = \begin{cases}
		\ell \quad &\textit{if $F$ is a finite extension of $\mathbb{Q}_\ell$};\\

		\infty \quad &\textit{if $F$ is Archimedean}. 
	\end{cases}
\]
Let $\dot F_0$ to be a finite extension of $\mathbb{Q}$, together with three places $u'_1, u'_2, w$, such that $\dot F_{0,u'_1}$ and $\dot F_{0,u'_2}$ are subfields of $F$, and $w'$ is above $p$. Such $\dot F_0$ clearly exists: we can take $\dot F_{0}$ to be a quadratic, or biquadratic extension of $\mathbb{Q}$, depending on $v$ equals to $p$ or not, such that $v$ is totally split. Let $F_w$ be a finite extension of $\dot F_{0,w'}$, such that
\[
	[k_w:\mathbb{F}_p] >d,
\]
where $k_w$ is the residue field of $F_w$. Applying the theorem above, we know that there exists a number field $\dot F$, together with three places $u_1, u_2, w$ above $u'_1, u'_2, w'$, such that
\[
	\dot F_{u_1}\simeq \dot F_{u_2} \simeq F, \quad \textit{and} \quad \dot F_{w}\simeq F_w.
\]
If further that $F$ is non-Archimedean, we can take $\dot F$ to be totally imaginary. Indeed, if $\dot F$ is not totally imaginary, let $R$ be the set of all real places of $\dot F$. Applying the theorem above, we obtain a quadratic extension $\dot F'$ of $\dot F$, such that $u_1, u_2, w$ split in $\dot F'$, and 
\[
	\dot F'_u \simeq \CC
\]
for all $u\in R$. Then $\dot F'$ is a totally imaginary field satisfies our requirements, and we may replace $\dot F$ by $\dot F'$. Finally, if $E$ is a quadratic extension of $F$, once again it follows from the theorem above that there exists a number field $\dot E$, such that
\[
	\dot E_{u_1}\simeq \dot E_{u_2} \simeq E,
\]
and $\dot E_w$ is a ramified extension of $\dot F_w$.
\end{proof}

\bibliographystyle{alpha}
\bibliography{ref}

\end{document}